\newenvironment{altenumerate}
   {\begin{list}
      {\textup{(\theenumi)} }
      {\usecounter{enumi}
       \setlength{\labelwidth}{0pt}
       \setlength{\labelsep}{2pt}
       \setlength{\leftmargin}{0pt}
       \setlength{\itemsep}{\the\smallskipamount}
       \renewcommand{\theenumi}{\roman{enumi}}
      }}
   {\end{list}}
\def\from{\colon}
\newcommand{\PD}{\mathrm{PD}}
\DeclareMathOperator{\im}{im}
\def\C{\mathbb{C}}
\def\B{\mathbb{B}}
\def\F{\mathbb{F}}
\def\G{\mathbf{G}}
\def\Q{\mathbb{Q}}
\def\R{\mathbb{R}}
\def\Z{\mathbb{Z}}
\def\A{\mathbb{A}}
\def\FF{\mathbb{F}}
\def\OO{\mathcal{O}}
\def\MM{\mathbf{M}}
\def\M{\mathcal{M}}
\def\op{\mathrm{op}}
\def\ad{\mathrm{ad}}
\def\wa{\mathrm{wa}}
\def\a{\mathrm{a}}
\def\top{\mathrm{top}}
\def\naive{\mathrm{naive}}
\def\isog{\mathrm{isog}}
\def\cont{\mathrm{cont}}
\def\int{\mathrm{int}}
\def\gothm{\mathfrak{m}}
\def\isom{\cong}
\def\Ga{\mathbf{G}_{\text{a}}}
\def\Gm{\mathbf{G}_{\text{m}}}
\newcommand{\powerseries}[1]{\llbracket #1 \rrbracket}
\newcommand{\laurentseries}[1]{(\!(#1)\!)}
\newcommand{\injects}{\hookrightarrow}
\DeclareMathOperator{\qlog}{qlog}
\DeclareMathOperator{\Gal}{Gal}
\DeclareMathOperator{\Sym}{Sym}
\DeclareMathOperator{\End}{End}
\DeclareMathOperator{\Aut}{Aut}
\DeclareMathOperator{\Lie}{Lie}
\newcommand{\cris}{\mathrm{cris}}
\DeclareMathOperator{\Spec}{Spec}
\DeclareMathOperator{\Proj}{Proj}
\DeclareMathOperator{\Spa}{Spa}
\DeclareMathOperator{\Div}{Div}
\DeclareMathOperator{\Hom}{Hom}
\DeclareMathOperator{\Spf}{Spf}
\DeclareMathOperator{\GL}{GL}
\newcommand{\LT}{\mathrm{LT}}
\newcommand{\Dr}{\mathrm{Dr}}
\DeclareMathOperator{\Nilp}{Nilp}
\DeclareMathOperator{\Aff}{Aff}
\DeclareMathOperator{\CAff}{CAff}
\DeclareMathOperator{\Sets}{Sets}
\DeclareMathOperator{\Adic}{Adic}
\DeclareMathOperator{\Def}{Def}
\newcommand{\Flag}{\mathscr{F}\!\ell}
\newcommand{\GM}{\mathrm{GM}}
\newcommand{\HT}{\mathrm{HT}}
\newcommand{\et}{\text{\'et}}
\newcommand{\abs}[1]{\left\lvert #1 \right\rvert}
\newcommand{\class}[1]{\left< #1 \right>}
\newcommand{\set}[1]{\left\{ #1 \right\}}
\newcommand{\tatealgebra}{\class}
\numberwithin{equation}{subsection}
\newtheorem{Theorem}{Theorem}[subsection]
\numberwithin{Theorem}{subsection}
\newtheorem{lemma}[Theorem]{Lemma}
\newtheorem{Cor}[Theorem]{Corollary}
\newtheorem{prop}[Theorem]{Proposition}
\newtheorem{defprop}[Theorem]{Definition/Proposition}
\newtheorem{conj}[Theorem]{Conjecture}
\newtheorem*{thma}{Theorem A}
\newtheorem*{thmb}{Theorem B}
\newtheorem*{thmc}{Theorem C}
\newtheorem*{thmd}{Theorem D}
\newtheorem*{thme}{Theorem E}
\newtheorem{defn}[Theorem]{Definition}
\theoremstyle{definition}
\newtheorem{rmk}[Theorem]{Remark}
\newtheorem{exmp}[Theorem]{Example}
\begin{document}
\title{Moduli of $p$-divisible groups}
\author{Peter Scholze and Jared Weinstein}
\begin{abstract}
We prove several results about moduli spaces of $p$-divisible groups such as Rapoport--Zink spaces. Our main goal is to prove that Rapoport--Zink spaces at infinite level carry a natural structure as a perfectoid space, and to give a description purely in terms of $p$-adic Hodge theory of these spaces. This allows us to formulate and prove duality isomorphisms between basic Rapoport--Zink spaces at infinite level in general. Moreover, we identify the image of the period morphism, reproving results of Faltings, \cite{FaltingsPeriodDomains}. For this, we give a general classification of $p$-divisible groups over $\OO_C$, where $C$ is an algebraically closed complete extension of $\Q_p$, in the spirit of Riemann's classification of complex abelian varieties. Another key ingredient is a full faithfulness result for the Dieudonn\'e module functor for $p$-divisible groups over semiperfect rings (i.e. rings on which the Frobenius is surjective).
\end{abstract}

\maketitle
\tableofcontents
\pagebreak

\section{Introduction}  In this article we prove several results about $p$-divisible groups and their moduli.  Our work touches upon various themes known to arise in the context of $p$-divisible groups, including the crystalline Dieudonn\'e module (\cite{Messing}, \cite{BerthelotBreenMessing}), the rigid-analytic moduli spaces of Rapoport-Zink and their associated period maps (\cite{RZ}), and the more recent work of Fargues and Fontaine on the fundamental curve of $p$-adic Hodge theory (\cite{FarguesFontaine}).   The theory of perfectoid spaces (\cite{Sch}) arises repeatedly in our constructions and arguments, and in particular it provides the appropriate context for studying Rapoport-Zink spaces at infinite level.

Our first result concerns the full faithfulness of the crystalline Dieudonn\'e module functor.  Given a ring $R$ in which $p$ is nilpotent, and a $p$-divisible group $G$ over $R$, let $\MM(G)$ denote its (covariant) Dieudonn\'e crystal, as defined in \cite{Messing}.  The question of the full faithfulness of $\MM$ has a long history (see \cite{deJongMessing} for a survey), and is known to have an affirmative answer in a number of situations, for instance when $R$ is an excellent ring in characteristic $p$ which is a locally complete intersection ring (loc. cit.).

In contrast, our result applies to rings in characteristic $p$ which are typically non-noetherian.  We say a ring $R$ in characteristic $p$ is {\em semiperfect} if the Frobenius map $\Phi\from R\to R$ is surjective.  Such a ring is called {\em f-semiperfect} if the topological ring $\varprojlim_{\Phi}R$ has a finitely generated ideal of definition.  For instance, the quotient of a perfect ring by a finitely generated ideal is f-semiperfect. An important example is $\OO_C/p$, where $C$ is any algebraically closed extension of $\Q_p$.

\begin{thma} Let $R$ be an f-semiperfect ring.  Then the Dieudonn\'e module functor on $p$-divisible groups up to isogeny is fully faithful. If moreover $R$ is the quotient $R=S/J$ of a perfect ring $S$ by a regular ideal $J\subset S$, then the Dieudonn\'e module functor on $p$-divisible groups is fully faithful.
\end{thma}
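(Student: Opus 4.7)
The plan is to evaluate the Dieudonn\'e crystal on the universal $p$-adic PD thickening of $R$. For $R$ f-semiperfect, let $R^\flat := \varprojlim_\Phi R$, a perfect ring with canonical surjection to $R$, and let $A_\cris(R)$ be the $p$-adic PD envelope of $W(R^\flat)\twoheadrightarrow R$. By Fontaine's construction this is the universal $p$-adically complete PD thickening of $R$, so the Dieudonn\'e crystal $\MM(G)$ is completely captured by its value $\MM(G)(A_\cris(R))$, a finite projective $A_\cris(R)$-module with Frobenius $F$ and Verschiebung $V$ satisfying $FV=VF=p$. Full faithfulness of the Dieudonn\'e functor (integrally or up to isogeny) thus amounts to full faithfulness of the functor $G\mapsto (\MM(G)(A_\cris(R)),F,V)$.

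By an internal-Hom reduction one reduces to computing sections of a single $p$-divisible group: the statement becomes
$$G(R)\otimes\Q_p \;\isom\; \Hom_{F,V}\!\bigl(\MM(\Q_p/\Z_p),\MM(G)(A_\cris(R))\bigr)\otimes\Q_p,$$
and the analogue before inverting $p$ in the second assertion. The direction producing $F$-invariant elements from points of $G$ is formal. Conversely, given an $F$-invariant $m\in\MM(G)(A_\cris(R))$ one produces the desired point by a $\log$/$\exp$ construction: over the PD thickening $A_\cris(R)/p^n\to R/p^n$, any Grothendieck--Messing lift of $G[p^n]$ admits a formal logarithm taking values in $\MM(G)(A_\cris(R))/p^n$ modulo the Hodge filtration, and exponentiating $m$ produces a candidate torsion point over $A_\cris(R)/p^n$. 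Frobenius-invariance of $m$ is what forces this candidate to descend from $A_\cris(R)$ to $R$ and to be independent of the chosen Grothendieck--Messing lift.

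The main obstacle is the integrality gap between $A_\cris(R)$ and $R$, i.e.~controlling denominators in the $\log$/$\exp$ construction. For the isogeny statement it suffices to work in $B_\cris^+(R):=A_\cris(R)[1/p]$, where the argument is largely formal and driven by the Frobenius structure alone, together with convergence of the relevant series. The integral statement is more delicate: the regularity hypothesis on $J\subset S$ is needed to ensure that $A_\cris(R)$ is $p$-torsion free and that the divided powers of $\ker(A_\cris(R)\to R)$ behave like those of a regular sequence, so that rational exponentiation in fact lands in $G(R)$ rather than only in $G(R)\otimes\Q_p$. Establishing this sharp integrality, through a detailed structural analysis of $A_\cris(R)$ in the case $R=S/J$ with $J$ regular, is where the principal technical work lies.
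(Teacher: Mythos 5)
Your outline correctly identifies the starting point (evaluate the Dieudonn\'e crystal on $A_\cris(R)$) and the role of the regularity hypothesis for the integral statement, but the two key reduction steps you propose are either unavailable or bypass the real work. The ``internal-Hom reduction'' does not exist as stated: $\underline{\Hom}(G,H)$ is not a $p$-divisible group, so $\Hom_R(G,H)$ cannot be recast as sections of a single $p$-divisible group over $R$. The paper's actual reduction (the de Jong--Messing trick) goes in a different direction: one base-changes to the f-semiperfect $R$-algebra $S=A_G\otimes_R A_{H^\vee}$ representing $TG\times TH^\vee$, over which there are \emph{universal} morphisms $\Q_p/\Z_p\to G$ and $H\to\mu_{p^\infty}$, and composition carries a given morphism of Dieudonn\'e modules $\MM(G)\to\MM(H)$ to a morphism $\MM(\Q_p/\Z_p)\to\MM(\mu_{p^\infty})$ over $A_\cris(S)$, hence to an element $\eta_f\in\Hom_S(\Q_p/\Z_p,\mu_{p^\infty})[p^{-1}]$. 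The target is $\mu_{p^\infty}$ over a much larger ring $S$, not an arbitrary $G$ over $R$; this is essential, because only for $\Q_p/\Z_p\to\mu_{p^\infty}$ can the problem be made fully explicit.

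Even for that special case, the $\log/\exp$ mechanism you sketch is not the one that works: a naive exponential does not converge integrally on the divided-power ideal, and the real device is the Artin--Hasse exponential $AH(t)=\exp\bigl(\sum_i t^{p^i}/p^i\bigr)\in\Z_p\powerseries{t}$. Given $a\in A_\cris(R)^{\varphi=p}$ normalized into $I_\cris(R)^{\varphi^1=1}$ one writes $a=w+n$ with $w\in p^2W(R^\flat)$ and $n$ in the PD ideal, sets $z=\varphi^1(w)-w=\sum_i[z_i]p^i$, and shows that $\xi=\exp(w)\prod_i AH([z_i])^{p^i}$ converges in $1+I\subset W(R^\flat)$, satisfies $\varphi(\xi)=\xi^p$, and equals a Teichm\"uller lift $[r]$ with $r\in 1+J$ and $a=\log[r]$. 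Your assertion that the isogeny statement is ``largely formal and driven by the Frobenius structure alone'' is false: the de Jong--Messing reduction depends on a difficult surjectivity theorem (the cokernel of $A_\cris(S)^\ast\to\MM(G)(A_\cris(R))$ is killed by $p^2$, Theorem \ref{SurjResult}), whose proof uses perfectoid spaces and Faltings's almost purity theorem; and one also needs a Hopf-algebra compatibility argument (Proposition \ref{honestHopf}) to convert $\eta_f$ into a genuine morphism $G\to H$ over $R$. None of this appears in your sketch. For the integral statement, your intuition about $p$-torsion-freeness of $A_\cris(R)$ under the regularity hypothesis is correct, but the paper's argument (Corollary \ref{Intfullyfaithful}) is different from ``rational exponentiation lands in $G(R)$'': the regular sequence equips $\Phi\colon R\to R$ with a PD structure, and one shows $\MM(f)\in p\MM$ implies $f\in p\Hom$ by lifting $f$ along this thickening and factoring the lift through Frobenius and Verschiebung.
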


\begin{rmk} In this case, a Dieudonn\'e module can be made very explicit: In general, if $R$ is a semiperfect ring, we have the Fontaine rings $A_{\cris}(R)$ and $B_{\cris}^+(R)$, and the functor $\MM\mapsto M=\MM(A_{\cris}(R))$ (resp. $\MM\mapsto M=\MM(A_\cris(R))[p^{-1}]$) induces an equivalence of categories between the category of Dieudonn\'e crystals over $R$ (resp., up to isogeny) and the category of finite projective $A_\cris(R)$ (resp. $B_{\cris}^+(R)$)-modules equipped with Frobenius and Verschiebung maps.
\end{rmk}

We remark that the case of perfect fields is classical, and that the case of perfect rings is an unpublished result due to Gabber, relying on a result of Berthelot for the case of perfect valuation rings, \cite{BerthelotValuationRings}. Recently, this was reproved by Lau, \cite{LauTruncatedDisplays}. In all of these results, one knows even that the Dieudonn\'e module functor is essentially surjective, and one also has a description of finite locally free group schemes.

Our method of proof is entirely different. It handles first the case of morphisms $\Q_p/\Z_p\to \mu_{p^\infty}$ by an explicit computation involving integrality of the Artin-Hasse exponential, and reduces to this case by a trick we learned from a paper of de Jong and Messing, \cite{deJongMessing}. We note that this reduction step only works for morphisms of $p$-divisible groups (and not for finite locally free group schemes), and even if one is only interested in the result for perfect rings, this reduction step will introduce rather arbitrary f-semiperfect rings. In this reduction step, a certain technical result is necessary, for which we give a proof that relies on Faltings's almost purity theorem in the form given in \cite{Sch}. As explained below, Theorem A has direct consequences in $p$-adic Hodge theory, so that this state of affairs may be reasonable.

Of particular interest are the morphisms $\Q_p/\Z_p\to G$.  Observe that if $S$ is any $R$-algebra, then the $\Q_p$-vector space $\Hom_S(\Q_p/\Z_p,G)[1/p]$ is identified with the inverse limit $\tilde{G}(S)=\varprojlim G(S)$ (taken with respect to multiplication by $p$).  Then $\tilde{G}$ is a functor from $R$-algebras to $\Q_p$-vector spaces which we call the {\em universal cover} of $G$.  An important observation is that if $S\to R$ is a surjection with nilpotent kernel, and if $G_S$ is a lift of $G$ to $S$, then $\tilde{G}_S(S)$ is canonically isomorphic to $\tilde{G}(R)$ (and so does not depend on the choice of $G_S$).  In other words, $\tilde{G}$ is a crystal on the infinitesimal site of $R$.

Using this construction, we can explain how Theorem A is closely related to $p$-adic Hodge theory. In fact, take $R=\OO_C/p$, where $C$ is an algebraically closed complete extension of $\Q_p$, and let $G$ be any $p$-divisible group over $\OO_C$, with reduction $G_0$ to $\OO_C/p$. Let $M(G)$ be the finite projective $B_\cris^+$-module given by evaluating the Dieudonn\'e crystal of $G$ on $A_\cris = A_\cris(\OO_C/p)$, and inverting $p$. Then, using Theorem A for morphisms $\Q_p/\Z_p\to G_0$ up to isogeny, one finds that
\[
M(G)^{\varphi = p} = \Hom(\Q_p/\Z_p,G_0)[p^{-1}] = \tilde{G}(\OO_C/p) = \tilde{G}(\OO_C)\ .
\]
On the other hand, there is an exact sequence
\begin{equation}\label{TGW}
0\to T(G)[p^{-1}]\to \tilde{G}(\OO_C)\to \Lie G\otimes C\to 0\ ,
\end{equation}
where the latter map is the logarithm map, and $T(G)$ denotes the Tate module of $G$. This translates into the exact sequence
\[
0\to T(G)[p^{-1}]\to M(G)^{\varphi = p}\to \Lie G\otimes C\to 0
\]
relating the \'etale and crystalline homology of $G$. We remark that Theorem A also handles the relative case of this result, and also the integral version.

Our second main result is a classification of $p$-divisible groups over $\OO_C$, where $C$ is still an algebraically closed complete extension of $\Q_p$. We recall that one has a Hodge-Tate sequence
\[
0\to \Lie G\otimes C(1)\to T(G)\otimes C\to (\Lie G^\vee)^\vee\otimes C\to 0\ ,
\]
where $(1)$ denotes a Tate twist.

\begin{thmb} There is an equivalence of categories between the category of $p$-divisible groups over $\OO_C$ and the category of free $\Z_p$-modules $T$ of finite rank together with a $C$-subvectorspace $W$ of $T\otimes C(-1)$.
\end{thmb}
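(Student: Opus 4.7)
The functor sends a $p$-divisible group $G/\OO_C$ to the pair $(T(G), W_G)$, where $W_G := \Lie G \otimes_{\OO_C} C$ is regarded as a $C$-subspace of $T(G) \otimes_{\Z_p} C(-1)$ via the Tate twist of the Hodge-Tate exact sequence displayed above. Faithfulness is essentially automatic: a morphism $G \to G'$ is determined by its restriction to the generic fibre $\Spec C$, and every $p$-divisible group over the algebraically closed, characteristic-zero field $C$ is \'etale, hence equivalent to its Tate module as a $\Z_p$-module.

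The plan for fullness is to invoke Theorem A. Let $G_0, G_0'$ denote the reductions modulo $p$, and let $f \from (T(G), W_G) \to (T(G'), W_{G'})$ be a morphism of pairs. Combining the identification $\tilde G(\OO_C) = M(G_0)^{\varphi = p}$ from Theorem A with the exact sequence (\ref{TGW}), the map $f \otimes \Q_p$ extends (Frobenius-equivariantly) to a morphism $M(G_0)[p^{-1}] \to M(G_0')[p^{-1}]$ of finite projective $B_{\cris}^+$-modules. Theorem A in its isogeny form then produces a quasi-isogeny $G_0 \to G_0'$ over $\OO_C/p$, which Grothendieck-Messing deformation theory lifts uniquely to a quasi-isogeny $G \to G'$ over $\OO_C$: the lifting criterion is precisely that $f(W_G) \subseteq W_{G'}$, i.e., preservation of the Hodge filtration. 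Finally, the integrality of $f$ on Tate modules forces this quasi-isogeny to be an honest morphism of $p$-divisible groups.

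The main obstacle is essential surjectivity. Given a pair $(T, W)$, the strategy is to realize the data by a $p$-divisible group up to isogeny, and then upgrade to the integral object. For the isogeny step, one can freely combine standard building blocks over the residue field $\bar\F_p$---for instance, $\mu_{p^\infty}$, $\Q_p/\Z_p$, and formal groups realizing arbitrary Dieudonn\'e-Manin slopes---to produce a $p$-divisible group $G_0/\bar\F_p$ whose $\Z_p$-rank and dimension match those of $T$ and $W$; base-changing to $\OO_C/p$ and invoking Grothendieck-Messing with Hodge filtration $W$ yields a lift $G/\OO_C$ with $W_G = W$. The integral lattice $T$ is then pinned down by selecting, within the isogeny class, the representative whose Tate module is $T$, equivalently a $\Z_p$-lattice in the submodule $T(G)[p^{-1}] \subset \tilde G(\OO_C)$ from (\ref{TGW}). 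A cleaner alternative is to interpret $(T, W)$ as a modification (of type $W$) of the trivial rank-$r$ bundle on the Fargues-Fontaine curve at the distinguished $B_{\dR}^+$-point, whereupon essential surjectivity becomes the classification of such modifications by $p$-divisible groups over $\OO_C$.
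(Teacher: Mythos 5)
Your treatment of full faithfulness is in the right spirit but differs from the paper's. The paper (following Fargues) argues by \emph{reconstruction}: one produces $G^\ad_\eta$ as the fibre product of $\Lie G\otimes\Ga\subset\Lie G'\otimes\Ga$ with the generic fibre of the multiplicative group $G' = T(-1)\otimes\mu_{p^\infty}$, and then recovers $G$ as $\bigsqcup_Y\Spf H^0(Y,\OO_Y^+)$ over the connected components $Y$. Your route instead runs through Theorem A and the Fargues--Fontaine curve. That is fine, but as written there is a gap: from $\tilde G(\OO_C) = M(G_0)[p^{-1}]^{\varphi=p}$ and the sequence \eqref{TGW}, a map $f\from (T,W)\to(T',W')$ does \emph{not} automatically ``extend Frobenius-equivariantly'' to $M(G_0)[p^{-1}]\to M(G_0')[p^{-1}]$ --- the $\varphi=p$ eigenspace is a finite-dimensional $\Q_p$-space inside an infinite-dimensional $B^+_\cris$-module, so there is no formal reason for the extension to exist. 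What makes it work is that one can produce a morphism of \emph{modifications} of bundles on $X$ (the map $T\otimes\OO_X\to T'\otimes\OO_X$ extends to $\mathscr{E}\to\mathscr{E}'$ precisely because $f(W)\subset W'$), and then invoke Theorem \ref{FactsFarguesFontaine}(ii), which is the full faithfulness of $G_0\mapsto\mathscr{E}(G_0)$. You should make this step explicit; the exact sequence \eqref{TGW} alone does not do the job.

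The essential surjectivity argument has a genuine error. Your claim that one may ``freely combine standard building blocks... whose $\Z_p$-rank and dimension match those of $T$ and $W$'' and then lift by Grothendieck--Messing does not work: the isogeny class of $H/\bar{\F}_p$ is \emph{not} free, but is forced by $(T,W)$ through the slope decomposition of the pushout bundle $\mathscr{E}$ (which is $\mathscr{E}(H)$ only for the correct $H$). If you pick a mismatched $H$, choosing an abstract identification $M(H)\otimes C\cong T\otimes C(-1)$ and lifting via the Hodge filtration $W$ produces a $p$-divisible group $G$ over $\OO_C$ whose Tate module bears no canonical relation to $T$. Your ``cleaner alternative'' --- reading $(T,W)$ as a modification and appealing to ``the classification of such modifications by $p$-divisible groups over $\OO_C$'' --- is circular: that classification is precisely the content of Theorem B. What is actually needed, and what the paper supplies, is a way to know that a filtration $W$ giving $\mathscr{F}\cong\OO_X^h$ is \emph{admissible}, i.e. in the image of the period morphism (Theorem \ref{ImagePeriodMorphism}). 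The paper's logic is: prove Theorem B directly for $C$ spherically complete with surjective norm, via the rigid-analytic argument showing that $(G^\ad_\eta)^0$, built from $(T,W)$ as a fibre product inside $G'^\ad_\eta$, is an increasing union of closed balls and hence an open unit ball (using Lemma \ref{SphCompleteLemma}); use this to prove Theorem C; then deduce Theorem B for general $C$ by constructing the desired $G$ inside a Rapoport--Zink space via Theorem C. Your proposal omits the spherically-complete case entirely, which is the engine of the whole proof, and without it the reduction you sketch cannot close.
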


The equivalence carries $G$ to the pair $(T,W)$, where $T$ is the Tate module of $G$ and $W=\Lie G\otimes C$. In particular, the result is a classification in terms of linear algebra, instead of $\sigma$-linear algebra as usually, and (related to that) is in terms of its \'etale cohomology, instead in terms of its crystalline cohomology. We note that Theorem B can be regarded as a $p$-adic analogue of Riemann's classification of complex abelian varieties, which are classified by their singular homology together with the Hodge filtration.

Results in the direction of Theorem B have previously been obtained by Fargues in \cite{FarguesPDivGroups}. In particular, the fully faithfulness part is proved already there. We prove essential surjectivity first in the case that $C$ is spherically complete, with surjective norm map $C\to \R_{\geq 0}$. In that case, the argument is very direct; the key idea is to look first at the rigid-analytic generic fibre of the (connected, say) $p$-divisible group (considered as a formal scheme), which is an open ball and can be constructed directly from $(T,W)$. For the converse, one has to see that a certain rigid-analytic variety is an open ball. In general, one can show that it is an increasing union of closed balls; under the assumptions on $C$, one can conclude that it is an open ball. Afterwards, we make a (somewhat indirect) descent argument involving Rapoport-Zink spaces (and Theorem C) to deduce the general case.

It will be crucial to relate the exact sequence in Eq. \eqref{TGW} to vector bundles on the Fargues-Fontaine curve $X$, which is the object investigated in \cite{FarguesFontaine}.  This is defined as $X=\Proj P$, where $P$ is the graded $\Q_p$-algebra
\[ P=\bigoplus_{d\geq 0} (B^+_{\cris})^{\varphi=p^d}, \]
 and $B^+_{\cris}=B^+_{\cris}(\OO_C/p)$.  There is a special point $\infty\in X$ corresponding to the homomorphism $\Theta\from B^+_{\cris}\to C$;  we write $i_\infty\from \set{\infty}\to X$ for the inclusion.  For every isocrystal $M$ over $\bar{\FF}_p$, there is a corresponding vector bundle on $\OO_C$, given by the vector bundle associated to $\bigoplus_{d\geq 0} (M\otimes B^+_{\cris})^{\varphi=p^d}$, and Fargues and Fontaine show that all the vector bundles on $X$ arise in this way.  In particular there is a bijection $H\mapsto \mathscr{E}(H)$ between isogeny classes of $p$-divisible groups over $\bar{\FF}_p$ and vector bundles on $X$ whose slopes are between $0$ and $1$.  Note that $i_{\infty}^*\mathscr{E}(H)=M(H)\otimes C$.

Using this classification of vector bundles, we show in Theorem \ref{FactsFarguesFontaine} that every $p$-divisible group $G$ over $\OO_C$ is {\em isotrivial} in the sense that  there exists a $p$-divisible group $H$ over $\bar{\FF}_p$ and a quasi-isogeny
\[ \rho \from H\otimes_{\bar{\FF}_p}\OO_C/p\to G\otimes_{\OO_C} \OO_C/p. \]
Thus $(G,\rho)$ is a deformation of $H$ in the sense of Rapoport-Zink. Appealing to Theorem A, we can identify $\tilde{G}(\OO_C)\isom \tilde{H}(\OO_C/p)\isom (M(H)\otimes B_{\cris}^+)^{\varphi=p}$ with the space of global sections of $\mathscr{E}=\mathscr{E}(H)$.  The exact sequence in Eq. \eqref{TGW} appears when one takes global sections in the exact sequence of coherent sheaves on $X$,
\begin{equation}
\label{FEW}
0\to \mathscr{F}\to\mathscr{E}\to i_{\infty\ast}(\Lie G\otimes C) \to 0,
\end{equation}
where $\mathscr{F}=T\otimes_{\Z_p}\OO_X$, see Proposition \ref{PDivGroupGivesModVectBund}.  To summarize the situation, a $p$-divisible group $H$ gives a vector bundle $\mathscr{E}$ on $X$, while a deformation of $H$ to $\OO_C$ gives a {\em modification} of $\mathscr{E}$ (in the sense that $\mathscr{F}$ and $\mathscr{E}$ are isomorphic away from $\infty$).

We should note that vector bundles over $X$ are equivalent to $\varphi$-modules over the Robba ring, and that our use of $X$ could be replaced by Kedlaya's theory of $\varphi$-modules over the Robba ring, \cite{KedlayaRobbaRing}.

We now turn to Rapoport-Zink spaces and their associated period maps, as in \cite{RZ}.  Let $H$ be a $p$-divisible group of dimension $d$ and height $h$ over a perfect field $k$ of characteristic $p$.  Let $\mathcal{M}$ be the associated Rapoport-Zink space.  This is a formal scheme parametrizing deformations $(G,\rho)$ of $H$, see Definition \ref{deformation}.

Passing to the generic fibre, we get an adic space $\mathcal{M}_{\eta}^{\ad}$.  Note that \cite{RZ} uses rigid spaces, but we work with adic spaces, as it is important for us to consider non-classical points (in particular to talk about the image of the period morphism later).\footnote{As in our previous work, we prefer adic spaces over Berkovich spaces.} We remark that if $(R,R^+)$ is a complete affinoid $(W(k)[1/p],W(k))$-algebra, then a morphism $\Spa(R,R^+)\to\mathcal{M}_{\eta}^{\ad}$ corresponds to a covering of $\Spa(R,R^+)$ by open affinoids $\Spa(R_i,R_i^+)$, together with a deformation $(G_i,\rho_i)$ of $H$ to an open bounded subring of each $R_i^+$.  We will simply refer to such a datum as a pair $(G,\rho)$.  If $(K,K^+)$ is an affinoid field, then $\mathcal{M}_{\eta}^{\ad}(K,K^+)$ is the set of deformations of $H$ to $K^+$.

There is a (Grothendieck-Messing) {\em period morphism} $\pi\from \mathcal{M}_{\eta}^{\ad}\to \Flag$, where $\mathcal{M}_{\eta}^{\ad}$ is the adic generic fibre of $\mathcal{M}$ and $\Flag$ is the Grassmannian variety of $d$-dimensional quotients of $M(H)\otimes\Q_p$ (considered as an adic space).  If $x$ is a $(C,\OO_C)$-point of $\mathcal{M}_{\eta}^{\ad}$ corresponding to a deformation $(G,\rho)$ of $H$ to $\OO_C$, then $\pi(x)\in\Flag(C,\OO_C)$ corresponds to the quotient $M(H)\otimes C\to W$ given by Grothendieck-Messing theory.

Our third main theorem provides a description of the image of $\pi$. A $(C,\OO_C)$-valued point $x$ of $\Flag$ corresponds to a $d$-dimensional quotient $M(H)\otimes C\to W$.  Let $\mathscr{E}=\mathscr{E}(H)$ be the vector bundle over $X$ associated to $H$, so that $i_{\infty}^*\mathscr{E}=M(H)\otimes C$.  The map $i_{\infty}^\ast\mathscr{E}\to W$ induces a map $\mathscr{E}\to i_{\infty\ast}i_\infty^\ast\mathscr{E}\to i_{\infty\ast}W$.  Let $\mathscr{F}$ be the kernel, so that we have a modification of $\mathscr{E}$ as in Eq. \eqref{FEW}.

\begin{thmc} The point $x$ is in the image of $\pi$ if and only if $\mathscr{F}\isom\OO_X^h$.
\end{thmc}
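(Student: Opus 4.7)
The ``only if'' direction is immediate from Proposition~\ref{PDivGroupGivesModVectBund}: a deformation $(G,\rho)$ mapping to $x$ produces a modification whose kernel $\mathscr{F}=T(G)\otimes_{\Z_p}\OO_X$ is isomorphic to $\OO_X^h$, since $T(G)$ is a free $\Z_p$-module of rank $h$. The real content is the converse.

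So assume $\mathscr{F}\cong\OO_X^h$. The plan is to extract linear-algebra data from the modification, feed it into Theorem~B to produce a candidate $p$-divisible group $G$ over $\OO_C$, and then argue that $G$ is canonically a deformation of $H$ with period $x$. To extract the data, set $V:=H^0(X,\mathscr{F})$, a $\Q_p$-vector space of dimension $h$ (using $H^0(X,\OO_X)=\Q_p$), so that $\mathscr{F}=V\otimes_{\Q_p}\OO_X$ canonically, and choose any $\Z_p$-lattice $T\subset V$. Apply $Li_\infty^*$ to the modification sequence: since $\mathscr{F}$ and $\mathscr{E}$ are locally free, only $L_1 i_\infty^*(i_{\infty\ast}W)\cong W\otimes_C N^*_\infty$ contributes (with $N^*_\infty$ the conormal line at $\infty$, canonically a Tate twist of $C$ in the Fargues--Fontaine normalization), giving the four-term exact sequence
\[
0 \to W\otimes_C N^*_\infty \to V\otimes_{\Q_p} C \to M(H)\otimes C \to W \to 0,
\]
from which one reads off an embedding $W\hookrightarrow V\otimes_{\Q_p} C(-1) = T\otimes_{\Z_p} C(-1)$. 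Equivalently, passing to $\widehat{\OO}_{X,\infty}=B_\dR^+$, the modification is encoded by a $B_\dR^+$-lattice $\Lambda$ with $T\otimes B_\dR^+\subset\Lambda\subset t^{-1}(T\otimes B_\dR^+)$ and $\Lambda/(T\otimes B_\dR^+)\cong W$.

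Theorem~B applied to $(T,W\subset T\otimes C(-1))$ now yields a $p$-divisible group $G$ over $\OO_C$ with $T(G)=T$ and $\Lie G\otimes C=W$. It remains to construct a quasi-isogeny $\rho\from H\otimes_{\bar\FF_p}\OO_C/p\to G\otimes_{\OO_C}\OO_C/p$ and verify that $\pi(G,\rho)=x$. Apply Proposition~\ref{PDivGroupGivesModVectBund} to $G$: it produces a modification $0\to T\otimes\OO_X\to\mathscr{E}(G\otimes\OO_C/p)\to i_{\infty\ast}W\to 0$ whose $B_\dR^+$-lattice datum at $\infty$ agrees with $\Lambda$, since both encode the same embedding $W\subset T\otimes C(-1)$ (the one obtained above from $Li_\infty^*$, respectively the one coming from the Hodge--Tate sequence of $G$, which by Theorem~B match). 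Since a modification of the trivial bundle $T\otimes\OO_X$ is canonically determined by its $B_\dR^+$-lattice---via gluing of a vector bundle on $X$ from its restriction to $X\setminus\{\infty\}$ and its formal completion at $\infty$ as in \cite{FarguesFontaine}---we obtain a canonical isomorphism $\mathscr{E}(G\otimes\OO_C/p)\cong\mathscr{E}(H)=\mathscr{E}$. Via the Fargues--Fontaine dictionary between vector bundles on $X$ and isocrystals, together with Theorem~A (full faithfulness of the Dieudonn\'e functor over the f-semiperfect ring $\OO_C/p$), this isomorphism yields the required quasi-isogeny $\rho$, and $\pi(G,\rho)=x$ follows by unwinding Grothendieck--Messing.

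The main obstacle is the last step---promoting the abstract isomorphism of isocrystals coming from $\mathscr{E}(G\otimes\OO_C/p)\cong\mathscr{E}(H)$ to a genuine quasi-isogeny of $p$-divisible groups over the semiperfect ring $\OO_C/p$, rather than merely over the perfect base $\bar\FF_p$. This is precisely where Theorem~A enters essentially: its full-faithfulness assertion for morphisms up to isogeny lifts the abstract isomorphism of Dieudonn\'e modules to a morphism of $p$-divisible groups up to isogeny over $\OO_C/p$, without which one could only realize the point $x$ after a quasi-isogeny performed over the perfect hull.
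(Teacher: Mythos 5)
Your proof matches the paper's strategy (extract $(T,W)$ from the modification, build $G$ via Theorem~B, then promote the induced isomorphism of vector bundles to a quasi-isogeny via Theorem~\ref{FactsFarguesFontaine}~(ii), which in turn rests on Theorem~A). The $Li_\infty^*$/$B_\dR^+$-lattice bookkeeping you supply to check that $G$ reproduces the given modification is a useful elaboration of a step the paper leaves implicit. But there is a genuine gap at the very first move: you invoke Theorem~B (essential surjectivity of $G\mapsto(T,W)$) for an arbitrary algebraically closed complete $C$. At the point where Theorem~C is being proved, the paper has only established essential surjectivity when $C$ is \emph{spherically complete with surjective norm map}; the general case of Theorem~B is deduced \emph{afterwards}, as a consequence of Theorem~C. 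So your argument, as written, is circular.

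The paper resolves this by a preliminary reduction that you omitted: since $\pi$ is locally of finite type, the fibre $\pi^{-1}(x)$ is an adic space locally of finite type over $\Spa(C,\OO_C)$; if $\mathscr{F}$ is trivial over $C$ it remains trivial over a spherically complete extension $C'$ with surjective norm, and by the argument you give (now legitimately using Theorem~B over $C'$) the fibre acquires a $(C',\OO_{C'})$-valued point, hence is nonempty, hence has a $(C,\OO_C)$-valued point. You should insert this reduction at the start of the ``if'' direction; with that fix, the rest of your argument is sound and essentially the paper's.
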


In order to prove this theorem, it is enough to consider the case where $C$ is spherically complete and the norm map $C\to \R_{\geq 0}$ is surjective. In that case, Theorem B is available to construct the desired $p$-divisible group $G$. The construction of the isogeny $\rho$ over $\OO_C/p$ relies on Theorem A. On the other hand, Theorem C can be used to deduce Theorem B for general $C$, by constructing the desired $p$-divisible group within some Rapoport-Zink space.

Let us make some historical remarks on Theorem C. In their book \cite{RZ}, Rapoport and Zink constructed the period morphism $\pi$, and showed that the image is contained in the weakly admissible set $\Flag^\wa\subset \Flag$, which is explicitly described as the complement of lower-dimensional subvarieties. This generalizes previous known cases, notably the case of Drinfeld's upper half-space, \cite{DrinfeldUpperHalfSpace}, and the Gross-Hopkins period map from Lubin-Tate space to projective space, \cite{GrossHopkins}. Rapoport and Zink conjectured that there is a natural rigid-analytic variety $\Flag^\prime$ with an \'etale and bijective map $\Flag^\prime\to \Flag^\wa$, over which the period morphism factors, and over which there exists a $\Q_p$-local system (coming from the rational Tate module of the universal $p$-divisible group). In the language of adic spaces, $\Flag^\prime$ should be an open subset of $\Flag^\wa$, with the same classical points. As $\pi$ is \'etale and \'etale maps are open (in the adic world), one can simply define $\Flag^\prime$ to be the image of $\pi$ (as we shall do in the discussion to follow); but then, one does not yet know what its classical points are, and the description is very inexplicit.

On the level of classical points, a theorem of Colmez and Fontaine, \cite{ColmezFontaine}, shows that weakly admissible implies admissible (meaning that any weakly admissible filtered isocrystal comes from a crystalline representation), so that for a classical point of $\Flag^\wa$, one gets an associated crystalline representation with Hodge-Tate weights only $0$ and $1$. On the other hand, a theorem of Breuil, \cite{Breuil}, and Kisin, \cite{Kisin}, shows that all crystalline representations with Hodge-Tate weights only $0$ and $1$ come from $p$-divisible groups. We note that their results rely on deep techniques in integral $p$-adic Hodge theory. Taken these results together, one finds that indeed the classical points of $\Flag^\prime$ agree with the classical points of $\Flag^\wa$.

Then Hartl, after handling the case of equal characteristic, \cite{HartlEqualChar}, used Kedlaya's theory of $\varphi$-modules over the Robba ring to construct an open subset $\Flag^\a\subset \Flag^\wa$, called the admissible locus, cf. \cite{HartlAdmLocus}.\footnote{He uses Berkovich spaces there, but one gets a corresponding adic space as well, by using that the Berkovich space is the maximal hausdorff quotient of the adic space, for example.} Roughly, it is the locus where the universal filtered $\varphi$-module over $\Flag$ comes from a $\Q_p$-local system (at least punctually). Hartl conjectured that $\Flag^\prime = \Flag^\a$. This was proved by Faltings, \cite{FaltingsPeriodDomains}, by first showing that the universal filtered $\varphi$-module over $\Flag^\a$ comes from a $\Q_p$-local system globally, and then using this local system to spread the $p$-divisible groups constructed from the results of Breuil and Kisin at classical points to the whole admissible locus.

We note that our approach is entirely different, in that we bypass both the results of Breuil and Kisin (i.e., integral $p$-adic Hodge theory), and the need to construct a $\Q_p$-local system first (i.e., relative $p$-adic Hodge theory). Instead, we produce the desired $p$-divisible groups directly from Theorem B. As a consequence, we get a new proof of the result of Breuil and Kisin, using only rational $p$-adic Hodge theory.

By adding level structures, one constructs a tower of covers $\mathcal{M}_n$ of $\mathcal{M}^{\ad}_{\eta}$.   The $(R,R^+)$-points of $\mathcal{M}_n$ are triples $(G,\rho,\alpha)$, where $(G,\rho)$ is a deformation of $H$ and $\alpha\from (\Z/p^n\Z)^h\to G[p^n]^{\ad}_{\eta}(R,R^+)$ is a homomorphism which induces an isomorphism at every point $x=\Spa(K,K^+)$ of $\Spa(R,R^+)$.

It is then quite natural to define a Rapoport-Zink moduli problem at infinite level.  Let $\mathcal{M}_\infty$ be the functor on complete affinoid $(W(k)[1/p],W(k))$-algebras which assigns to $(R,R^+)$ the set of triples $(G,\rho,\alpha)$, where $(G,\rho)\in \mathcal{M}_{\eta}^{\ad}(R,R^+)$, and $\alpha\from \Z_p^h\to T(G)^{\ad}_{\eta}(R,R^+)$ is a homomorphism which induces an isomorphism at every point $x=\Spa(K,K^+)$ of $\Spa(R,R^+)$.

Our fourth main theorem shows that $\mathcal{M}_\infty$ is representable by an adic space which has an alternative description independent of deformations of $H$. Recall that the universal cover $\tilde{H}$ of $H$ is a crystal of $\Q_p$-vector spaces on the infinitesimal site of $k$.  In the following we consider $\tilde{H}$ as a formal scheme over $\Spf W(k)$, which represents the functor $R\mapsto \varprojlim H'(R)$ (where $H'$ is any lift of $H$ to $W(k)$).  Then $\tilde{H}^{\ad}_{\eta}$ is a $\Q_p$-vector space object in the category of adic spaces over $\Spa(W(k)[1/p],W(k))$. Note that any point $(G,\rho,\alpha)$ of $\mathcal{M}_\infty$ gives a map
\[
\Z_p^h\to T(G)^\ad_\eta\subset \tilde{G}^\ad_\eta\cong \tilde{H}^\ad_\eta\ ,
\]
where the first map is $\alpha$, and the isomorphism is induced from $\rho$. In other words, all $p$-divisible groups $G$ parametrized by $\mathcal{M}$ have the same universal cover $\tilde{H}$, and are of the form $\tilde{H} / \Lambda$, where $\Lambda\subset \tilde{H}$ is a $\Z_p$-lattice. The following theorem answers the question for which $\Lambda\subset \tilde{H}$ one can form the quotient $\tilde{H} / \Lambda$. The description uses the quasi-logarithm map $\qlog\from\tilde{H}^{\ad}_{\eta}\to M(H)\otimes\Ga$. We remark that for any deformation $(G,\rho)$ of $H$, we have a commutative diagram
\[\xymatrix{
\tilde{H}^\ad_\eta\ar^{\qlog}[rr]\ar[d]&&M(H)\otimes\Ga\ar[d]\\
G^\ad_\eta\ar^\log[rr]&&\Lie G\otimes\Ga
}\]
In particular, $\Lambda = \ker(\tilde{H}^\ad_\eta\to G^\ad_\eta)$ maps to the kernel of $M(H)\to \Lie G$, which explains the condition on the rank in the following theorem.

\begin{thmd} The moduli problem $\mathcal{M}_{\infty}$ is representable by an adic space, and is isomorphic to the functor which assigns to a complete affinoid $(W(k)[1/p],W(k))$-algebra $(R,R^+)$ the set of $h$-tuples $s_1,\dots,s_h\in \tilde{H}^{\ad}_{\eta}(R,R^+)$ for which the following conditions are satisfied.
\begin{altenumerate}
\item[{\rm (i)}] The cokernel $W$ of the map
\[
R^h\xrightarrow{(\qlog(s_1),\ldots,\qlog(s_h))}M(H)\otimes R
\]
is a projective $R$-module of rank $d$.
\item[{\rm (ii)}] For all geometric points $x=\Spa(C,\OO_C)\to \Spa(R,R^+)$, the sequence
\[
0\to \Q_p^h\xrightarrow{(s_1,\ldots,s_h)} \tilde{H}^\ad_\eta(C,\OO_C)\to W\otimes_R C\to 0
\]
is exact.
\end{altenumerate}

Moreover, $\mathcal{M}_\infty\subset (\tilde{H}^\ad_\eta)^h$ is a locally closed subspace, the space $\mathcal{M}_{\infty}$ is preperfectoid, and $\mathcal{M}_{\infty} \sim \varprojlim_n \mathcal{M}_n$.
\end{thmd}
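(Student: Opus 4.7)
The plan is to define a natural transformation $\Phi\from\mathcal{M}_\infty\to\mathcal{N}$, where $\mathcal{N}$ denotes the subfunctor of $(\tilde{H}^{\ad}_\eta)^h$ cut out by conditions (i) and (ii), show $\Phi$ is an isomorphism of functors, and then deduce representability and the preperfectoid structure of $\mathcal{N}$ from that of $\tilde{H}^{\ad}_\eta$. On an $(R,R^+)$-point $(G,\rho,\alpha)$, set $s_i=\alpha(e_i)$, viewed inside $\tilde{H}^{\ad}_\eta$ via the canonical isomorphism $\tilde{H}^{\ad}_\eta\cong\tilde{G}^{\ad}_\eta$ induced by $\rho$. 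Condition (i) follows from the commutative $\qlog$/$\log$ diagram stated above the theorem: since $\alpha$ factors through $T(G)\subset\tilde{G}^{\ad}_\eta$, each $s_i$ lies in the kernel of $\log\from \tilde{G}^\ad_\eta\to \Lie G\otimes\Ga$, so each $\qlog(s_i)$ lies in the kernel of the surjection $M(H)\otimes R\twoheadrightarrow\Lie G\otimes R$, and an examination of ranks identifies $W$ canonically with $\Lie G$, a projective $R$-module of rank $d$. Condition (ii) is then the pointwise exact sequence \eqref{TGW}, after identifying $T(G)\otimes\Q_p$ with $\Q_p^h$ via $\alpha$ (which is a pointwise $\Z_p$-isomorphism by hypothesis).

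For the inverse, start with a tuple $(s_1,\dots,s_h)$ satisfying (i) and (ii) and construct the triple $(G,\rho,\alpha)$ locally on $\Spa(R,R^+)$. The projective rank-$d$ quotient $W$ of $M(H)\otimes R$ furnished by (i) is a candidate Hodge filtration; Grothendieck--Messing deformation theory, applied to a suitable open bounded PD-thickening inside $R^+$, produces a deformation $G$ of $H$ with $\Lie G=W$, together with the canonical quasi-isogeny $\rho$. The $s_i$ live in $\tilde{G}^{\ad}_\eta$ via $\rho$, and condition (i) together with the $\qlog$/$\log$ diagram forces $\log(s_i)=0$; combined with the integrality of $(s_i)$ as $R^+$-valued points of $\tilde{H}$, this identifies $G$ also with the quotient $\tilde{H}/\Lambda$ where $\Lambda:=\sum_i\Z_p s_i$, so that $\alpha\from\Z_p^h\to T(G)=\Lambda$ defined by $e_i\mapsto s_i$ is well-defined and, by (ii), a pointwise isomorphism. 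The two constructions are mutually inverse: $\alpha$ is recovered tautologically from $(s_i)$, while $G$ is recovered from its Hodge filtration $W$ via Grothendieck--Messing (equivalently, as $\tilde{H}/\Lambda$).

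It remains to establish representability, the preperfectoid property, and the compatibility $\mathcal{M}_\infty\sim\varprojlim_n\mathcal{M}_n$. Condition (i) is an open condition on $(\tilde{H}^{\ad}_\eta)^h$, since it amounts to asking the $\qlog$-induced map of vector bundles on $\Spa(R,R^+)$ to have a projective cokernel of rank $d$. Condition (ii), layered on top of (i), is a fiberwise exactness condition and cuts out a locally closed subspace within the locus of (i). The space $\tilde{H}^{\ad}_\eta$ is preperfectoid, being a tilde limit in the sense of \cite{Sch} of the generic fibres $(H')^{\ad}_\eta$ along the pro-finite-flat tower given by multiplication by $p$; preperfectoidness descends to locally closed subspaces, giving it for $\mathcal{M}_\infty$. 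Finally, $\mathcal{M}_\infty\sim\varprojlim_n\mathcal{M}_n$ is essentially the reinterpretation of $\alpha\from\Z_p^h\to T(G)$ as a compatible system of level-$n$ structures $\alpha_n\from(\Z/p^n\Z)^h\to G[p^n]^{\ad}_\eta$. The main obstacle is the careful application of Grothendieck--Messing to produce $G$ over open bounded subrings of $R^+$, which need not be $p$-adically complete nor carry canonical divided power structures, and the verification that the resulting local deformations patch; granting this and the preperfectoid structure of $\tilde{H}^{\ad}_\eta$ developed earlier in the paper, the remaining verifications are routine.
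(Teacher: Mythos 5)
The construction of the inverse map is where your argument has a genuine gap, and it is exactly where the paper must invoke Theorem C (the identification of the image of the period morphism). You write that given a tuple $(s_1,\dots,s_h)$ satisfying (i) and (ii), "the projective rank-$d$ quotient $W$ \ldots is a candidate Hodge filtration; Grothendieck--Messing deformation theory, applied to a suitable open bounded PD-thickening inside $R^+$, produces a deformation $G$ of $H$ with $\Lie G=W$." This does not work: Grothendieck--Messing lets one lift $p$-divisible groups across nilpotent or topologically-nilpotent PD thickenings by lifting the Hodge filtration, but it does not produce a $p$-divisible group over a $p$-adic ring from an arbitrary filtration. The crystalline period morphism $\pi\from\mathcal{M}^\ad_\eta\to\Flag$ is \emph{not} surjective (except when $d=1$), so an arbitrary quotient $M(H)\otimes R\to W$ need not correspond to any deformation of $H$. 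To get $G$, one must first know that the point of $\Flag$ defined by $W$ lies in the image $U$ of $\pi$, and this is precisely where condition (ii) earns its keep: at each geometric point $\Spa(C,\OO_C)$, one passes to the Fargues--Fontaine curve, builds the modification $0\to\mathscr{F}\to\mathscr{E}\to i_{\infty\ast}W\to 0$, uses condition (ii) to deduce $H^0(X,\mathscr{F})\cong\Q_p^h$ and then by rank/degree considerations $\mathscr{F}\cong\OO_X^h$, and invokes Theorem \ref{ImagePeriodMorphism} to conclude that $W$ lies in $U$. Only after this, together with the identification $(\Def_H^\isog)^\ad_\eta\cong U$, does one obtain a deformation $(G',\rho')$ over an open bounded subring — and only up to quasi-isogeny, so a further non-trivial step (the proposition in the paper's proof, using the lemma comparing finite locally free group schemes over $K^+$ and over $\OO_{X,x}^+$) is needed to pin down the actual $p$-divisible group $G$ inside the quasi-isogeny class so that $T(G)$ matches the lattice $\sum_i\Z_p s_i$.

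Two smaller remarks. First, your claim that condition (ii) on top of (i) cuts out a locally closed subspace is asserted but not argued; in the paper this follows only after establishing the factorization through $U$ and the open-embedding statement $\mathcal{F}'\subset\mathcal{F}$ via the proposition above, so it is part of the work you have skipped, not an independent input. Second, the paper in fact establishes representability of $\mathcal{M}_\infty$ \emph{before} any of this, by exhibiting a cartesian square
\[
\xymatrix{\mathcal{M}_\infty \ar[r]\ar[d] & (T(G)^\ad_\eta)^h\ar[d]\\ \mathcal{M}_1\ar[r]& (G[p]^\ad_\eta)^h}
\]
and deduces $\mathcal{M}_\infty\sim\varprojlim\mathcal{M}_n$ directly from Proposition \ref{TateInvLimit}. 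Deriving representability instead from the local closedness inside $(\tilde{H}^\ad_\eta)^h$, as you propose, is a valid alternative once local closedness is genuinely proved, but as it stands you have taken the hardest part on faith. Your forward direction, the preperfectoid reduction via Proposition \ref{LocClosedPreperfectoid}, and the identification of the tilde-limit are all correct and match the paper.
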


The fact that we can describe $\mathcal{M}_\infty$ explicitly independently of $p$-divisible groups relies on Theorem C.\footnote{We note that using Theorem A, one can make explicit what $\tilde{H}^\ad_\eta(R,R^+)$ is in terms of Dieudonn\'e theory, at least when $(R,R^+)$ is a perfectoid affinoid $(K,\OO_K)$-algebra for some perfectoid field $K$. This gives a description of $\mathcal{M}_\infty$ purely in terms of $p$-adic Hodge theory.} The last part of the theorem says more explicitly that for any perfectoid field extension $K$ of $W(k)[p^{-1}]$, the base-change $\mathcal{M}_{\infty,K}$ of $\mathcal{M}_\infty$ to $\Spa(K,\OO_K)$ has a natural completion $\hat{\mathcal{M}}_{\infty,K}$, which is a perfectoid space over $K$, and one has an equivalence of \'etale topoi
\[
\hat{\mathcal{M}}_{\infty,K,\et}\sim \varprojlim_n \mathcal{M}_{n,K,\et}
\]
(at least after base-change to a quasicompact open subset of some $\mathcal{M}_{n,K}$, so as to work with qcqs topoi, where projective limits are well-defined). We remark that taking inverse limits in the category of adic spaces is not canonical, corresponding to the phenomenon that on a direct limit $\varinjlim A_i$ of Banach algebras $A_i$, one can put inequivalent norms (which are equivalent when restricted to an individual $A_i$, but inequivalent in the direct limit). The space $\mathcal{M}_\infty$ has a nice moduli description, but it is not clear to which norm on $\varinjlim A_i$ it corresponds. On the other hand, $\hat{\mathcal{M}}_{\infty,K}$ is characterized as giving $\varinjlim A_i$ the norm making $\varinjlim A_i^\circ$ the norm-$\leq 1$-subalgebra, where $A_i^\circ\subset A_i$ is the set of power-bounded elements. (This corresponds to the weakest possible topology on $\varinjlim A_i$.) In \cite{FaltingsPeriodDomains}, Faltings essentially constructs $\mathcal{M}_\infty$ (in the world of Berkovich spaces), and notes that understanding $\hat{\mathcal{M}}_{\infty,K}$ would be harder. In our context, it is the theory of perfectoid spaces that makes it possible to control this change of topology, and show that one will get a perfectoid (and thus well-behaved) space, without having to construct explicit integral models of all Rapoport-Zink spaces at finite level.

We should also remark that we do not know whether the structure presheaf of $\mathcal{M}_\infty$ is a structure sheaf, and thus this may not be an adic space in Huber's sense. This is one reason that we generalize his theory slightly so as to include `spaces' where the structure presheaf is not a sheaf; this is possible by a Yoneda-type definition.

From here it is not difficult to introduce the notion of an EL Rapoport-Zink space at infinite level.  We define a (rational) EL-datum to be a quadruple $\mathcal{D}=(B,V,H,\mu)$, where $B/\Q_p$ is a semisimple $\Q_p$-algebra, $V$ is a finite $B$-module, $H$ is a $p$-divisible group over $k$ up to isogeny, and $\mu$ is a cocharacter of $\mathbf{G}=\GL_B(V)$, subject to the constraints described in \S\ref{ELstructures}.  We obtain a functor $\mathcal{M}_{\mathcal{D},\infty}$ which satisfies an analogue of Theorem D. In particular $\mathcal{M}_{\mathcal{D},\infty}$ is an adic space. It carries an action of $\check{\mathbf{G}}(\Q_p)$, where $\check{\mathbf{G}}=\Aut_B(H)$ (automorphisms up to isogeny).

Our final main theorem is a duality isomorphism between EL Rapoport-Zink spaces at infinite level which generalizes the isomorphism between the Lubin-Tate and Drinfeld towers described in \cite{FarguesGenestierLafforgue}. There is a simple duality $\mathcal{D}\mapsto \check{\mathcal{D}}$ among basic Rapoport-Zink data, which reverses the roles of $\mathbf{G}$ and $\check{\mathbf{G}}$. Also, we note that in general, $\mathcal{M}_{\mathcal{D},\infty}$ comes with a Hodge-Tate period map
\[
\pi_\HT\from \mathcal{M}_{\mathcal{D},\infty}\to \Flag_\HT
\]
to a certain flag variety $\Flag_\HT$. Analogues of $\pi_\HT$ exist whenever one has a family of $p$-divisible groups with trivialized Tate module, e.g. for infinite-level Shimura varieties.

\begin{thme} There is a natural $\mathbf{G}(\Q_p)\times \check{\mathbf{G}}(\Q_p)$-equivariant isomorphism $\mathcal{M}_{\mathcal{D},\infty}\isom \mathcal{M}_{\check{\mathcal{D}},\infty}$ which exchanges the Grothendieck-Messing and Hodge-Tate period maps.
\end{thme}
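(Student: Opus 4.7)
The strategy is to use Theorem D (extended to the EL setting) to reformulate both $\mathcal{M}_{\mathcal{D},\infty}$ and $\mathcal{M}_{\check{\mathcal{D}},\infty}$ in terms of modifications of vector bundles on the Fargues--Fontaine curve, and then to exhibit the duality as a natural involution of this linear-algebraic data. First, I extend Theorem D to the EL setting by running through the same argument while tracking the $B$-action throughout. On a geometric point $\Spa(C,\OO_C)$, the outcome is that $\mathcal{M}_{\mathcal{D},\infty}(C,\OO_C)$ is in bijection with the set of $B\otimes\OO_X$-linear short exact sequences
\[
0\to \mathscr{F}\to \mathscr{E}\to i_{\infty*}W\to 0
\]
on $X$, where $\mathscr{F}=V\otimes_{\Q_p}\OO_X$ is the trivial bundle of type $V$, $\mathscr{E}=\mathscr{E}(H)$, and $W$ is a finite $B\otimes C$-module of type determined by $\mu$. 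This combines the moduli description of Theorem D with Proposition \ref{PDivGroupGivesModVectBund} and the identification $\tilde{H}^{\ad}_{\eta}(C,\OO_C)=H^0(X,\mathscr{E}(H))$.

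The crucial observation is that the right-hand side of this bijection is essentially symmetric in $\mathscr{F}$ and $\mathscr{E}$, and the basic duality $\mathcal{D}\mapsto \check{\mathcal{D}}$ is set up to implement exactly this swap. Concretely, up to a twist keeping slopes in $[0,1]$ together with Serre duality of $p$-divisible groups, the trivial bundle $V\otimes\OO_X$ on the $\mathcal{D}$-side is reinterpreted as the bundle $\mathscr{E}(\check{H})$ on the $\check{\mathcal{D}}$-side, while $\mathscr{E}(H)$ is reinterpreted as the trivial bundle $\check{V}\otimes\OO_X$; the cokernel $W$ is replaced by a dual datum $\check{W}$ of type determined by $\check{\mu}$. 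Equivariance for $\mathbf{G}(\Q_p)\times\check{\mathbf{G}}(\Q_p)$ is then automatic: $\mathbf{G}(\Q_p)=\GL_B(V)$ acts via the trivialization of $\mathscr{F}$, $\check{\mathbf{G}}(\Q_p)=\Aut_B(H)$ acts on the quasi-isogeny class of $\mathscr{E}$, and the involution interchanges these two factors.

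Compatibility with the period morphisms is built into the description. The Grothendieck--Messing period records the Hodge filtration $\mathscr{E}|_{\infty}\twoheadrightarrow W$, while the Hodge--Tate period records the filtration on $\mathscr{F}|_{\infty}$ coming from the relative position of the two $B_{\dR}^+$-lattices at $\infty$ (equivalently, the Hodge--Tate decomposition of $T(G)\otimes C$). The swap $\mathscr{F}\leftrightarrow \mathscr{E}$ therefore exchanges these two filtrations, and one has a canonical identification $\Flag_{\GM}(\mathcal{D})\cong \Flag_{\HT}(\check{\mathcal{D}})$ given by the duality of the corresponding Grassmannians of $B$-stable $d$-dimensional subspaces.

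The main obstacle lies not in the construction on geometric points---which is essentially formal given Theorems C and D---but in upgrading the bijection to an isomorphism of adic spaces. Since $\mathcal{M}_{\mathcal{D},\infty}$ is only preperfectoid and its structure presheaf is not known to be a sheaf, one cannot directly invoke a ``check on geometric points'' criterion. I would therefore first establish the isomorphism between the perfectoid completions $\widehat{\mathcal{M}}_{\mathcal{D},\infty,K}$ and $\widehat{\mathcal{M}}_{\check{\mathcal{D}},\infty,K}$ over a fixed perfectoid base $K$, where Theorem D provides honest perfectoid spaces and one may argue via tilting and the Fargues--Fontaine classification. The isomorphism then descends to $\mathcal{M}_{\mathcal{D},\infty}$ via the universal property of the perfectoid completion discussed after Theorem D, which pins down the norm on the relevant direct limit of Banach algebras.
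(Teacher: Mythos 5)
Your proposal misses the point that Theorem D (and its EL analogue, Theorem \ref{MInftyExistsEL} in the body of the paper) already describes $\mathcal{M}_{\mathcal{D},\infty}$ as a moduli functor on \emph{arbitrary} complete affinoid $(\breve{E},\OO_{\breve{E}})$-algebras $(R,R^+)$, not merely on geometric points: a section is a $B$-linear map $s\from V\to \tilde{H}^\ad_\eta(R,R^+)$ satisfying conditions (i) and (ii). The paper's proof exploits this directly. The isomorphism $\mathcal{M}_{\mathcal{D},\infty}\cong\mathcal{M}_{\check{\mathcal{D}},\infty}$ is given functorially in $(R,R^+)$ by $s\mapsto\check{s}$, where $(\check{s}(f))(v)=f(s(v))$ for $f\in\End^\circ_B(H)$ and $v\in V$, using $\tilde{\check{H}}^\ad_\eta = V^\ast\otimes_B\tilde{H}^\ad_\eta$; one then checks by hand that conditions (i)--(ii) and the period maps correspond under this bijection and under the canonical identifications $\Flag_\HT\cong\check{\Flag}_\GM$ and $\Flag_\GM\cong\check{\Flag}_\HT$. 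No passage through geometric points, the Fargues--Fontaine curve, or perfectoid completions is needed at any stage.

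Consequently, the "main obstacle" you identify is illusory, and the route you propose to get around it is both unnecessary and, as written, not closed. The universal property of the strong completion $\hat{\mathcal{M}}_{\infty,K}$ controls morphisms \emph{from} perfectoid spaces \emph{into} $\mathcal{M}_{\mathcal{D},\infty}$; it does not, by itself, allow you to descend an isomorphism $\hat{\mathcal{M}}_{\mathcal{D},\infty,K}\cong\hat{\mathcal{M}}_{\check{\mathcal{D}},\infty,K}$ over a fixed perfectoid $K$ to an isomorphism of the original (non-completed, non-perfectoid-base) adic spaces over $\breve{E}$. Filling that in would require extra arguments that you do not supply. Separately, your description of the duality as ``swapping $\mathscr{F}$ and $\mathscr{E}$ up to a twist and Serre duality'' is imprecise: the correct operation is $\mathscr{H}om(-,\mathscr{E})$ applied to the modification $\mathscr{F}\to\mathscr{E}$, producing $\mathscr{H}om(\mathscr{E},\mathscr{E})\to\mathscr{H}om(\mathscr{F},\mathscr{E})$; in the basic case $\mathscr{H}om(\mathscr{E},\mathscr{E})\cong\check{B}\otimes_{\Q_p}\OO_X$ is the new trivial bundle and $\mathscr{H}om(\mathscr{F},\mathscr{E})\cong V^\ast\otimes_B\mathscr{E}\cong\mathscr{E}(\check{H})$, which is exactly the content of the formula $(\check{s}(f))(v)=f(s(v))$.
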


In light of the linear algebra descriptions of $\mathcal{M}_{\mathcal{D},\infty}$ and $\mathcal{M}_{\check{\mathcal{D}},\infty}$ of Theorem D, Theorem E is reduced to a direct linear algebra verification. Roughly, in the picture of modifications of vector bundles
\[
0\to \mathscr{F}\to \mathscr{E}\to i_{\infty\ast} W\to 0\ ,
\]
one applies $\mathscr{H}om(-,\mathscr{E})$ to get an $\End(\mathscr{E})$-equivariant modification
\[
0\to \mathscr{H}om(\mathscr{E},\mathscr{E})\to \mathscr{H}om(\mathscr{F},\mathscr{E})\to i_{\infty\ast} \check{W}\to 0\ .
\]
In particular, the duality makes \'etale homology (i.e., $\mathscr{F}$) into crystalline homology $\check{\mathscr{E}} = \mathscr{H}om(\mathscr{F},\mathscr{E})$, and conversely. As an application, we show (Theorem \ref{splittingtheorem}) that any $\GL_n(\Q_p)$-equivariant \'etale cover of Drinfeld's upper half-space $\Omega\subset\mathbf{P}^{n-1}$ must factor through the Drinfeld space $\mathcal{M}_m^{\text{Dr}}$ for some $m\geq 0$.

Again, let us make some historical remarks on Theorem E. A conjecture in this direction appeared first in the book \cite{RZ} of Rapoport and Zink. However, they are being somewhat vague about the nature of this isomorphism, as they were using the language of rigid-analytic geometry, whereas the infinite-level spaces $\mathcal{M}_{\mathcal{D},\infty}$ are not of finite type any more, and thus not rigid-analytic varieties. Until now, a good framework to work with those infinite-level spaces was missing, and is now provided by the theory of perfectoid spaces. The case of Lubin-Tate and Drinfeld tower was first established by Faltings, \cite{FaltingsLubinTateDrinfeld}, and then worked out in detail by Fargues (in mixed characteristic) and Genestier and Lafforgue (in equal characteristic), \cite{FarguesGenestierLafforgue}. The case of Lubin-Tate and Drinfeld tower is easier, as in this case Theorem C (i.e., the image of the period morphism) was known for a long time, by work of Drinfeld, \cite{DrinfeldUpperHalfSpace}, and Gross-Hopkins, \cite{GrossHopkins}. In fact, in those cases, one has $\Flag^\wa = \Flag^\a$, which is not true in general.

We note that Fargues, Genestier and Lafforgue worked with explicit integral models (as formal schemes of infinite type make good sense) to formalize this isomorphism. This necessitates a detailed understanding of the integral structure of all objects involved, and in particular of integral $p$-adic Hodge theory (in fact, often $p\neq 2$ is assumed for this reason). By contrast, our arguments are entirely on the generic fibre, and use only rational $p$-adic Hodge theory.

Theorem E was proved earlier by Faltings in \cite{FaltingsPeriodDomains}, after proving Theorem C.

We note that the description of $\mathcal{M}_\infty$ in terms of $p$-adic Hodge theory (in particular, independently of $p$-divisible groups) makes it possible to generalize our methods to give a definition of an analogue of Rapoport-Zink spaces for non-PEL cases; this will be the subject of future work of the first author.

Finally, let us summarize the different sections. In Section 2, we review the theory of adic spaces in the form that we will need it. In particular, we give a Yoneda-style definition of adic spaces which is more general than Huber's, and contains Huber's adic spaces as a full subcategory (that we will refer to as honest adic spaces). Moreover, we recall the notion of perfectoid spaces, and review some facts about inverse limit constructions in the category of adic spaces.

In Section 3, we recall some facts about $p$-divisible groups. In general, we like to view them as formal schemes (instead of inductive limits), thus making their generic fibres (disjoint unions of) open unit balls (instead of a set of points). Most importantly, we define the universal cover $\tilde{G}$ of a $p$-divisible group $G$, and show that it is a crystal on the infinitesimal site. Moreover, we show that there is a natural morphism $\tilde{G}\to EG$ of crystals on the nilpotent PD site, where $EG$ is the universal vector extension. This makes it possible to make explicit some maps between Dieudonn\'e modules.

In Section 4, we prove Theorem A, in Section 5, we prove Theorem B, and in Section 6, we prove Theorem C and D, using the respective ideas explained above. Finally, in Section 7, we prove Theorem E and the result on equivariant covers of Drinfeld's upper half-space.

{\bf Acknowledgments.} Our work in this direction started after we gave (incidentally!) successive talks on perfectoid spaces, and the Lubin-Tate tower at infinite level, respectively, at a conference in Princeton in March 2011, where some explicit formulas made it strikingly clear that this is a perfectoid space -- we are deeply thankful to the organizers of that conference for making this happen. The authors thank Laurent Fargues for many discussions, and in particular for sending us his preprints \cite{FarguesPDivGroups} and \cite{FarguesFontaineDurham}, the first of which led to a considerable simplification in our proof of Theorem B. Moreover, they thank Bhargav Bhatt, Przemyslaw Chojecki, Pierre Colmez, Brian Conrad, Ofer Gabber, Urs Hartl, Hadi Hedayatzadeh, Eugen Hellmann, Johan de Jong, Kiran Kedlaya, Eike Lau, William Messing, Timo Richarz, Michael Rapoport and Richard Taylor for helpful discussions and feedback. The work on this project was carried out during various stays of the authors at the Fields Institute, Bonn, Oxford, Harvard, Boston University, and Stanford, and the authors want to thank these institutions for their hospitality. In particular, they thank the Clay Institute for its support for travels to Bonn, Oxford, and Harvard, where crucial parts of this work were done. This work was done while Peter Scholze was a Clay Research Fellow.

\section{Adic spaces}

\subsection{Adic spaces}  We will be relying heavily on Huber's theory of adic spaces~\cite{Hub94}, so we include a summary of that theory here. Also, we will slightly generalize his notion of adic spaces.

\begin{defn}  An {\em adic ring} is a topological ring $A$ for which there exists an ideal $I$ such that $I^n$ gives a basis of neighborhoods of $0\in A$.  Such an ideal $I$ is an {\em ideal of definition} for $A$.

An {\em f-adic ring} is a topological ring $A$ admitting an open adic subring $A_0\subset A$ with finitely generated ideal of definition. Then any open adic subring of $A$ is called a ring of definition, and all rings of definition of $A$ admit a finitely generated ideal of definition.

A subset $S\subset A$ of a topological ring is {\em bounded} if for every neighborhood $U$ of $0$ in $A$, there exists another neighborhood $V$ of $0$ with $VS\subset U$.  An element $f\in A$ is {\em power bounded} if $\set{f^n}_{n\geq 1}$ is bounded. Write $A^\circ$ for the subring of power-bounded elements in $A$. Also, we denote by $A^{\circ\circ}\subset A^\circ$ the ideal of topologically nilpotent elements.

Finally, an {\em affinoid ring} is a pair $(A,A^+)$, with $A$ an $f$-adic ring and $A^+\subset A$ an open subring which is integrally closed and contained in $A^\circ$. Morphisms between affinoid rings $(A,A^+)$ and $(B,B^+)$ are continuous ring homomorphisms $A\to B$ sending $A^+$ into $B^+$.
\end{defn}

For instance, if a ring $A$ is given the discrete topology then $A$ is both adic ($0$ serves as an ideal of definition) and f-adic, and $(A,A)$ is an affinoid ring.

For a less trivial example, let $K$ be a complete nonarchimedean field with ring of integers $\OO_K$ and maximal ideal $\mathfrak{m}_K$. Then $\OO_K$ is an adic ring and $(K,\OO_K)$ is an affinoid ring.  The integral Tate algebra $A^+=\OO_K\tatealgebra{X_i}_{i\in I}$ in any number of variables is an adic ring. If $\varpi\in \gothm_K$, then $\varpi A^+$ serves as an ideal of definition for $A^+$, so that $A^+$ is $f$-adic. The Tate algebra $A=K\tatealgebra{X_i}_{i\in I}=A^+[\varpi^{-1}]$ is an $f$-adic ring because it contains $A^+$ as an open subring.  In this case we have $A^\circ=A^+$, so that $(A,A^+)$ is an affinoid ring. It should be noted that $(A^+,A^+)$ is an affinoid ring as well.

We note that it is not always the case that $A^+\subset A$ is bounded. Examples arise by taking nonreduced rings appearing in rigid-analytic geometry, such as $A=K[X]/X^2$, in which case one can choose $A^+=A^\circ = \OO_K + KX$.

\begin{defn} Let $A$ be a topological ring.  A {\em continuous valuation} of $A$ is a multiplicative map $\abs{\cdot}$ from $A$ to $\Gamma\cup \set{0}$, where $\Gamma$ is a linearly ordered abelian group (written multiplicatively) such that $\abs{0}=0$, $\abs{1}=1$, $\abs{x+y}\leq \max(\abs{x},\abs{y})$, and for all $\gamma\in \Gamma$, the set $\set{x\in A:\;\abs{x}<\gamma}$ is open.
\end{defn}

Two valuations $\abs{\cdot}_i\from A\to\Gamma_i\cup\set{0}$ ($i=1,2$) are {\em equivalent} if there are subgroups $\Gamma_i^\prime\subset \Gamma_i$ containing the image of $\abs{\cdot}_i$ for $i=1,2$ and an isomorphism of ordered groups $\iota\from\Gamma_1^\prime\to\Gamma_2^\prime$ for which $\iota\left(\abs{f}_1\right)=\abs{f}_2$, all $f\in A$.

\begin{defn}
Let $(A,A^+)$ be an affinoid ring.  The {\em adic spectrum} $X=\Spa_\top(A,A^+)$ is the set of equivalence classes of continuous valuations on $A$ which satisfy $\abs{A^+}\leq 1$. For $x\in X$ and $f\in A$, we write $\abs{f(x)}$ for the image of $f$ under the continuous valuation $\abs{\;}$ corresponding to $x$.

Let $f_1,\dots,f_n\in A$ be elements such that $(f_1,\dots,f_n)A$ is open in $A$, and let $g\in A$.  Define the subset $X_{f_1,\dots,f_n;g}$ by
\[ X_{f_1,\dots,f_n;g}=\set{x\in X\biggm\vert \abs{f_i(x)}\leq \abs{g(x)}\neq 0,\;i=1,\dots,n} \]
Finite intersections of such subsets are called {\em rational}. We give $X$ the topology generated by its rational subsets.
\end{defn}

As an example, if $K$ is a nonarchimedean field, then $\Spa_\top(K,\OO_K)$ consists of a single equivalence class of continuous valuations, namely the valuation $\eta$ which defines the topology on $K$. The space $\Spa_\top(\OO_K,\OO_K)$ is a trait containing $\eta$ and a special point $s$ defined by
\[ \abs{f(s)}=\begin{cases}
1, & f\not\in\mathfrak{m}_K \\
0, & f\in\mathfrak{m}_K.
\end{cases}
\]

For any affinoid ring $(A,A^+)$, Huber defines a presheaf of complete topological rings $\OO_X$ on $X=\Spa_\top(A,A^+)$, together with a subpresheaf $\OO_X^+$. They have the following universal property.

\begin{defn} Let $(A,A^+)$ be an affinoid ring, $X=\Spa_\top(A,A^+)$, and $U\subset X$ be a rational subset. Then there is a complete affinoid ring $(\OO_X(U),\OO_X^+(U))$ with a map $(A,A^+)\to (\OO_X(U),\OO_X^+(U))$ such that
\[
\Spa_\top(\OO_X(U),\OO_X^+(U))\to \Spa_\top(A,A^+)
\]
factors through $U$, and such that for any complete affinoid ring $(B,B^+)$ with a map $(A,A^+)\to (B,B^+)$ for which $\Spa_\top(B,B^+)\to \Spa_\top(A,A^+)$ factors through $U$, the map $(A,A^+)\to (B,B^+)$ extends to a map
\[
(\OO_X(U),\OO_X^+(U))\to (B,B^+)
\]
in a unique way.
\end{defn}

One checks that $\Spa_\top(\OO_X(U),\OO_X^+(U))\cong U$, and that the valuations on $(A,A^+)$ associated to $x\in U$ extend to $(\OO_X(U),\OO_X^+(U))$. In particular, they extend further to $\OO_{X,x}$. Also,
\[
\OO_X^+(U) = \set{f\in \OO_X(U)\mid \forall x\in U : \abs{f(x)}\leq 1 }\ .
\]

The presheaf $\OO_X$ is in general not a sheaf, but it is a sheaf in the important cases where $A$ has a noetherian ring of definition, is strongly noetherian, or is perfectoid. Let us say that $(A,A^+)$ is sheafy if $\OO_X$ is a sheaf. Recall the category $(V)$ whose objects are triples $(X,\OO_X,\set{v_x:\;x\in X})$ consisting of a topological space $X$, a sheaf of topological rings $\OO_X$, and a choice of continuous valuations $v_x$ on $\OO_{X,x}$ for any $x\in X$. In general, we can associate to $(A,A^+)$ the triple $\Spa_\naive(A,A^+)=(X,\OO_X^\sharp,\set{v_x:\;x\in X})$ consisting of the topological space $X=\Spa_\top(A,A^+)$, the sheafification $\OO_X^\sharp$ of $\OO_X$, and a continuous valuation $v_x$ on $\OO_{X,x}^\sharp = \OO_{X,x}$; this induces a functor $\Aff^\op\rightarrow (V)$, where $\Aff$ is the category of affinoid rings. It factors over the category $\CAff^\op$, opposite of the category of complete affinoid rings. Recall that an {\em adic space (in the sense of Huber)} is an object of $(V)$ that is locally isomorphic to $\Spa_\naive(A,A^+)$ for some sheafy affinoid ring $(A,A^+)$. We will use the term {\em honest adic space} to refer to such spaces, and denote their category as $\Adic^h$.

We will now define a more general notion of adic spaces. The definition is entirely analogous to the definition of a scheme as a sheaf for the Zariski topology on the category of rings which admits an open cover by representable sheaves. The only difference is that in our situation, the site is not subcanonical, i.e. representable functors are not in general sheaves.

\begin{defn} Consider the category $\CAff^\op$, opposite of the category of complete affinoid rings $(A,A^+)$. We give it the structure of a site by declaring a cover of $(A,A^+)$ to be a family of morphisms $(A,A^+)\rightarrow (A_i,A_i^+)$, such that $(A_i,A_i^+) = (\OO_X(U_i),\OO_X^+(U_i))$ for a covering of $X=\Spa_\top(A,A^+)$ by rational subsets $U_i\subset X$.

Let $(\CAff^\op)^\sim$ be the associated topos, consisting of covariant functors $\CAff\to \Sets$ satisfying the sheaf property for the given coverings. Any affinoid ring $(A,A^+)$ (not necessarily complete) gives rise to the presheaf
\[
(B,B^+)\mapsto \Hom((A,A^+),(B,B^+))\ ,
\]
and we denote by $\Spa(A,A^+)$ the associated sheaf. Note that passing to the completion gives $\Spa(A,A^+) = \Spa(\hat{A},\hat{A}^+)$. We call $\Spa(A,A^+)$ an affinoid adic space. We say that $\mathcal{F}\to \Spa(A,A^+)$ is a rational subset if there is some rational subset $U\subset X=\Spa_\top(A,A^+)$ such that $\mathcal{F} = \Spa(\OO_X(U),\OO_X^+(U))$. We say that $\mathcal{F}\to \Spa(A,A^+)$ is an open immersion if there is an open subset $U\subset X$ such that
\[
\mathcal{F} = \varinjlim_{V\subset U, V\ \mathrm{rational}} \Spa(\OO_X(V),\OO_X^+(V))\ .
\]
If $f: \mathcal{F}\to \mathcal{G}$ is any morphism in $(\CAff^\op)^\sim$, then we say that $f$ is an open immersion if for all $(A,A^+)$ and all morphisms $\Spa(A,A^+)\to \mathcal{G}$, the fibre product
\[
\mathcal{F}\times_{\mathcal{G}} \Spa(A,A^+)\to \Spa(A,A^+)
\]
is an open immersion. Note that open immersions are injective, and we will often simply say that $\mathcal{F}\subset \mathcal{G}$ is open. Finally, an adic space is by definition a functor $\mathcal{F}\in (\CAff^\op)^\sim$ such that
\[
\mathcal{F} = \varinjlim_{\Spa(A,A^+)\subset \mathcal{F}\ \mathrm{open}} \Spa(A,A^+)\ .
\]
The category of adic spaces is denoted $\Adic$.
\end{defn}

We list some first properties.

\begin{prop}\begin{altenumerate}
\item[{\rm (i)}] Let $(A,A^+)$, $(B,B^+)$ be affinoid rings, and assume that $(A,A^+)$ is sheafy and complete. Then
\[
\Hom(\Spa(A,A^+),\Spa(B,B^+)) = \Hom((B,B^+),(A,A^+))\ .
\]
In particular, the functor $(A,A^+)\mapsto \Spa(A,A^+)$ is fully faithful on the full subcategory of sheafy complete affinoid rings.
\item[{\rm (ii)}] The functor $(A,A^+)\mapsto \Spa_\naive(A,A^+)$ factors over $(A,A^+)\mapsto \Spa(A,A^+)$.
\item[{\rm (iii)}] The functor $\Spa(A,A^+)\mapsto \Spa_\naive(A,A^+)$ coming from part (ii) extends to a functor $X\mapsto X_\naive\from \Adic\to (V)$. In particular, any adic space $X$ has an associated topological space $|X|$, given by the first component of $X_\naive$.
\item[{\rm (iv)}] The full subcategory of adic spaces that are covered by $\Spa(A,A^+)$ with $(A,A^+)$ sheafy is equivalent to $\Adic^h$ under $X\mapsto X_\naive$. Under this identification, we consider $\Adic^h$ as a full subcategory of $\Adic$.
\item[{\rm (v)}] Let $X$ be an honest adic space and let $(B,B^+)$ be an affinoid ring. Then
\[
\Hom(X,\Spa(B,B^+)) = \Hom((B,B^+),(\OO_X(X),\OO_X^+(X)))\ .
\]
Here, the latter is the set of continuous ring homomorphisms $B\to \OO_X(X)$ which map $B^+$ into $\OO_X^+(X)$.
\item[{\rm (vi)}] Fibre products exist in $\Adic$. Locally, they are given by
\[
\Spa(A,A^+)\times_{\Spa(B,B^+)} \Spa(C,C^+) = \Spa(D,D^+)\ ,
\]
where $D=A\otimes_B C$ with the topology making the image of $A_0\otimes_{B_0} C_0$ a ring of definition with ideal of definition the image of $I\otimes J$, where $A_0\subset A$, $B_0\subset B$ and $C_0\subset C$ are rings of definition and $I\subset A_0$ and $J\subset C_0$ are ideals of definition. Moreover, $D^+\subset D$ is the integral closure of the image of $A^+\otimes_{B^+} C^+$ in $D$.
\end{altenumerate}
\end{prop}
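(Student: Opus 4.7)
My plan proceeds roughly in the order (i), (ii)--(iii), (iv), (v)--(vi), using Yoneda in the topos $(\CAff^\op)^\sim$ as the main organizing principle. For (i), I would compute the left hand side via the Yoneda lemma in this topos: $\Hom_{\Adic}(\Spa(A,A^+),\Spa(B,B^+)) = \Spa(B,B^+)(A,A^+)$, the value of the sheaf $\Spa(B,B^+)$ at the object $(A,A^+)$ of the site. By construction this is the sheafification of the presheaf $(C,C^+) \mapsto \Hom((B,B^+),(C,C^+))$, so a section over $(A,A^+)$ is represented by a rational cover $\{U_i\}$ of $X = \Spa_\top(A,A^+)$ together with compatible ring maps $(B,B^+) \to (\OO_X(U_i),\OO_X^+(U_i))$. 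When $(A,A^+)$ is sheafy and complete, the sheaf condition on $\OO_X$ and $\OO_X^+$ glues this data uniquely to a map $(B,B^+) \to (A,A^+)$, yielding the desired bijection.

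For (ii), a morphism of sheaves $\Spa(A,A^+) \to \Spa(B,B^+)$ is given by the same type of local data even when $(A,A^+)$ is not sheafy. Each local ring map $(B,B^+) \to (\OO_X(U_i),\OO_X^+(U_i))$ induces a morphism $\Spa_\naive(\OO_X(U_i),\OO_X^+(U_i)) \to \Spa_\naive(B,B^+)$ in $(V)$, and these glue on the topological space of $\Spa_\naive(A,A^+)$ because the sheafified structure sheaf $\OO_X^\sharp$ is a sheaf by construction; two representatives in the same equivalence class under the sheafification yield the same morphism in $(V)$ by passing to a common refinement. Part (iii) then follows by a straightforward gluing argument: any adic space is by definition a colimit of affinoid opens in $\Adic$, so the functor $X \mapsto X_\naive$ extends uniquely using (ii).

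For (iv), I would check essential surjectivity and full faithfulness separately. Every honest adic space $Y \in \Adic^h$ is covered by $\Spa_\naive(A_i,A_i^+)$ with $(A_i,A_i^+)$ sheafy, each piece coming from $\Spa(A_i,A_i^+) \in \Adic$, and gluing in $\Adic$ produces an $X$ with $X_\naive \simeq Y$. Full faithfulness is a local question: on affinoid pieces, (i) identifies morphisms in $\Adic$ with ring maps, while Huber's theory identifies morphisms in $\Adic^h$ between sheafy affinoids with the same ring maps. For (v), I would cover $X$ by sheafy complete affinoid opens $\Spa(A_i,A_i^+)$, apply (i) to obtain compatible ring maps $(B,B^+) \to (A_i,A_i^+)$, and use the sheaf property of $\OO_X$ on $X$ to glue these into a single ring map $(B,B^+) \to (\OO_X(X),\OO_X^+(X))$. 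For (vi), the affinoid case follows by directly verifying the universal property: for any affinoid $\Spa(E,E^+)$, morphisms into either side of the proposed fibre product correspond by Yoneda to compatible continuous ring maps $A,C \to E$ over $B$ sending $A^+,C^+$ into $E^+$, which is exactly the universal property of $(D,D^+)$ with its prescribed tensor-product topology and integrally closed $D^+$. The general case of (vi) is obtained by gluing such local fibre products.

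The main obstacle is the sheafification step underlying all of (i)--(iii). The presheaf $(C,C^+)\mapsto \Hom((B,B^+),(C,C^+))$ fails to be a sheaf in general, so one cannot directly invoke Yoneda to identify morphisms of affinoid adic spaces with ring maps. The delicate verification is that sections of the sheafification are described precisely by compatible families of local ring maps on rational covers, and that these families produce well-defined morphisms in $(V)$ through the sheafified structure sheaf $\OO_X^\sharp$, independently of the chosen representative. Once this dictionary is in place, sheafiness of $(A,A^+)$ reenters only in (i), (iv), and (v), where it allows one to glue the local ring maps back into a single global ring map.
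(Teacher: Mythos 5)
The paper itself only says ``Easy and left to the reader,'' so there is no official proof to compare against; I am assessing your argument on its own terms. Most of it is sound, and you correctly identify that the real work is in understanding sections of the sheafification over a sheafy complete affinoid ring: for such $(A,A^+)$ every rational localization is again sheafy, so the presheaf $(C,C^+)\mapsto\Hom((B,B^+),(C,C^+))$ restricted to the rational localizations of $(A,A^+)$ is already a sheaf (it is cut out of the sheaf $\OO_X$), and sheafification therefore does not change its value at $(A,A^+)$. That handles (i), (ii), (iii), (v) and the full-faithfulness half of (iv) exactly as you describe.

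The one step that does not work as written is (vi). You assert that ``for any affinoid $\Spa(E,E^+)$, morphisms into either side of the proposed fibre product correspond by Yoneda to compatible continuous ring maps $A,C\to E$ over $B$.'' For a \emph{general} $(E,E^+)$ this is false: $\Spa(E,E^+)$ is the sheafification of the representable presheaf, not the representable presheaf itself, so $\Hom(\Spa(E,E^+),\Spa(D,D^+)) = \Spa(D,D^+)(E,E^+)$ is a set of compatible local ring maps modulo refinement, not $\Hom((D,D^+),(E,E^+))$ --- indeed part (i) shows the naive identification requires $(E,E^+)$ to be sheafy, and avoiding exactly this conflation is the whole reason the paper sets up the Yoneda-style definition in the first place. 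The correct route is one step removed: verify at the presheaf level that $(D,D^+)$ (after completion) is the pushout of $(A,A^+)\leftarrow(B,B^+)\to(C,C^+)$ in $\CAff$, so that $h_{(D,D^+)} = h_{(A,A^+)}\times_{h_{(B,B^+)}} h_{(C,C^+)}$ as presheaves, and then invoke left-exactness of sheafification to conclude $\Spa(D,D^+) = \Spa(A,A^+)\times_{\Spa(B,B^+)}\Spa(C,C^+)$ in $(\CAff^\op)^\sim$. The gluing to obtain the non-affinoid case is then as you say. With that repair the proposal is a correct reconstruction of the omitted proof.
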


\begin{proof} Easy and left to the reader.
\end{proof}

A basic example of an affinoid adic space is the closed adic disc $\Spa(A,A^+)$, where $A=K\tatealgebra{X}$ and $A^+=\OO_K\tatealgebra{X}$.  See~\cite{Sch}, Example 2.20 for a discussion of the five species of points of $\Spa_\top(A,A^+)$. An example of a nonaffinoid adic space is the open adic disc, equal to the open subset of $\Spa(\OO_K\powerseries{X},\OO_K\powerseries{X})$ defined by the condition $\abs{\varpi}\neq 0$, where $\varpi\in \gothm_K$.  A covering of the open disc by affinoid adic spaces over $K$ is given by the collection $\Spa(K\tatealgebra{X,X^n/\varpi},\OO_K\tatealgebra{X,X^n/\varpi})$ for $n\geq 1$.

In the following, we will often write $\Spa(A,A^+)$ for either of $\Spa(A,A^+)$ or $\Spa_\top(A,A^+)$. As by definition, their open subsets agree, we hope that this will not result in any confusion. However, we will never talk about $\Spa_\naive(A,A^+)$ in the following.

\subsection{Formal schemes and their generic fibres}

In this subsection, we recall the definition of formal schemes that we will use, and relate them to the category of adic spaces. Fix a complete nonarchimedean field $K$ with ring of integers $\OO=\OO_K$ and fix some $\varpi\in \OO$, $|\varpi|<1$. Consider the category $\Nilp_{\OO}$ of $\OO$-algebras $R$ on which $\varpi$ is nilpotent. Its opposite $\Nilp_{\OO}^\op$ has the structure of a site, using Zariski covers. We get the associated topos $(\Nilp_{\OO}^\op)^\sim$. Any adic $\OO$-algebra $A$ with ideal of definition $I$ containing $\varpi$ gives rise to the sheaf
\[
(\Spf A)(R) = \varinjlim_n \Hom(A/I^n,R)
\]
on $\Nilp_{\OO}^\op$. As before, one defines open embeddings in $(\Nilp_{\OO}^\op)^\sim$, and hence formal schemes over $\OO$ as those sheaves on $\Nilp_{\OO}^\op$ which admit an open cover by $\Spf A$ for $A$ as above, cf. \cite{RZ}, first page of Chapter 2.

\begin{prop} The functor $\Spf A\mapsto \Spa(A,A)$ extends to a fully faithful functor $\mathfrak{M}\mapsto \mathfrak{M}^\ad$ from formal schemes over $\OO$ which locally admit a finitely generated ideal of definition to adic spaces over $\Spa(\OO,\OO)$.
\end{prop}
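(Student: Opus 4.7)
The plan is to prove this in two stages: first construct the functor on affine formal schemes, then glue along open covers, with full faithfulness following formally from the affine case.

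For $A$ an adic $\OO$-algebra with finitely generated ideal of definition $I \ni \varpi$, set $(\Spf A)^\ad = \Spa(A,A)$. One first observes that $(A,A)$ is a genuine affinoid ring: $A$ is f-adic with itself as a ring of definition, and every $a \in A$ is power-bounded (for any neighborhood $U$ of $0$, choose $k$ with $I^k \subset U$; then $V = I^k$ satisfies $V \cdot \{a^n\} \subset I^k \subset U$), so $A = A^\circ$ and $A^+ = A$ is integrally closed in $A$ and contained in $A^\circ$. A morphism $\Spf B \to \Spf A$ is precisely a continuous ring homomorphism $A \to B$, which gives a morphism of affinoid rings $(A,A) \to (B,B)$ and hence $\Spa(B,B) \to \Spa(A,A)$ in $\Adic$. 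Full faithfulness on the affine subcategory follows from the chain
\[
\Hom(\Spf B, \Spf A) = \Hom_{\mathrm{cont}}(A, B) = \Hom((A,A), (B,B)) = \Hom_{\Adic}(\Spa(B,B), \Spa(A,A)),
\]
where the last equality uses the Yoneda-style definition of $\Adic$ and the universal property of $\Spa$.

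To extend the functor, I verify that principal open immersions $\Spf A\langle f^{-1}\rangle \hookrightarrow \Spf A$, where $A\langle f^{-1}\rangle$ denotes the $I$-adic completion of $A_f$, correspond to open immersions of adic spaces. The locus $U_f = \{x : |f(x)| \neq 0\} \subset \Spa(A,A)$ is the increasing union over $n$ of the rational subsets $X_{a_1,\dots,a_k;f^n} = \{x : |a_i(x)| \leq |f(x)|^n,\ 1 \leq i \leq k\}$ for $a_1, \dots, a_k$ generating $I$. The complete affinoid ring attached to each such rational subset is an $I$-adic completion of a localization of $A$, and the colimit over $n$ of these rings recovers $(A\langle f^{-1}\rangle, A\langle f^{-1}\rangle)$; thus $U_f \subset \Spa(A,A)$, viewed as an object of $\Adic$, is $\Spa(A\langle f^{-1}\rangle, A\langle f^{-1}\rangle)$. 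Any open immersion of formal schemes then admits an affine cover by such principal opens, which glues to an open immersion of adic spaces.

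With these two inputs, an arbitrary formal scheme $\mathfrak{M}$ with affine open cover $\Spf A_i$ and principal transition opens assembles into an adic space $\mathfrak{M}^\ad$ in the Yoneda-style sense, morphisms glue by descent on both sides, and full faithfulness of $\mathfrak{M} \mapsto \mathfrak{M}^\ad$ reduces via compatible open covers to the affine case. The main obstacle is the identification of the principal opens in the previous paragraph: the subtle point is that $\{|f(x)| \neq 0\}$ is not itself a rational subset of $\Spa(A,A)$ (since $(f)$ need not be an open ideal of $A$), so one must write it as a filtered union of rational subsets and verify that the colimit of their section rings is the expected completed localization $A\langle f^{-1}\rangle$ with the right topology.
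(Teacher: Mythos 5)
Your chain of Hom-set identifications has a gap at the last step: the equality $\Hom((A,A),(B,B)) = \Hom_{\Adic}(\Spa(B,B),\Spa(A,A))$ does not follow formally from the ``Yoneda-style definition and the universal property of $\Spa$.'' Since $\Spa(B,B)$ is by definition the \emph{sheafification} of the representable presheaf, a morphism of sheaves $\Spa(B,B)\to\Spa(A,A)$ is a priori more general than a morphism of affinoid rings $(A,A)\to(B,B)$. The paper's proposition on the functor $(A,A^+)\mapsto\Spa(A,A^+)$ (part (i)) gives precisely this comparison, but only under the hypothesis that the source affinoid ring is \emph{sheafy} (and complete). Your computation $A=A^\circ$ shows $(A,A)$ is a legitimate affinoid ring, but it says nothing about whether $\OO_{\Spa(A,A)}$ is a sheaf for an adic ring $A$ with finitely generated ideal of definition; this sheafiness is the substantive content behind the proposition and is missing from your argument.

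Supplying this missing verification is what the paper's proof actually does. It introduces the specialization map $\Spa(A,A)\to\Spf A$, $x\mapsto\bar{x}$, where $\bar{x}$ is the $\set{0,1}$-valued valuation recording whether $|f(x)|=1$ or $|f(x)|<1$. Since every point of $\Spa(A,A)$ specializes into $\Spf A\subset\Spa(A,A)$, every rational cover of $\Spa(A,A)$ is refined by the pullback of a Zariski cover of $\Spf A$, so the sheaf property of $\OO_{\Spa(A,A)}$ reduces to that of $\OO_{\Spf A}$; the same argument recovers the topological space and structure sheaf of $\Spf A$ directly from $\Spa(A,A)$, which is how the paper gets full faithfulness. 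A secondary error in your gluing step: the rational subsets $X_{a_1,\ldots,a_k;f^n}$ are \emph{decreasing} in $n$, not increasing, since $|f(x)|\leq 1$ on $\Spa(A,A)$ makes the bound $|a_i(x)|\leq|f(x)|^n$ stricter as $n$ grows; their union does not exhaust $U_f$. The increasing family is $X_{a_1^n,\ldots,a_k^n;f}$, whose union is $U_f$ because $|a_i(x)|<1$ for $a_i\in I$ by continuity.
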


\begin{proof} Let us explain how one can reconstruct $\Spf A$ from $\Spa(A,A)$. First, note that $\Spf A\subset \Spa(A,A)$ is the subset of valuations with value group $\{0,1\}$, recovering the topological space. To recover the structure sheaf, it is enough to show that the global sections of the sheafification $\OO_{\Spa(A,A)}^\sharp$ of $\OO_{\Spa(A,A)}$ are given by $A$ itself.

Note that there is a continuous map $\Spa(A,A)\to \Spf A:x\mapsto \bar{x}$, given by sending any continuous valuation $x$ on $A$ to the valuation
\[
f\mapsto |f(\bar{x})| = \left\{\begin{array}{cl}0& |f(x)|<1\\ 1 & |f(x)|=1\ .\end{array}\right .
\]
Moreover, every $x\in \Spa(A,A)$ specializes to $\bar{x}\in \Spf(A)\subset \Spa(A,A)$. It follows that any covering of $\Spa(A,A)$ is refined by a covering coming as pullback via the specialization from a cover of $\Spf A$, so we only have to check the sheaf property for such coverings. But this reduces to the sheaf property of the structure sheaf of $\Spf A$.
\end{proof}

Note that if $A$ is an adic $\OO$-algebra with ideal of definition $I$ containing $\varpi$, and $\mathcal{F}\in (\Nilp_{\OO}^\op)^\sim$, then
\[
\Hom(\Spf A,\mathcal{F}) = \varprojlim_n \mathcal{F}(A/I^n)\ .
\]
We abbreviate the left-hand side to $\mathcal{F}(A)$, thereby extending the functor $\mathcal{F}$ to the larger category of adic $\OO$-algebras with ideal of definition containing $\varpi$.

The following proposition gives a moduli description of the generic fibre
\[
\mathfrak{M}^\ad_\eta = \mathfrak{M}^\ad\times_{\Spa(\OO,\OO)} \Spa(K,\OO)\ .
\]

\begin{prop}\label{PropGenericFibre} Let $(R,R^+)$ be a complete affinoid $(K,\OO)$-algebra.
\begin{altenumerate}
\item[{\rm (i)}] The ring $R^+$ is the filtered union of its open and bounded $\OO$-subalgebras $R_0\subset R^+$.
\item[{\rm (ii)}] The functor $\mathfrak{M}^\ad_\eta\from \CAff_{(K,\OO)}\to \Sets$ is the sheafification of
\[
(R,R^+)\mapsto \varinjlim_{R_0\subset R^+} \mathfrak{M}(R_0) = \varinjlim_{R_0\subset R^+} \varprojlim_n \mathfrak{M}(R_0/\varpi^n)\ .
\]
\end{altenumerate}
\end{prop}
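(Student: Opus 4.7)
For part (i), I would first choose a ring of definition $A_0\subset R^+$ of $R$---for instance by intersecting any ring of definition of $R$ with the open subring $R^+$---together with a finitely generated ideal of definition $I\subset A_0$. Given $f_1,\ldots,f_n\in R^+\subset R^\circ$, each $f_i$ is power-bounded, so one can find $N$ with $I^N f_i^k\subset A_0$ for all $i$ and all $k\geq 0$. This yields $I^{nN+m}\cdot A_0[f_1,\ldots,f_n]\subset I^m A_0$ for every $m\geq 0$, showing that $R_0:=A_0[f_1,\ldots,f_n]\subset R^+$ is an open bounded $\OO$-subalgebra. Letting $\{f_1,\ldots,f_n\}$ range over all finite subsets of $R^+$ and taking the directed union exhibits $R^+$ as the filtered union of its open bounded $\OO$-subalgebras.

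For part (ii), I would first treat the affine case $\mathfrak{M}=\Spf A$, where $A$ is an adic $\OO$-algebra with finitely generated ideal of definition $I\supset(\varpi)$. In this case $\mathfrak{M}^\ad=\Spa(A,A)$ and $\mathfrak{M}^\ad_\eta=\Spa(A[\varpi^{-1}],A)$, so by the universal property of the adic spectrum an $(R,R^+)$-point is precisely a continuous $\OO$-algebra homomorphism $\phi\from A\to R^+$. Pick any ring of definition $R_0'\subset R^+$. Since $\varpi\in I$, continuity of $\phi$ with respect to the coarser $\varpi$-adic topology on $A$ gives some $n$ with $\varpi^n\phi(A)\subset R_0'$, i.e.\ $\phi(A)\subset\varpi^{-n}R_0'$ as a subset of $R$. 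The crucial observation is that $\phi(A)$ is closed under multiplication, so the $R_0'$-subalgebra $R_0:=R_0'[\phi(A)]$ again satisfies $R_0\subset \varpi^{-n}R_0'$, and is therefore an open bounded $\OO$-subalgebra of $R^+$ through which $\phi$ factors. Continuity of $\phi\from A\to R_0$ with respect to the $I$-adic and $\varpi$-adic topologies then yields a compatible system $A/I^m\to R_0/\varpi^n$, i.e.\ an element of $\mathfrak{M}(R_0)=\varprojlim_n\mathfrak{M}(R_0/\varpi^n)$; the converse is tautological. Hence in the affine case the presheaf $(R,R^+)\mapsto\varinjlim_{R_0}\mathfrak{M}(R_0)$ already agrees with $\mathfrak{M}^\ad_\eta$ (no sheafification needed).

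For general $\mathfrak{M}$, cover it by affine opens $\Spf A_i\subset\mathfrak{M}$. The induced $\Spa(A_i,A_i)\hookrightarrow\mathfrak{M}^\ad$ are open immersions covering $\mathfrak{M}^\ad$, and passing to generic fibres yields an open cover of $\mathfrak{M}^\ad_\eta$. The functor $(R,R^+)\mapsto\mathfrak{M}^\ad_\eta(R,R^+)$ is a sheaf on $\CAff_{(K,\OO)}^\op$ since $\mathfrak{M}^\ad_\eta$ is an adic space, and locally on $\mathfrak{M}^\ad_\eta$ it agrees with the presheaf $F\from(R,R^+)\mapsto\varinjlim_{R_0}\mathfrak{M}(R_0)$ by the affine case. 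Therefore the sheafification of $F$ recovers $\mathfrak{M}^\ad_\eta$.

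The main obstacle is the boundedness step in the affine case: combining the continuity of $\phi$ (which, via $\varpi\in I$, passes to the $\varpi$-adic topology) with the subring property of $\phi(A)$ to extract an open bounded $\OO$-subalgebra of $R^+$ through which $\phi$ factors. Given this step and part (i), the gluing in the general case is formal.
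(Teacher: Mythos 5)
Your argument follows essentially the same route as the paper's: for part (ii) one reduces to the affine case $\mathfrak{M}=\Spf A$, identifies the presheaf with continuous $\OO$-algebra maps $A\to R^+$, and uses that $\varpi\in I$ forces $\varpi$-adic continuity to bound the image inside an open and bounded $\OO$-subalgebra, over which one can use $\varpi$-adic completeness to rewrite $\Hom(A,R_0)$ as an inverse limit. Three small remarks are in order.

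First, in part (i) the statement concerns open bounded $\OO$-\emph{subalgebras}, but a ring of definition $A_0\subset R^+$ need not contain (the image of) $\OO$. You should first replace $A_0$ by $A_0[\OO]$, which is still open and bounded since $\OO$ has bounded image in $R^+$ and products of bounded subrings are bounded (this is exactly the observation the paper makes before passing to $\OO$-subalgebras). After that, your estimate $I^{nN+m}A_0[f_1,\dots,f_n]\subset I^m A_0$ handles the adjunction of finitely many power-bounded elements exactly as in the paper.

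Second, the $\varpi$-adic topology on $A$ is \emph{finer}, not coarser, than the $I$-adic one: $(\varpi)^n\subset I^n$ makes each $I^n$ a $\varpi$-adic open, so a continuous map out of $(A,I\text{-adic})$ remains continuous out of $(A,\varpi\text{-adic})$. The conclusion you draw ($\varpi^n\phi(A)\subset R_0'$ for some $n$) is correct; only the word is flipped.

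Third, the parenthetical claim that ``no sheafification is needed'' in the affine case overstates what you have shown. What your computation establishes is that the presheaf agrees with $(R,R^+)\mapsto\Hom((A,A),(R,R^+))$ on $\CAff_{(K,\OO)}$; since $\mathfrak{M}^\ad_\eta$ is by definition the sheafification of this representable presheaf (base-changed to $(K,\OO)$), that identification is exactly what the proposition asserts. But the representable presheaf itself is a sheaf only when $(A,A)$ is sheafy, which is not guaranteed for an arbitrary adic $\OO$-algebra $A$ in the present generality. This does not affect the validity of the argument, only of the extra claim.
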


\begin{proof}\begin{altenumerate}
\item[{\rm (i)}] First, there is some open and bounded subring $R_0\subset R^+$. Next, we check that if $R_1,R_2\subset R^+$ are bounded subrings, then so is the image of $R_1\otimes R_2\rightarrow R^+$. Indeed, $R_i\subset \varpi^{-n_i} R_0$ for certain $n_i\in \Z$, and then the image of $R_1\otimes R_2$ is contained in $\varpi^{-n_1-n_2} R_0$. In particular, there are open and bounded $\OO$-subalgebras, and the set of such is filtered. Finally, if $R_0\subset R^+$ is an open and bounded $\OO$-subalgebra and $x\in R^+$, then $x$ is power-bounded, so that all powers of $x$ are contained in $\varpi^{-n} R_0$ for some $n$, and hence $R_0[x]\subset \varpi^{-n} R_0$ is still open and bounded.
\item[{\rm (ii)}] It suffices to check in the case $\mathfrak{M} = \Spf(A,A)$, where $A$ is some adic $\OO$-algebra with finitely generated ideal $I$ containing $\varpi$. Then it follows from
\[
\Hom((A,A),(R,R^+)) = \Hom(A,R^+) = \varinjlim_{R_0\subset R^+} \Hom(A,R_0) = \varinjlim_{R_0\subset R^+} \varprojlim_n \Hom(A,R_0/\varpi^n)\ .
\]
Indeed, any continuous map $A\to R^+$ is also continuous for the $\varpi$-adic topology on $A$; then $A[\frac 1{\varpi}]\to R$ is a continuous map of $K$-Banach spaces. In particular, the image of $A$ in $R^+$ is bounded, and hence contained in some open and bounded $\OO$-subalgebra $R_0$. As $R_0$ is $\varpi$-adically complete, one gets the identification
\[
\Hom(A,R_0) = \varprojlim_n \Hom(A,R_0/\varpi^n)\ .
\]
\end{altenumerate}
\end{proof}

In particular, we can define a functor $\mathcal{F}\mapsto \mathcal{F}^\ad_\eta\from (\Nilp_{\OO}^\op)^\sim\to (\CAff_{(K,\OO)}^\op)^\sim$ taking a moduli problem over $\OO$-algebras in which $\varpi$ is nilpotent to a moduli problem on complete affinoid $(K,\OO)$-algebras, mapping $\mathcal{F}$ to the sheafification of
\[
(R,R^+)\mapsto \varinjlim_{R_0\subset R^+} \varprojlim_n \mathcal{F}(R_0/p^n)\ ,
\]
such that whenever $\mathcal{F}$ is representable by a formal scheme $\mathfrak{M}$ with locally finitely generated ideal of definition, then $\mathcal{F}^\ad_\eta$ is representable by $\mathfrak{M}^\ad_\eta$.

Similarly, this discussion extends to stacks. As an example of this, consider the stack of $p$-divisible groups over $\Nilp_{\Z_p}^\op$, sending any ring $R$ on which $p$ is nilpotent to the groupoid of $p$-divisible groups over $R$, with isomorphisms as morphisms. Note that if $A$ is an adic ring with ideal of definition containing $p$, then giving a compatible system of $p$-divisible groups over $A/I^n$ for all $n\geq 1$ is equivalent to giving a $p$-divisible group over $A$. Using the previous construction, we get a stack on the category of adic spaces over $\Spa(\Q_p,\Z_p)$, which we still call the stack of $p$-divisible groups. Explicitly, if $X$ is an adic space over $\Spa(\Q_p,\Z_p)$, then giving a $p$-divisible group over $X$ amounts to giving a cover of $X$ by open subsets $U_i=\Spa(R_i,R_i^+)\subset X$, open and bounded $\Z_p$-subalgebras $R_{i,0}\subset R_i^+$, and $p$-divisible groups over $R_{i,0}$, satisfying an obvious compatibility condition.

\subsection{Perfectoid spaces}

Here we present the basic definitions of the theory of perfectoid spaces, cf.~\cite{Sch}.

\begin{defn}\label{perfectoidfield} A {\em perfectoid field} $K$ is a complete nonarchimedean field such that its valuation is non-discrete and the $p$-th power map $\OO_K/p\to\OO_K/p$ is surjective.
\end{defn}

Examples include $\C_p$, as well as the $p$-adic completions of the fields $\Q_p(p^{1/p^\infty})$ and $\Q_p(\zeta_{p^\infty})$.  The $t$-adic completion of $\mathbf{F}_p\laurentseries{t}(t^{1/p^\infty})$ is a perfectoid field of characteristic $p$.

\begin{defn} Let $K$ be a perfectoid field. A {\em perfectoid $K$-algebra} is a complete Banach $K$-algebra $A$ satisfying the following properties:
\begin{altenumerate}
\item[{\rm (i)}] The subalgebra $A^\circ$ of power-bounded elements is bounded in $A$, and
\item[{\rm (ii)}] The Frobenius map $A^\circ/p\to A^\circ/p$ is surjective.
\end{altenumerate}
A {\em perfectoid affinoid $(K,\OO_K)$-algebra} is an affinoid $(K,\OO_K)$-algebra $(A,A^+)$ such that $A$ is a perfectoid $K$-algebra.
\end{defn}

In the paper \cite{Sch}, it is proved that if $(A,A^+)$ is a perfectoid affinoid $(K,\OO_K)$-algebra, then $(A,A^+)$ is sheafy, i.e. the structure presheaf on $\Spa(A,A^+)$ is a sheaf. Moreover, if $U\subset X$ is a rational subset, then the pair $(\OO_X(U),\OO_X^+(U))$ is a perfectoid affinoid $(K,\OO_K)$-algebra again.

\begin{defn} A {\em perfectoid space} over $\Spa(K,\OO_K)$ is an adic space locally isomorphic to $\Spa(A,A^+)$ for a perfectoid affinoid $(K,\OO_K)$-algebra $(A,A^+)$. The category of perfectoid spaces over $\Spa(K,\OO_K)$ is a full subcategory of the category of adic spaces over $\Spa(K,\OO_K)$.
\end{defn}

We note that all perfectoid spaces are honest adic spaces. A basic example of a perfectoid space is $X=\Spa(A,A^+)$, where $A^+=\OO_K\tatealgebra{X^{1/p^\infty}}$ is the $\varpi$-adic completion of $\varinjlim \OO_K[X^{1/p^m}]$ and $A=A^+[1/\varpi]$.   Here $A$ is given the unique Banach $K$-algebra structure for which $A^+$ is the $\OO_K$-subalgebra of elements of norm $\leq 1$.

The spaces that we shall ultimately be interested in will turn out to be not quite perfectoid, but instead satisfy a slightly weaker notion.

\begin{defn} Let $X$ be an adic space over $\Spa(K,\OO_K)$. Then $X$ is preperfectoid if there is a cover of $X$ by open affinoid $U_i = \Spa(A_i,A_i^+)\subset X$ such that $(\hat{A}_i,\hat{A}_i^+)$ is a perfectoid affinoid $(K,\OO_K)$-algebra, where we take the completion with respect to the topology on $A_i$ giving $A_i^+$ the $p$-adic topology.
\end{defn}

We call $(\hat{A}_i,\hat{A}_i^+)$ the strong completion of $(A_i,A_i^+)$.

\begin{rmk} As an example, the nonreduced rigid-analytic point $X=\Spa(K[X]/X^2,\OO_K + KX)$ is preperfectoid. Note that in this example, $(K[X]/X^2, \OO_K + KX)$ is already complete for its natural topology. However, artificially enforcing the $p$-adic topology on $\OO_K + KX$ makes the completion simply $(K,\OO_K)$, which is certainly a perfectoid affinoid $(K,\OO_K)$-algebra.
\end{rmk}

\begin{prop} Let $X$ be a preperfectoid adic space over $\Spa(K,\OO_K)$. Then there exists a perfectoid space $\hat{X}$ over $\Spa(K,\OO_K)$ with the same underlying topological space, and such that for any open subset $U=\Spa(A,A^+)\subset X$ for which $(\hat{A},\hat{A}^+)$ is perfectoid, the corresponding open subset of $\hat{X}$ is given by $\hat{U} = \Spa(\hat{A},\hat{A}^+)$. The space $\hat{X}$ is universal for morphisms from perfectoid spaces over $\Spa(K,\OO_K)$ to $X$.
\end{prop}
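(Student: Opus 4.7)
The plan is to construct $\hat X$ by gluing the strong completions of an affinoid cover, and then to deduce the remaining claims from a universal property. Let $\{U_i = \Spa(A_i, A_i^+)\}$ be the cover of $X$ given by the preperfectoid hypothesis, so that each strong completion $(\hat A_i, \hat A_i^+)$ is perfectoid. I would set $\hat U_i = \Spa(\hat A_i, \hat A_i^+)$, which is a perfectoid space (in particular an honest adic space, by \cite{Sch}), and glue these to form $\hat X$.

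The heart of the argument is the following key lemma: for any affinoid open $U = \Spa(A, A^+)$ of $X$ whose strong completion $(\hat A, \hat A^+)$ is perfectoid, \textup{(i)} the natural map $(A, A^+) \to (\hat A, \hat A^+)$ induces a homeomorphism $|\Spa(\hat A, \hat A^+)| \to |\Spa(A, A^+)|$ matching rational subsets; and \textup{(ii)} for every rational subset $V \subset \Spa(A, A^+)$, the pair $(\OO_U(V), \OO_U^+(V))$ is again preperfectoid, with strong completion $(\OO_{\hat U}(V), \OO_{\hat U}^+(V))$. For \textup{(i)}, a continuous valuation on $A$ bounded by $1$ on $A^+$ is equivalently continuous when $A^+$ is given its $p$-adic topology, since $|p|<1$ in $K$ forces $|p|^n$ to be cofinal to $0$ in the value group of any such valuation; rational subsets are cut out by the same inequalities, and the spectrum is invariant under completion. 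For \textup{(ii)}, I would compare the two constructions ``rational-localize then strong-complete'' versus ``strong-complete then rational-localize'' via their universal properties: the target is perfectoid by \cite{Sch}, and both sides represent the same functor on complete affinoid $(K,\OO_K)$-algebras.

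Given the key lemma, gluing is straightforward: on overlaps $U_i \cap U_j$, I choose mutually rational refinements, apply the lemma on each side to identify the perfectoid pieces, and patch the $\hat U_i$ into a perfectoid space $\hat X$ with $|\hat X| = |X|$. The universal property then follows: any morphism $Y \to X$ from a perfectoid space $Y$ is, locally, a continuous morphism $(A_i, A_i^+) \to (B, B^+)$ with $(B, B^+)$ perfectoid, whose $B^+$ already carries the $p$-adic topology, so the morphism factors uniquely through $(\hat A_i, \hat A_i^+)$; these factorizations patch to a unique morphism $Y \to \hat X$. For an arbitrary preperfectoid affinoid open $U = \Spa(A, A^+) \subset X$ not in the original cover, the universal property then forces the corresponding open of $\hat X$ to be $\Spa(\hat A, \hat A^+)$.

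I expect the main obstacle to be part \textup{(ii)} of the key lemma: verifying that strong completion commutes with rational localization. The subtlety is that weakening the topology on $A^+$ to the $p$-adic one and then completing does not a priori commute with the Huber localization $A\tatealgebra{f_1/g, \ldots, f_n/g}$, so one must check that the natural comparison morphism between the two constructions is an isomorphism of complete topological rings rather than merely a dense inclusion. The essential input is that rational subsets of perfectoid affinoid spaces are perfectoid (\cite{Sch}), so both sides are perfectoid affinoid $(K,\OO_K)$-algebras with the same universal property among such, hence canonically isomorphic.
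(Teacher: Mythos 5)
The paper itself gives no argument here (``Easy and left to the reader''), so I can only judge your proposal on its own terms; the strategy you describe — glue the strong completions of a verifying affinoid cover, and reduce both the local compatibilities and the universal property to the adjunction between strong completion and inclusion of perfectoid affinoids — is sound and is surely what the authors intend. Two points deserve tightening.

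In part (i) of your key lemma, the cofinality of $|p|^n$ in the value group of a valuation $|\cdot|$ on $A$ is not a consequence of $|p|<1$ in $K$ alone: a valuation bounded by $1$ on $A^+$ with $|p|<1$ can in principle fail to have $|p|^n$ cofinal. What you actually need is that $|\cdot|$ is continuous for the \emph{original} topology on $A$ and that $p$ is topologically nilpotent there (which holds since $(A,A^+)$ is an affinoid $(K,\OO_K)$-algebra); Huber's criterion for continuity of valuations on a Tate ring then gives cofinality of $|p|^n$, and from this one deduces continuity for the coarser $p$-adic topology, with $A^+$ as ring of definition. So the input is the original continuity, not merely $|p|<1$. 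The identification of rational subsets is then clear, since ``open ideal'' means ``unit ideal'' for both Tate topologies.

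In part (ii), as phrased there is a small circularity: you assert ``both sides are perfectoid affinoid $(K,\OO_K)$-algebras with the same universal property,'' but perfectoidness of $(\widehat{\OO_U(V)},\widehat{\OO_U^+(V)})$ is part of the conclusion rather than an input. The clean order is: $(\OO_{\hat U}(\hat V),\OO_{\hat U}^+(\hat V))$ is perfectoid by \cite{Sch}; using that $\Hom((A,A^+),(B,B^+))=\Hom((\hat A,\hat A^+),(B,B^+))$ whenever $(B,B^+)$ is complete with $B^+$ $p$-adically topologized, one sees that $(\OO_{\hat U}(\hat V),\OO_{\hat U}^+(\hat V))$ is initial among such $(B,B^+)$ receiving a map from $(A,A^+)$ landing in $V$, hence \emph{is} the strong completion of $(\OO_U(V),\OO_U^+(V))$ — in particular the latter is perfectoid. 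With this in hand, your gluing goes through: on any $W\subset U_i\cap U_j$ rational in both $U_i$ and $U_j$, both restrictions of $\hat U_i$ and $\hat U_j$ are the strong completion of $(\OO_X(W),\OO_X^+(W))$, so they are canonically isomorphic, with the cocycle condition on triple overlaps following from uniqueness in the universal property; and your argument for the universal property of $\hat X$ and the identification of $\hat X|_U$ for an arbitrary preperfectoid-verifying affinoid open $U$ is correct.
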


\begin{proof} Easy and left to the reader.
\end{proof}

We call $\hat{X}$ the strong completion of $X$. We have the following behaviour with respect to passage to locally closed subspaces.

\begin{prop}\label{LocClosedPreperfectoidK} Let $X$ be a preperfectoid adic space over $\Spa(K,\OO_K)$, and let $Y\subset X$ be a locally closed subspace. Then $Y$ is preperfectoid.
\end{prop}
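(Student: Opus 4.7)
The plan is to reduce to a local affinoid situation and then decompose locally closed subspaces into open and closed parts. Since being preperfectoid is by definition a local property, I may assume $X = \Spa(A,A^+)$ is affinoid with strong completion $(\hat A,\hat A^+)$ a perfectoid affinoid $(K,\OO_K)$-algebra. Every locally closed subspace $Y \subset X$ is, locally on $Y$, open in a closed subspace, so it suffices to treat open subspaces and closed subspaces separately.

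For the open case I would reduce further to rational subsets, which form a basis of the topology. Given $U = X(f_1,\ldots,f_n/g) \subset X$, the same inequalities cut out a rational subset $\hat U \subset \Spa(\hat A,\hat A^+)$, and by the core stability theorem for perfectoid algebras (recalled just before the definition of perfectoid spaces in the excerpt) the pair $(\OO_{\hat X}(\hat U),\OO_{\hat X}^+(\hat U))$ is again perfectoid affinoid over $(K,\OO_K)$. I would then identify this with the strong completion of $(\OO_X(U),\OO_X^+(U))$ by comparing universal properties: any map from a perfectoid affinoid $(C,C^+)$ into $(\OO_X(U),\OO_X^+(U))$ first factors through $(\hat A,\hat A^+)$ (universal property of strong completion applied to the composition to $(A,A^+)$), and then through the rational subset by the universal property recalled in the excerpt. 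This handles the open case.

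For the closed case, I would assume locally that $Y$ is cut out by an ideal $I \subset A$, so $Y = \Spa(A/I,(A/I)^+)$ with $(A/I)^+$ the integral closure of the image of $A^+$. Let $\hat I \subset \hat A$ denote the closure of the image of $I$ and $\hat J = \hat I \cap \hat A^+$. The candidate strong completion is the Tate $K$-algebra $\hat A/\hat I$ together with the integral closure of the image of $\hat A^+/\hat J$; this pair is perfectoid, because completeness is automatic (quotient by a closed ideal), power-boundedness of the integral subring is inherited from the boundedness of $\hat A^+$ in $\hat A$, and the Frobenius on $(\hat A^+/\hat J)/p$ is surjective since surjectivity of Frobenius on $\hat A^+/p$ is preserved under any ring quotient. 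The identification with the strong completion of $(A/I,(A/I)^+)$ is again by comparing universal properties against perfectoid affinoid $(K,\OO_K)$-algebras.

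I expect the main obstacle to be the closed case, specifically checking that the integral closure of the image of $\hat A^+$ in $\hat A/\hat I$ is the correct object (rather than some larger integrally closed ring), and that the $p$-adic topology on the quotient really matches the topology obtained by giving $(A/I)^+$ the $p$-adic topology first and then completing. Once this compatibility is verified, Frobenius surjectivity and boundedness transport mechanically from $(\hat A,\hat A^+)$, and combining the two cases with the local nature of the preperfectoid condition yields the proposition.
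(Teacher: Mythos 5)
Your handling of the open case matches what the paper leaves implicit (rational subsets of a perfectoid affinoid are perfectoid, and the universal property identifies the strong completion). The real divergence is in the closed case, and here your argument has a gap that is in fact fatal as written. The paper does \emph{not} try to show directly that the quotient is perfectoid; instead it writes $Y$ as a filtered intersection of rational neighborhoods $U_i=\{|f_1|,\dots,|f_k|\le p^{-n}\}$ with $f_j\in I$, identifies $\hat B^+$ with the $p$-adic completion of $\varinjlim \OO_X^+(U_i)$ (using that every $f\in I$ becomes infinitely $p$-divisible in the colimit), and concludes because each $\OO_X(U_i)$ is perfectoid and a completed filtered colimit of perfectoid rings is perfectoid. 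This device exists precisely to avoid the quotient analysis you attempt.

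The point you flag as the ``main obstacle'' is not a detail to be checked afterwards; it is where the argument breaks. The remark immediately before this proposition, concerning $(K[X]/X^2,\OO_K+KX)$, shows the problem concretely: there the integral closure $\OO_K+KX$ of the image of $\OO_K\langle X\rangle$ contains $p^{-n}X$ for every $n$ (since $(p^{-n}X)^2=0$ is a monic integral relation), hence is \emph{unbounded} for the quotient topology; it coincides with $(K[X]/X^2)^\circ$, so the quotient ring is not even uniform; and the strong completion re-topologizes $\OO_K+KX$ $p$-adically, collapsing it to $\OO_K$, which is nothing like the image $\OO_K[X]/X^2$. Your claim that ``power-boundedness of the integral subring is inherited from the boundedness of $\hat A^+$'' proves only that the image $\hat A^+/\hat J$ is bounded, not that the integral closure $B^+$ or $(\hat A/\hat I)^\circ$ is; similarly the Frobenius-surjectivity step applies to $\hat A^+/\hat J$, not to $B^+$ or $B^\circ$, and the two rings can carry genuinely different $p$-adic topologies. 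Establishing uniformity of the quotient and identifying the correct topology is precisely the nontrivial content of ``Zariski-closed subsets of perfectoid spaces are perfectoid''; the paper's intersection-of-rational-subsets argument is the short route around that.
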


\begin{rmk} Again, this is all one can expect as $Y$ may be nonreduced.
\end{rmk}

\begin{proof} The statement is clear for open embeddings, so let us assume that $Y\subset X$ is a closed subspace. We may assume that $X=\Spa(A,A^+)$ is affinoid with $(\hat{A},\hat{A}^+)$ perfectoid. Then $Y=\Spa(B,B^+)$ is given by a closed ideal $I$, so that $B=A/I$ and $B^+$ the integral closure of the image of $A^+$ in $B$. We claim that $\hat{B}$ is a perfectoid $K$-algebra, which implies the result. Note that $Y$ is the filtered intersection of its rational neighborhoods $U_i\subset X$; indeed, for any $f_1,\ldots,f_k\in I$, one can consider the subsets where $|f_1|,\ldots,|f_k|\leq p^{-n}$, and then $Y$ is their intersection. Then
\[
\hat{B}^+ = \widehat{\varinjlim_{Y\subset U_i} \OO_X^+(U_i)}\ .
\]
Indeed, any $f\in I$ becomes infinitely $p$-divisible in the direct limit, and thus gets killed in the completion. But each $\OO_X(U_i)$ is perfectoid, hence so is $\hat{B}$.
\end{proof}

Finally, we will need one more notion.

\begin{defn} \label{defpreperfectoid} Let $X$ be an adic space over $\Spa(\Q_p,\Z_p)$. Then $X$ is preperfectoid if for any perfectoid field $K$ of characteristic $0$, the base-change $X_K = X\times_{\Spa(\Q_p,\Z_p)} \Spa(K,\OO_K)$ is preperfectoid.
\end{defn}

\begin{prop} Let $K$ be a perfectoid field, and let $X$ be an adic space over $\Spa(K,\OO_K)$. Then $X$ is preperfectoid as an adic space over $\Spa(K,\OO_K)$ if $X$ is preperfectoid as an adic space over $\Spa(\Q_p,\Z_p)$.
\end{prop}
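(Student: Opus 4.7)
The plan is to realize $X$ as a closed subspace of its base change $X_K := X\times_{\Spa(\Q_p,\Z_p)}\Spa(K,\OO_K)$ via the diagonal, and then to invoke Proposition \ref{LocClosedPreperfectoidK}.

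First, since $X$ is given as an adic space over $\Spa(K,\OO_K)$, the structure morphism $X\to \Spa(\Q_p,\Z_p)$ factors through $\Spa(K,\OO_K)$. Set $Y = \Spa(K,\OO_K)\times_{\Spa(\Q_p,\Z_p)}\Spa(K,\OO_K)$, and let $\Delta\colon \Spa(K,\OO_K)\to Y$ denote the diagonal. Base-changing $\Delta$ along $X\to \Spa(K,\OO_K)$ (using the first projection $Y\to \Spa(K,\OO_K)$), one obtains a canonical morphism $X\hookrightarrow X_K$ that splits the projection $X_K\to X$.

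Second, I would verify that $\Delta$ is a closed immersion in the category of adic spaces. Using the fibre product description from part (vi) of the preceding proposition, $Y$ is the affinoid adic space associated to a complete affinoid ring with underlying algebra $K\hat{\otimes}_{\Q_p} K$, and $\Delta$ corresponds to the continuous multiplication map $K\hat{\otimes}_{\Q_p} K\to K$, which is surjective with closed kernel (the target being Hausdorff). Hence $\Delta$ is a closed immersion, and since closed immersions are stable under base change, so is the induced embedding $X\hookrightarrow X_K$.

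Finally, applying the hypothesis with $K$ itself as the perfectoid field appearing in Definition \ref{defpreperfectoid}, we see that $X_K$ is preperfectoid as an adic space over $\Spa(K,\OO_K)$. Proposition \ref{LocClosedPreperfectoidK} then yields that the closed subspace $X\subset X_K$ is itself preperfectoid over $\Spa(K,\OO_K)$, as desired. The whole argument is essentially formal once the two ingredients (closedness of $\Delta$ and Proposition \ref{LocClosedPreperfectoidK}) are in hand, so I do not anticipate any substantial obstacle; the only point requiring a moment of care is the verification that the diagonal is closed.
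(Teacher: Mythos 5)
Your argument is correct and follows exactly the same route as the paper's proof: apply Definition \ref{defpreperfectoid} with the field $K$ to see that $X_K$ is preperfectoid over $\Spa(K,\OO_K)$, then use the closed embedding $X\hookrightarrow X_K$ and Proposition \ref{LocClosedPreperfectoidK}. The paper simply asserts the existence of the closed embedding without elaboration; you supply the standard justification via the diagonal, which is a reasonable thing to spell out.
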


\begin{proof} By assumption, $X_K$ is preperfectoid over $\Spa(K,\OO_K)$. But there is a closed embedding $X\to X_K$, which shows that $X$ is preperfectoid over $\Spa(K,\OO_K)$ by Proposition \ref{LocClosedPreperfectoidK}.
\end{proof}

\begin{prop}\label{LocClosedPreperfectoid} Let $X$ be a preperfectoid adic space over $\Spa(\Q_p,\Z_p)$, and let $Y\subset X$ be a locally closed subspace. Then $Y$ is preperfectoid.
\end{prop}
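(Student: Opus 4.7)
The plan is to reduce the statement to the previously established Proposition \ref{LocClosedPreperfectoidK} (the analogous fact over a fixed perfectoid base field) by base change. By Definition \ref{defpreperfectoid}, we must verify that for every perfectoid field $K$ of characteristic $0$, the base change $Y_K = Y \times_{\Spa(\Q_p,\Z_p)} \Spa(K,\OO_K)$ is preperfectoid as an adic space over $\Spa(K,\OO_K)$. So fix such a $K$.

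First I would check that $Y_K \subset X_K$ is again a locally closed subspace. This is a general (easy) property of fibre products in $\Adic$: for an open immersion $U \hookrightarrow X$, the pullback $U \times_X X_K \to X_K$ is an open immersion by the very definition of open immersion in $(\CAff^\op)^\sim$, while for a closed immersion $Z \hookrightarrow X$ given locally by a closed ideal $I \subset A$, the explicit formula for fibre products in part (vi) of the earlier proposition shows that $Z \times_X X_K$ is locally cut out by the image of $I$ in the completed tensor product, hence closed. Writing $Y$ as an open subset of a closed subspace of $X$ (locally), the base change $Y_K$ is therefore an open subset of a closed subspace of $X_K$, i.e.\ locally closed in $X_K$.

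Next, by the hypothesis that $X$ is preperfectoid over $\Spa(\Q_p,\Z_p)$, the base change $X_K$ is preperfectoid over $\Spa(K,\OO_K)$. Proposition \ref{LocClosedPreperfectoidK} then applies directly to the inclusion $Y_K \subset X_K$ and yields that $Y_K$ is preperfectoid over $\Spa(K,\OO_K)$. Since $K$ was an arbitrary perfectoid field of characteristic $0$, this is exactly what is required to conclude that $Y$ is preperfectoid over $\Spa(\Q_p,\Z_p)$.

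There is no real obstacle here; the content lies entirely in Proposition \ref{LocClosedPreperfectoidK}, whose proof already handled the genuine analytic work (taking filtered intersections by rational subsets, using that any generator of the defining ideal becomes infinitely $p$-divisible in the direct limit, and invoking the fact that perfectoid affinoid algebras are preserved under passage to rational subsets and completions). The only thing to verify beyond citing that proposition is the compatibility of the locally-closed condition with base change from $\Spa(\Q_p,\Z_p)$ to $\Spa(K,\OO_K)$, which is formal.
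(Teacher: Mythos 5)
Your proof is correct and follows exactly the paper's intended argument (the paper's one-line proof contains a circular self-reference to \ref{LocClosedPreperfectoid}, which is clearly a typo for \ref{LocClosedPreperfectoidK}); you reduce to the fixed-perfectoid-field case by base change and invoke that proposition. The only thing you add is the explicit verification that locally closed immersions are stable under fibre product, which the paper leaves implicit.
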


\begin{proof} This follows directly from Proposition \ref{LocClosedPreperfectoid}.
\end{proof}

\begin{exmp} \label{ExmpBall}  Let $A=\Z_p\powerseries{X_1^{1/p^\infty},\dots,X_n^{1/p^\infty}}$ be the completion of the $\Z_p$-algebra $\varinjlim \Z_p[X_1^{1/p^m},\dots,X_n^{1/p^m}]$ with respect to the $I=(p,X_1,\dots,X_n)$-adic topology.

\begin{prop}\label{ballfunctornilp} Let $R$ be an adic $\Z_p$-algebra with ideal of definition containing $p$. Then
\[
(\Spf A)(R) = (\varprojlim_{x\mapsto x^p} R^{\circ\circ})^n = (\varprojlim_{x\mapsto x^p} R^{\circ\circ}/p)^n = (R^{\flat\circ\circ})^n\ ,
\]
where $R^\flat = \varprojlim_{x\mapsto x^p} R/p$, equipped with the inverse limit topology.
\end{prop}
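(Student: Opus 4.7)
The plan is to verify the three equalities in turn, with the tilting step carrying the main content.

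For the first, by the discussion preceding the proposition we have $(\Spf A)(R) = \varprojlim_n (\Spf A)(R/J^n) = \Hom_{\cont}(A,R)$, where $J \subseteq R$ denotes the ideal of definition containing $p$. Since $A$ is the $I$-adic completion of $\Z_p[X_1^{1/p^\infty},\dots,X_n^{1/p^\infty}]$ for $I = (p,X_1,\dots,X_n)$, such a continuous map is determined by choosing, for each $i$, a compatible sequence of images $f_i^{(m)} \in R$ of $X_i^{1/p^m}$ satisfying $(f_i^{(m+1)})^p = f_i^{(m)}$. Continuity forces $f_i^{(0)} \in R^{\circ\circ} = \sqrt{J}$ (as $X_i \in I$ is topologically nilpotent in $A$); conversely, given any such system with $f_i^{(0)} \in R^{\circ\circ}$, fixing integers $N_i$ with $f_i^{N_i} \in J$ one checks that for $k \gg 0$ every generator $p^a X_1^{b_1}\cdots X_n^{b_n}$ of $I^k$ has image $p^a f_1^{b_1}\cdots f_n^{b_n} \in J^{a+\sum \lfloor b_i/N_i\rfloor}$, which lies in any prescribed $J^\ell$. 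This identifies $(\Spf A)(R)$ with $(\varprojlim_{x\mapsto x^p} R^{\circ\circ})^n$.

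For the second equality, I would show directly that the reduction map $\varprojlim_{x\mapsto x^p} R^{\circ\circ} \to \varprojlim_{x\mapsto x^p} R^{\circ\circ}/p$ is a bijection. Injectivity: if $(y_m)$ reduces to zero, then the identity $y_m = y_{m+k}^{p^k}$ together with $y_{m+k} \in pR$ gives $y_m \in p^{p^k}R$ for every $k$, so $y_m = 0$ by $p$-adic separatedness (which holds because $R$ is complete with $p \in J$). Surjectivity is the standard lifting argument: given $(\bar{y}_m)$, pick any lifts $\tilde{y}_m \in R^{\circ\circ}$; the elementary congruence $a \equiv b \pmod{p^s} \Rightarrow a^p \equiv b^p \pmod{p^{s+1}}$ shows $\tilde{y}_{m+k}^{p^k}$ is $p$-adically Cauchy in $k$, so $y_m := \lim_k \tilde{y}_{m+k}^{p^k}$ exists in $R$ by $p$-adic completeness. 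Passing the relation $(\tilde{y}_{m+1+k}^{p^k})^p = \tilde{y}_{m+k}^{p^{k+1}}$ to the limit yields $y_{m+1}^p = y_m$, and $y_m \in R^{\circ\circ}$ because $y_m - \tilde{y}_m \in pR \subseteq R^{\circ\circ}$.

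The third equality is immediate from the definition of $R^\flat$: an element $(\bar{y}_m) \in R^\flat = \varprojlim_{x\mapsto x^p} R/p$ is topologically nilpotent iff $\bar{y}_0 \in R/p$ is, and since $p$ is topologically nilpotent in $R$, this holds iff some (hence every) lift lies in $R^{\circ\circ}$. The main obstacle in the whole argument is the surjectivity step in the tilting equality: one must simultaneously establish $p$-adic convergence of the lifted sequence and verify that its limit remains topologically nilpotent. Everything else is a formal unravelling of definitions and the universal property of the $I$-adic completion.
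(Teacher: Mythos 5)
Your proof is correct and gives the natural unwinding of the definitions, which is surely the argument the authors had in mind when leaving this to the reader. The one point worth pinning down is the invocation of ``$p$-adic completeness'': the hypothesis is $J$-adic completeness for an ideal of definition $J\ni p$, and one should note that this does imply $p$-adic completeness. Concretely, if $\tilde x_{n+1}-\tilde x_n=p^na_n$ and $x$ is the $J$-adic limit of $(\tilde x_n)$, then $x-\tilde x_n=p^n\cdot\lim_{N}\sum_{m=n}^{N-1}p^{m-n}a_m\in p^nR$, since the inner partial sums are $J$-adically Cauchy (their successive differences lie in $p^{N-n}R\subseteq J^{N-n}$) and multiplication by $p^n$ is continuous. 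The potential worry that $x-\tilde x_n$ a priori lies only in the $J$-adic closure of $p^nR$ — which need not equal $p^nR$ in the non-Noetherian setting — is thus sidestepped by factoring out $p^n$ before passing to the limit. Everything else, namely the continuity estimate via the integers $N_i$, the injectivity from $J$-adic separatedness, and the identification $(R^\flat)^{\circ\circ}=\varprojlim_{x\mapsto x^p}R^{\circ\circ}/p$ by testing topological nilpotence on a single coordinate, is exactly right.
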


\begin{proof} Easy and left to the reader.
\end{proof}

Let $X = (\Spf A)^\ad_\eta$ be the generic fibre of $\Spa(A,A)\to\Spa(\Z_p,\Z_p)$. Then $X$ is an adic space over $\Spa(\Q_p,\Z_p)$, equal to the union of affinoids $\Spa(A_m,A_m^+)$ defined by
\[
|X_1|^m,\ldots,|X_n|^m\leq |p|\ ,\ m\geq 1\ .
\]
It is easy to see that $X$ is a preperfectoid space over $\Spa(\Q_p,\Z_p)$.

\begin{lemma}\label{ballfunctor} The adic space $X$ over $\Spa(\Q_p,\Z_p)$ is the sheaf associated to
\[
(R,R^+)\mapsto \varinjlim_{R_0\subset R^+} (R_0^{\flat\circ\circ})^n\ .
\]
\end{lemma}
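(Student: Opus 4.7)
The plan is to obtain the lemma by chaining together Proposition \ref{PropGenericFibre}(ii) and Proposition \ref{ballfunctornilp}, in the case $(K,\OO,\varpi)=(\Q_p,\Z_p,p)$. First, I apply Proposition \ref{PropGenericFibre}(ii) to the formal scheme $\Spf A$ over $\Z_p$ to conclude that the generic fibre $X=(\Spf A)^\ad_\eta$ is the sheafification of the presheaf
\[
(R,R^+)\mapsto \varinjlim_{R_0\subset R^+}(\Spf A)(R_0)
\]
on complete affinoid $(\Q_p,\Z_p)$-algebras, where $R_0$ ranges over the open and bounded $\Z_p$-subalgebras of $R^+$.

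The next step is to verify that each such $R_0$ satisfies the hypothesis of Proposition \ref{ballfunctornilp}, i.e.\ that $R_0$ is an adic $\Z_p$-algebra with an ideal of definition containing $p$. This is essentially contained in the proof of Proposition \ref{PropGenericFibre}(ii). Since $R_0$ is open and bounded in $R^+$, the ideals $p^n R_0$ form a basis of neighborhoods of $0$, so the subspace topology on $R_0$ coincides with the $p$-adic topology, and the same reasoning shows $R_0$ is $p$-adically complete. Hence Proposition \ref{ballfunctornilp} applies and gives
\[
(\Spf A)(R_0)=(R_0^{\flat\circ\circ})^n.
\]

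Substituting this identification into the previous description, $X$ is the sheaf associated to $(R,R^+)\mapsto\varinjlim_{R_0\subset R^+}(R_0^{\flat\circ\circ})^n$, which is the claim.

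I do not anticipate any serious obstacle: the substantive content — the tilt-style description of the functor of points of the ball $\Spf A$ — is already packaged in Proposition \ref{ballfunctornilp}, and here it is simply being transported across the generic fibre functor of Proposition \ref{PropGenericFibre}. The only technical point worth making explicit is the topology and $p$-adic completeness of the approximating subrings $R_0$, which is routine.
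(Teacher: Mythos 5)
Your argument is correct and coincides with the paper's intended proof: the paper simply cites Proposition \ref{PropGenericFibre}, and you make explicit that one must then feed each approximating $R_0$ into Proposition \ref{ballfunctornilp}. The one point you gloss over — that $R_0$ is $p$-adically complete — is genuine but routine (an open subring of the complete ring $R^+$ is automatically closed, hence complete, and the induced topology is $p$-adic because $R_0$ is open and bounded and $R$ is Tate with pseudouniformizer $p$).
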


\begin{proof} This follows directly from Proposition \ref{PropGenericFibre}.
\end{proof}

\end{exmp}

\subsection{Inverse limits in the category of adic spaces}

Inverse limits rarely exist in the category of adic spaces, even when the transition maps are affine. The problem is that if $(A_i,A_i^+)$ is a direct system of affinoid rings, then the direct limit topology of $\varinjlim A_i$ will not be f-adic, and there is no canonical choice for the topology. (Another problem arises because of non-honest adic spaces.) We will use the following definition.

\begin{defn} \label{limitdef} Let $X_i$ be a filtered inverse system of adic spaces with quasicompact and quasiseparated transition maps, let $X$ be an adic space, and let $f_i\from X\to X_i$ be a compatible family of morphisms.  We write $X\sim\varprojlim X_i$ if the map of underlying topological spaces $\abs{X}\to\varprojlim\abs{X_i}$ is a homeomorphism, and if there is an open cover of $X$ by affinoid $\Spa(A,A^+)\subset X$, such that the map
\[
\varinjlim_{\Spa(A_i,A_i^+)\subset X_i} A_i\rightarrow A
\]
has dense image, where the direct limit runs over all open affinoid
\[
\Spa(A_i,A_i^+)\subset X_i
\]
over which $\Spa(A,A^+)\subset X\rightarrow X_i$ factors.
\end{defn}

If the inverse limit consists of a single space $X_0$, then we write $X\sim X_0$ instead of $X\sim\varprojlim X_0$.

\begin{prop}\label{ConstrInvLimits} Let $(A_i,A_i^+)$ be a direct system of complete affinoid rings. Assume that there are rings of definition $A_{i,0}\subset A_i$ compatible for all $i$, and finitely generated ideals of definition $I_i\subset A_{i,0}$ such that $I_j = I_i A_{j,0}\subset A_{j,0}$ for $j\geq i$. Let $(A,A^+)$ be the direct limit of the $(A_i,A_i^+)$, equipped with the topology making $A_0 = \varinjlim_i A_{i,0}$ an open adic subring with ideal of definition $I = \varinjlim_i I_i$. Then
\[
\Spa(A,A^+)\sim \varprojlim \Spa(A_i,A_i^+)\ .
\]
\end{prop}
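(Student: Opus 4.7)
The plan is to unwind the definition of $X \sim \varprojlim X_i$, where I set $X = \Spa(A,A^+)$ and $X_i = \Spa(A_i,A_i^+)$. Two conditions must be verified: the map of underlying topological spaces $|X| \to \varprojlim |X_i|$ is a homeomorphism, and there is an open affinoid cover of $X$ on which $\varinjlim A_j \to (\text{sections})$ has dense image. The density condition is essentially automatic using the trivial cover by $X$ itself: each $X_i$ is an affinoid subset of itself over which $X\to X_i$ factors, so the direct system in the definition contains $\varinjlim A_i$, and this has dense image in $A$ by construction (namely $A$ is the completion of $\varinjlim A_i$ with respect to the $I$-adic topology on $A_0 = \varinjlim A_{i,0}$). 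So the substantive content is the homeomorphism of topological spaces.

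For the bijection of points, continuity of each structure map $X \to X_i$ gives a continuous map $|X| \to \varprojlim |X_i|$, and injectivity is immediate: two continuous valuations on $A$ agreeing on the dense subring $\varinjlim A_i$ must coincide. For surjectivity, I would take a compatible family $(v_i)$, assemble it into a multiplicative map $v$ on $\varinjlim A_i$, and then show $v$ is continuous for the $I$-adic topology on $A_0$. This is the heart of the argument. Given $\gamma$ in the value group, pick $i$ large enough so that $\gamma$ already lies in the value group of $v_i$, and use continuity of $v_i$ to find $n$ with $v_i(I_i^n A_{i,0}) < \gamma$. For any $j \geq i$, the compatibility $I_j = I_i A_{j,0}$ yields $I_j^n A_{j,0} = I_i^n A_{j,0}$; arranging (by enlarging if necessary) that $A_{j,0}\subset A_j^+$, so that $v_j(A_{j,0})\leq 1$, one gets $v_j(I_j^n A_{j,0}) < \gamma$. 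Hence $v(I^n A_0) < \gamma$, so $v$ is continuous, extends to the completion $A$, and $v(A^+)\leq 1$ follows by density.

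For the homeomorphism, I would argue that rational subsets of $X$ form a basis and match those pulled back from the $X_i$. In one direction, each rational $X_{i,f_1,\ldots,f_k;g}\subset X_i$ pulls back to a rational of $X$, so the map $|X|\to \varprojlim |X_i|$ is continuous. Conversely, a rational $U\subset X$ cut out by $f_1,\ldots,f_k,g\in A$ can be approximated: by density of $\varinjlim A_j$ in $A$, choose $f_\ell',g'\in A_i$ with $f_\ell - f_\ell',\; g - g'\in I^N A_0$ for $N$ large. A standard perturbation argument (as in rigid geometry, using that $(f_1,\ldots,f_k)A$ is open and $I$ is finitely generated) then shows $U$ coincides with the rational subset defined by $f_\ell',g'$, which is pulled back from $X_i$. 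Thus the topology on $X$ is generated by pullbacks from the $X_i$, giving the homeomorphism.

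The most delicate step is the continuity of the assembled valuation $v$ in the surjectivity portion. The hypothesis $I_j = I_i A_{j,0}$ and the finite generation of $I_i$ are exactly what make the estimate $v(I^n A_0)<\gamma$ work uniformly across the tower; without such compatibilities the direct limit topology would not be controlled by any single $v_i$ and the bijection on points could easily fail. The perturbation argument for rational subsets is standard once $I$ is finitely generated, so I expect the continuity step to be the only real obstacle.
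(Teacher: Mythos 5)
Your overall plan matches the paper's: reduce to a bijection of points by matching continuous valuations and then note that the density condition is satisfied by the defining cover. The one genuine gap is in the continuity estimate, at the step where you invoke $v_j(A_{j,0})\leq 1$.

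The justification offered, ``arranging (by enlarging if necessary) that $A_{j,0}\subset A_j^+$'', does not work. To get $A_{j,0}\subset A_j^+$ one would have to \emph{shrink} $A_{j,0}$ (say to $A_{j,0}\cap A_j^+$), not enlarge it, since $A_j^+$ is part of the given data. But shrinking the $A_{j,0}$ changes $A_0=\varinjlim A_{j,0}$ and hence the very topology on $A$ that the proposition fixes, and moreover the compatibility $I_j=I_iA_{j,0}$ has no reason to survive the replacement. And the inequality $v_j(A_{j,0})\leq 1$ is false in general when $A_{j,0}\not\subset A_j^+$: on $A=\Q_p\langle T\rangle$ with $A_0=\Z_p\langle T\rangle$ and $A^+=\Z_p+p\Z_p\langle T\rangle$, the rank-two Gauss valuation with $v(p)=(p^{-1},1)$ and $v(T)=(1,2)$ in $\R_{>0}^2$ (lexicographic) is continuous, satisfies $v(A^+)\leq 1$, yet $v(T)>1$ with $T\in A_0$.

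The correct uniform estimate comes straight from the compatibility hypothesis and needs no choice of rings of definition inside $A_j^+$. Fix $\gamma\in\Gamma_{v_i}$. If $v_i(I_i)=\{0\}$ then $v(I)=v(I_iA_0)=0$ and continuity at $\gamma$ is vacuous. Otherwise pick $y\in I_i$ with $v_i(y)\neq 0$. For any $j\geq i$ and $a\in A_{j,0}$, the element $ya$ lies in $I_iA_{j,0}=I_j$, hence is topologically nilpotent in $A_j$, hence $v_j(ya)<1$ (using that $\{v_j<1\}$ is open by continuity of $v_j$). This gives $v_j(a)<v_i(y)^{-1}$, a bound independent of $j$. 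Now choose $n$ with $v_i(I_i^n)<\gamma\, v_i(y)$; then for $x=\sum f_\ell a_\ell\in I^n=I_i^nA_0$ with $f_\ell\in I_i^n$, $a_\ell\in A_{j,0}$, one gets $v(x)\leq\max_\ell v_i(f_\ell)v_j(a_\ell)<\gamma\, v_i(y)\cdot v_i(y)^{-1}=\gamma$. This closes the continuity argument using exactly the hypotheses as stated; the rest of your outline (injectivity via density, and the description of rational subsets of $X$ as pullbacks from the $X_i$) is fine.
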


\begin{proof} It is clear that $(A,A^+)$ with the given topology is an affinoid ring. Moreover, giving a valuation on $(A,A^+)$ is equivalent to giving a compatible system of valuations on all $(A_i,A_i^+)$. The continuity condition is given by asking that $|I^n|\to 0$ as $n\to \infty$, checking that also continuous valuations correspond, thus
\[
|\Spa(A,A^+)|\cong \varprojlim |\Spa(A_i,A_i^+)|\ .
\]
The condition on rings is satisfied by definition, finishing the proof.
\end{proof}

\begin{prop}\label{BaseChangeInvLimit} In the situation of Definition \ref{limitdef}, let $Y_i\to X_i$ be any map of adic spaces, and let $Y_j=Y_i\times_{X_i} X_j\to X_j$ for $j\geq i$ be the pullback, as well as $Y=Y_i\times_{X_i} X\to X$. Then $Y\sim\varprojlim_{j\geq i} Y_j$.
\end{prop}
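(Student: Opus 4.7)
The plan is to verify both clauses of Definition \ref{limitdef} for the induced maps $Y\to Y_j$, and the natural strategy is to pull everything back locally from the affinoid cover of $X$ provided by the hypothesis $X\sim\varprojlim X_i$.

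Construction of a cover of $Y$: By hypothesis there is an open affinoid cover $\{\Spa(A_\beta,A_\beta^+)\}$ of $X$ such that, for each $\beta$, the map to $X_j$ factors compatibly in $j$ through some open affinoid $\Spa(A_{j,\beta},A_{j,\beta}^+)\subset X_j$, and $\varinjlim_j A_{j,\beta}\to A_\beta$ has dense image. Cover $Y_i$ by open affinoids $\Spa(C_\alpha,C_\alpha^+)$, each mapping to some open affinoid $\Spa(A_{i,\alpha},A_{i,\alpha}^+)\subset X_i$; after refining, we may arrange that each $\Spa(A_{i,\alpha},A_{i,\alpha}^+)$ is taken from the collection $\{\Spa(A_{i,\beta},A_{i,\beta}^+)\}$ (the density property is stable under passing to rational subsets, since rational localization commutes with tensor products and completion). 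Pulling $\Spa(C_\alpha,C_\alpha^+)$ back along $X_j\to X_i$ and $X\to X_i$ and invoking the fibre-product description of Proposition~(vi), we obtain open affinoids $\Spa(C_{j,\alpha},C_{j,\alpha}^+)\subset Y_j$ and $\Spa(D_\alpha,D_\alpha^+)\subset Y$, where $C_{j,\alpha}$ and $D_\alpha$ are the appropriate completions of $C_\alpha\otimes_{A_{i,\alpha}}A_{j,\alpha}$ and $C_\alpha\otimes_{A_{i,\alpha}}A_\alpha$ with the topologies prescribed there.

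Density and topology: The image of $C_\alpha\otimes_{A_{i,\alpha}}\varinjlim_j A_{j,\alpha}$ inside $D_\alpha$ lies in $\varinjlim_j C_{j,\alpha}$, and density of $\varinjlim_j A_{j,\alpha}\to A_\alpha$ implies density of this tensor-product image in $D_\alpha$: each simple tensor $c\otimes a$ is approximated by approximating $a$, because the ideal of definition of $D_\alpha$ contains the image of the ideal of definition from $A_\alpha$ by Proposition~(vi). Hence $\varinjlim_j C_{j,\alpha}\to D_\alpha$ has dense image, verifying the ring-theoretic clause of Definition \ref{limitdef}. For the topological claim, a continuous valuation on $D_\alpha$ restricts to one on the dense subring $\varinjlim_j C_{j,\alpha}$, equivalently to a compatible system of continuous valuations on the $C_{j,\alpha}$; conversely, any such compatible system gives a continuous valuation on the direct limit, which extends uniquely by density to $D_\alpha$. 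This yields a bijection $|\Spa(D_\alpha,D_\alpha^+)|\cong\varprojlim_j|\Spa(C_{j,\alpha},C_{j,\alpha}^+)|$, and it is a homeomorphism because the rational subsets of $\Spa(D_\alpha,D_\alpha^+)$ are (up to refinement) pulled back from those at finite level. Gluing gives $|Y|\cong\varprojlim_j|Y_j|$.

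Main obstacle: The principal technical point is matching topologies, namely, that the topology on $D_\alpha$ arising from Proposition~(vi) is precisely the one for which density propagates through the tensor product and for which valuations on $\varinjlim_j C_{j,\alpha}$ extend continuously. This is essentially the same mechanism as in Proposition \ref{ConstrInvLimits}: once the rings of definition are chosen compatibly, the ideal of definition in each $D_\alpha$ is generated by ideals of definition pulled back from $C_\alpha$ and $A_\alpha$, so approximation in $A_\alpha$ forces approximation in $D_\alpha$. The other minor subtlety, ensuring that the chosen affinoid cover of $Y_i$ refines to one built from the given cover of $X$, is handled by the already-noted stability of the density condition under rational localization.
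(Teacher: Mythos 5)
Your overall strategy—verifying both clauses of Definition \ref{limitdef} by pulling back an affinoid cover—is the natural one; the paper itself offers no proof, only a citation to Huber's book. For the density clause the idea is sound (density propagates through the tensor product because multiplication by a fixed $c\in C_\alpha$ is continuous into $D_\alpha$), but your stated justification misreads Proposition~(vi): that proposition says the ideal of definition of $D_{0,\alpha}$ is the image of $I\otimes J$, which is $I_A I_C D_{0,\alpha}$ and hence is \emph{contained in} $I_A D_{0,\alpha}$, not the reverse, so the inclusion you assert does not follow from it. The correct, and sufficient, observation is simply that $A_\alpha\to D_\alpha$ underlies a morphism of adic spaces and is therefore continuous, whence $c\otimes(a-a_n)\to 0$ once $a_n\to a$.

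The genuine gap is in the topological clause. Your bijection $\lvert\Spa(D_\alpha,D_\alpha^+)\rvert\cong\varprojlim_j\lvert\Spa(C_{j,\alpha},C_{j,\alpha}^+)\rvert$ is argued by extending a valuation from the dense subring $\varinjlim_j C_{j,\alpha}$ to $D_\alpha$. This requires the valuation to be continuous for the topology that $\varinjlim_j C_{j,\alpha}$ inherits from $D_\alpha$—not merely for the colimit topology—and that does not follow automatically from having a compatible system of continuous valuations on the $C_{j,\alpha}$. You flag this as the "main obstacle" and point to Proposition \ref{ConstrInvLimits}, but that proposition operates under an extra hypothesis, namely a compatible chain of ideals of definition with $I_j=I_iA_{j,0}$, which the general condition $X\sim\varprojlim X_i$ does not give you. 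Indeed your argument never uses the topological half of the hypothesis, $\lvert X\rvert\cong\varprojlim\lvert X_i\rvert$, which is independent of the density clause and is precisely what is needed here: the comparison of fibres of $Y$ and $Y_j$ over matching pairs $(y_i,x)$ versus $(y_i,x_j)$ should be made using the fact, implied by $\sim$, that the residue field at $x$ is the completion of the colimit of the residue fields at the $x_j$. Finally, the closing claim that the bijection is a homeomorphism because "rational subsets of $\Spa(D_\alpha,D_\alpha^+)$ are pulled back from finite level" is plausible—one can perturb the defining parameters using density—but it is asserted without proof and needs such a perturbation lemma to be invoked explicitly.
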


\begin{proof} See Remark 2.4.3 in \cite{HuberBook}.
\end{proof}

\begin{prop} Let $X$ be a preperfectoid space over $\Spa(K,\OO_K)$, where $K$ is a perfectoid field. Let $\hat{X}$ be the strong completion of $X$. Then $\hat{X}\sim X$.
\end{prop}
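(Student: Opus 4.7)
The plan is largely bookkeeping: unpack Definition \ref{limitdef} for the degenerate inverse system $\{X\}$ indexed by a single object, and then verify both clauses directly from the construction of $\hat{X}$ given by the previous proposition.

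First I would fix notation: since $X$ is preperfectoid, choose an affinoid open cover $\{U_i = \Spa(A_i, A_i^+)\}$ of $X$ with $(\hat{A}_i, \hat{A}_i^+)$ a perfectoid affinoid $(K,\OO_K)$-algebra (here $\hat{}$ denotes strong completion, i.e.\ completion for the topology on $A_i$ that makes $A_i^+$ carry its $p$-adic topology). The previous proposition hands me the perfectoid space $\hat{X}$ together with a morphism $f\from \hat{X}\to X$ whose restriction over $U_i$ is $\Spa(\hat{A}_i,\hat{A}_i^+)\to \Spa(A_i, A_i^+)$, and which is a homeomorphism on underlying topological spaces. This immediately gives the first clause of Definition \ref{limitdef}: $|\hat{X}|\to |X|$ is a homeomorphism.

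For the second clause, I would use the cover of $\hat{X}$ by the affinoid perfectoid opens $\hat{U}_i = \Spa(\hat{A}_i, \hat{A}_i^+)$. The composite $\hat{U}_i \to \hat{X}\to X$ factors through $U_i = \Spa(A_i, A_i^+)\subset X$, so the filtered colimit
\[
\varinjlim_{\Spa(B,B^+)\subset X,\ \hat{U}_i\to X\ \text{factors through it}} B \longrightarrow \hat{A}_i
\]
contains the specific term $A_i\to \hat{A}_i$ in its image. But this map is, by construction, the canonical map from $A_i$ to its topological completion, and hence has dense image. A fortiori the colimit map has dense image, yielding the required density condition.

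The only subtle point to flag is that the topology used to form $\hat{A}_i$ differs from the original topology on $A_i$ (we force $A_i^+$ to carry its $p$-adic topology); however, density of the canonical map into a topological completion holds for any choice of topology, so this reindexing of topologies does not cause any trouble. There is essentially no hard step here: the proposition is a direct consequence of how $\hat{X}$ was defined, once one unfolds the meaning of $\hat{X}\sim X$.
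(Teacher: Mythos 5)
Your proof is correct and matches the paper's approach; the paper simply writes "Immediate," and what you have written is exactly the definition-unpacking that justifies that remark. The key observations — that the previous proposition already asserts $|\hat{X}|\to |X|$ is a homeomorphism, and that $A_i\to \hat{A}_i$ has dense image because it is the canonical map into a topological completion (regardless of which topology one completes with respect to) — are precisely the two clauses of Definition \ref{limitdef} in the degenerate single-object case.
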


\begin{proof} Immediate.
\end{proof}

Of course, this shows that one may have $X\sim X_0$, while $X\neq X_0$. Let us however record the following result.

\begin{prop} Let $K$ be a perfectoid field, let $X_i$ be an inverse system of adic spaces over $\Spa(K,\OO_K)$ with qcqs transition maps, and assume that there is a perfectoid space $X$ over $\Spa(K,\OO_K)$ such that $X\sim \varprojlim X_i$. Then for any perfectoid affinoid $(K,\OO_K)$-algebra $(B,B^+)$, we have
\[
X(B,B^+) = \varprojlim X_i(B,B^+)\ .
\]
In particular, if $Y$ is a perfectoid space over $\Spa(K,\OO_K)$ with a compatible system of maps $Y\to X_i$, then $Y$ factors over $X$ uniquely, making $X$ unique up to unique isomorphism.
\end{prop}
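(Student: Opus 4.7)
The plan is to first deduce the last assertion (uniqueness of $X$) from the equality $X(B,B^+) = \varprojlim X_i(B,B^+)$: given a perfectoid space $Y$ over $\Spa(K,\OO_K)$ with compatible morphisms $Y\to X_i$, cover $Y$ by affinoid perfectoid opens $\Spa(B,B^+)\subset Y$, apply the equality to obtain morphisms $\Spa(B,B^+)\to X$, and glue them using uniqueness on affinoid perfectoid refinements of the double overlaps. It therefore suffices to prove that $X(B,B^+)\to \varprojlim X_i(B,B^+)$ is a bijection.

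For injectivity, suppose $g, g' \from \Spa(B,B^+) \to X$ induce the same family $(f_i)$. The homeomorphism $|X|\cong \varprojlim |X_i|$ forces $|g|=|g'|$. For any affinoid $\Spa(A,A^+)\subset X$ from the good cover in Definition \ref{limitdef}, the two pullbacks $g^*, g'^*$ are continuous ring maps $A\to \OO_{\Spa(B,B^+)}(g^{-1}\Spa(A,A^+))$ agreeing on the image of $\varinjlim A_i$; density of this image in $A$ together with completeness and Hausdorffness of the target forces $g^* = g'^*$, hence $g=g'$ locally and thus globally.

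For surjectivity, start with a compatible family $(f_i)$ and define $|g|\from|\Spa(B,B^+)|\to |X|$ via the homeomorphism. By quasicompactness of $\Spa(B,B^+)$ and the good affinoid cover of $X$, choose a finite cover of $\Spa(B,B^+)$ by rational subsets $V_j=\Spa(B_j, B_j^+)$ with $|g|(V_j)\subset |\Spa(A^{(j)}, A^{(j)+})|$ for some $\Spa(A^{(j)},A^{(j)+})$ in the good cover. For each index $i$ such that $\Spa(A^{(j)},A^{(j)+})\to X_i$ factors through some $\Spa(A_i^{(j)}, A_i^{(j)+})\subset X_i$, the restriction of $f_i$ to $V_j$ factors through this affinoid, yielding a continuous ring map $A_i^{(j)}\to B_j$ sending $A_i^{(j)+}$ into $B_j^+$. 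Passing to the colimit produces a bounded continuous ring map $\varphi\from \varinjlim A_i^{(j)}\to B_j$. Density of $\varinjlim A_i^{(j)}\to A^{(j)}$ together with completeness of $B_j$ allows $\varphi$ to be extended uniquely to a continuous ring map $A^{(j)}\to B_j$, and since $B_j^+$ is closed in $B_j$, the extension sends $A^{(j)+}$ into $B_j^+$. This defines morphisms $V_j\to \Spa(A^{(j)},A^{(j)+})\subset X$, which glue on overlaps by injectivity (applied to affinoid perfectoid refinements of $V_j\cap V_{j'}$) to the desired $g\from \Spa(B,B^+)\to X$.

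The main obstacle is the continuity step: one must check that $\varphi$ is continuous not merely for the colimit topology on $\varinjlim A_i^{(j)}$ but for the topology induced by the embedding into the Banach algebra $A^{(j)}$, which is what permits the extension by density. Concretely, fixing a pseudo-uniformizer $\varpi\in \OO_K$, one needs that the preimage under $\varphi$ of $\varpi^n B_j^+$ contains $\varpi^n A^{(j)+}\cap \varinjlim A_i^{(j)}$; this follows because each $A_i^{(j)+}$ maps into both $A^{(j)+}$ and $B_j^+$, and each transition is an $\OO_K$-algebra map, so $\varpi^n A_i^{(j)+}$ is uniformly sent into $\varpi^n B_j^+$. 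Combined with density, this yields the unique continuous extension and completes the argument.
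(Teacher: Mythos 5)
Your proposal follows essentially the same route as the paper: injectivity from density of $\varinjlim A_i$ in $A$ (paper asserts this directly), and surjectivity by extending a compatible family to a map $A\to B$ using density and completeness (the paper phrases this as ``passing to $p$-adic completions'' of the plus rings, which amounts to the same continuity-extension argument). You are in fact somewhat more careful than the paper, which simply says ``we may assume $X=\Spa(A,A^+)$ is affinoid'' and leaves the cover-and-glue reduction to the reader; your step of covering $\Spa(B,B^+)$ by rational subsets $V_j$ landing in single affinoids of $X$ fills that in correctly.

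The one place where your justification does not quite close is the continuity claim. You want the preimage under $\varphi$ of $\varpi^n B_j^+$ to contain $\varpi^n A^{(j)+}\cap \varinjlim A_i^{(j)}$, and offer as the reason that $\varpi^n A_i^{(j)+}$ is sent into $\varpi^n B_j^+$. But that only controls $\varinjlim \varpi^n A_i^{(j)+}$, which is a priori smaller than $\varpi^n A^{(j)+}\cap \varinjlim A_i^{(j)}$: an element of the latter is represented by some $a_i\in A_i^{(j)}$ whose image in $A^{(j)}$ lies in $\varpi^n A^{(j)+}$, but $a_i$ itself need not lie in $\varpi^n A_i^{(j)+}$. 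The claim is nonetheless true, and can be closed in two ways: either replace $\Spa(A_i^{(j)},A_i^{(j)+})$ by the rational subset $\{|a_i/\varpi^n|\leq 1\}$ (which still contains the image of $\Spa(A^{(j)},A^{(j)+})$ precisely because $a/\varpi^n\in A^{(j)+}$, and over which $a_i/\varpi^n$ does become integral, so that $a\in \varpi^n A_{i'}^{(j)+}$ for that refinement in the colimit), or argue valuation-wise, noting that for $y\in \Spa(B_j,B_j^+)$ the point $f_i(y)$ comes from a point of $\Spa(A^{(j)},A^{(j)+})$ under the homeomorphism of topological spaces, whence $|\varphi(a_i)(y)/\varpi^n|=|a(x)/\varpi^n|\leq 1$. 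The same subtlety is hidden in the paper's assertion that $\varinjlim A_i^+$ is dense in $A^+$ for the $p$-adic topology, so the omission is not structural; but the reason you gave is, strictly, a non sequitur and should be replaced by one of the two fixes above. The same fix is also needed where you conclude that the extension sends $A^{(j)+}$ into $B_j^+$ by closedness of $B_j^+$, since there too you implicitly need density of (images of) plus-elements.
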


\begin{proof} We may assume that $X=\Spa(A,A^+)$ is affinoid such that $(A,A^+)$ is a perfectoid affinoid $(K,\OO_K)$-algebra. Then we have $X(B,B^+)=\Hom((A,A^+),(B,B^+))$. As
\[
\varinjlim_{\im(X)\subset \Spa(A_i,A_i^+)\subset X_i} A_i\rightarrow A
\]
has dense image, it follows that the map
\[
X(B,B^+) \to \varprojlim X_i(B,B^+)
\]
is injective. Conversely, given a compatible system of maps $\Spa(B,B^+)\to X_i$, it follows that $\Spa(B,B^+)\to X_i$ factors over the image of $X$ in $X_i$. We get a map
\[
\varinjlim_{\im(X)\subset \Spa(A_i,A_i^+)\subset X_i} A_i^+\to B^+\ .
\]
Passing to $p$-adic completions, the left-hand side becomes equal to $A^+$, as $(A,A^+)$ is strongly complete, and the left-hand side is dense in $A^+$. This gives the desired map $A^+\to B^+$, i.e. $(A,A^+)\to (B,B^+)$.
\end{proof}

It seems reasonable to expect that there is a good definition of an \'{e}tale site for any adic space, and that the following result is true.

\begin{conj} Assume that $X_i$ is an inverse system of quasicompact and quasiseparated adic spaces, and that $X\sim \varprojlim X_i$. Then the \'{e}tale topos of $X$ is equivalent to the projective limit of the \'{e}tale topoi of the $X_i$ (considered as a fibred topos in the obvious way).
\end{conj}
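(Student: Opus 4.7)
The plan is to imitate the classical argument of SGA 4 VII for inverse limits of qcqs schemes, adapted to the adic setting. As the \'etale topos of a qcqs space is determined by its restriction to the subsite of qcqs \'etale morphisms, it suffices to prove that the small qcqs \'etale site of $X$ is equivalent to the $2$-filtered colimit of those of the $X_i$ under pullback along the transition maps. Granting this, general topos theory identifies the topos of the colimit site with $\varprojlim_i X_{i,\et}$, yielding the desired equivalence.

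First I would prove essential surjectivity: given a qcqs \'etale $Y\to X$, produce an index $i_0$ and a qcqs \'etale $Y_{i_0}\to X_{i_0}$ with $Y \cong Y_{i_0}\times_{X_{i_0}} X$. Cover $Y$ by finitely many affinoids on which the \'etale map factors (up to rational localization on the target) as a finite \'etale map $\Spa(B,B^+)\to \Spa(A,A^+)$ with $\Spa(A,A^+)\subset X$. By the density condition in Definition \ref{limitdef}, $A$ is the completion of a filtered direct limit $\varinjlim_j A_j$ of affinoid rings coming from the $X_j$; an Elkik-type approximation for finite \'etale algebras over complete Huber pairs then spreads $B$ to a finite \'etale $A_j$-algebra for some large $j$. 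The finitely many gluing isomorphisms along overlaps can also be spread out after a further increase of $j$, giving $Y_{i_0}$.

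Next I would prove full faithfulness: for qcqs \'etale $Y_i, Z_i \to X_i$, the natural map
\[
\varinjlim_{j\geq i} \Hom_{X_j}(Y_j, Z_j) \to \Hom_X(Y,Z)
\]
should be a bijection, where $Y_j$, $Z_j$ denote the pullbacks to $X_j$. Injectivity follows from the topological identification $\abs{X} = \varprojlim \abs{X_j}$ together with the openness of qcqs \'etale morphisms and a routine gluing argument. For surjectivity, reduce locally to the finite \'etale case over affinoids, where a morphism $Y\to Z$ over $X$ corresponds to a continuous $A$-algebra map between finite \'etale algebras over $A$; using density of $\varinjlim_j A_j$ in $A$ together with the rigidity of finite \'etale algebras (which are uniquely determined by their underlying algebra structure up to unique isomorphism), one argues that such a map descends to a morphism over some $A_j$.

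The hard part is the approximation of finite \'etale algebras over $A$ by those over the dense subring $\varinjlim_j A_j$, i.e.\ the adic analogue of Elkik's approximation theorem in this filtered direct limit situation. In the scheme case one exploits finite presentation directly; here one must use the completeness of $A$ and a Hensel-type argument to lift finite \'etale structures, and one also needs to ensure the spread-out morphism is again \'etale (rather than merely formally \'etale) after enlarging $j$. A secondary subtlety is that the conjecture is stated in the generality of the adic spaces of this paper (where $\OO_X$ may not be a sheaf), so one first has to fix a definition of the \'etale site in this generality--most naturally by descent from an open cover by honest affinoid adic spaces--and check independence of the cover.
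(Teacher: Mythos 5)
The paper does not prove this statement: it is labeled a \emph{Conjecture}, and the authors explicitly say that they only need, and only have, the two special cases where all spaces are strongly noetherian or perfectoid (Theorem \ref{topoi}, citing \cite{Sch}). So you have set out to prove something the authors themselves regard as open in this generality, and your plan should be judged against that.

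Your outline is the expected SGA 4 VII.5 strategy, and in the special cases that are actually known it is indeed carried out roughly this way. But the two steps you flag as ``the hard part'' are not technical details to be filled in; they are exactly the open issues that make this a conjecture. First, in the paper's generality one does not even have a definition of the \'etale site: adic spaces here include ones where $\OO_X$ is not a sheaf, and for such $X$ there is no agreed notion of \'etale morphism or \'etale topos, so ``define it by descent from honest affinoid pieces'' is already a nontrivial, unresolved proposal. Second, the approximation step is the crux. The notion $X\sim\varprojlim X_i$ is only a density statement (the image of $\varinjlim A_i$ is dense in $A$ on a suitable open cover), not an identification of $A$ with a completed direct limit compatibly with all the $A_i^+$; to spread a finite \'etale $A$-algebra out to some $A_j$ one needs both a Hensel/Elkik-type lifting \emph{and} control over the ring of definition and integral subring at finite level. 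In the perfectoid case this is supplied by almost purity, and in the strongly noetherian case by Huber's finiteness results; in general no such mechanism is available, and there are no general theorems guaranteeing that a finite \'etale cover of $\Spa(A,A^+)$ with $A$ a completed colimit descends to a finite level. Until one has such a spreading-out theorem (and a robust general definition of the \'etale site), your argument cannot be completed, which is consistent with the authors leaving the statement as a conjecture and using only Theorem \ref{topoi} in the sequel.
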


We will only need the cases where all $X_i$ and $X$ are either strongly noetherian adic spaces, or perfectoid spaces. In that case, there is a definition of their \'{e}tale topoi, and we have the following result.

\begin{Theorem}[\cite{Sch}, Thm. 7.17] \label{topoi} Let $K$ be a perfectoid field, and assume that all $X_i$ are strongly noetherian adic spaces (in particular quasicompact and quasiseparated) over $\Spa(K,\OO_K)$, and that $X$ is a perfectoid space over $\Spa(K,\OO_K)$, $X\sim \varprojlim X_i$. Then the \'etale topos of $X$ is the projective limit of the \'etale topoi of the $X_i$.
\end{Theorem}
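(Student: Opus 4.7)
The plan is to prove the equivalence of étale topoi by showing that the étale site of $X$ is the $2$-colimit of the étale sites of the $X_i$, from which the topos statement follows by the general formalism for projective limits of qcqs topoi. The hypothesis $X\sim\varprojlim X_i$ already gives $|X|=\varprojlim|X_i|$, so the open-immersion part is immediate; the real content lies in finite étale covers.

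I would first reduce to the case $X=\Spa(A,A^+)$ affinoid perfectoid and $X_i=\Spa(A_i,A_i^+)$ affinoid strongly noetherian, with $\varinjlim A_i\to A$ of dense image. The goal then becomes showing that the category of finite étale $A$-algebras is the filtered $2$-colimit of the categories of finite étale $A_i$-algebras. Full faithfulness is the easier half: a morphism between pullbacks of finite étale covers $Y_i,Z_i\to X_i$ is encoded by an idempotent in the finite étale endomorphism algebra of $(Y_i\times_{X_i} Z_i)_X$, and such an idempotent descends to some $X_j$ by combining density with the fact that idempotents are rigid under small perturbations.

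For essential surjectivity, the crucial tool is the almost purity theorem for perfectoid rings, which identifies finite étale $A$-algebras with almost finite étale $A^{+a}$-algebras, and then under tilting with almost finite étale $(A^{+\flat})^a$-algebras. Since $A^{+\flat}=\varprojlim_\Phi A^+/p$ and the image of $\varinjlim A_i^+/p$ is dense in $A^+/p$, one can approximate any given almost finite étale algebra on the tilt side by one coming from a finite stage, then lift back through almost purity and tilting to produce the desired finite étale $A_j$-algebra whose pullback recovers the original cover of $X$.

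The main obstacle is precisely this last approximation-and-lifting step: tilting lives naturally in the perfectoid world, whereas the $A_i$ are merely noetherian and carry no Frobenius structure. Bridging the two pictures requires compatibility of almost purity with completed filtered colimits and careful use of the strongly noetherian hypothesis on the $X_i$ to guarantee that a finite étale algebra produced by approximation is already defined over a finite stage in the honest (non-almost, non-tilted) category. All of this is carried out as Theorem~7.17 of \cite{Sch}, which is the source of the result stated here.
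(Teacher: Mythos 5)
The paper itself offers no proof of Theorem~\ref{topoi}; it is quoted verbatim from \cite{Sch}, Thm.~7.17, and the remark immediately following the theorem explains that the relation $\sim$ used here is \emph{stronger} than the relation $\sim'$ appearing in \cite{Sch}, so the cited statement applies directly. Thus the only thing to compare your sketch against is the argument in \cite{Sch}. Your reduction to the affinoid case and the full-faithfulness step are sound and match the standard strategy: a morphism of pullbacks of finite \'etale covers is encoded by an idempotent in a completed tensor product, idempotents in a ring complete along a topologically nilpotent ideal are determined modulo that ideal, and the density built into the definition of $\sim$ then lets the idempotent descend to a finite level.

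The essential-surjectivity step is where a genuine gap appears, and you under-diagnose the problem you yourself flag. Tilting identifies finite \'etale $A$-algebras with finite \'etale $A^\flat$-algebras, but there is no filtered system on the tilt side to descend to: the $A_i$ are strongly noetherian Tate rings in characteristic zero, they carry no Frobenius and have no tilts, and the density of the image of $\varinjlim A_i^+/p$ in $A^+/p$ does \emph{not} produce approximants of elements of $A^{+\flat}=\varprojlim_\Phi A^+/p$ originating at a finite stage --- a compatible system of $p$-power roots in $A^+/p$ has no reason to come from any single $A_i$. So the step ``approximate on the tilt side by one coming from a finite stage'' cannot be carried out even in principle; this is not a technicality to be absorbed by ``careful use of the strongly noetherian hypothesis.'' The argument in \cite{Sch} instead stays in characteristic zero: one writes the finite \'etale $A$-algebra (Zariski-locally) in standard \'etale form, approximates the finitely many defining constants by elements of some $A_i$ using the density supplied by $\sim$, and invokes the invariance of finite \'etale algebras under completion along a topologically nilpotent ideal (the Gabber--Ramero/Elkik-type statements underlying the lemmas of \cite{Sch}, Section~7) to conclude that the approximated $A_i$-algebra base-changes to the given one. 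Almost purity and tilting enter only to set up the \'etale site of the perfectoid $X$ and to prove the auxiliary lemmas about its points; they are not the descent mechanism.
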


We remark that the notion $\sim$ introduced here is stronger than the notion $\sim^\prime$ introduced in \cite{Sch}, which only talks about the residue fields instead of the sections of the structure sheaf in an open neighborhood. As explained in \cite[Section 2.4]{HuberBook}, one should expect that the weaker notion $\sim^\prime$ is sufficient in the conjecture if all $X_i$ are analytic adic spaces, as is the case in the previous theorem.

\section{Preparations on $p$-divisible groups}

In this section, we work over a ring $R$ which is $p$-torsion, and consider $p$-divisible groups $G$ over $R$. Let $\Nilp_R^\op$ denote the category opposite of the category of $R$-algebras on which $p$ is nilpotent. Of course, the last condition is vacuous, but later we will pass to $p$-adically complete $\Z_p$-algebras $R$.

\subsection{The universal cover of a $p$-divisible group}

We will consider $G$ as the sheaf on $\Nilp_R^\op$, sending any $R$-algebra $S$ to $\varinjlim G[p^n](S)$.

\begin{lemma} Assume that $G$ is isogenous to an extension of an \'{e}tale $p$-divisible group by a connected $p$-divisible group. Then the functor $G$ is representable by a formal scheme (still denoted $G$), which locally admits a finitely generated ideal of definition.
\end{lemma}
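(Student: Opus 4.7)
The plan is to handle the case where $G$ is itself such an extension first, and then reduce the general isogenous case by taking an fppf quotient by a finite locally free group scheme.

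For the extension $0 \to G^\circ \to G \to G^{\et} \to 0$, I would first recall the two pure cases and then assemble them. By a classical theorem (\cite{Messing}), a connected $p$-divisible group $G^\circ$ of dimension $d$ is representable by the formal Lie group $\Spf R\llbracket X_1, \ldots, X_d\rrbracket$, an affine formal scheme with finitely generated ideal of definition $(X_1, \ldots, X_d)$; since $p$ is nilpotent in $R$, no power of $p$ need be adjoined. An étale $p$-divisible group $G^{\et}$ is the inductive limit of the finite étale group schemes $G^{\et}[p^n] = \Spec A_n$, with the inclusions $G^{\et}[p^n] \hookrightarrow G^{\et}[p^{n+1}]$ being clopen immersions; hence $G^{\et}$ is actually a scheme, namely a (possibly infinite) disjoint union of clopen affine opens, which I view as a formal scheme with zero ideal of definition. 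For the extension $G$ itself, the projection $G \to G^{\et}$ is affine and formally smooth of relative dimension $d$ with fibres isomorphic to $G^\circ$; for any affine open $U = \Spec A \subset G^{\et}$ I would show that $G|_U \cong \Spf(A\llbracket X_1, \ldots, X_d\rrbracket)$, an affine formal scheme with finitely generated ideal of definition $(X_1, \ldots, X_d)$.

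For the reduction from the general isogenous case: given an isogeny $\phi\colon G \to G'$ to an extension $G'$, the dual isogeny $\psi\colon G' \to G$ (satisfying $\psi \circ \phi = [p^n]_G$) exhibits $G$ as the fppf quotient $G'/K'$, where $K' = \ker(\psi) \subset G'$ is a finite locally free closed subgroup. Locally $G' = \Spf B$ for an adic ring $B$ with finitely generated ideal of definition $I$, and $K'$ acts on $\Spf B$ through a finite locally free $R$-algebra; the quotient is then locally $\Spf B^{K'}$, where $B^{K'}$ is the invariant subring. Since $B^{K'} \subset B$ is a finite extension, the image of $I$ generates a finitely generated ideal of definition for $B^{K'}$, and $G$ is locally a formal scheme with finitely generated ideal of definition.

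The main technical obstacle is establishing the local isomorphism in the extension step — a priori the $G^\circ$-torsor $G|_U \to U$ need not admit a global section, so one cannot produce the claimed structure simply by splitting. Instead, I would set $B := \varprojlim_n \mathcal{O}(G[p^n] \times_{G^{\et}} U)$ equipped with the inverse limit topology, and verify that this is an adic $A$-algebra of the required shape by using the formal smoothness of $G \to G^{\et}$ together with the fact that each $G[p^n] \times_{G^{\et}} U$ is a finite locally free $U$-scheme of rank equal to the order of $G^\circ[p^n]$. This is the formal-scheme analogue of the standard fact that smooth morphisms are étale-locally affine spaces, and its careful verification (including the check that the inverse limit topology is indeed adic with finitely generated ideal of definition, cofinal with the kernels of $B \to \mathcal{O}(G[p^n] \times_{G^{\et}} U)$) is where the bulk of the work lies.
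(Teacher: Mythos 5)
Your treatment of the extension case is in the spirit of what the paper intends by ``one reduces to the connected case'': over each quasicompact open $U\subset G^{\et}$ the preimage in $G$ is finite locally free over $G^{\circ}=\Spf R\powerseries{X_1,\dots,X_d}$, and your candidate description $B=\varprojlim_n\OO(G[p^n]\times_{G^{\et}}U)$ with the inverse limit topology is the right formal Lie variety; the cofinality/adic-ness verification you flag is precisely the content the paper leaves unspoken. The genuine gap is in the reduction from the isogenous case. You present $G$ as the fppf quotient $G'/K'$ and assert the quotient is ``locally $\Spf B^{K'}$'', but (a) a given affine formal open $\Spf B\subset G'$ need not be stable under translation by $K'$ (stable opens do exist --- take $[p^n]^{-1}(G'^{\circ})$ with $n$ large enough that $K'\subset G'[p^n]$ --- but you do not invoke them), and (b) even after fixing (a), it is not clear that $B^{K'}$ is an adic ring with a \emph{finitely generated} ideal of definition; the phrase ``the image of $I$ generates a finitely generated ideal of definition for $B^{K'}$'' does not parse, since $I\subset B$ and $B^{K'}\subset B$, and $I\cap B^{K'}$ being finitely generated is not automatic in this non-noetherian setting.

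The paper avoids all of this by running the isogeny the other way. After multiplying a quasi-isogeny by a power of $p$ one has an honest isogeny $\phi\colon G\to G'$, which is finite locally free and hence relatively representable. If $G'=\bigcup_i\Spf B_i'$ with $I_i'\subset B_i'$ finitely generated ideals of definition, then $\phi^{-1}(\Spf B_i')=\Spf B_i$ with $B_i$ finite locally free over $B_i'$, and $I_i'B_i$ is tautologically a finitely generated ideal of definition. Pulling back a cover along a finite locally free map is a one-line step and never requires forming invariants. Your own set-up already produces this $\phi$; you could simply discard the quotient argument and pull back the affine cover along it.
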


\begin{proof} It is clear that isogenies induce relatively representable morphisms, which do not change the ideal of definition, so we may assume that $G$ is an extension of an \'{e}tale by a connected $p$-divisible group. Then one reduces to the connected case, in which case $G$ is a formal Lie group, giving the result.
\end{proof}

One can also consider the completion $\hat{G}$ of $G$, which we define as the functor on $\Nilp_R^\op$ sending any $R$-algebra $S$ on which $p$ is nilpotent to the subset of those $x\in G(S)$ for which there is some nilpotent ideal $I\subset S$ such that $\bar{x}\in G(S/I)$ is the zero section. We recall that this is always representable.

\begin{lemma} The functor $\hat{G}$ is a formal Lie variety, and in particular representable by an affine formal scheme with finitely generated ideal of definition. If $G$ is connected, then $G=\hat{G}$. Moreover, if $\Lie G$ is free, then
\[
\hat{G} = \Spf R\powerseries{X_1,\ldots,X_d}\ .
\]
\end{lemma}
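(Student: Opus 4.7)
The plan is to reduce everything to the classical structure theory of the formal completion of a $p$-divisible group at the identity section, as developed in Messing \cite{Messing}, and then handle the three assertions separately.

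First, I would unwind the definition. For each $n$, write $G[p^n]=\Spec A_n$ with augmentation ideal $I_n=\ker(A_n\to R)$ (so $A_n/I_n=R$ via the unit section). Then $\hat G$ is the colimit $\varinjlim_n \widehat{G[p^n]}$, where $\widehat{G[p^n]}(S)$ consists of those $R$-algebra maps $A_n\to S$ that send $I_n$ into some nilpotent ideal of $S$. Since each $A_n$ is finitely generated over $R$, this condition is equivalent to landing in the nilradical. In particular, $\hat G$ only depends on the formal neighborhood of the zero section of $G$.

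Second, I would invoke the main structural result of Messing, Chapter II: the formal completion of any $p$-divisible group $G$ along its identity section is a formal Lie variety of dimension $d=\operatorname{rank}_R\Lie G$. The crucial input is that $[p]\from\hat G\to\hat G$ is a faithfully flat isogeny of degree $p^d$ lifting the isogeny on $G$, and this forces $\hat G$ to be formally smooth. In particular, $\hat G$ is representable by an affine formal scheme whose ideal of definition is finitely generated (generated by the images of lifts of a basis of $\Lie G$, together with the $p$-power torsion indexing data). When $\Lie G$ is free over $R$ with basis $e_1,\ldots,e_d$, one lifts this basis through the canonical isomorphism $\Lie\hat G\cong I/I^2$ (where $I\subset\OO_{\hat G}$ is the augmentation ideal) to elements $X_1,\ldots,X_d\in I$, and Messing's argument shows that these generate $\OO_{\hat G}$ topologically and are algebraically independent, producing the claimed global isomorphism $\hat G=\Spf R\powerseries{X_1,\ldots,X_d}$.

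Third, for the connected case, I would use that $G$ connected means each $G[p^n]=\Spec A_n$ is infinitesimal, i.e.\ the augmentation ideal $I_n\subset A_n$ is nilpotent. But then for any $R$-algebra $S$ with $p$ nilpotent and any $R$-algebra map $f\from A_n\to S$, the image $f(I_n)\subset S$ is automatically nilpotent. Hence $G[p^n](S)=\widehat{G[p^n]}(S)$ for all such $S$, and passing to the direct limit in $n$ gives $G=\hat G$ as functors on $\Nilp_R^\op$.

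The main obstacle is the formal smoothness of $\hat G$; rather than reprove this, I would cite Messing. The other two assertions then follow by direct unwinding of definitions and linear algebra with the identification $\Lie\hat G=\Lie G=I/I^2$.
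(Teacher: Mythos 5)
Your proposal is correct and takes essentially the same approach as the paper, which simply gives the reference ``Cf.~\cite{Messing}.'' You spell out the reduction to Messing's structure theorem in more detail (in particular the observation that finite generation of the augmentation ideal $I_n$ makes ``maps into a nilpotent ideal'' equivalent to ``maps into the nilradical,'' and the argument for the connected case via nilpotence of $I_n$), but the substance is the same citation.
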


\begin{proof} Cf. \cite{Messing}.
\end{proof}

Consider the sheaf
\[
\tilde{G}(S) = \varprojlim_{p\from G\to G} G(S)\ ,
\]
on $\Nilp_R^\op$, which we call the universal cover of $G$. Clearly, $\tilde{G}$ is a sheaf of $\Q_p$-vector spaces.

\begin{prop}\label{UnivCoverRepr}\begin{altenumerate}
\item[{\rm (i)}] Let $\rho\from G_1\to G_2$ be an isogeny. Then the induced morphism $\tilde{\rho}\from \tilde{G}_1\to \tilde{G}_2$ is an isomorphism.
\item[{\rm (ii)}] Let $S\to R$ be a surjection with nilpotent kernel. Then the categories of $p$-divisible groups over $R$ and $S$ up to isogeny are equivalent. In particular, for any $p$-divisible group $G$ over $R$, the universal cover $\tilde{G}$ lifts canonically to $\tilde{G}_S$ over $S$. Moreover, $\tilde{G}_S(S) = \tilde{G}(R)$.
\item[{\rm (iii)}] Assume that $G$ is isogenous to an extension of an \'{e}tale by a connected $p$-divisible group. Then $\tilde{G}$ is representable by a formal scheme (still denoted $\tilde{G}$), which locally admits a finitely generated ideal of definition. If $R$ is perfect of characteristic $p$, $G$ is connected and $\Lie G$ is free of dimension $d$, then
\[
\tilde{G}\cong \Spf R\powerseries{X_1^{1/p^\infty},\ldots,X_d^{1/p^\infty}}\ .
\]
\end{altenumerate}
\end{prop}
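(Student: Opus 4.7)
For part (i), I would observe that an isogeny $\rho\colon G_1\to G_2$ with $\ker\rho\subset G_1[p^n]$ admits a ``dual'' isogeny $\rho'\colon G_2\to G_1$ satisfying $\rho'\rho = [p^n]_{G_1}$ and $\rho\rho' = [p^n]_{G_2}$, because $[p^n]_{G_1}$ kills $\ker\rho$ and thus factors through $\rho$. Since multiplication by $p$ is invertible on the universal cover by construction (it is a transition map in the defining inverse limit), the induced maps $\tilde\rho$ and $\tilde\rho'$ are mutually inverse, giving (i).

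For (ii), the equivalence of isogeny categories through a nilpotent thickening $S\to R$ is the classical rigidity theorem for $p$-divisible groups (see Chapter~2 of \cite{RZ}). Full faithfulness reduces to the observation that for a nilpotent ideal $J\subset S$ (with $p$ nilpotent in $S$) and a connected $p$-divisible group $G_S$, the kernel of $G_S(S)\to G(R)$ is killed by a bounded power $p^N$, so any homomorphism of lifts vanishing on $R$ is $p^N$-divisible and hence zero in the isogeny category. Essential surjectivity rests on Grothendieck--Messing: after passing to a PD-cover of the thickening, one lifts the Dieudonn\'e crystal of $G$ together with any compatible Hodge filtration and realizes the lifted datum as an honest $p$-divisible group over $S$. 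Once the equivalence is available, $\tilde G$ lifts canonically to $\tilde G_S$, and for the identity $\tilde G_S(S) = \tilde G(R)$: injectivity follows from $\varprojlim_p$ of a $p^N$-torsion group being zero; for surjectivity I would choose any set-theoretic lifts $\tilde x_n\in G_S(S)$ of $x_n\in G(R)$ (possible by formal smoothness of connected parts and unique liftability of \'etale parts), note that the defects $\tilde x_n - [p]\tilde x_{n+1}$ lie in the $p^N$-torsion kernel of reduction, and set $y_n := [p]^N \tilde x_{n+N}$, which by telescoping satisfies $[p]y_{n+1} = y_n$ and lifts the original system.

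For (iii), representability is immediate because each transition map $[p]\colon G\to G$ is a finite flat isogeny of degree $p^h$, in particular affine, so the inverse limit exists in the category of formal schemes -- locally it is $\Spf$ of the filtered colimit of coordinate rings along $[p]^*$ with the naturally induced adic topology. For the explicit formula when $R$ is perfect of characteristic $p$ and $G$ is connected of dimension $d$ with $\Lie G$ free, I would factor $[p]_G = V\circ F$ into the relative Frobenius $F$ and Verschiebung $V$, both isogenies of $G$ once we identify $G^{(p)}$ with $G$ via perfectness. By (i) applied to $V$, the Verschiebung is an isomorphism on $\tilde G$, which together with the commuting factorization $[p] = VF = FV$ lets me replace the inverse system $(G,[p])$ by the system $(G,F)$ up to canonical isomorphism. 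In coordinates $G = \Spf R\powerseries{X_1,\dots,X_d}$, the Frobenius $F^*$ acts as $X_i\mapsto X_i^p$ (with a $\sigma$-twist on coefficients that is invertible by perfectness), so the colimit of $F^*$ is $R[X_1^{1/p^\infty},\dots,X_d^{1/p^\infty}]$, whose $(X_1,\dots,X_d)$-adic completion is the desired $R\powerseries{X_1^{1/p^\infty},\dots,X_d^{1/p^\infty}}$.

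The hardest step is essential surjectivity in (ii), for which I need the Grothendieck--Messing (or Cartier/Zink) machinery to produce a lift of an arbitrary $p$-divisible group to $S$ up to isogeny; without this input the crystal structure on $\tilde G$ would not be available. Once (ii) is in place, rigidity and the shifting trick make the identity $\tilde G_S(S) = \tilde G(R)$ formal, and (iii) reduces to chasing coordinates via the $F$-$V$ factorization.
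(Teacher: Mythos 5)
Your parts (i) and (ii) are correct. For (i) you spell out the dual-isogeny argument that the paper relegates to ``clear.'' For the equality $\tilde G_S(S)=\tilde G(R)$ in (ii), the paper reformulates $\tilde G(R)=\Hom_R(\Q_p/\Z_p,G)[p^{-1}]$ and cites the equivalence of isogeny categories directly, whereas you give a bare-hands telescoping argument with bounded-$p$-torsion defects; both are valid, and your route has the virtue of being elementary once one grants the bounded torsion of $\ker(G_S(S)\to G(R))$, though it requires a bit more care about why that torsion bound holds uniformly (this is ultimately Messing-style).

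Part (iii) has a real gap in the passage from $\varprojlim_p G$ to $\varprojlim_F G^{(p^{-n})}$. You assert that since $\tilde V$ is an isomorphism (by (i)) and $[p]=VF=FV$, one may ``replace the inverse system $(G,[p])$ by the system $(G,F)$.'' But this inference cannot be right as stated: $V$ is an isogeny for \emph{every} $p$-divisible group, so $\tilde V$ is always an isomorphism, and yet the conclusion fails for \'etale $G$. For $G=\Q_p/\Z_p$ over a perfect base, $F$ is already an isomorphism, so $\varprojlim_F G^{(p^{-n})}\cong G$, whereas $\varprojlim_p G=\tilde G\neq G$. Your argument nowhere uses connectedness of $G$, which is therefore suspicious. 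What is actually needed is to show that the natural transformation $\mathcal V\colon \varprojlim_p G\to\varprojlim_F G^{(p^{-n})}$ (given on the $n$-th component by $V^n$) is an isomorphism, and for this one must construct an inverse; the paper does so by choosing $m\geq 1$ with $F^m=pu$ for an isogeny $u\colon G^{(p^{-m})}\to G$ (this is exactly where connectedness enters), and defining $\mathcal U$ via the $u^n$. Without an argument of this type, the step is unjustified. Once that is supplied, your coordinate computation of the $F$-colimit and its $(X_1,\dots,X_d)$-adic completion as $R\powerseries{X_1^{1/p^\infty},\dots,X_d^{1/p^\infty}}$ is fine.
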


\begin{rmk} Part (ii) says that $\tilde{G}$ can be considered as a crystal on the infinitesimal site of $R$.
\end{rmk}

\begin{proof} Part (i) is clear, as multiplication with $p$-powers induces an isomorphism on $\tilde{G}$. In part (ii), only the last statement requires proof; but note that
\[
\tilde{G}(R) = \Hom_R(\Q_p/\Z_p,G)[p^{-1}]\ ,
\]
so the result follows from the equivalence of categories of $p$-divisible groups up to isogeny, in the special case of homomorphisms with source $\Q_p/\Z_p$.

For part (iii), the first part follows easily from the result for $G$. Now assume that $R$ is perfect, and $G$ is connected. Let $G^{(p^n)}=G\otimes_{R,\Phi^n} R$ for $n\in \Z$, where $\Phi\from R\to R$ is the Frobenius.  Write $F\from G^{(p^n)}\to G^{(p^{n+1})}$ for the Frobenius isogeny, and $V\from G^{(p^{n+1})}\to G^{(p^n)}$ for the Verschiebung, so that $VF$ (resp., $FV$) is multiplication by $p$ in $G^{(p^n)}$ (resp., $G^{(p^{n+1})}$).

There is a natural transformation of functors $\mathcal{V}\from \varprojlim_p G\to \varprojlim_F G^{(p^{-n})}$, given by
\[
\xymatrix{
G \ar[d]_{=} & G \ar[d]_{V} \ar[l]_{p} & G \ar[d]_{V^2} \ar[l]_{p} & \cdots \ar[l] \\
G & G^{(p^{-1})} \ar[l]^{F} & G^{(p^{-2})} \ar[l]^{F} &\cdots \ar[l]
}
\]
We claim that $\mathcal{V}$ is an isomorphism.  Since $G$ is connected, there exists $m\geq 1$ such that $F^m=pu$ for an isogeny $u\from G^{(p^{-m})}\to G$.  Use the same symbol $u$ to denote the base change $G^{(p^{-mn})}\to G^{(p^{-m(n-1)})}$ for any $n\in\Z$, so that $u^n$ is an isogeny $G^{(p^{-mn})}\to G$.   Write $\mathcal{U}$ for the natural transformation $\varprojlim_{F^m} G^{(p^{-mn})}\to \varprojlim_p G$ induced by the $u^n$.  One checks that $\mathcal{U}\circ \mathcal{V}$ and $\mathcal{V}\circ \mathcal{U}$ are both the identity map, giving the claim.

Now it is clear that $\varprojlim_F G^{(p^{-n})}$ is represented by the completion of
\[
\varinjlim_{X_i\mapsto X_i^p} R\powerseries{X_1,\ldots,X_d}\ ,
\]
as claimed.
\end{proof}

A similar argument is presented in the proof of Prop. 8.4 of~\cite{FarguesFontaine}.

\begin{Cor}\label{tG-G0} Assume that there is a perfect ring $A$, a map $A\to R/p$, a connected $p$-divisible group $H$ over $A$ with free Lie algebra of dimension $d$, and a quasi-isogeny
\[
\rho\from H\otimes_A R/p\to G\otimes_R R/p\ .
\]
Then $\tilde{G}$ is represented by the formal scheme
\[
\tilde{G} = \Spf R\powerseries{X_1^{1/p^\infty},\ldots,X_d^{1/p^\infty}}\ .
\]
Moreover, $\rho$ induces a natural identification $\tilde{G} = \tilde{H}_R$, where the latter denotes the evaluation of the crystal $\tilde{H}\otimes_A R/p$ on the infinitesimal site of $R/p$ on the thickening $R\to R/p$.
\end{Cor}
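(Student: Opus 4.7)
The proof assembles three facts from Proposition \ref{UnivCoverRepr}: isogeny-invariance of $\tilde{G}$ (part (i)), representability over a perfect base (part (iii)), and the crystal property on the infinitesimal site (part (ii)). First I would use the quasi-isogeny $\rho$ and Proposition \ref{UnivCoverRepr}(i) to get an isomorphism $\widetilde{H \otimes_A R/p} \xrightarrow{\sim} \widetilde{G \otimes_R R/p}$. Since the universal cover commutes with base change of the base ring -- both $\widetilde{H \otimes_A R/p}(S)$ and $\tilde{H}(S)$ equal $\varprojlim_p H(S)$ for any $R/p$-algebra $S$ -- we have $\widetilde{H \otimes_A R/p} = \tilde{H} \times_{\Spf A} \Spf(R/p)$ as formal schemes. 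Combining with Proposition \ref{UnivCoverRepr}(iii), applicable because $A$ is perfect and $H$ is connected with free Lie algebra of dimension $d$, yields
\[
\widetilde{G \otimes_R R/p} \cong \Spf(R/p)\powerseries{X_1^{1/p^\infty}, \ldots, X_d^{1/p^\infty}}.
\]

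The next step is to lift this identification from $R/p$ to $R$. Since $R$ is $p$-torsion, the kernel of $R \to R/p$ is nilpotent, so Proposition \ref{UnivCoverRepr}(ii) identifies $\tilde{G}$ with the canonical crystal lift of $\widetilde{G \otimes_R R/p}$. The obvious candidate is $\mathcal{X} := \Spf R\powerseries{X_1^{1/p^\infty}, \ldots, X_d^{1/p^\infty}}$, which reduces modulo $p$ to the right-hand side above. It suffices to show that the reduction map $\mathcal{X}(S) \to \mathcal{X}(S/pS)$ is a bijection for every $S \in \Nilp_R^\op$; combined with the crystal identification $\tilde{G}(S) = \widetilde{G \otimes_R R/p}(S/pS) = \mathcal{X}(S/pS)$, this gives $\tilde{G} \cong \mathcal{X}$. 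The statement $\tilde{G} = \tilde{H}_R$ then follows by construction, since $\tilde{H}_R$ is defined as the evaluation of the crystal $\tilde{H} \otimes_A R/p$ on the thickening $R \to R/p$, which matches our chain of identifications.

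The bijectivity claim is the main technical step; everything else is a formal chain of identifications. Points of $\mathcal{X}(S)$ correspond to $d$-tuples of compatible $p$-power sequences $(a_k)_{k \geq 1}$ in $S$ with $a_{k+1}^p = a_k$ and $a_1$ topologically nilpotent. For injectivity, given two such sequences agreeing modulo $p$, set $b_k := a_k - a'_k \in pS$ and expand
\[
b_k = \sum_{j=1}^{p} \binom{p}{j} (a'_{k+1})^{p-j} b_{k+1}^j;
\]
the divisibility $p \mid \binom{p}{j}$ for $1 \leq j \leq p-1$ gives the gain $b_{k+1} \in p^m S \Rightarrow b_k \in p^{m+1} S$, and iterating along the infinite tail forces $b_k = 0$ since $p$ is nilpotent in $S$. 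For surjectivity, arbitrary lifts $\widetilde{a}_k \in S$ of a compatible sequence in $S/pS$ need not themselves be compatible, but an analogous binomial estimate gives $\widetilde{a}_{k+n+1}^{p^{n+1}} - \widetilde{a}_{k+n}^{p^n} \in p^{n+1} S$, so the sequence $\widetilde{a}_{k+n}^{p^n}$ stabilizes in $n$ (as $p$ is nilpotent) to a genuine lift $a_k := \lim_n \widetilde{a}_{k+n}^{p^n}$ with $a_{k+1}^p = a_k$, while nilpotence of $a_1$ is inherited from its reduction.
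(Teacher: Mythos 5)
Your proof is correct and takes essentially the route the paper intends for this corollary: combine the isogeny-invariance (Proposition \ref{UnivCoverRepr}(i)) with representability over the perfect base $A$ (part (iii)), identify $\widetilde{G\otimes_R R/p}\cong \Spf (R/p)\powerseries{X_1^{1/p^\infty},\dots,X_d^{1/p^\infty}}$, and then use the crystal property (part (ii)) to lift to $R$. The only divergence is cosmetic: your key technical step, the bijectivity of $\mathcal{X}(S)\to\mathcal{X}(S/pS)$, is precisely the content of Proposition \ref{ballfunctornilp}, which the paper had already stated (with ``easy and left to the reader'') and which you could have cited directly. Your explicit verification via the binomial estimates is a clean and correct proof of that fact, so you've simply supplied the argument the authors elided rather than taken a genuinely different path.
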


Again, one can consider a variant with better representability properties, namely $\tilde{G}\times_G \hat{G}$.

\begin{lemma}\label{UnivCoverAlmRepr} The sheaf $\tilde{G}\times_G \hat{G}$ is representable by an affine formal scheme $\Spf S$ with finitely generated ideal of definition $I\subset S$ given by the preimage of the ideal defining the unit section in $\hat{G}$. Moreover, $S$, $I$ and $S/I$ are flat over $R$, and if $R$ is of characteristic $p$, then $S$ is relatively perfect over $R$, i.e. the relative Frobenius morphism
\[
\Phi_{S/R}\from S\otimes_{R,\Phi} R\to S
\]
is an isomorphism.
\end{lemma}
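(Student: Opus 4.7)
I would decompose $G$ using its connected-\'etale sequence $0\to G^\circ\to G\to G^{\et}\to 0$, noting $\hat G=G^\circ$, and identify $\tilde G\times_G\hat G$ with the kernel of the composition $\tilde G\to G\to G^{\et}$. An element $(x_0,x_1,\ldots)\in\tilde G$ lies in $\tilde G\times_G\hat G$ iff $\bar x_0=0$ in $G^{\et}$; then the images $(\bar x_1,\bar x_2,\ldots)$ form an element of $T_p(G^{\et})$, and two lifts of a fixed element of $T_p(G^{\et})$ differ by an element of $\widetilde{G^\circ}$. Since each $G^{\et}[p^n]$ is finite \'etale over $R$, the resulting $\widetilde{G^\circ}$-torsor trivializes \'etale-locally on the pro-\'etale scheme $T_p(G^{\et})$, and combined with the representability of $\widetilde{G^\circ}$ from Proposition 3.1.3(iii), this exhibits $\tilde G\times_G\hat G$ as an affine formal scheme $\Spf S$.

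The projection $\pi_0:\tilde G\times_G\hat G\to\hat G$ makes $\Spf S$ into a $T_p(G)$-torsor over $\hat G$, so after choosing coordinates $\hat G=\Spf R\powerseries{X_1,\ldots,X_d}$ locally on $\Lie G$, the ideal $I:=\pi_0^*(X_1,\ldots,X_d)\cdot S$ is finitely generated by $d$ elements, coincides with the preimage of the unit-section ideal of $\hat G$, and is an ideal of definition of $S$, since $S/I^n$ is \'etale-locally $R\powerseries{X_1,\ldots,X_d}/(X_1,\ldots,X_d)^n\otimes_R\OO(T_p(G))$, hence finite over $S/I$. For flatness, $S/I=\OO(T_p(G))$ is the filtered colimit of the finite locally free $R$-algebras $\OO(G[p^n])$, hence flat over $R$; each graded piece $I^n/I^{n+1}$ is finite locally free over $S/I$ by the formal Lie structure of $\hat G$, and by $I$-adic completeness, $S$ and $I$ are also flat over $R$.

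For the relative perfectness in characteristic $p$, the key observation is that on $\tilde G=\varprojlim_{[p]}G$, multiplication by $p$ acts as the invertible shift $(x_0,x_1,\ldots)\mapsto (px_0,x_0,x_1,\ldots)$. Since $V\circ F=[p]_{\tilde G}$ and $F\circ V=[p]_{\tilde G^{(p)}}$, both the relative Frobenius $F:\tilde G\to\tilde G^{(p)}$ and the Verschiebung $V:\tilde G^{(p)}\to\tilde G$ are isomorphisms. Furthermore, $F_G:G\to G^{(p)}$ induces an isomorphism on \'etale quotients, so $F_G^{-1}(\hat G^{(p)})=\hat G$, and consequently $F$ restricts to an isomorphism $\tilde G\times_G\hat G\xrightarrow{\sim}(\tilde G\times_G\hat G)^{(p)}$. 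Passing to rings, this is precisely the statement that the relative Frobenius $S\otimes_{R,\Phi}R\to S$ is an isomorphism, i.e., $S$ is relatively perfect over $R$.

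\textbf{Main obstacle.} The main difficulty is globalizing the \'etale-local trivializations of the torsors to obtain representability with a finitely generated ideal of definition, and checking flatness in the non-perfect setting, where the clean description $\widetilde{G^\circ}=\Spf R\powerseries{X_i^{1/p^\infty}}$ of Corollary 3.1.4 is unavailable; both are handled via the filtered-colimit structure of $\OO(T_p(G))$ together with the uniform contribution of the formal Lie structure of $\hat G$ in the $X$-direction along the torsor.
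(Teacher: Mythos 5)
Your proposal hinges on the connected--\'etale sequence $0\to G^\circ\to G\to G^{\et}\to 0$ and the identification $\hat G = G^\circ$, but the lemma is stated for an arbitrary ring $R$ with $p$ nilpotent, and over such a base the connected--\'etale sequence of a $p$-divisible group need not exist. This is precisely why the paper works with $\tilde G\times_G\hat G$ rather than with $\tilde G$ (or $\widetilde{G^\circ}$) directly: $\hat G$ is \emph{always} a formal Lie variety by Lemma \ref{UnivCoverRepr}'s neighbouring lemma, whereas Proposition~\ref{UnivCoverRepr}(iii) has to explicitly \emph{assume} that $G$ is isogenous to an extension of an \'etale by a connected group. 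So the first step of your argument -- decomposing via $G^{\et}$ and describing $\tilde G\times_G\hat G$ as a $\widetilde{G^\circ}$-torsor over $T_p(G^{\et})$ -- already presupposes structure that the lemma is designed to avoid. Even where the decomposition does exist, the maps $G[p^n]\to G^{\et}[p^n]$ are only fppf (the kernel is infinitesimal, not \'etale), so the torsor trivializes fppf-locally, not \'etale-locally; the gluing you invoke would have to be fppf descent.

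The paper's proof is much shorter and sidesteps all of this. Writing $\tilde G\times_G\hat G$ as the inverse limit of $[p^n]^{-1}(\hat G)\subset G$ with transition maps $[p]$, one observes that each $[p^n]^{-1}(\hat G)\to\hat G$ is finite locally free (because $[p^n]\colon G\to G$ is), hence representable by $\Spf S_n$ with $S_n$ finite projective over $\OO(\hat G)$, with ideal of definition pulled back from $\hat G$. Passing to the filtered colimit on rings gives $S$; all flatness assertions follow because each $S_n$, $IS_n$, $S_n/IS_n$ is $R$-flat (using that $\hat G$ is a formal Lie variety) and filtered colimits preserve flatness. For relative perfectness, the paper reduces everything to the single claim $F^{-1}(\hat G^{(p)})=\hat G$, which is immediate: if $F(\bar x)=0$ in $G^{(p)}(S/I)$ then $\bar x$ factors through $\ker F$, which is infinitesimal, so $\bar x$ dies after a further nilpotent thickening. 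Your argument for this via ``$F_G$ induces an isomorphism on \'etale quotients'' again leans on the connected--\'etale decomposition; the direct argument is both cleaner and valid in full generality. Your observation that $F$ and $V$ are isomorphisms on $\tilde G$ because $[p]$ is invertible there is correct and is a nice way to package the first half of the perfectness statement.
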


\begin{proof} As $G\buildrel p\over\to G$ is relatively representable (and finite locally free), the map $G\times_G \hat{G}\to \hat{G}$ is relatively representable, and thus $G\times_G \hat{G}$ is representable, where the transition map $G\to G$ is multiplication by a $p$-power. Moreover, the ideal of definition does not change: One can take the preimage of the ideal defining the unit section in $\hat{G}$. Upon passing to the inverse limit (i.e., the direct limit on the level of rings), we get representability of $\tilde{G}\times_G \hat{G}$: The flatness assertions are easy to check, as $\hat{G}$ is a formal Lie variety and the transition morphisms are finite and locally free. To check that $S$ is relatively perfect over $R$, one has to check that the pullback of $\hat{G}^{(p)}$ along $F: G\to G^{(p)}$ is $\hat{G}$, which is obvious.
\end{proof}

\begin{rmk} Note that $\tilde{G}\times_G \hat{G}$ is still a crystal on the infinitesimal site, as checking whether a section of $G$ on a thickening $S\to R$ of $R$ lies in $\hat{G}(S)$ can be checked after restricting to $R$, by definition of $\hat{G}$.
\end{rmk}

\subsection{The universal vector extension, and the universal cover}

Let $EG$ be the universal vector extension of $G$:
\[
0 \to V\to EG\to G\to 0\ .
\]
Define a map
\begin{equation}\label{alphaG}
 s_G\from \tilde{G}(R)\to EG(R)
\end{equation}
as follows.  Represent an element $x\in \tilde{G}(R)$ by a sequence $x_n\in G(R)$.  Lift $x_n$ to an element $y_n\in EG(R)$, and form the limit $\lim_{n\to \infty} p^ny_n$.  The limit exists because the differences $p^{m+n}y_{m+n}-p^ny_n$ lie in $p^nV(R)$.  A different choice of lifts changes the value of $y$ by an element of the form $\lim_{n\to\infty} p^nv_n$ with $v_n\in V(R)$, but this limit is $0$.  Set $s_G(x)=y$.  It is easy to see that the formation of $s_G$ is functorial in $G$.

We note that if $G=\Q_p/\Z_p$, then the universal vector extension of $G$ is
\[ 0\to\mathbf{G}_a\to \frac{\mathbf{G}_a\oplus\Q_p}{\Z_p} \to \Q_p/\Z_p\to 0,\]
and the morphism $s_G\from\tilde{G}\to EG$ is exactly the inclusion of $\Q_p$ into $(R\oplus\Q_p)/\Z_p$.

\begin{lemma} The morphism $s_G$ extends functorially to a morphism of crystals $\tilde{G}\to EG$ on the nilpotent crystalline site (i.e., thickenings with a nilpotent PD structure).
\end{lemma}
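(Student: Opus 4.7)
The plan is to extend $s_G$ to a morphism of crystals as follows. Let $(U \hookrightarrow T, \gamma)$ be an object of the nilpotent PD crystalline site of $R$. Since $p$ is nilpotent on $U$ and the PD ideal of $T$ over $U$ is nilpotent, $p$ is nilpotent on $T$ as well. By Grothendieck--Messing theory, choose any lift $G_T$ of $G$ to $T$. Applying formula (\ref{alphaG}) to $G_T$ over $T$ gives a map $s_{G_T}(T) : \tilde{G}_T(T) \to EG_T(T)$: the limit $\lim_n p^n y_n$ converges in $EG_T(T)$ because $p^{n+m}y_{n+m} - p^n y_n \in p^n V(T)$ and $p$ is nilpotent on $T$, and it is independent of the auxiliary lifts $y_n$ for the same reason. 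Via the crystal identification $\tilde{G}_T(T) \cong \tilde{G}(U)$ from Proposition \ref{UnivCoverRepr}(ii), this produces a candidate map $\tilde{G}(U) \to EG_T(T)$.

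The essential content is independence of the choice of lift. Given two lifts $G_T, G_T'$ of $G_U$ to $T$, Proposition \ref{UnivCoverRepr}(ii) yields a canonical quasi-isogeny $\rho : G_T \to G_T'$ restricting to $\mathrm{id}_{G_U}$. Multiplying $\rho$ by a sufficiently large power of $p$ turns it into an honest isogeny, so the functoriality of the universal vector extension produces a map $E\rho : EG_T \to EG_{T'}$ well-defined after inverting $p$; this is harmless since $s_G$ is defined by a $\lim p^n$ construction and is thus $\Q_p$-linear. The quasi-isogeny $\rho$ also induces the canonical crystal isomorphism $\tilde\rho : \tilde{G}_T \xrightarrow{\sim} \tilde{G}_{T'}$. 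The functoriality of $s_G$ in $G$, noted in the paragraph following (\ref{alphaG}), then yields the commutativity $E\rho \circ s_{G_T} = s_{G_{T'}} \circ \tilde\rho$, which is precisely the compatibility of $s_{G_T}$ and $s_{G_{T'}}$ under the canonical crystalline identifications of $EG$. This shows that the candidate map $\tilde{G}(U) \to EG_T(T)$ depends only on the PD thickening, not on the auxiliary lift.

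Functoriality with respect to morphisms of thickenings $(U',T',\gamma') \to (U,T,\gamma)$ is then immediate from the naturality of (\ref{alphaG}) applied with compatibly chosen lifts of $G|_{U'}$. The main obstacle is the independence-of-lift step, and it is overcome precisely by the canonical identification of different lifts \emph{up to isogeny} furnished by Proposition \ref{UnivCoverRepr}(ii): the construction of $s_G$ as a $p$-adic limit sees only the isogeny class of $G_T$, so the ambiguity in the integral choice of lift disappears after applying the functoriality of $s_G$ for quasi-isogenies.
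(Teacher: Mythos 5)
Your proposal is correct and uses the same mechanism as the paper: lift a $p^n$-multiple of the relevant map over the thickening, observe the two constructions agree after multiplication by $p^n$, then cancel the $p^n$ using that the source $\tilde{G}(T)$ is a $\Q_p$-vector space. The only organizational difference is that the paper states a single sub-lemma for an arbitrary morphism $f_0\from G_0\to H_0$ of $p$-divisible groups over $R$ together with arbitrary lifts to the thickening, which gives in one stroke both the lift-independence that you prove (taking $f_0=\mathrm{id}$) and the functoriality of $G\mapsto(\tilde G\to EG)$ in $G$ that is part of the statement (``extends \emph{functorially}'') but which your write-up treats only implicitly; you would want to make that case explicit, though it is literally the same $p^n$-cancellation argument. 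One minor caution on phrasing: you speak of $E\rho$ being ``well-defined after inverting $p$,'' but $EG_T(T)$ is not a $\Q_p$-module so this is not quite meaningful as stated; what actually happens is exactly the paper's bookkeeping --- $E(p^n\rho)$ exists honestly, and the identity $E(p^n\rho)\circ s_{G_T} = p^n\bigl(s_{G_{T'}}\circ\tilde\rho\bigr)$ forces the desired equality because $p^n$ is invertible on the source.
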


\begin{proof} Let $S\to R$ be a nilpotent PD thickening. As one can always lift $p$-divisible groups from $R$ to $S$, it is enough to prove the following result.

\begin{lemma} Let $G$, $H$ be two $p$-divisible groups over $S$, with reductions $G_0$, $H_0$ to $R$. Let $f_0\from G_0\to H_0$ be any morphism. Then the diagram
\[
\xymatrix{
\tilde{G}(S) \ar[r]^{s_G} \ar[d]_{\tilde{f}_0}  & EG(S) \ar[d]^{Ef_0} \\
\tilde{H}(S) \ar[r]_{s_H} & EH(S)
}
\]
commutes.
\end{lemma}

\begin{proof} Some multiple $p^n f_0$ of $f_0$ lifts to a morphism $f\from G\to H$ over $S$. It is clear that the diagram
\[
\xymatrix{
\tilde{G}(S) \ar[r]^{s_G} \ar[d]_{\tilde{f}}  & EG(S) \ar[d]^{Ef} \\
\tilde{H}(S) \ar[r]_{s_H} & EH(S)
}
\]
commutes, as everything is defined over $S$. Let $g\in \Hom(\tilde{G}(S),EH(S))$ denote the difference of the two morphisms in the diagram of the lemma. Then we just proved that $p^n g=0$. On the other hand, $\tilde{G}(S)$ is a $\Q_p$-vector space. It follows that $g=0$.
\end{proof}
\end{proof}

Now assume that $S$ is a topological $\Z_p$-algebra, equipped with a surjection $S\to R$, whose kernel is topologically nilpotent and has a PD structure. We do not require the PD structure to be (topologically) nilpotent. For example, we allow the surjection $\Z_2\to \F_2$.

In the following, we write $\MM(G)$ for the crystal $\Lie EG$. Note that by the theory of \cite{BerthelotBreenMessing}, it can be made into a crystal on the crystalline site, not just on the nilpotent crystalline site.

\begin{defn} The quasi-logarithm map is defined as the map
\[
\qlog_G\from \tilde{G}(S)\to \MM(G)(S)[p^{-1}]
\]
given as the composite
\[
\tilde{G}(S)\buildrel{s_G}\over\longrightarrow EG(S)\buildrel{\log_{EG}}\over\longrightarrow \MM(G)(S)[p^{-1}]\ .
\]
\end{defn}

\begin{rmk} In this definition, we want to evaluate $EG$ on $S$. This may not be possible, as the thickening $S\to R$ is not required to have a (topologically) nilpotent PD structure. The easiest way around this is the following. Consider $R^\prime = S/I^2$, where $I=\ker(S\to R)$. Then $S\to R^\prime$ is a topologically nilpotent PD thickening. Up to isogeny, working over $R$ and $R^\prime$ is equivalent. As the quasi-logarithm only involves objects up to isogeny, we can use the definition for $S\to R^\prime$ to get the result for $S\to R$.
\end{rmk}

\begin{lemma}\label{QLogvsLog} Let $G^\prime$ be a lift of $G$ to $S$. The diagram
\[\xymatrix{
\tilde{G}(S)\ar[rr]^{\qlog_G}\ar[d] && \MM(G)(S)[p^{-1}]\ar[d]\\
G^\prime(S)\ar[rr]^{\log_{G^\prime}} && (\Lie G^\prime)[p^{-1}]
}\]
commutes.
\end{lemma}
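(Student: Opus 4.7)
The plan is to verify the commutativity by unwinding the construction of $s_G$ (and hence $\qlog_G$) using the given lift $G'$, and then invoking the functoriality of the logarithm with respect to the projection $EG' \to G'$.

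First I would set up the identifications. Since $G'$ lifts $G$ to $S$, its universal vector extension $EG'$ is a lift of $EG$, and by definition $\MM(G)(S) = \Lie EG'$; under this identification, the right-hand vertical map in the diagram is the $\Q_p$-linearization of the natural projection $\Lie EG' \to \Lie G'$ coming from $0 \to V \to EG' \to G' \to 0$. On the other hand, by Proposition \ref{UnivCoverRepr}(ii), $\tilde{G}(S) = \tilde{G}'(S) = \varprojlim_p G'(S)$, and the left vertical map is the projection onto the $0$th coordinate.

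Next I would compute $\qlog_G(x) = \log_{EG}(s_G(x))$ for $x = (x_n)_{n\geq 0} \in \tilde{G}'(S)$, using lifts inside $EG'(S)$ rather than working only modulo the PD-kernel. Choose lifts $y_n \in EG'(S)$ of $x_n$; then
\[
s_G(x) = \lim_{n\to\infty} p^n y_n
\]
makes sense in $EG'(S)$ (in the appropriate formal/up-to-isogeny sense explained in the remark preceding the lemma, after passing from $S$ to $S/I^2$ if necessary), since $p^{n+m}y_{n+m} - p^n y_n \in p^n V(S)$. The crucial observation is that under the projection $EG'(S) \to G'(S)$, each term $p^n y_n$ maps to $p^n x_n = x_0$, so $s_G(x)$ maps to $x_0$.

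Finally I would apply functoriality of the logarithm to the morphism $EG' \to G'$: the square with top row $\log_{EG'}\from EG'(S) \to \Lie EG'[p^{-1}]$, bottom row $\log_{G'}\from G'(S) \to \Lie G'[p^{-1}]$, and vertical maps the natural projections, commutes. Chasing $s_G(x)$ through this square: its image in $\Lie G'[p^{-1}]$ under the right-hand projection is $\log_{G'}(x_0)$, while the top composite is precisely $\qlog_G(x)$. This is exactly the commutativity asserted. The only mildly subtle point is the honest existence of the limit defining $s_G$ in $EG'(S)$ and the identification of its image in $G'(S)$ with $x_0$; both are handled by the formal-group/crystal bookkeeping already in place, so I do not expect a real obstacle beyond this bookkeeping.
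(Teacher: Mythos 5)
Your proof is correct and follows essentially the same route as the paper: both composites out of $\tilde{G}(S)=\tilde{G}'(S)$ factor through $s_{G'}\from\tilde{G}'(S)\to EG'(S)$, and the remaining square is the functoriality of $\log$ applied to the projection $EG'\to G'$. You spell out the one point the paper leaves implicit, namely that $s_{G'}(x)=\lim p^n y_n$ projects to $x_0$ in $G'(S)$, which is what makes the left-bottom composite factor through $s_{G'}$.
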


\begin{rmk} This lemma says that on the universal cover $\tilde{G}(S)$, which is independent of the lift, one has quasi-logarithm map (which is also independent of the lift), and which specializes to the logarithm map of any chosen lift $G^\prime$ upon projection to the corresponding quotient $\MM(G)(S)\to \Lie G^\prime$.
\end{rmk}

\begin{proof} Reduce to the case of a topologically nilpotent PD thickening as in the previous remark. Then the maps from $\tilde{G}(S)=\tilde{G}^\prime(S)$ both factor over $s_{G^\prime}\from \tilde{G}^\prime(S)\to EG^\prime(S)$. The remaining diagram is just the functoriality of $\log$.
\end{proof}

\subsection{The Tate module of a $p$-divisible group}

Consider the sheaf
\[
T(G)(S) = \varprojlim_n G[p^n](S)\ ,
\]
on $\Nilp_R^\op$, which we call the (integral) Tate module of $G$. Clearly, $T(G)$ is a sheaf of $\Z_p$-modules.

\begin{prop} The sheaf $T(G)$ is representable by an affine scheme, flat over $\Spec R$.
\end{prop}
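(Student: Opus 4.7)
The plan is to realize $T(G)$ as the inverse limit of affine schemes in the category of fpqc sheaves, which will automatically be represented by an affine scheme, and then use that a filtered colimit of flat modules is flat.

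First, I would note that for each $n$, the scheme $G[p^n]$ is a finite locally free $R$-group scheme, hence affine. Write $G[p^n] = \Spec A_n$, where $A_n$ is a finite locally free (in particular flat) $R$-algebra. The transition map in the inverse system defining $T(G)$ is the multiplication-by-$p$ map $[p]\from G[p^{n+1}]\to G[p^n]$, which is an affine morphism of schemes and therefore corresponds to an $R$-algebra homomorphism $A_n\to A_{n+1}$. Thus we obtain a directed system of $R$-algebras $(A_n)_{n\geq 1}$.

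Next, I would set $A_\infty = \varinjlim_n A_n$ and claim that $T(G)$ is represented by $\Spec A_\infty$. For any $R$-algebra $S$ on which $p$ is nilpotent, one has
\[
\Hom_R(A_\infty, S) = \varprojlim_n \Hom_R(A_n, S) = \varprojlim_n G[p^n](S) = T(G)(S),
\]
where the first equality uses the universal property of the filtered colimit and the second identifies $\Hom_R(A_n,S)$ with $G[p^n](S)$. (One should check that the transition maps correspond correctly to multiplication by $p$, which is immediate from the construction.) Hence $T(G)$ is representable by the affine scheme $\Spec A_\infty$.

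Finally, for flatness: each $A_n$ is finite locally free over $R$, hence flat. Flatness is preserved under filtered colimits of modules, so $A_\infty = \varinjlim_n A_n$ is a flat $R$-algebra, giving that $T(G)\to \Spec R$ is flat. No step here is really an obstacle; the only thing one needs to verify carefully is that the scheme-theoretic limit agrees with the sheaf-theoretic limit, which is automatic because the transition maps are affine and one can compute the limit in the category of affine schemes.
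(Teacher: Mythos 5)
Your proof is correct, and it takes a more elementary route than the paper. The paper's own proof (one line) realizes $T(G)$ as the closed subscheme of $\tilde{G}\times_G\hat{G}$ cut out by the ideal $I$ of Lemma~\ref{UnivCoverAlmRepr}, i.e.\ $T(G)=\Spec(S/I)$, and then cites that lemma's flatness statement for $S/I$; that lemma in turn was proved by taking the direct limit along the multiplication-by-$p$ transition maps of the coordinate rings of $G\times_G\hat G$. Your argument short-circuits this by directly writing $T(G)=\Spec\bigl(\varinjlim_n A_n\bigr)$ with $A_n$ the coordinate ring of $G[p^n]$ and the transition maps dual to $[p]\from G[p^{n+1}]\to G[p^n]$, then quoting the universal property of filtered colimits and the fact that a filtered colimit of flat modules is flat. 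The two representing rings coincide by Yoneda, and the flatness argument is the same in spirit (filtered colimit of flat $R$-modules). What the paper's packaging buys — and why it chooses the $\tilde{G}\times_G\hat{G}$ framing — is that it simultaneously records the additional structure used later (e.g.\ in the proof of Theorem~\ref{SurjResult}): that $T(G)=\Spec(S/I)$ with $S$ flat and, in characteristic $p$, relatively perfect over $R$, with $I$ finitely generated and $S/I$ flat. Your argument establishes the proposition as stated just as well, but one would still need Lemma~\ref{UnivCoverAlmRepr} for those downstream applications.
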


\begin{proof} Note that $T(G)$ is the closed subfunctor of $\tilde{G}\times_G \hat{G}$ given by pullback from the zero section in $\hat{G}$. This means that in the notation of Lemma \ref{UnivCoverAlmRepr} that $T(G) = \Spec (S/I)$, giving the result.
\end{proof}

From now on, we will assume that $R$ is an adic $\Z_p$-algebra, with a finitely generated ideal $I$ of definition containing $p$, and take a $p$-divisible group $G$ over $R$. The previous discussion applies to all rings $R/I^n$, and we will consider the objects obtained by passage to the inverse limit in the following.

We have the adic space $T(G)^\ad$, and its generic fibre $T(G)^\ad_\eta$ over $\Spa(\Q_p,\Z_p)$, which is an adic space over $\Spa(R,R)_\eta$.

\begin{prop}\label{TateInvLimit} We have
\[
T(G)^\ad_\eta\sim \varprojlim_n G[p^n]^\ad_\eta\ .
\]
\end{prop}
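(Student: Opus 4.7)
The plan is to verify directly the two defining conditions of Definition~\ref{limitdef}—the homeomorphism of underlying topological spaces and the density of sections—by reducing to a local affinoid computation and invoking Proposition~\ref{ConstrInvLimits}. The transition maps $G[p^{n+1}]^\ad_\eta\to G[p^n]^\ad_\eta$ come from the finite locally free morphisms $G[p^{n+1}]\to G[p^n]$ induced by multiplication by $p$, so they are automatically quasicompact and quasiseparated, as required.

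I would first work locally on $\Spa(R,R)_\eta$. Choose any affinoid open $U=\Spa(R',R'^+)\subset \Spa(R,R)_\eta$ with $R'^+$ bounded (equivalently, a ring of definition). Since each $G[p^n]$ is finite locally free over $R$, its pullback to $U$ is an affinoid $U_n=\Spa(R'_n,R'^{+}_n)$, where $R'_n$ is a finite $R'$-algebra and $R'^{+}_n$ is the integral closure of $R'^+$ in $R'_n$. Because finite extensions preserve boundedness, each $R'^{+}_n$ is itself a ring of definition for $R'_n$ with finitely generated ideal of definition $pR'^{+}_n$, and for $m\geq n$ one has $pR'^{+}_m=(pR'^{+}_n)\cdot R'^{+}_m$. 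These are exactly the hypotheses of Proposition~\ref{ConstrInvLimits}, which yields a complete affinoid ring $(R'_\infty,R'^{+}_\infty)$—with $R'_\infty$ the completion of $\varinjlim_n R'_n$ in the topology giving $\varinjlim_n R'^{+}_n$ the $p$-adic topology—and a canonical identification $\Spa(R'_\infty,R'^{+}_\infty)\sim \varprojlim_n U_n$.

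It remains to identify the pullback of $T(G)^\ad_\eta$ to $U$ with this affinoid $\Spa(R'_\infty,R'^{+}_\infty)$. I would verify this on functors of points using Proposition~\ref{PropGenericFibre}. For a complete affinoid $(\Q_p,\Z_p)$-algebra $(S,S^+)$ equipped with a morphism to $(R',R'^+)$, both functors send $(S,S^+)$ to the sheafification of $\varinjlim_{S_0\subset S^+} T(G)(S_0)$, where $S_0$ ranges over open bounded $\OO_K$-subalgebras of $S^+$ compatible with a chosen ring of definition $R'_0\subset R'^+$: on the one hand this is $T(G)^\ad_\eta(S,S^+)$ by construction, and on the other hand it equals $\varinjlim_{S_0}\varprojlim_n \Hom_{R'_0}(R'_n,S_0) = \varinjlim_{S_0}\varprojlim_n G[p^n](S_0)$, which by the universal property of the $\sim$-limit agrees with $\Spa(R'_\infty,R'^{+}_\infty)(S,S^+)$. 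Gluing this local identification over an affinoid cover of $\Spa(R,R)_\eta$ yields both the homeomorphism and density conditions globally.

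The main technical obstacle is the third step: reconciling the ``formal scheme'' topology on $B_\infty=\varprojlim_n\mathcal{O}(G[p^n])$—where a basis of neighborhoods of zero is given by the kernels of the projections $B_\infty\to B_n$ together with powers of an ideal of definition of $R$—with the Banach topology on $R'_\infty$ arising from the direct limit construction of Proposition~\ref{ConstrInvLimits}. The two topologies need not agree integrally, but they become compatible on the generic fibre once $p$ is inverted and one restricts attention to maps out of bounded open subrings $S_0\subset S^+$, which is precisely the content of the functor-of-points calculation above.
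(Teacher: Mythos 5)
Your overall plan---reduce to an affinoid computation and invoke Proposition~\ref{ConstrInvLimits}---is in the right ballpark, but you work on the generic fibre from the start, which creates complications that the paper sidesteps, and two of your steps have real gaps. The paper instead first proves $T(G)^\ad\sim \varprojlim_n G[p^n]^\ad$ over $\Spa(R,R)$ (where each $G[p^n]=\Spf R_n$, $T(G)=\Spf R_\infty$ with $R_\infty$ the $I$-adic completion of $\varinjlim R_n$, so Proposition~\ref{ConstrInvLimits} applies immediately with the affinoid rings $(R_n,R_n)$), and then passes to the generic fibre in one stroke using Proposition~\ref{BaseChangeInvLimit}, which you never invoke. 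Your route has to re-derive by hand, over each affinoid $U$, a compatibility that Proposition~\ref{BaseChangeInvLimit} gives for free.

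The concrete gaps: first, the assertion that the integral closures $R'^+_n$ are rings of definition (bounded), and that affinoid opens with $R'^+$ bounded even cover $\Spa(R,R)_\eta$, is stated without justification; at the integral level the paper avoids this entirely since the $R_n$ are themselves the rings of definition. Second, and more seriously, the clause ``which by the universal property of the $\sim$-limit agrees with $\Spa(R'_\infty,R'^+_\infty)(S,S^+)$'' invokes a universal property that the relation $\sim$ does not have in general: the paper is explicit that inverse limits are not canonical in $\Adic$, and the only universal property proved is for perfectoid spaces mapping into a perfectoid $X\sim\varprojlim X_i$. Your functor-of-points identification of $T(G)^\ad_\eta|_U$ with $\Spa(R'_\infty,R'^+_\infty)$ therefore needs a different argument, which is exactly what going integral supplies. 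Finally, the ``technical obstacle'' you raise in the last paragraph is based on a confusion: $T(G)=\varprojlim_p G[p^n]$ has coordinate ring the \emph{direct} limit $\varinjlim_n \mathcal{O}(G[p^n])$ along the maps dual to multiplication by $p$, not the inverse limit $\varprojlim_n\mathcal{O}(G[p^n])$ (that would be the ring $A'_G$ associated to the ind-scheme $G[p^\infty]$, a different object). The formal-scheme topology on $\varinjlim_n\mathcal{O}(G[p^n])$ is simply the $I$-adic one, which is exactly what Proposition~\ref{ConstrInvLimits} builds in, so there is no mismatch to reconcile once the confusion is cleared.
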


\begin{proof} By Proposition \ref{BaseChangeInvLimit}, is enough to prove that $T(G)^\ad\sim \varprojlim_n G[p^n]^\ad$. However, if $G[p^n] = \Spf R_n$, then $T(G) = \Spf R_\infty$, where $R_\infty$ is the $p$-adic completion of $\varinjlim R_n$. Thus the result follows from Proposition \ref{ConstrInvLimits}.
\end{proof}

\begin{prop} Multiplication by $p$ induces an open immersion $T(G)^\ad_\eta\to T(G)^\ad_\eta$. In particular,
\[
V(G)^\ad_\eta = \varinjlim_p T(G)^\ad_\eta
\]
is an adic space over $\Spa(R,R)_\eta$, which we call the rational Tate module of $G$.
\end{prop}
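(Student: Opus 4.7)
The plan is to exhibit $[p]$ as an isomorphism of $T(G)^\ad_\eta$ onto the open subspace
\[
U := \pi_1^{-1}\bigl(e(\Spa(R,R)_\eta)\bigr) \subset T(G)^\ad_\eta,
\]
where $\pi_1 \colon T(G)^\ad_\eta \to G[p]^\ad_\eta$ is the projection coming from Proposition~\ref{TateInvLimit} and $e$ is the zero section. The starting observation is that on the generic fibre every $G[p^n]^\ad_\eta \to \Spa(R,R)_\eta$ is finite \'etale, since $p$ is invertible there; consequently $e$ is a clopen immersion (identifying $\Spa(R,R)_\eta$ with a connected component of $G[p]^\ad_\eta$), and similarly each inclusion $G[p^n]^\ad_\eta \hookrightarrow G[p^{n+1}]^\ad_\eta$ is a clopen immersion. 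In particular $U \subset T(G)^\ad_\eta$ is open.

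Next I would work on $(S,S^+)$-points. The identity $[p]\bigl((x_n)_{n \geq 1}\bigr) = (p x_n)_{n \geq 1} = (0, x_1, x_2, \dots)$ shows that $[p]$ factors through $U$. The candidate inverse is the shift $\sigma \colon U \to T(G)^\ad_\eta$ given by $(y_n)_n \mapsto (y_{n+1})_n$, which is well-defined: on $U$ we have $y_1 = 0$, so $p^n y_{n+1} = y_1 = 0$ forces $y_{n+1} \in G[p^n]$. Pointwise $\sigma$ and $[p]$ are manifestly mutually inverse. Using the clopen embeddings $G[p^n]^\ad_\eta \hookrightarrow G[p^{n+1}]^\ad_\eta$ again, the restriction $\pi_{n+1}|_U$ factors through $G[p^n]^\ad_\eta$, producing a compatible system of morphisms $U \to G[p^n]^\ad_\eta$ indexed by $n$.

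The main subtlety is upgrading these compatible maps to a genuine morphism $\sigma \colon U \to T(G)^\ad_\eta$ of adic spaces, because the relation $T(G)^\ad_\eta \sim \varprojlim G[p^n]^\ad_\eta$ of Proposition~\ref{TateInvLimit} is strictly weaker than being an inverse limit in $\Adic$. Here I would invoke the representability of $T(G)$ as an affine formal scheme $\Spf R_\infty$ (established in the proof of Proposition~\ref{TateInvLimit}), which makes $T(G)^\ad_\eta$ an honest affinoid adic space with the standard universal property for maps from adic spaces; the compatible system above then corresponds to an actual morphism $\sigma$ inverse to $[p]$. Given that $[p]\colon T(G)^\ad_\eta \xrightarrow{\sim} U$ is an isomorphism onto an open subspace, the last assertion follows since filtered colimits of adic spaces along open immersions exist.
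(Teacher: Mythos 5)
Your approach is essentially the paper's: identify $U$ as the preimage under the projection $\pi_1\from T(G)^\ad_\eta\to G[p]^\ad_\eta$ of the zero section (open, since $G[p]^\ad_\eta$ is finite \'etale over the base), and show $[p]$ is an isomorphism onto $U$. The paper packages this as the single observation that
\[
\xymatrix{
T(G)^\ad\ar[r]^p\ar[d] & T(G)^\ad\ar[d]^{\pi_1}\\
\set{e}\ar[r] & G[p]^\ad
}
\]
is a pullback square already over $\Spa(R,R)$, which it then base-changes to the generic fibre where $\set{e}$ becomes open; your shift map $\sigma$ is precisely the explicit inverse witnessing that this square is cartesian, and your pointwise verification that $\sigma$ and $[p]$ are mutually inverse is correct.

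The one step that is off is the resolution of your ``upgrade'' worry. The claim that representability of $T(G)$ as $\Spf R_\infty$ ``makes $T(G)^\ad_\eta$ an honest affinoid adic space'' is not correct: the generic fibre of an affine formal scheme is an increasing union of affinoids, not affinoid, and whether it is honest (sheafy) is not established at this point in the paper. More to the point, trying to pass from a compatible system of maps $U\to G[p^n]^\ad_\eta$ to a map $U\to T(G)^\ad_\eta$ is exactly the step that the relation $\sim$ of Proposition~\ref{TateInvLimit} does \emph{not} license, and representability by itself does not supply the missing universal property. But the worry is unnecessary: the shift $\sigma$, as you already define it on $(S,S^+)$-valued points, is a natural transformation of functors on $\CAff^\op$, and in the paper's Yoneda-style framework that \emph{is} by definition a morphism of adic spaces $U\to T(G)^\ad_\eta$ --- no detour through the tower $G[p^n]^\ad_\eta$ is needed. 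Alternatively, and this is what the paper's proof does, verify the pullback square over $\Spa(R,R)$ directly (where $T(G)^\ad=\Spa(R_\infty,R_\infty)$ and $G[p]^\ad$ are both affinoid) and then base-change along $\Spa(R,R)_\eta\to\Spa(R,R)$, using that fibre products are preserved and that $\set{e}\subset G[p]^\ad_\eta$ is open.
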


\begin{rmk} Although $V(G)^\ad_\eta$ does not come as the generic fibre of some object $V(G)^\ad$ in general, we still leave a subscript $_\eta$ in the notation to indicate where the object is living.
\end{rmk}

\begin{proof} We note that there is a pullback diagram
\[\xymatrix{
T(G)^\ad\ar[r]^p\ar[d] & T(G)^\ad\ar[d]\\
\set{e}\ar[r] & G[p]^\ad\\
}\]
in the category of adic spaces, where $\set{e}\subset G[p]^\ad$ denotes the identity section. On the generic fibre, $\set{e}\subset G[p]^\ad_\eta$ is an open immersion, giving the claim.
\end{proof}

Although $G$ is not in general representable, one can still define $G^\ad_\eta$ as a functor.

\begin{lemma} The subfunctor
\[
G^\ad_\eta[p^\infty] = \varinjlim G^\ad_\eta[p^n] = \varinjlim G[p^n]^\ad_\eta
\]
is representable by an adic space over $\Spa(R,R)_\eta$.

There is an exact sequence of functors
\[
0\to T(G)^\ad_\eta\to V(G)^\ad_\eta\to G^\ad_\eta[p^\infty]\ .
\]
\end{lemma}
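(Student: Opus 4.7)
The plan is to handle representability first, then derive the exact sequence from a purely functorial computation.

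First I would observe that each transition map $G[p^n]^\ad_\eta \hookrightarrow G[p^{n+1}]^\ad_\eta$ is both open and closed. Since $G[p^n]$ is finite locally free over $R$ of rank $p^{nh}$ and $p$ is invertible on the generic fibre, each $G[p^n]^\ad_\eta$ is finite \'etale over $\Spa(R,R)_\eta$. The inclusion $G[p^n] \hookrightarrow G[p^{n+1}]$ is the kernel of the isogeny $p^n\from G[p^{n+1}] \to G[p]$, so on the generic fibre it realizes $G[p^n]^\ad_\eta$ as the clopen fibre over the identity section of the finite \'etale map $p^n\from G[p^{n+1}]^\ad_\eta \to G[p]^\ad_\eta$. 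Given this, the filtered colimit $G^\ad_\eta[p^\infty] = \varinjlim_n G[p^n]^\ad_\eta$ along open immersions is an adic space: it is covered by the open subfunctors $G[p^n]^\ad_\eta$, and this gluing fits into the Yoneda-style framework of Section 2.

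For the exact sequence, I would first verify the corresponding exact sequence of functors $0 \to T(G) \to V(G) \to G[p^\infty]$ on $\Nilp_R^\op$. A section of $T(G) = \varprojlim_n G[p^n]$ (with transition $G[p^{n+1}] \to G[p^n]$ being multiplication by $p$) is a tuple $t = (t_n)_{n \geq 0}$ with $t_n \in G[p^n]$, $t_0 = 0$, and $p t_{n+1} = t_n$; and $V(G) = \varinjlim_p T(G)$. The map $V(G) \to G[p^\infty]$ sends $t/p^m$ to $t_m$. This is well-defined because $t/p^m$ is identified with $(pt)/p^{m+1}$, and the shift formula $(pt)_{m+1} = t_m$ holds. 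Its kernel is exactly $T(G) \subset V(G)$: if $t_m = 0$ then $t_k = 0$ for all $k \leq m$ by the relation $p t_{k+1} = t_k$, so one may write $t = p^m s$ with $s_k := t_{k+m}$, and hence $t/p^m = s$ lies in the image of $T(G)$. Passing this exact sequence to adic generic fibres is then formal, as both sides are constructed by the sheafification procedure of Section 2.2 which commutes with the $\Hom$- and colimit-operations used here.

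I expect the main technical point to be establishing that $G[p^n]^\ad_\eta \subset G[p^{n+1}]^\ad_\eta$ is genuinely open (and not merely closed), and checking that filtered colimits along open immersions really produce an adic space in the generalized sense of Section 2; once this is set up, the exactness statement is formal, and no $p$-adic Hodge theoretic input beyond the basic étaleness of generic fibres of finite $p$-power group schemes is required.
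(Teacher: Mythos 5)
Your proof is correct and takes essentially the same route as the paper, which simply asserts that the transition maps are open immersions and that the exact sequence is obvious. You have usefully filled in the detail: the openness of $G[p^n]^\ad_\eta \hookrightarrow G[p^{n+1}]^\ad_\eta$ follows because it is the fibre over the identity section of the finite \'etale map $p^n\from G[p^{n+1}]^\ad_\eta\to G[p]^\ad_\eta$ --- this is the very same mechanism the paper uses immediately afterwards to show that multiplication by $p$ on $T(G)^\ad_\eta$ is an open immersion --- and your functorial verification of the sequence $0\to T(G)\to V(G)\to G[p^\infty]$ by identifying the kernel of $t/p^m\mapsto t_m$ is the right computation.
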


\begin{proof} The identity $G^\ad_\eta[p^n] = G[p^n]^\ad_\eta$ is easy to check. However, $G[p^n]$ is representable, hence so is $G[p^n]^\ad_\eta$. The transition maps are open immersions, so we get the result. The exact sequence of functors is obvious.
\end{proof}

\subsection{The logarithm}

The Lie algebra $\Lie G$ is a locally free $R$-module. We can associate to it an adic space. We give the construction for any locally free $R$-module.

\begin{prop} Let $M$ be a locally free $R$-module. The sheafification of the functor on $\CAff_{\Spa(R,R)_\eta}$ sending $(S,S^+)$ to $M\otimes_R S$ is representable by an adic space over $\Spa(R,R)_\eta$, which we denote $M\otimes \Ga$.
\end{prop}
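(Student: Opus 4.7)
The plan is to first handle the case where $M$ is free, and then to glue, using that $M$ is locally free in the Zariski topology on $\Spec R$.

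\textbf{Free case.} Suppose $M = R^d$. Fix a generator $\varpi$ of the ideal of definition $I$ (or argue with a finite set of generators of $I$; this is cosmetic). For each $n \geq 1$, let $R\langle \varpi^n T_1,\ldots,\varpi^n T_d\rangle$ denote the $I$-adic completion of $R[\varpi^n T_1,\ldots,\varpi^n T_d]$, and form the affinoid adic space $B_n := \Spa(R\langle \varpi^n T_1,\ldots,\varpi^n T_d\rangle, R\langle \varpi^n T_1,\ldots,\varpi^n T_d\rangle)_\eta$. The natural maps $B_n \hookrightarrow B_{n+1}$ are open immersions, and I would define $M\otimes\Ga := \bigcup_n B_n$, which is an adic space over $\Spa(R,R)_\eta$. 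For a complete affinoid $(R,R)_\eta$-algebra $(S,S^+)$, a morphism $\Spa(S,S^+) \to B_n$ corresponds to a $d$-tuple $(s_1,\ldots,s_d) \in S^d$ such that each $\varpi^n s_i$ is power-bounded. Since $S$ is an f-adic ring with bounded $S^+$, every element of $S$ is power-bounded after multiplication by a sufficiently high power of $\varpi$, so the union over $n$ recovers all of $S^d = M \otimes_R S$. Hence $M\otimes\Ga$ represents the functor in the free case; functoriality in $R$-linear maps is manifest, since an $R$-linear map $R^d \to R^{d'}$ induces a morphism of the corresponding affine spaces which on $(S,S^+)$-points is the induced map $S^d \to S^{d'}$.

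\textbf{Gluing.} In general, $M$ is locally free, so there is a finite collection $f_1,\ldots,f_r \in R$ generating the unit ideal such that $M[f_i^{-1}]$ is a free $R[f_i^{-1}]$-module. The rational subsets
\[
U_i = \{x \in \Spa(R,R)_\eta : |f_j(x)| \leq |f_i(x)| \neq 0 \text{ for all } j\}
\]
form an open cover of $\Spa(R,R)_\eta$, and the structure ring on $U_i$ is a completion of an $R[f_i^{-1}]$-algebra. Applying the free-case construction over each $U_i$ yields adic spaces representing the restricted functor. On overlaps $U_i \cap U_j$, the two trivializations differ by a $\GL$-valued transition function, which by functoriality of the free case induces an isomorphism between the two local affine spaces. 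These satisfy the cocycle condition because they do so for the original transitions of $M$. Gluing along these isomorphisms yields an adic space $M \otimes \Ga$ over $\Spa(R,R)_\eta$, and it represents the sheafification of $(S,S^+) \mapsto M \otimes_R S$ because representability of a sheaf can be checked locally on the target.

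\textbf{Expected main obstacle.} The delicate point is the bookkeeping in the gluing — verifying that the transition isomorphisms on the affine-space side really come from a cocycle and agree with the structure of $M$ — but this is formal once one has established functoriality of the free-case construction in $R$-linear maps. A minor secondary issue is that the presheaf $(S,S^+) \mapsto M \otimes_R S$ need not be a sheaf a priori (one is genuinely sheafifying); however, tensor product commutes with the analytic localizations appearing in rational covers, so the sheafification agrees with $M \otimes_R -$ locally on $\Spa(S,S^+)$, which is all that is needed for the representability statement.
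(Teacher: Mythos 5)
Your construction is essentially the paper's: realize the free case as an increasing union of closed balls and then handle locally free $M$ by reduction to the free case (the paper dismisses this reduction with ``one immediately reduces to the case where $M$ is free''). The paper's version is tidier in one respect that also sidesteps a couple of inaccuracies in your write-up. It first reduces to $M=R$ and then to $R=\Z_p$ by base change, so that the only pseudouniformizer in sight is $p$ and the general case is recovered as the fiber product $\Ga^d\times_{\Spa(\Q_p,\Z_p)}\Spa(R,R)_\eta$.

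Working directly over $R$ as you do, note that $I$ is only required to be finitely generated, not principal, so ``a generator $\varpi$ of $I$'' does not exist in general; the correct and always available choice is $\varpi=p$ (which lies in $I$ by hypothesis and becomes invertible on the generic fiber). Any topologically nilpotent $\varpi\in R$ would in fact work, because its image in any ring of definition $S_0\subset S^+$ is topologically nilpotent, so $\varpi^n\in p^mS^+$ for $n\gg 0$ and hence $\varpi^n s\in S^+$ for $n\gg0$ once one knows $S=S_0[1/p]$ --- but that needs an argument and is not ``cosmetic.'' Relatedly, your justification ``since $S$ is an f-adic ring with bounded $S^+$'' is not correct: the paper points out that $S^+$ need not be bounded (e.g.\ $K[X]/X^2$ with $S^+=\OO_K+KX$). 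What is true and suffices is $S=S_0[1/p]$ for any ring of definition $S_0\subset S^+$. Finally, in the gluing step, ``applying the free-case construction over each $U_i$'' should be read as base-changing the universal $\Ga^d$ over $\Spa(\Q_p,\Z_p)$ along $U_i\to\Spa(\Q_p,\Z_p)$, since $U_i$ is an affinoid adic space and not of the form $\Spa(R',R')_\eta$ for an adic $\Z_p$-algebra $R'$; with that reading your cocycle verification and the local-representability criterion go through.
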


\begin{proof} One immediately reduces to the case where $M$ is free, and hence where $M=R$. Now, we can assume that $R=\Z_p$, as the general case follows from this by base-change. We have to show that the sheafification of the functor $(S,S^+)\mapsto S$ on complete affinoid $(\Q_p,\Z_p)$-algebras is representable by an adic space $\Ga$ over $\Q_p$. Note that as $S=S^+[p^{-1}]$, it can be written as the direct limit of the functor $(S,S^+)\mapsto S^+$ along the multiplication by $p$ map. The latter functor is representable by $\Spa(\Q_p\tatealgebra{X},\Z_p\tatealgebra{X})$, hence $\Ga = \A^1$ is representable by the increasing union of the balls $\Spa(\Q_p\tatealgebra{p^m X},\Z_p\tatealgebra{p^m X})$, $m\geq 0$.
\end{proof}

The objects introduced so far sit in an exact sequence.

\begin{prop}\label{logG}\begin{altenumerate}
\item[{\rm (i)}] There is a natural $\Z_p$-linear logarithm map
\[
\log_G\from G^\ad_\eta\to \Lie G\otimes \Ga
\]
of sheaves over $\CAff_{\Spa(R,R)_\eta}$.
\item[{\rm (ii)}] The functor $G^\ad_\eta$ is representable by an adic space over $\Spa(R,R)_\eta$.
\item[{\rm (iii)}] The sequence
\[
0\to G^\ad_\eta[p^\infty]\to G^\ad_\eta\to \Lie G\otimes \Ga
\]
of sheaves of $\Z_p$-modules over $\CAff_{\Spa(R,R)_\eta}$ is exact.
\item[{\rm (iv)}] The functor $\tilde{G}^\ad_\eta$ is representable by an adic space over $\Spa(R,R)_\eta$.
\item[{\rm (v)}] The sequence
\[
0\to V(G)^\ad_\eta\to \tilde{G}^\ad_\eta\to \Lie G\otimes \Ga
\]
of sheaves of $\Q_p$-vector spaces over $\CAff_{\Spa(R,R)_\eta}$ is exact.
\end{altenumerate}
\end{prop}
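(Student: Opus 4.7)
The approach is to build $\log_G$ first on the formal completion $\hat G$, where the classical power series $\log(1+t)=-\sum_{n\geq 1}(-t)^n/n$ converges, and then to extend to all of $G^\ad_\eta$ by exploiting the fact that every section becomes formal after multiplication by a sufficiently large power of $p$.

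For (ii), the opening lemma of Section 3.1 (applied after passing to the connected-\'etale sequence of $G$, which exists locally since $p$ is nilpotent on each $R/I^k$) shows that $G$ is representable by a formal scheme locally admitting a finitely generated ideal of definition; Proposition \ref{PropGenericFibre} then produces $G^\ad_\eta$ as an adic space over $\Spa(R,R)_\eta$. For (i), on $\hat G$ (a formal Lie variety) the generic fibre $\hat G^\ad_\eta$ is locally an open unit polydisc, and the usual power series defines a $\Z_p$-linear morphism of adic spaces $\log_{\hat G}\from\hat G^\ad_\eta\to\Lie G\otimes\Ga$. Given a section $x\in G^\ad_\eta(S,S^+)$, Proposition \ref{PropGenericFibre} produces, locally after refinement, an open bounded $S_0\subset S^+$ and a lift $x\in G(S_0)$; its reduction in $G(S_0/I_0)=\varinjlim_n G[p^n](S_0/I_0)$ lies in some $G[p^n](S_0/I_0)$ by quasi-compactness of $\Spec(S_0/I_0)$, so $p^n x\in\hat G(S_0)$ and one sets
\[
\log_G(x) := p^{-n}\log_{\hat G}(p^n x)\in\Lie G\otimes S.
\]
Independence of $n$ and $\Z_p$-linearity are immediate from the $\Z_p$-linearity of $\log_{\hat G}$ together with the vanishing of $\log_{\hat G}$ on torsion.

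Part (iii) follows directly: $G^\ad_\eta[p^\infty]\subset\ker(\log_G)$ is clear from the definition, and conversely $\log_{\hat G}$ is an isomorphism onto a neighbourhood of $0$ in $\Lie G\otimes\Ga$, so $\log_G(x)=0$ forces $p^n x=0$ locally, i.e.\ $x\in G^\ad_\eta[p^\infty]$. For (iv), Proposition \ref{UnivCoverRepr}(iii), applied to the connected-\'etale pieces, represents $\tilde G$ by a formal scheme locally admitting a finitely generated ideal of definition, so $\tilde G^\ad_\eta$ is an adic space; equivalently, one has $\tilde G^\ad_\eta\sim\varprojlim_p G^\ad_\eta$ via Proposition \ref{ConstrInvLimits}. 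For (v), the compositions $\log_G\circ\pr_k\from\tilde G^\ad_\eta\to\Lie G\otimes\Ga$ agree up to the scalar $p^k$, which is a $\Q_p$-automorphism of the target, so they assemble into a single $\Q_p$-linear morphism; exactness then reduces to (iii), because a section $(x_k)\in\tilde G^\ad_\eta$ lies in the kernel iff each $x_k\in G^\ad_\eta[p^\infty]$, which is precisely the condition defining $V(G)^\ad_\eta=\bigcup_n p^{-n}T(G)^\ad_\eta$ inside $\tilde G^\ad_\eta$.

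The main obstacle I foresee is the glueing step in (i): the integer $n$ with $p^n x\in\hat G$ is local data depending on the affinoid and on the choice of $S_0$, so promoting the pointwise recipe to a morphism of adic spaces requires verifying independence of these choices and compatibility under refinement of covers, and checking that the assignment factors through the sheafification defining $\Lie G\otimes\Ga$. A secondary technical point is the uniform choice of $n$ over a quasicompact affinoid, which uses boundedness of $S_0$ in $S^+$ together with the fact that each $G[p^n]^\ad_\eta$ is an open subspace of $G^\ad_\eta$.
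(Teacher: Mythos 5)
Your outline of parts (i), (iii), and (v) is essentially the paper's argument (the paper defines $\log_G$ via Messing's Lemma~2.2.5 rather than an explicit power series, but the idea of extending from $\hat G$ via torsion and topological nilpotence of $p$ is the same). The real problem is in how you set up (ii) and (iv).

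You ground representability of $G^\ad_\eta$ in the first lemma of Section~3.1 by claiming that the connected-\'etale sequence of $G$ ``exists locally since $p$ is nilpotent on each $R/I^k$.'' That claim is false. Nilpotence of $p$ is not enough: the connected-\'etale sequence for a $p$-divisible group exists over Henselian local rings, or more generally when the \'etale rank is locally constant, but it fails Zariski-locally in the typical case where the Newton stratification is nontrivial (e.g.\ a deformation over $\F_p[t]$ that is supersingular at $t=0$ and ordinary elsewhere; no Zariski neighborhood of $t=0$ admits the sequence). So the lemma you cite does not apply, and the same problem infects your use of Proposition~\ref{UnivCoverRepr}(iii) in part (iv), which has the same hypothesis on $G$.

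The paper avoids this entirely, and the logical order is reversed from yours: it does \emph{not} first make $G^\ad_\eta$ an adic space and then define $\log_G$; instead it proves (i) as a map of \emph{sheaves} on $\CAff_{\Spa(R,R)_\eta}$ (no representability needed), using that $\ker(G(R)\to G(R/p^2))\xrightarrow{\ \sim\ }p^2\Lie G$ by Messing. Then for (ii), the open subfunctor $U\subset G^\ad_\eta$ where $\log_G$ is an isomorphism onto its image inside $\Lie G\otimes\Ga$ is representable because its target is; the preimages of $U$ under $[p^k]$ are open subfunctors (multiplication by $p$ being relatively representable and finite locally free, with the identity section open in $G[p]^\ad_\eta$) that exhaust $G^\ad_\eta$ since $p$ is topologically nilpotent. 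For (iv), one then observes $\tilde G\to G$ is relatively representable as an inverse limit of the finite locally free maps $[p^k]$, so $\tilde G^\ad_\eta$ is representable over $G^\ad_\eta$. If you replace your connected-\'etale reduction by this bootstrap through $\hat G$ and $\log_G$, the rest of your proposal goes through.
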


\begin{proof} Functorially in $(S,S^+)$, we have to define a $\Z_p$-linear map
\[
\log_G\from G^\ad_\eta(S,S^+)\to (\Lie G\otimes \Ga)(S)\ .
\]
Recall that $G^\ad_\eta$ is the sheaf associated to
\[
(S,S^+)\mapsto \varinjlim_{S_0\subset S^+} G(S_0)\ ,
\]
and $\Lie G\otimes \Ga$ is the sheaf associated to $(S,S^+)\mapsto \Lie G\otimes_R S$. Changing notation slightly, we see that it is enough to describe a functorial map
\[
\log_G\from G(R)\to \Lie G[p^{-1}]
\]
for any $p$-adically complete and separated flat $\Z_p$-algebra $R$, and $p$-divisible group $G$ over $R$. Then Lemma 2.2.5 of \cite{Messing} gives a $\Z_p$-linear bijection
\[
\log_G\from \ker(G(R)\to G(R/p^2))\buildrel\cong\over\to p^2 \Lie G\ .
\]
As multiplication by $p$ is topologically nilpotent on $G(R)$, any section $x\in G(R)$ admits some integer $n\geq 0$ such that $[p^n]_G(x)\in \ker(G(R)\to G(R/p^2))$. This allows one to extend the morphism to
\[
\log_G\from G(R)\to \Lie G[p^{-1}]\ ,
\]
as desired. It also shows that the kernel is precisely $G(R)[p^\infty]$, which proves part (iii). For part (ii), note that we have proved that there is an open subset $U$ of the identity of $G^\ad_\eta$ on which $\log_G$ is an isomorphism onto its image; it follows that this open subset is representable. But as multiplication by $p$ is relatively representable (and finite locally free), it follows that the preimages of $U$ under multiplication by $p$-powers are representable. Now one finishes the proof by noting multiplication by $p$ is topologically nilpotent.

Similarly, in part (iv), the representability of $\tilde{G}^\ad_\eta$ follows from the representability of $G^\ad_\eta$ and the relative representability of $\tilde{G}\to G$ (as an inverse of finite locally free morphisms $G\buildrel{p^k}\over\longrightarrow G$).

For part (v), the only nontrivial part is to show exactness in the middle. But if $x\in \tilde{G}^\ad_\eta(S,S^+)$ maps to $0$ in $(\Lie G\otimes \Ga)(S,S^+)$, then the sequence in part (iii) shows that after multiplication by a power of $p$, $x$ maps to $0$ in $G^\ad_\eta(S,S^+)$. By definition of $T(G)$, this implies that $x$ lies in $T(G)^\ad_\eta(S,S^+)$, as desired.
\end{proof}

\subsection{Explicit Dieudonn\'{e} theory}

Later, we will need to make certain maps between Dieudonn\'{e} modules explicit. This will be contained in the following discussion. Again, let $R$ be a ring on which $p$ is nilpotent, and let $S$ be a topological ring equipped with a surjection $S\to R$ whose kernel is topologically nilpotent and has a PD structure. Let $G$ be a $p$-divisible group over $R$. Note that
\[
\Hom_R(\Q_p/\Z_p,G) = TG(R)\ .
\]
In particular, Dieudonn\'{e} theory gives a map
\[
TG(R) = \Hom_R(\Q_p/\Z_p,G)\to \Hom(\MM(\Q_p/\Z_p)(S),\MM(G)(S))\to \MM(G)(S)\ ,
\]
by evaluation at the natural element $1\in \MM(\Q_p/\Z_p)(S)$, in turn giving a map
\[
\tilde{G}(R) = TG(R)[p^{-1}]\to \MM(G)(S)[p^{-1}]\ .
\]

\begin{lemma}\label{ExplicitDieudonne} The map $\tilde{G}(R)\to \MM(G)(S)[p^{-1}]$ agrees with $\qlog\from \tilde{G}(R)\to \MM(G)(S)[p^{-1}]$.
\end{lemma}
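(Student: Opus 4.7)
The plan is to reduce to the universal case $G=\Q_p/\Z_p$ by functoriality, and then check the identity explicitly there, using the remark right after \eqref{alphaG} describing $EG$ and $s_G$ for $\Q_p/\Z_p$.

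First I would verify that both maps are natural in $G$. For the Dieudonn\'e-theoretic map, naturality is built in: given $f\from G_1\to G_2$, one has $\MM(f)(S)\circ (\text{Dieudonn\'e map})_{G_1} = (\text{Dieudonn\'e map})_{G_2}\circ Tf$, because evaluation at the canonical element $1\in \MM(\Q_p/\Z_p)(S)$ is intertwined by $\MM(f)$. For $\qlog$, naturality follows from the functoriality of the universal vector extension and of the log: the construction $s_G(x)=\lim p^n y_n$ is visibly compatible with maps of $p$-divisible groups (modulo the usual nilpotence argument in the case that a given morphism does not lift, already handled in the proof of the lemma just above the definition of $\qlog_G$), and $\log$ is natural.

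Next, any element $x\in\tilde{G}(R)=\Hom_R(\Q_p/\Z_p,G)[p^{-1}]$ is of the form $x=p^{-k}\tilde{f}(1)$ for some $f\from \Q_p/\Z_p\to G$, where $1\in \tilde{\Q_p/\Z_p}(R)=\Q_p$ is the canonical generator. Both maps are $\Q_p$-linear (the Dieudonn\'e map because $\MM(f)$ is $S$-linear and multiplication by $p$ on $\Q_p/\Z_p$ corresponds to multiplication by $p$ on the Dieudonn\'e crystal; the quasi-logarithm from its definition). By the naturality in the previous paragraph, both sides evaluated at $x$ reduce via $\MM(f)(S)$ to their values at $1\in \tilde{\Q_p/\Z_p}(R)$. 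So it suffices to check that both maps send the canonical $1\in \tilde{\Q_p/\Z_p}(R)$ to the canonical element $1\in \MM(\Q_p/\Z_p)(S)=S$ (where $\MM(\Q_p/\Z_p)$ is the structure crystal).

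The Dieudonn\'e side is then tautological: the canonical $1\in T(\Q_p/\Z_p)(R)=\Z_p$ is the identity morphism $\Q_p/\Z_p\to\Q_p/\Z_p$, which induces the identity on Dieudonn\'e crystals, and so sends $1\in\MM(\Q_p/\Z_p)(S)$ to $1$. For $\qlog_{\Q_p/\Z_p}(1)$, I would use the explicit description from the excerpt: $EG=(\Ga\oplus\Q_p)/\Z_p$ and $s_G\from\Q_p\hookrightarrow EG$ is the inclusion, so $s_G(1)$ is the class of $(0,1)$. To apply $\log_{EG}$, multiply by $p^n$ to land in the formal part: $p^n\cdot(0,1)=(0,p^n)$ is congruent modulo the embedded $\Z_p$ to an element of $\Ga\subset EG$, namely $\pm p^n\in \Ga(S)$. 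Since $\log_{EG}$ restricted to the formal part $V=\Ga$ is the canonical identification $\Ga\overset{\sim}{\to}\Lie EG=\MM(\Q_p/\Z_p)$, one obtains $\log_{EG}(p^n\cdot s_G(1))=\pm p^n$ under this identification, and hence $\qlog_{\Q_p/\Z_p}(1)=\pm 1$.

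The main thing to pin down, therefore, is that the sign actually comes out to $+1$; this is really the only substantive point, and it is a matter of tracing the sign conventions in the pushout $EG=(\Ga\oplus\Q_p)/\Z_p$, the identification $\Lie EG=\MM(\Q_p/\Z_p)$, and the identification of the canonical generator of $\MM(\Q_p/\Z_p)$ as an element of $S$. Once these conventions are fixed compatibly (as is implicit in the paper's construction of $\qlog$ and in the identification of $\MM(\Q_p/\Z_p)$ with the structure crystal via the Dieudonn\'e functor), the sign is $+1$ and the lemma follows from the reduction above.
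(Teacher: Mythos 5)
Your proof follows exactly the paper's route: reduce by $\Q_p$-linearity and functoriality (in $G$, via $\MM(f)$ and the functoriality of $s_G$ and $\log$) to the single check that both maps send the canonical generator $1\in\tilde{\Q_p/\Z_p}$ to the canonical basis element $1\in\MM(\Q_p/\Z_p)(S)$. The only difference is that you unwind the base case using the explicit model $EG=(\Ga\oplus\Q_p)/\Z_p$ rather than, as the paper does, simply deferring to "the definition of the natural basis element"; the residual sign-matching you flag is precisely what the paper's phrase leaves implicit, so the two arguments are at the same level of rigor.
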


\begin{proof} First note that we are allowed to work up to isogeny; in particular, we can again assume that $S\to R$ has a topologically nilpotent PD structure. Now note that both morphisms are functorial in the group $G$, and any element $t\in \tilde{G}(R)=\Hom_R(\Q_p/\Z_p,G)[p^{-1}]$ comes via functoriality from the identity morphism of $\Q_p/\Z_p$. It follows that it is enough to prove the statement for the identity morphism of $G=\Q_p/\Z_p$. In that case, the statement is easily verified from the definition of the natural basis element $1\in \MM(\Q_p/\Z_p)(S)$.
\end{proof}

\section{Dieudonn\'{e} theory over semiperfect rings}

\subsection{Statement of result}

Let $R$ be a ring of characteristic $p$.

\begin{defn}  A ring $R$ in characteristic $p$ is {\em semiperfect} if the Frobenius map $\Phi\from R\to R$ is surjective. A map $f\from R\to S$ of semiperfect rings is said to be an isogeny if its kernel $I\subset R$ satisfies $\Phi^n(I)=0$ for some $n$.
\end{defn}

In particular, an isogeny $f\from R\to S$ factors over $\Phi^n\from R\to R$ for some $n$, giving a map $g\from S\to R$, which is also an isogeny. One checks that the notion of being isogenous gives an equivalence relation on the set of semiperfect rings.

\begin{defprop}\label{DefFSemiperfect} Let $R$ be a semiperfect ring. Let $R^\flat = \varprojlim_\Phi R$ denote its (inverse) perfection, equipped with the inverse limit topology. Then the following statements are equivalent.
\begin{altenumerate}
\item[{\rm (i)}] The ring $R^\flat$ is f-adic.
\item[{\rm (ii)}] The ring $R$ is isogeneous to a semiperfect ring $S$ which is the quotient of a perfect ring $T$ by a finitely generated ideal $J\subset T$.
\end{altenumerate}
In this case, we say that $R$ is f-semiperfect. Moreover, in that case $R$ is isogenous to a ring $S$ which is the quotient of a perfect ring $T$ by an ideal $J\subset T$ for which $\Phi(J) = J^p$.
\end{defprop}

\begin{proof} Let $J=\ker(R^\flat\to R)$. Then $\Phi^n(J)$ is a basis of open neighborhoods of $0$ in $R^\flat$. Assume that $R^\flat$ is f-adic, and let $I\subset R^\flat$ be a finitely generated ideal of definition. Then $\Phi^n(J)\subset I$ for some $n$, hence $J\subset \Phi^{-n}(I)$, and we can take $S=R^\flat/\Phi^{-n}(I)$.

Conversely, it is enough to show that if $J$ is finitely generated, then $R^\flat$ is f-adic. It is enough to show that $J$ is an ideal of definition. But $\Phi^n(J)$ and $J^n$ are cofinal if $J$ is finitely generated, giving the result.

For the last statement, note that if $J\subset R^\flat$ is finitely generated, then $J^\prime = \bigcup_n \Phi^{-n}(J^{p^n})\subset R^\flat$ is an ideal of definition of $R^\flat$ with $\Phi(J^\prime) = (J^\prime)^p$, and $S=R^\flat/J^\prime$ has the desired property.
\end{proof}

Also recall the following result of Fontaine.

\begin{prop} Let $R$ be a semiperfect ring. Then there is a universal $p$-adically complete PD thickening $A_\cris(R)$ of $R$. The construction of $A_\cris(R)$ is functorial in $R$; in particular, there is a natural Frobenius $\varphi$ on $A_\cris(R)$.
\end{prop}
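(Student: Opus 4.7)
The plan is to construct $A_\cris(R)$ explicitly from Witt vectors of the tilt $R^\flat$, and then verify the universal property by lifting maps through the two universal constructions used (Witt vectors of a perfect ring, and the PD envelope).

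First, set $R^\flat=\varprojlim_\Phi R$, which is a perfect $\F_p$-algebra, and take the projection to the zeroth coordinate $R^\flat\to R$; this is surjective because $R$ is semiperfect. Form the Witt ring $W(R^\flat)$, which is $p$-adically complete and $p$-torsion free, and let $\theta\from W(R^\flat)\to R$ be the composition $W(R^\flat)\to W(R^\flat)/p=R^\flat\to R$, which is surjective. Write $J=\ker(\theta)$; note $p\in J$ because $R$ has characteristic $p$. Define $A_\cris(R)$ to be the $p$-adic completion of the divided power envelope $D_J(W(R^\flat))$ of $W(R^\flat)$ along $J$, compatible with the canonical divided powers on the ideal $(p)$. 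By construction, $A_\cris(R)$ is a $p$-adically complete PD thickening of $R$.

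To verify universality, let $(A,I,\gamma)$ be any $p$-adically complete PD thickening of $R$ whose divided powers are compatible with the standard ones on $(p)$. I need to produce a unique PD-thickening map $A_\cris(R)\to A$ over $R$. The key construction is a ring map $W(R^\flat)\to A$ lifting $R^\flat\to R$. Since $R^\flat$ is perfect and $A$ is $p$-adically complete, for each $a=(a_0,a_1,\ldots)\in R^\flat$ I define its Teichm\"uller-type lift by $[a]=\lim_{n\to\infty}\tilde{a}_n^{p^n}\in A$, where $\tilde{a}_n\in A$ is any lift of $a_n\in R$. Convergence and independence of the choice of lifts follow because any two lifts differ by an element of the PD ideal $I$, and for $x\in I$ one has $x^p=p!\,\gamma_p(x)$, which makes $p$-th powers highly divisible by $p$; combined with $p$-adic completeness of $A$, the sequence stabilizes in the limit. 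The resulting multiplicative map $R^\flat\to A$ extends to a ring homomorphism $W(R^\flat)\to A$ by the universal property of Witt vectors of perfect rings. By construction this map reduces to $\theta$ upon composition with $A\to R$, so it sends $J$ into $I$. Since $(A,I,\gamma)$ carries the requisite divided powers on $I$, the universal property of the PD envelope extends the map uniquely to $D_J(W(R^\flat))\to A$, and then $p$-adic completeness of $A$ extends it uniquely to $A_\cris(R)\to A$. Uniqueness at each stage gives uniqueness of the final map.

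The Frobenius $\varphi$ on $A_\cris(R)$ comes entirely from functoriality applied to the Frobenius $\Phi\from R^\flat\to R^\flat$: it lifts canonically to $W(\Phi)$ on $W(R^\flat)$, preserves $J$ (since $\theta\circ W(\Phi)$ and $\Phi_R\circ\theta$ agree, and $R$ is semiperfect so this forces stability of $J$), and therefore extends to $D_J(W(R^\flat))$ and its $p$-adic completion. The main technical obstacle in this whole argument is showing that the multiplicative lift $[\cdot]\from R^\flat\to A$ is well-defined and a ring homomorphism; this is where the PD structure on $I$ is essential, and it is also the step whose uniqueness forces the rest of the universal property to go through.
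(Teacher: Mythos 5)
Your proposal is correct and takes exactly the same route as the paper, which simply recalls the construction ($A_\cris(R)$ as the $p$-adic completion of the PD envelope of $W(R^\flat)\twoheadrightarrow R$) and attributes the result to Fontaine; your verification of the universal property via the Teichm\"uller-type lift $[a]=\lim \tilde a_n^{p^n}$, with convergence controlled by the divided powers on $I$, is precisely Fontaine's argument. One small inaccuracy: stability of $J$ under $W(\Phi)$ follows immediately from the identity $\theta\circ W(\Phi)=\Phi_R\circ\theta$ together with $\Phi_R(0)=0$, and semiperfectness of $R$ plays no role at that step.
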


\begin{proof} Let us just recall the construction: $A_\cris(R)$ is the $p$-adic completion of the PD hull of the surjection $W(R^\flat)\to R$.
\end{proof}

For semiperfect rings $R$, the Dieudonn\'{e} module functor is given by evaluating the Dieudonn\'{e} crystal on $A_\cris(R)\to R$. One gets a functor from $p$-divisible groups $G$ over $R$ to finite projective $A_\cris(R)$-modules $\MM = \MM(G)(A_\cris(R))$ equipped with maps
\[
F\from \MM\otimes_{A_\cris(R),\varphi} A_\cris(R)\to \MM\ ,\ V\from \MM\to \MM\otimes_{A_\cris(R),\varphi} A_\cris(R)\ ,\ FV = FV = p\ .
\]
Passing to the isogeny category, we get a functor from $p$-divisible groups over $R$ up to isogeny to finite projective $B_\cris^+(R)=A_\cris(R)[p^{-1}]$-modules equipped with $F$ and $V$ as before. We can now state the main theorem of this section.

\begin{Theorem}\label{fullyfaithful} Let $R$ be an f-semiperfect ring. Then the Dieudonn\'{e} module functor on $p$-divisible groups up to isogeny is fully faithful.
\end{Theorem}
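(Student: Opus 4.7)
The plan is to reduce full faithfulness of the Dieudonn\'e module functor to an explicit computation in the single special case of morphisms $\Q_p/\Z_p\to\mu_{p^\infty}$. Using Definition/Proposition \ref{DefFSemiperfect}, I would first replace $R$ by an isogenous ring and assume $R=T/J$ where $T$ is perfect and $\Phi(J)=J^p$; this gives good control over $A_\cris(R)$ and its Frobenius. The claim to be proved is then that, for any two $p$-divisible groups $G_1,G_2$ over $R$, the natural map
\[
\Hom(G_1,G_2)\otimes_{\Z_p}\Q_p \longrightarrow \Hom_{B_{\cris}^+(R),\,F,V}(\MM(G_1)[p^{-1}],\,\MM(G_2)[p^{-1}])
\]
is a bijection.

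For the base case I would treat the pair $(\Q_p/\Z_p,\mu_{p^\infty})$ by hand. The source is $T\mu_{p^\infty}(R)[p^{-1}]$, the $\Q_p$-vector space of compatible systems of $p$-power roots of unity in $R$; the target is computed from the standard rank-one Dieudonn\'e modules of $\Q_p/\Z_p$ and $\mu_{p^\infty}$, and reduces to an identification of a specific $\varphi$-eigenspace in $B_\cris^+(R)$. By Lemma \ref{ExplicitDieudonne}, the map in question is the quasi-logarithm $\qlog\from \tilde{\mu}_{p^\infty}(R)\to \MM(\mu_{p^\infty})(A_\cris(R))[p^{-1}]$, so the issue is precisely to show that the image of $\qlog$ equals the predicted eigenspace. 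This comes down to the $p$-integrality of the Artin-Hasse exponential $E(X)=\exp\bigl(\sum_{n\geq 0} X^{p^n}/p^n\bigr)$, which lets one exponentiate any element of the eigenspace to produce the required compatible system of roots of unity in $R$.

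For the general case I would follow the de Jong-Messing reduction. A morphism $\phi\from\MM(G_1)\to\MM(G_2)$ up to isogeny is encoded by its graph, a sub-Dieudonn\'e module of $\MM(G_1\times G_2)$ isogenous to $\MM(G_1)$; constructing the corresponding isogeny class of morphisms $G_1\to G_2$ amounts to constructing sections into $G_1\times G_2$ of an appropriate auxiliary $p$-divisible group $H$, which by further reduction can be tested against characters $H\to \mu_{p^\infty}$. Crucially, this manoeuvre uses division by $p$-powers, so it only works in the isogeny category, which explains why the theorem fails for finite locally free group schemes; it also forces us out of the perfect world into arbitrary f-semiperfect rings, which is why the hypothesis is stated at this level of generality. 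A technical descent statement about the compatibility of this reduction with the formation of $A_\cris$ will be needed, and I expect to prove it by invoking Faltings's almost purity theorem in the form of \cite{Sch}.

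The hardest step will be the reduction from general morphisms to the $(\Q_p/\Z_p,\mu_{p^\infty})$ case: the de Jong-Messing argument combined with the almost-purity input is what carries the whole theorem once the base case is in hand, while the base case itself is dominated by the Artin-Hasse identity. Throughout, I would work in the isogeny category on both sides so that the universal cover $\tilde{G}$ (viewed as a crystal on the infinitesimal site by Proposition \ref{UnivCoverRepr}) and the $F$-crystal $\MM(G)[p^{-1}]$ can be manipulated freely, exactly as set up in Section 3.
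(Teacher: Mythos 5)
Your high-level skeleton is the same as the paper's: handle $\Q_p/\Z_p\to\mu_{p^\infty}$ directly via the integrality of the Artin--Hasse exponential, then use a de Jong--Messing-style reduction with an almost-purity-powered technical lemma to handle general morphisms $G\to H$, all in the isogeny category after normalizing $R$ by Definition/Proposition \ref{DefFSemiperfect}. The base case and the identification via Lemma \ref{ExplicitDieudonne} are exactly right.

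However, the way you describe the reduction step is not what the paper (or de Jong--Messing) does, and as stated it would not work. You propose encoding $\phi\from\MM(G_1)\to\MM(G_2)$ by its graph inside $\MM(G_1\times G_2)$ and then ``constructing sections into $G_1\times G_2$ of an appropriate auxiliary $p$-divisible group $H$.'' But finding a $p$-divisible group with prescribed Dieudonn\'e module is essential surjectivity, which is not available (and in the isogeny setting over f-semiperfect rings is not claimed); and realizing the graph inclusion as a morphism $H\to G_1\times G_2$ would already require the faithfulness you are trying to prove. The actual mechanism is different: one base-changes to $S = A_G\otimes_R A_{H^\vee}$, the f-semiperfect ring carrying the universal trivializations $\Q_p/\Z_p\to G$ and (by Cartier duality) $H\to\mu_{p^\infty}$; pre- and post-composing the given crystal morphism $f\from\MM(G)\to\MM(H)$ with the Dieudonn\'e modules of these universal maps produces a crystal morphism $\MM(\Q_p/\Z_p)\to\MM(\mu_{p^\infty})$ over $A_\cris(S)$, and the base case converts this into a concrete $\eta_f\in\Hom_S(\Q_p/\Z_p,\mu_{p^\infty})[p^{-1}]$. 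No graphs, and no auxiliary $p$-divisible group gets constructed.

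You are also missing the entire mechanism by which $\eta_f$ is converted back into a morphism of $p$-divisible groups $G\to H$ over $R$. In the paper this is the Hopf-algebra argument (Lemma \ref{psiGdiagram} and Proposition \ref{honestHopf}): $\eta_f$ is equivalent to a compatible system of continuous $R$-linear maps $r_{f,n}\from A_H'\to A_G$, and one proves by functoriality under the diagonal maps and duality that each $r_{f,n}$ is a morphism of Hopf algebras, hence corresponds to a group-scheme morphism $G[p^m]\to H[p^m]$, and these glue. Without that step the reduction does not close up. Finally, the almost-purity input is not a ``descent statement about compatibility with $A_\cris$'': it is the surjectivity statement (Theorem \ref{SurjResult}) that the cokernel of $A_\cris(S)^\ast\to\MM(G)(A_\cris(R))$, where $S$ represents $TG$, is killed by $p^2$; this is what is needed to prove injectivity of $f\mapsto\eta_f$ (Proposition \ref{Injectivity}), i.e.\ that $\eta_f=0$ forces $f$ to be $p$-torsion. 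You should make this precise, because it is the one place almost purity actually enters.
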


The following proposition says that this statement is invariant under isogenies of semiperfect rings.

\begin{prop}\label{IsogenyReduction} Let $f\from R\to S$ be an isogeny of semiperfect rings. Then the reduction functor from $p$-divisible groups over $R$ up to isogeny to $p$-divisible groups over $S$ up to isogeny is an equivalence. The same result holds for Dieudonn\'e modules.
\end{prop}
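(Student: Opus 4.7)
The plan is to construct an explicit quasi-inverse to the reduction functor $f^\ast$, exploiting the fact that pullback along any power of Frobenius is canonically isomorphic to the identity on the isogeny category. The remark immediately following the definition of isogeny provides a factorization $\Phi_R^n = g\circ f$ for some homomorphism $g\from S\to R$, and, since being isogenous is a symmetric equivalence relation on semiperfect rings, after possibly enlarging $n$ one can likewise arrange $f\circ g = \Phi_S^n$. Under base change of $p$-divisible groups, the composite $g^\ast\circ f^\ast$ then coincides with pullback along $\Phi_R^n$ on $p$-divisible groups over $R$, and symmetrically $f^\ast\circ g^\ast$ coincides with pullback along $\Phi_S^n$ on $p$-divisible groups over $S$.

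The key input will be the following standard observation: for any $p$-divisible group $G$ over a ring of characteristic $p$, the iterated relative Frobenius $F^n\from G\to G^{(p^n)}=\Phi^{n\ast}G$ is an isogeny, since its kernel $G[F^n]$ is a finite locally free group scheme, and the relations $V^nF^n = F^nV^n = p^n$ make it invertible in the isogeny category. By naturality of $F^n$ in $G$, this upgrades to a natural isomorphism $\mathrm{id}\cong\Phi^{n\ast}$ of endofunctors on the isogeny category of $p$-divisible groups over any characteristic $p$ ring. Applying this on both sides, I would conclude that $g^\ast\circ f^\ast\cong \mathrm{id}$ and $f^\ast\circ g^\ast\cong \mathrm{id}$, so that $f^\ast$ is an equivalence of categories with quasi-inverse $g^\ast$.

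The Dieudonn\'e module case runs along entirely parallel lines. Pullback along $\Phi_R^n$ of a Dieudonn\'e module over $R$ is $\MM\mapsto \MM\otimes_{A_\cris(R),\varphi^n}A_\cris(R)$, and the Frobenius operator $F^n$ on $\MM$ (with Verschiebung $V^n$ satisfying $F^nV^n = V^nF^n = p^n$) supplies a natural isomorphism between the identity and pullback along $\varphi^n$ on Dieudonn\'e modules up to isogeny; the remainder of the argument is formally identical. The main technical point I expect to have to address carefully is the existence of the ring map $g\from S\to R$ realizing $\Phi_R^n$ on all of $S$ (and not merely on the image of $f$) together with the choice of a single $n$ making both compositions equal to the corresponding Frobenius power; once this is settled, the rest of the proof is a purely formal consequence of the Frobenius-invertibility observation.
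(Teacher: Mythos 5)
Your proposal takes essentially the same route as the paper, which simply says ``one reduces to checking both statements for $\Phi\from R\to R$'' and then verifies, using $F\from G\to G^{(1)}$ and $V\from G^{(1)}\to G$, that base change along Frobenius is isogenous to the identity; you have made the reduction step explicit via a quasi-inverse $g^\ast$ and spelled out that $F^n$, $V^n$ give the natural isomorphism $\mathrm{id}\cong\Phi^{n\ast}$. Two small points worth tightening. First, as you rightly flag, the existence of $g$ with $g\circ f=\Phi_R^n$ does require $f$ to be surjective (otherwise $\Phi_R^n$ only factors through the subring $R/\ker f\subset S$, and indeed a non-surjective map with trivial kernel, like $\FF_p\hookrightarrow \FF_p[X^{1/p^\infty}]$, would trivially satisfy the kernel condition yet certainly not induce an equivalence); surjectivity is implicit in the paper's notion of isogeny and in its remark that $\Phi^n$ factors over $f$. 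Second, once surjectivity is in hand there is no need to ``enlarge $n$'' or appeal to symmetry of the isogeny relation to get $f\circ g=\Phi_S^n$: with the \emph{same} $g$ defined by $g(f(r))=\Phi^n(r)$ one computes directly, for $s=f(r)$, that $f(g(s))=f(\Phi^n(r))=\Phi^n(f(r))=\Phi^n(s)$, so $f\circ g=\Phi_S^n$ with the same exponent. With those two adjustments, your argument is a correct (and somewhat more explicit) rendering of the paper's proof, including the Dieudonn\'e module case.
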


\begin{proof} One reduces to checking both statements for $\Phi\from R\to R$. Write $R^{(1)}$ for the ring $R$, considered as an $R$-algebra via $\Phi\from R\to R$. We have the base extension functor $G\mapsto G^{(1)} = G\otimes_R R^{(1)}$. On the other hand, $R^{(1)}\cong R$ as rings, so one may consider any $p$-divisible group $H$ over $R^{(1)}$ as a $p$-divisible group over $R$. The composite of the two functors is isogenous to the identity, using $F\from G\to G^{(1)}$, and $V\from G^{(1)}\to G$, respectively. The same argument works for Dieudonn\'e modules.
\end{proof}

\begin{rmk} Before going on, let us observe several pathologies. The first is that torsion may appear in $A_\cris(R)$. In fact, one may adapt the example in \cite{BerthelotOgusInvent} to the present situation. Let $p\neq 2$, and consider
\[
R = \F_p[X^{1/p^\infty},Y^{1/p^\infty}]/(X^2,XY,Y^2)\ .
\]
Then $R^\flat = \F_p\powerseries{X^{1/p^\infty},Y^{1/p^\infty}}$, and we get canonical Teichm\"uller lits $X,Y\in W(R^\flat)$. Let $\gamma_p(x)$ denote the $p$-th divided power of $x$. Then we can consider the element
\[
\tau = \gamma_p(X^2) \gamma_p(Y^2) - \gamma_p(XY)^2\ .
\]
It is easy to see that $p\tau = 0$, and also $\varphi(\tau)=0$. However, one can check that $\tau\neq 0$. For this, look at the thickening
\[\begin{aligned}
S = \F_p[X^{1/p^\infty},Y^{1/p^\infty},U,V] / (X^{2+\epsilon},X^2 Y^{\epsilon},X^{1+\epsilon}Y, &X Y^{1+\epsilon}, X^{\epsilon} Y^2, Y^{2+\epsilon}, \\
&X^\epsilon U, Y^\epsilon U, X^\epsilon V, Y^\epsilon V, U^2, V^2)
\end{aligned}\]
of $R$. Here $\epsilon$ runs through $1/p^n$, for all $n\geq 0$. Note that the kernel $K$ of $S\to R$ has an $\F_p$-basis $(X^2,XY,Y^2,U,V,UV)$. One can define divided powers on $K$ in a unique way extending
\[
\gamma_p(aX^2 + bXY + cY^2 + dU + eV + fUV) = a^p U + c^p V\ .
\]
One gets an induced map $A_\cris(R)\to S$ sending $\tau$ to $UV\neq 0$.

Note that $\tau$ can be interpreted as a morphism of Dieudonn\'{e} crystals from $\Q_p/\Z_p$ to $\mu_{p^\infty}$, which cannot come from a morphism of $p$-divisible groups.

In this first example, $R$ is f-semiperfect. Another pathology occurs for semiperfect rings which are not f-semiperfect. Consider the ring
\[
R = \F_p[X_1^{1/p^\infty},X_2^{1/p^\infty},\ldots] / (X_1,X_2,\ldots,\forall n : X_1^{1/p^n} = (X_2X_3)^{1/p^n} = (X_4X_5X_6)^{1/p^n} = \ldots)\ .
\]
In this example,
\[
R^\flat = \F_p\powerseries{X_1^{1/p^\infty},X_2^{1/p^\infty},\ldots} / (\forall n : X_1^{1/p^n} = (X_2X_3)^{1/p^n} = (X_4X_5X_6)^{1/p^n} = \ldots)\ .
\]
This ring is complete for the topology making the ideals $\Phi^k(J) = (X_1^{p^k},X_2^{p^k},\ldots)$, $k\geq 0$, a basis of open neighborhoods of $0$. Then the element $[X_1]^p$ lies in the kernel of $W(R^\flat)\to A_\cris(R)$. Indeed, it is enough to show that it is divisible by $p^k$ for all $k$. But we can write $X_1$ as a product of $k$ elements of $J$, $X_1 = Y_1\cdots Y_k$. (For example, if $k=3$, take $Y_1 = X_4$, $Y_2 = X_5$, $Y_3 = X_6$.) Then
\[
[X_1]^p = [Y_1]^p \cdots [Y_k]^p = (p!)^k \gamma_p(Y_1)\cdots \gamma_p(Y_k)\ ,
\]
as claimed.
\end{rmk}

However, for f-semiperfect rings, this second pathology cannot occur.

\begin{lemma}\label{WtoAcrisInjective} Let $R$ be an f-semiperfect ring. Then the canonical map $W(R^\flat)\to A_\cris(R)$ is injective.
\end{lemma}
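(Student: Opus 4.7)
My plan is to reduce the statement to an explicit quotient of a perfect ring, and then analyze the divided power envelope inside $W(R^\flat)[1/p]$ by hand.

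By Proposition \ref{IsogenyReduction}, the formation of both $W(R^\flat)$ and $A_\cris(R)$ is invariant under isogenies of semiperfect rings; combining this with Definition/Proposition \ref{DefFSemiperfect}, I may assume $R=T/J$, where $T$ is a perfect $\F_p$-algebra and $J=(f_1,\ldots,f_d)\subset T$ is a finitely generated ideal with $\Phi(J)=J^p$. Using the perfection of $T$ together with $\Phi(J)=J^p$ (and passing to the $J$-adic completion of $T$ if necessary, which retains perfectness because $\Phi(J)=J^p$ propagates Frobenius surjectivity to the completion), one identifies the tilt $R^\flat$ with $T$.

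Then $W(T)$ is $p$-torsion-free and the kernel of $W(T)\to R$ is the ideal $I=(p,[f_1],\ldots,[f_d])$. Because $W(T)$ is $p$-torsion-free, the PD envelope $D:=D_{W(T)}(I)$ embeds as a subring of $W(T)[1/p]$, generated over $W(T)$ by the elements $[f_i]^k/k!$ for $1\le i\le d,\ k\ge 1$ (the divided powers $\gamma_k(p)$ already live in $\Z_p[1/p]$). In particular $W(T)\hookrightarrow D$ is automatic, and the remaining task is to show that $W(T)\to A_\cris(R)=\widehat{D}$ stays injective after $p$-adic completion, equivalently that no nonzero $x\in W(T)$ lies in $\bigcap_{n\ge 1} p^n D$.

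The mechanism exploits precisely what goes wrong in the second example of the Remark. There, $W(R^\flat)\to A_\cris(R)$ fails injectivity because $X_1$ admits factorizations $X_1=Y_1\cdots Y_k$ with each $Y_i$ in the relevant ideal for arbitrarily large $k$, which forces $[X_1]^p=(p!)^k\prod_i \gamma_p([Y_i])$ to be divisible by arbitrarily high powers of $p$ in $D$. Under our hypotheses, the finite generation of $J$ together with $\Phi(J)=J^p$ forces any such element to lie in $\bigcap_k J^k$, which vanishes by the $J$-adic separatedness of $R^\flat=T$. Concretely, writing a hypothetical $x\in W(T)\cap\bigcap_n p^n D$ as $x=p^n y_n$, expanding each $y_n$ as a $W(T)$-linear combination of monomials $\prod_i [f_i]^{k_i}/k_i!$, and clearing $p$-integral denominators, every potential source of infinite $p$-divisibility traces back to an element of $T$ sitting in $\bigcap_k J^k$, and hence collapses to zero; this then forces $x=0$.

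The main obstacle in making the last step rigorous is to set up a filtration on $D$, compatible with $p$-adic completion, whose associated graded cleanly separates the $W(T)$-part from the divided-power generators. A natural candidate combines the PD filtration on $D$ with the $J$-adic filtration on $T$; the technical heart is to prove that the resulting graded pieces are $p$-adically separated over $W(T)$, which is exactly where the finiteness of the generators $f_1,\ldots,f_d$ and the Frobenius condition $\Phi(J)=J^p$ enter most essentially.
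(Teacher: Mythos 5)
Your reduction to the situation $R=T/J$ with $T$ perfect, $J=(f_1,\ldots,f_d)$ finitely generated and $\Phi(J)=J^p$ is correct and matches the paper's setup, and your reformulation of the task as ruling out nonzero elements of $W(T)\cap\bigcap_n p^n D$ is in essence the right question. However, there are two problems. First, a technical inaccuracy: the assertion that the PD envelope $D_{W(T)}(I)$ embeds into $W(T)[1/p]$ because $W(T)$ is $p$-torsion free is false --- PD envelopes of $p$-torsion free rings can, and in this setting do, acquire $p$-torsion, as the paper's own Remark immediately preceding the lemma illustrates with the element $\tau$. What you actually work with is the $p$-torsion-free quotient of the PD envelope (the paper's $W_{\PD}$), and correspondingly $A_\cris(R)$ is \emph{not} $\hat D$; fortunately $A_\cris(R)\twoheadrightarrow\hat D$, so proving $W(T)\hookrightarrow\hat D$ would still suffice.

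The second and decisive problem is that the key step is simply not there. You write ``The main obstacle in making the last step rigorous is to set up a filtration on $D$ \ldots; the technical heart is to prove that the resulting graded pieces are $p$-adically separated over $W(T)$'' --- which is an honest admission that the proof is incomplete. The informal claim that any $x$ in $\bigcap_n p^n D$ ``traces back to'' an element of $\bigcap_k J^k$ is a heuristic drawn from the counterexample in the Remark (where $X_1\in\bigcap_k J^k$ via $X_1=Y_1\cdots Y_k$), but you give no argument that clearing denominators in a general relation $x=p^n y_n$, $y_n\in D$, really concentrates the obstruction in $\bigcap_k J^k$. That is precisely where the content of the lemma lives, and it is left unproved. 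The paper takes a cleaner route that avoids analyzing $\bigcap_n p^n D$ directly: it builds a family of explicit $p$-adically complete PD thickenings $W_{\PD,n}$ of $R$ (roughly ``Laurent series'' $\sum_{i\in\Z}[r_i]p^i$ with $r_i\in R^\flat/\Phi^n(J)$), uses the universal property of $A_\cris(R)$ to produce maps $A_\cris(R)\to W_{\PD,n}$, checks that the kernel of the composite $W(R^\flat)\to W_{\PD,n}$ is exactly $\Phi^n([J])$, and concludes from completeness of $W(R^\flat)$ for the $[J]$-adic topology. If you want to salvage your filtration approach, you would need to prove the separatedness of the graded pieces; the paper's construction of $W_{\PD,n}$ can be read as a particularly explicit way of carrying out exactly that separation.
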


\begin{proof} For the proof, we will construct certain explicit PD thickenings. Let $J=\ker(R^\flat\to R)$; we may assume that $\Phi(J) = J^p$. This implies that the subset
\[
[J] := \{\sum_{i\geq 0} [r_i] p^i\in W(R^\flat)\mid r_i\in J\}\subset W(R^\flat)
\]
is an ideal. Moreover, $W(R^\flat)$ is complete for the $[J]$-adic topology. Let $W_\PD\subset W(R^\flat)[p^{-1}]$ denote the subring generated by all divided powers of elements of $[J]$. It is the quotient of the PD hull of $W(R^\flat)\to R$ by its $p$-torsion. Now consider
\[
W_{\PD,n} = W_\PD / (W_\PD\cap \varphi^n([J][p^{-1}]))\ .
\]
Then elements of $W_{\PD,n}$ can be written uniquely as a sum $\sum_{i\in \Z} [r_i] p^i$, where $r_i\in R^\flat / \Phi^n(J)$, and zero for $i$ sufficiently negative. It is easy to see that the PD structure on $W_\PD(R^\flat)$ passes uniquely to the quotient $W_{\PD,n}$. Moreover, $W_{\PD,n}$ is $p$-adically complete. One gets an induced map $A_\cris(R)\to W_{\PD,n}$. The kernel of the composite $W\to A_\cris(R)\to W_{\PD,n}$ is exactly $\Phi^n([J])$. As $W(R^\flat)$ is complete for the $[J]$-adic topology, this gives the result.
\end{proof}

We will also need the following result.

\begin{lemma}\label{ExPhi1} Let $R$ be any semiperfect ring, and let $I_\cris(R)\subset A_\cris(R)$ be the kernel of $A_\cris(R)\to R$. There is a unique $\varphi$-linear map
\[
\varphi^1\from I_\cris(R)\to A_\cris(R)
\]
such that $p\varphi^1 = \varphi$, $\varphi^1(p)=1$ and
\[
\varphi^1(\gamma_n([x])) = \frac{(np)!}{p\cdot n!} \gamma_{np}([x])
\]
for all $x\in J$ and $n\geq 1$.
\end{lemma}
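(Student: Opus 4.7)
The plan is to construct $\varphi^1$ by dividing $\varphi$ by $p$ in the $p$-torsion-free ring $A_\cris(R)[1/p]$ and then descending using the explicit formula on PD generators. The key preliminary observation is that $\varphi(I_\cris(R))\subset pA_\cris(R)$: since $\varphi$ lifts Frobenius, $\varphi(y)\equiv y^p\pmod p$, and for $y$ in the PD ideal, $y^p = p!\,\gamma_p(y)\in pA_\cris(R)$. In the $p$-torsion-free ring $A_\cris(R)[1/p]$ there is thus an unambiguous $\varphi$-linear map
\[
\tilde\varphi^1\from I_\cris(R)[1/p]\to A_\cris(R)[1/p],\qquad y\mapsto \varphi(y)/p,
\]
satisfying $p\tilde\varphi^1 = \varphi$ and $\tilde\varphi^1(p) = 1$.

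Next I would compute $\tilde\varphi^1$ on the Teichm\"uller divided powers. Since $\varphi([x]) = [x]^p = p!\,\gamma_p([x])$ in $A_\cris(R)$, the PD identities $\gamma_n(ab) = a^n\gamma_n(b)$ (for $b$ in the PD ideal) and $\gamma_n(\gamma_p(z)) = \tfrac{(np)!}{n!\,(p!)^n}\gamma_{np}(z)$ give
\[
\varphi(\gamma_n([x])) = \gamma_n\bigl(p!\,\gamma_p([x])\bigr) = (p!)^n\gamma_n(\gamma_p([x])) = \tfrac{(np)!}{n!}\,\gamma_{np}([x]),
\]
hence $\tilde\varphi^1(\gamma_n([x])) = \tfrac{(np)!}{p\cdot n!}\gamma_{np}([x])$. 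By Legendre's formula, $v_p((np)!) = n + v_p(n!)$, so $\tfrac{(np)!}{p\cdot n!}$ has $p$-adic valuation $n-1\geq 0$; in particular the value lies in $A_\cris(R)$, not merely in $A_\cris(R)[1/p]$.

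To descend $\tilde\varphi^1$ to a map $\varphi^1\from I_\cris(R)\to A_\cris(R)$, I would use that $I_\cris(R)$ is the $p$-adic closure of the $A_\cris(R)$-ideal generated by $p$ and $\gamma_n([x])$ for $x\in J$, $n\geq 1$. This follows from the construction of $A_\cris(R)$ as the $p$-adic completion of the PD hull of $W(R^\flat)\to R$: the standard divided powers $\gamma_n(p) = p^n/n!$ already lie in $W(R^\flat)$ (by Legendre again), so the only ``new'' PD generators are the $\gamma_n([x])$, and any PD monomial is $\gamma_{n_1}([x_1])$ times an element of $A_\cris(R)$. On this dense submodule one defines $\varphi^1(ag) := \varphi(a)\,\varphi^1(g)$ for $g\in\{p,\gamma_n([x])\}$, and the values above lie in $A_\cris(R)$; since $\varphi(p) = p$, the identity $\varphi^1(p^ny) = p^n\varphi^1(y)$ makes this $p$-adically continuous, so it extends uniquely to all of $I_\cris(R)$. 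Uniqueness of $\varphi^1$ is then immediate, as any such map agrees with the explicit formula on the generators, hence on the dense submodule by $\varphi$-linearity, hence everywhere by continuity.

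The main obstacle is the possible $p$-torsion in $A_\cris(R)$ exhibited in the preceding remark: the recipe ``divide by $p$'' is only canonical in $A_\cris(R)[1/p]$, so one must verify the explicit values on PD generators lift consistently to $A_\cris(R)$. This reduces to checking that the formula respects the PD relations $\gamma_m([x])\gamma_n([x]) = \binom{m+n}{m}\gamma_{m+n}([x])$, which unwinds to the identity $\binom{m+n}{m}\tfrac{((m+n)p)!}{(m+n)!} = \tfrac{(mp)!(np)!}{m!\,n!}\binom{mp+np}{mp}$---a straightforward binomial manipulation.
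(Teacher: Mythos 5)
Your approach is genuinely different from the paper's, and it contains a real gap at the crucial step: the well-definedness check.

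You correctly observe that $\varphi(I_\cris(R))\subset pA_\cris(R)$ and that $\varphi/p$ is unambiguous in $A_\cris(R)[1/p]$; your computation of the explicit values on $\gamma_n([x])$ and the valuation estimate $v_p\bigl(\tfrac{(np)!}{p\cdot n!}\bigr)=n-1$ are both correct. The problem is in the descent to $A_\cris(R)$. You propose defining $\varphi^1$ on the ideal generated by the elements $p$ and $\gamma_n([x])$ via $\varphi^1(ag) := \varphi(a)\varphi^1(g)$ and claim that well-definedness ``reduces to checking'' the single PD identity $\gamma_m([x])\gamma_n([x]) = \binom{m+n}{m}\gamma_{m+n}([x])$. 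This is far from the full set of relations. The elements $\gamma_n([x])$ for varying $x\in J$ satisfy many $A_\cris(R)$-linear relations arising from Witt vector addition: for instance, $[x]+[y]-[x+y]\in pW(R^\flat)$, so for some $w\in W(R^\flat)$ one has $\gamma_1([x])+\gamma_1([y])-\gamma_1([x+y])-pw=0$ in $I_\cris(R)$. Well-definedness then requires $(p-1)!\bigl(\gamma_p([x])+\gamma_p([y])-\gamma_p([x+y])\bigr)=\varphi(w)$ to hold \emph{in} $A_\cris(R)$, not merely in $A_\cris(R)[1/p]$. When $A_\cris(R)$ has $p$-torsion (as in the Remark preceding the lemma), this is exactly the kind of identity one cannot take for granted, and it is not a consequence of your binomial manipulation. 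There are infinitely many such relations, involving higher divided powers and more summands, and checking them one by one is intractable.

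The paper's proof bypasses this enumeration entirely. It works modulo $p^k$, observes that the generating elements $\gamma_k(a_i)$ together with all their coefficients can be lifted to a universal ring of the form $R'=\F_p[X_1^{1/p^\infty},\ldots,X_s^{1/p^\infty},Y_1^{1/p^\infty},\ldots,Y_t^{1/p^\infty}]/(X_1,\ldots,X_s)$, and proves by an explicit computation (reducing to $\F_p[X^{1/p^\infty}]$ and $\F_p[X^{1/p^\infty}]/X$) that $A_\cris(R')$ is $p$-torsion free. In that case $\varphi^1=\varphi/p$ is automatically well-defined, and any relation among the $\gamma_{k_i}(a_i)$ holds after applying $\varphi^1$; this then specializes to the original $R$. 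The reduction to a $p$-torsion-free universal case — and the nontrivial verification that the universal $A_\cris$ is $p$-torsion free — is the essential ingredient your proposal is missing. Without it, your argument establishes uniqueness (which is easy and you do correctly) and the formula on generators, but not existence.
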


\begin{rmk} As the proof will show, one could also simply say there is a unique way to define `$\varphi/p$' on $I_\cris(R)$ in a way functorial in $R$. Also, the lemma implies that the surjection $A_\cris(R)\to R$ naturally has the structure of a frame in the sense of \cite{LauFrames}. It is probably true that any $p$-divisible group over $R$ gives rise to a window over this frame, but this does not seem to be a direct consequence of the general theory.
\end{rmk}

\begin{proof} As $I_\cris(R)$ is generated by $p$ and $\gamma_n([x])$ for $x\in J$, $n\geq 1$, it is clear that the map is uniquely determined. We have to see that such a map exists. We argue by reduction to the universal case. To avoid technicalities, we argue modulo $p^k$ for any $k\geq 1$ and show that there is a unique map
\[
I_{\cris,n}(R)\to A_{\cris,n-1}(R)
\]
with the properties described, where $A_{\cris,n}(R) = A_\cris(R)/p^n$ is the universal PD thickening of $p^n$-torsion. The requirements uniquely pin down
\[
\varphi^1(\gamma_k(a))
\]
for any $a=\sum_{i=0}^{n-1} [r_i] p^i\in W_{n-1}(R^\flat)$, $r_0\in J$, $k\geq 1$. For this, use induction in $k$ to break up the sum and reduce to a single summand. In that case, use the defining properties. Now, the elements $\gamma_k(a)$ generate $I_{\cris,n}(R)$, and are subject to certain universal relations. We have to check that $\varphi^1$ preserves these relations. For this, we have to check that certain relations, each involving finitely many $\varphi^1(\gamma_{k_i}(a_i))$, are satisfied. The coefficients of the $a_i$ give rise to a map
\[
R^\prime = \F_p[X_1^{1/p^\infty},\ldots,X_s^{1/p^\infty},Y_1^{1/p^\infty},\ldots,Y_t^{1/p^\infty}]/(X_1,\ldots,X_s)\to R
\]
sending $X_i$ to the $0$-th coefficients of the $a_i$, and the $Y_j$ to the other coefficients of $a_i$; so in particular, there are elements $a_i^\prime\in W(R^{\prime \flat})$ mapping to $a_i$. We claim that it is enough to check the result over $R^\prime$. If it is true there, the desired relations among the $\varphi^1(\gamma_{k_i}(a_i^\prime))$ follow from the relations defining $A_{\cris,n-1}(R^\prime)$. Each defining relation of $A_{\cris,n-1}(R^\prime)$ gives a defining relation for $A_{\cris,n-1}(R)$ by specialization, and thus the desired relation among the $\varphi^1(\gamma_{k_i}(a_i))$ follows by specialization.

Now we claim that in the universal case of $R^\prime$, $A_\cris(R^\prime)$ is $p$-torsion free, which obviously implies existence of $\varphi^1$. Indeed, by decomposition into a product, one reduces immediately to the cases where $R=\F_p[X^{1/p^\infty}]$ or $R=\F_p[X^{1/p^\infty}]/X$. In the first case, $A_\cris(R) = W(R)$ is clearly $p$-torsion free. In the second case, one has the exact sequence
\[
0\to (X-T)W(R^\flat)\tatealgebra{T}\to W(R^\flat)\tatealgebra{T}\to A_\cris(R)\to 0\ ,
\]
where $\tatealgebra{T}$ denotes the $p$-adically completed free PD polynomial algebra in one variable $T$, and $X\in W(R^\flat)$ denotes the canonical Teichm\"uller lift. Assume that
\[
\sum_{n\geq 0} a_n \gamma_n(T)\in W(R^\flat)\tatealgebra{T}
\]
with $a_n\in W(R^\flat)$, converging to $0$, defines a $p$-torsion element in $A_\cris(R)$. Then there exists
\[
\sum_{n\geq 0} b_n \gamma_n(T)\in W(R^\flat)\tatealgebra{T}
\]
with
\[
(X-T)\sum_{n\geq 0} b_n \gamma_n(T) = p \sum_{n\geq 0} a_n \gamma_n(T)\ .
\]
This means $pa_0 = Xb_0$, $pa_1 = Xb_1 - Tb_0$, and in general $pa_n = Xb_n - nTb_{n-1}$. Using that $X$ is not torsion in $R^\flat$, one checks inductively that $b_n$ is divisible by $p$, which then shows that $\sum_{n\geq 0} a_n \gamma_n(T)$ maps to $0$ in $A_\cris(R)$, as desired.
\end{proof}

\begin{rmk} The final computation showing that $A_\cris(R)$ is $p$-torsion free if $J=\ker(R^\flat\to R)$ is principal and generated by a non-torsion element, is the same as the classical computation showing that Fontaine's ring $A_\cris = A_\cris(\OO_C/p)$ is $p$-torsion free (and generalizes it), where $C/\Q_p$ is an algebraically closed complete extension.

More generally, we have the following proposition. We will not need the following results in the rest of the paper, and include them only for completeness.

\begin{prop} Let $R$ be an f-semiperfect ring such that $R=S/J$, where $S$ is a perfect ring, and $J=(s_1,\ldots,s_n)$ is an ideal generated by a regular sequence $s_1,\ldots,s_n\in S$. Then $A_\cris(R)$ is $p$-torsion free. Moreover, $\Phi\from R\to R$ admits the structure of a PD thickening.
\end{prop}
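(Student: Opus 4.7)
My plan is to generalize the explicit $p$-torsion analysis from the proof of Lemma~\ref{WtoAcrisInjective} (where $J$ was generated by a single non-torsion element) to the setting of a regular sequence of length $n$. The first step is to set up an explicit presentation of $A_\cris(R)$. Since $S$ is perfect, the universal property of the perfection applied to the surjection $S \to R$ produces a canonical map $S \to R^\flat$, sending each $s_i$ to an element $\tilde s_i \in R^\flat$ which maps to zero in $R$. The closed ideal $\ker(W(R^\flat)\to R)$ is then topologically generated by $p$ and the Teichm\"uller lifts $[\tilde s_i]$. Forming the $p$-adically completed PD envelope, we obtain a surjection
\[
B := W(R^\flat)\tatealgebra{T_1,\ldots,T_n} \twoheadrightarrow A_\cris(R),
\]
whose kernel $I$ is the closed ideal generated by $[\tilde s_i] - T_i$ for $i = 1,\ldots,n$. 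Here $B$, the $p$-adically completed free PD polynomial algebra over $W(R^\flat)$, is $p$-torsion free.

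To show $p$-torsion freeness of $A_\cris(R) = B/I$, it suffices to prove that $I$ is $p$-saturated in $B$. Given $b \in B$ with $pb \in I$, write $pb = \sum_i c_i([\tilde s_i]-T_i)$ and expand each factor in the PD basis $\gamma_{\vec m}(T) = \prod_i \gamma_{m_i}(T_i)$. Using $T_i\gamma_{\vec m}(T) = (m_i+1)\gamma_{\vec m+e_i}(T)$ and matching coefficients yields the recursion
\[
\sum_i [\tilde s_i]\, c_{\vec m,i} - \sum_i m_i\, c_{\vec m-e_i,i} = p\, b_{\vec m}
\]
in $W(R^\flat)$ for each multi-index $\vec m$. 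Starting from $\vec m = 0$ (where this reads $\sum_i [\tilde s_i]\, c_{0,i} \in pW(R^\flat)$) and running induction on $|\vec m|$ while exploiting that $\tilde s_1,\ldots,\tilde s_n$ form a regular sequence in $R^\flat$ (inherited from the regularity of $s_1,\ldots,s_n$ in the perfect $S$), one shows inductively that each $c_{\vec m,i}$ is divisible by $p$ in $W(R^\flat)$, after absorbing Koszul boundaries into the $c_i$ (which leaves $\sum c_i([\tilde s_i]-T_i)$ unchanged because the Koszul corrections are antisymmetric in $i,j$). Dividing by $p$ exhibits $b$ as an element of $I$, yielding the desired saturation.

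For the PD thickening assertion, observe that $\ker(\Phi\from R \to R) = J^{1/p}/J$, the ideal generated over $R$ by the classes of $s_1^{1/p},\ldots,s_n^{1/p}$ (using perfectness of $S$). Since $(s_i^{1/p})^p = s_i = 0$ in $R$, one equips this ideal with the natural PD structure in characteristic $p$: declare $\gamma_m(s_i^{1/p}) = (s_i^{1/p})^m/m!$ for $m < p$ and $\gamma_m(s_i^{1/p}) = 0$ for $m \geq p$, and extend multiplicatively. The regularity of $s_1^{1/p},\ldots,s_n^{1/p}$ in $S$ prevents spurious relations. Alternatively, the PD structure can be extracted from the canonical PD structure on the kernel of $A_\cris(R)/pA_\cris(R) \to R$, which is now well-behaved precisely by the $p$-torsion freeness proved above.

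The main obstacle is the regularity claim underlying the induction in Step~2, namely that $\tilde s_1,\ldots,\tilde s_n$ is a regular sequence in $R^\flat$. While this is plausible given that $s_1,\ldots,s_n$ is regular in the perfect ring $S$ and $S \to R^\flat$ is well-controlled, the PD structure on $B$ introduces Koszul-type subtleties modulo $p$ that must be handled carefully in the inductive elimination of higher-degree monomials. A clean strategy is to reduce to the universal example $S = \F_p[X_1^{1/p^\infty},\ldots,X_n^{1/p^\infty}]$ with $J = (X_1,\ldots,X_n)$, in the spirit of the reduction to $\F_p[X^{1/p^\infty}]/(X)$ used in the proof of Lemma~\ref{WtoAcrisInjective}: for this universal example, $A_\cris$ admits a completely explicit description making $p$-torsion freeness transparent, and flatness of a general perfect $S$ over its subring generated by a regular sequence transports the conclusion to the general case.
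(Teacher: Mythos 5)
The paper leaves this proposition to the reader, so there is no paper argument to compare against. Your strategy---present $A_\cris(R)$ as $W(R^\flat)\tatealgebra{T_1,\ldots,T_n}$ modulo the closed ideal generated by the $[\tilde{s}_i]-T_i$, prove $p$-saturation by matching $\gamma_{\vec m}$-coefficients and absorbing Koszul boundaries, then reduce to the universal example---is the natural generalization of the $n=1$ computation at the end of the proof of Lemma~\ref{ExPhi1}, and the antisymmetric-correction step is sound in shape. But several load-bearing inputs are missing. You need, and do not establish, that $\ker(R^\flat\to R)$ is the closed ideal generated by the $\tilde{s}_i$, and that $\tilde{s}_1,\ldots,\tilde{s}_n$ is a regular sequence in $R^\flat$; neither is formal, since $S\to R^\flat$ is a non-noetherian completion. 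For the transport from the universal example $S_0 = \F_p[X_1^{1/p^\infty},\ldots,X_n^{1/p^\infty}]$, you need both that a regular sequence in an arbitrary perfect $\F_p$-algebra $S$ yields a \emph{flat} map $S_0\to S$ (a genuine theorem about perfect rings---flatness of $\F_p[X_1,\ldots,X_n]\to S$ does not formally imply flatness through the intermediate faithfully flat perfection, since that implication fails for ring maps in general), and a base-change compatibility for $A_\cris$ in the spirit of Lemma~\ref{ASAS}; neither is supplied.

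For the PD-thickening half, the stated justification is not correct. The ideal $\ker(\Phi)=(s_1^{1/p},\ldots,s_n^{1/p})/J$ lives in $R$, where the images of the $s_i^{1/p}$ are nilpotent (each satisfies $(s_i^{1/p})^p = s_i = 0$) and hence do \emph{not} form a regular sequence, so ``regularity of $s_1^{1/p},\ldots,s_n^{1/p}$ in $S$ prevents spurious relations'' is a misdirected appeal; and ``extend multiplicatively'' is not how a PD structure propagates from generators---one must use $\gamma_m(x+y)=\sum_{i+j=m}\gamma_i(x)\gamma_j(y)$ and $\gamma_m(ax)=a^m\gamma_m(x)$ and then verify consistency against all relations in the ideal of $R$. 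What regularity of $(s_1^{1/p},\ldots,s_n^{1/p})$ in $S$ correctly buys is a free description of its PD envelope over $S$; a workable argument (consistent with the paper's remark that the PD structure depends on the chosen regular sequence) is to show the kernel of the projection of that envelope onto $R$ is a sub-PD-ideal, or equivalently to exhibit the PD structure on the universal example and pull it back. The suggested ``alternative'' of extracting a PD structure from $A_\cris(R)/p\to R$ is also not obviously valid, since there is no evident PD morphism from $A_\cris(R)/p$ onto the source copy of $R$ in $\Phi\from R\to R$.
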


\begin{proof} Easy and left to the reader. We note that the PD structure on $\Phi\from R\to R$ depends on the choice of the regular sequence $(s_1,\ldots,s_n)$.
\end{proof}

\begin{Cor}\label{Intfullyfaithful} Let $R$ be an f-semiperfect ring such that $R=S/J$, where $S$ is a perfect ring, and $J$ is an ideal generated by a regular sequence. Assume Theorem \ref{fullyfaithful}. Then the Dieudonn\'e module functor on $p$-divisible groups over $R$ is fully faithful.
\end{Cor}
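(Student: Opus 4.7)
The strategy is to deduce the corollary from Theorem \ref{fullyfaithful} by exploiting the $p$-torsion freeness of $A_\cris(R)$ granted by the preceding proposition. For $p$-divisible groups $G,H$ over $R$ we must show that
\[
\alpha\from \Hom(G,H)\longrightarrow \Hom_{F,V}(\MM(G),\MM(H))
\]
is a bijection. Both sides are $p$-torsion free: the source because $p\from G\to G$ is an fppf epimorphism, so any $\phi$ with $p\phi = 0$ must vanish; the target because $\MM(G),\MM(H)$ are finite projective modules over the $p$-torsion free ring $A_\cris(R)$. Since Theorem \ref{fullyfaithful} gives that $\alpha[1/p]$ is an isomorphism, injectivity of $\alpha$ is immediate.

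For surjectivity, take $f\in \Hom_{F,V}(\MM(G),\MM(H))$; Theorem \ref{fullyfaithful} supplies a unique $g\in \Hom(G,H)[1/p]$ with $\MM(g)=f$. Write $g = p^{-k}h$ with $h\in \Hom(G,H)$ and $k\geq 0$ minimal; the goal is to show $k=0$. A straightforward induction on $k$ reduces matters to the following integral divisibility claim: if $h\in \Hom(G,H)$ satisfies $\MM(h)\in p\cdot \Hom_{F,V}(\MM(G),\MM(H))$, then $h\in p\cdot \Hom(G,H)$.

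To prove the claim I would apply $\Hom(-,H)$ to the short exact sequence $0\to G[p]\to G\xrightarrow{p} G\to 0$, which shows that $h$ is divisible by $p$ in $\Hom(G,H)$ if and only if the restriction $h|_{G[p]}$ vanishes. On the other side, the hypothesis on $\MM(h)$ says exactly that its mod-$p$ reduction $\overline{\MM(h)}\from \MM(G)/p\to \MM(H)/p$ is zero. Because $A_\cris(R)$ is $p$-torsion free, the surjection $A_\cris(R)/p\to R$ is a genuine PD thickening of $R$, and $\MM(G)/p$, $\MM(H)/p$ are canonically the crystalline Dieudonn\'e modules of the truncated BT-1 groups $G[p]$ and $H[p]$ evaluated at this thickening.

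The main obstacle is thus reduced to full faithfulness of the Dieudonn\'e functor on finite locally free group schemes killed by $p$ over $R$, so that $\overline{\MM(h)}=0$ forces $h|_{G[p]}=0$. This truncated statement does not follow formally from Theorem \ref{fullyfaithful}, and I expect it to be the principal technical input: one would adapt the argument of Theorem \ref{fullyfaithful} --- reducing via the de Jong--Messing trick to explicit morphisms between the basic group schemes $\alpha_p$, $\mu_p$, $\Z/p\Z$ --- and handle each case by an Artin--Hasse-style integrality computation, where the regularity hypothesis on $J$ (which both ensures $p$-torsion freeness of $A_\cris(R)$ and endows $\Phi\from R\to R$ with a PD structure) is precisely what upgrades the isogeny argument to an integral one.
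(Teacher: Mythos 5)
Your reduction to the integral divisibility claim---that if $\MM(h)$ is divisible by $p$ then $h$ is---coincides with the paper's, and your observation that divisibility of $h$ by $p$ is equivalent to $h|_{G[p]}=0$ is correct. But the route you then take does not close the argument, and you acknowledge this: you reduce the divisibility claim to full faithfulness of the Dieudonn\'e module functor on finite locally free group schemes killed by $p$ over $R$, evaluated at the PD thickening $A_\cris(R)/p\to R$. This is a genuinely stronger statement than Theorem \ref{fullyfaithful} (which concerns $p$-divisible groups, and only up to isogeny), and you flag it as ``the principal technical input'' you would still need. As stated, then, the proposal has a real gap: it trades the original problem for an unproved mod-$p$ full faithfulness theorem, and the sketch of how to adapt the Artin--Hasse computation to $\alpha_p$, $\mu_p$, $\Z/p\Z$ is speculation rather than a proof.

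The paper takes a different and complete path that avoids any truncated Dieudonn\'e theory. Since $J$ is generated by a regular sequence, the Frobenius $\Phi\from R\to R$ itself carries a PD structure on its kernel (as recorded in the proposition preceding the corollary), and this is precisely where regularity is used. Evaluating $\MM(f)$ at this thickening $\Phi\from R\to R$, and using that $R$ is killed by $p$ so the evaluation of a map divisible by $p$ vanishes, one fixes lifts $G',H'$ along $\Phi$, lifts $f$ to $f'\from G'\to H'$, and finds that $f'$ induces the zero map on Lie algebras of universal vector extensions. From this one shows $f'$ kills $\ker F_{G'}$ and so factors as $G'\buildrel F\over\to G'^{(1)}\to H'^{(1)}\buildrel V\over\to H'$; the middle map untwists to a map $g\from G\to H$ with $pg=f$. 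In short: where you try to run the mod-$p$ reduction $\MM(G)/p\to\MM(H)/p$ and need a new faithfulness theorem, the paper instead uses the concrete PD thickening $\Phi\from R\to R$ and the $F$/$V$ factorization of a Lie-trivial morphism, which requires nothing beyond standard Dieudonn\'e crystal formalism. You would do well to look at the paper's argument (or at Lemma 11 of Faltings' \emph{Coverings of $p$-adic period domains}, which it follows) to see how the regularity hypothesis is actually cashed in.
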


\begin{proof} We follow the proof of Lemma 11 in \cite{FaltingsPeriodDomains}. Let $G, H$ be two $p$-divisible groups over $R$. We have to check that
\[
\Hom(G,H)\to \Hom(\MM(G),\MM(H))
\]
is an isomorphism. As $A_\cris(R)$ is $p$-torsion free, both groups are flat over $\Z_p$. As the kernel and cokernel are $p$-torsion, we see that the kernel is trivial; we have to show that the cokernel is trivial as well. For this, we have to check that if $f\in \Hom(G,H)$ such that $\MM(f)$ is divisible by $p$, then $f$ is divisible by $p$. For this, evaluate $\MM(f)$ at the PD thickening $\Phi\from R\to R$. By assumption, this evaluation is $0$, as $R$ is $p$-torsion. Fixing lifts of $G^\prime$ and $H^\prime$ to $\Phi\from R\to R$, we find that $f$ lifts to $f^\prime\from G^\prime\to H^\prime$, and that the map induced by $f^\prime$ on Lie algebras of the universal vector extension is trivial. We claim that $f^\prime$ factors as a composite
\[
G^\prime\buildrel F\over \to G^{\prime (1)}\to H^{\prime (1)}\buildrel V\over \to H^\prime\ ,
\]
where $G^{\prime (1)}, H^{\prime (1)}$ are the pullbacks of $G^\prime, H^\prime$ via $\Phi\from R\to R$. By duality, it suffices to check factorization over $G^{\prime (1)}$. But $G^\prime\to H^\prime$ is trivial on Lie algebras, which means that the map becomes trivial on the kernel of Frobenius, i.e. the desired factorization. But then we get a map $g^{(1)}\from G^{\prime (1)}\to H^{\prime (1)}$, which is the same thing as a map $g\from G\to H$ over $R$. One checks directly that $pg=f$, as desired.
\end{proof}
\end{rmk}

\subsection{The case $\Q_p/\Z_p\to \mu_{p^\infty}$}

In this section, we will prove the full faithfulness results up to isogeny for the case of homomorphisms from $\Q_p/\Z_p$ to $\mu_{p^\infty}$. We note that the Dieudonn\'{e} module $\MM(\Q_p/\Z_p)$ (resp., $\MM(\mu_{p^\infty})$) is a free $A_\cris(R)$-module of rank $1$, with $F$ acting on a basis element as $p$ (resp., as $1$).

\begin{lemma} There is a canonical identification
\[
\Hom_R(\Q_p/\Z_p,\mu_{p^\infty}) = 1 + J\subset R^\flat\ ,
\]
under which the canonical map
\[
\Hom_R(\Q_p/\Z_p,\mu_{p^\infty})\to \Hom(\MM(\Q_p/\Z_p),\MM(\mu_{p^\infty}))[p^{-1}] = B_\cris^+(R)^{\varphi=p}
\]
is identified with the map
\[
1 + J\to B_\cris^+(R)^{\varphi = p} : r\mapsto \log([r])\ .
\]
\end{lemma}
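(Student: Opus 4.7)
The plan is, first, to identify $\Hom_R(\Q_p/\Z_p, \mu_{p^\infty})$ with $1+J$. This Hom-group is the Tate module $T\mu_{p^\infty}(R) = \varprojlim_n \mu_{p^n}(R)$, whose elements are compatible systems $(r_n)_{n\geq 0}$ with $r_n \in R$, $r_{n+1}^p = r_n$, $r_n^{p^n} = 1$, and $r_0 = 1$. Such a sequence is precisely an element of $R^\flat = \varprojlim_\Phi R$ whose first coordinate is $1$, i.e. an element of $1 + J$, and this is manifestly a bijection of abelian groups.

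On the Dieudonn\'e side, both $\MM(\Q_p/\Z_p)(A_\cris(R))$ and $\MM(\mu_{p^\infty})(A_\cris(R))$ are free of rank one with canonical generators $e, f$ on which $F$ acts as multiplication by $p$ and by $1$, respectively. A Dieudonn\'e map $e \mapsto xf$ up to isogeny is therefore parameterised by a scalar $x \in B_\cris^+(R)$ with $\varphi(x) = px$, giving the stated identification with $B_\cris^+(R)^{\varphi = p}$.

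To compute the induced map $1+J \to B_\cris^+(R)^{\varphi=p}$ explicitly, I would invoke Lemma \ref{ExplicitDieudonne} to rewrite it as the restriction of $\qlog_{\mu_{p^\infty}}$ to $T\mu_{p^\infty}(R) \subset \widetilde{\mu_{p^\infty}}(R)$. Since $\mu_{p^\infty}^\vee = \Q_p/\Z_p$ has trivial Lie algebra, the vector part of the universal vector extension vanishes and $E\mu_{p^\infty} = \mu_{p^\infty}$; in particular $\MM(\mu_{p^\infty})(A_\cris(R)) = A_\cris(R)$ naturally. Taking the canonical lift of $\mu_{p^\infty}$ from $R$ to $A_\cris(R)$ and applying Lemma \ref{QLogvsLog}, one has $\qlog(r) = \log(\tilde r)$, where $\tilde r \in \mu_{p^\infty}(A_\cris(R))$ is the first-coordinate image of $r$ under the crystal identification $\widetilde{\mu_{p^\infty}}(R) = \widetilde{\mu_{p^\infty}}(A_\cris(R))$. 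The formula is thus reduced to the identification $\tilde r = [r] \in 1 + I_\cris(R)$, for then $\log$ on the formal group $1 + I_\cris(R)$ is the classical power series (which converges in $B_\cris^+(R)$ because $I_\cris(R)$ carries its PD structure), and $\varphi([r]) = [r]^p$ automatically lands $\log([r])$ in $B_\cris^+(R)^{\varphi = p}$.

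The main obstacle is the identification $\tilde r = [r]$, which I would establish by unwinding the definition of $s_G$. Represent $r$ by the compatible system $(r_n)$ and lift each $r_n$ to $y_n := [r^{1/p^n}] \in W(R^\flat) \subset A_\cris(R)^\times$, which reduces to $r_n$ modulo $I_\cris(R)$. Since $V = 0$, the compatibility clause in the definition of $s_G$ forces the sequence $y_n^{p^n}$ to be constant; multiplicativity of Teichm\"{u}ller gives $y_n^{p^n} = [r]$ for all $n$, yielding $s_G(r) = [r]$ and hence $\qlog(r) = \log([r])$. The one delicate point is to verify that the $y_n$ really are points of $\mu_{p^\infty}(A_\cris(R))$ in the formal-group sense, i.e.\ that $y_n - 1$ is topologically nilpotent; this follows from $(y_n - 1)^{p^n} \equiv y_n^{p^n} - 1 = [r] - 1 \pmod p$ together with the fact that $[r] - 1 \in I_\cris(R)$ is topologically nilpotent by its PD structure. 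Alternatively, the identification can be checked in a universal f-semiperfect case using the integrality of the Artin-Hasse exponential, as alluded to in the introduction.
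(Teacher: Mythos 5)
Your proof is correct and follows essentially the same route as the paper: the identification of $\Hom_R(\Q_p/\Z_p,\mu_{p^\infty})$ with $1+J$ is the paper's, and the key reduction to $\qlog$ via Lemma \ref{ExplicitDieudonne} is exactly what the paper cites. The paper, however, stops there and treats the formula $\qlog_{\mu_{p^\infty}}(r)=\log([r])$ as immediate; you flesh out the computation by unwinding $s_G$ (via the lifts $y_n=[r^{1/p^n}]$, the observation that $V=0$ so $E\mu_{p^\infty}=\mu_{p^\infty}$, and the topological-nilpotence check that makes $[r]$ an honest point of the lifted $\mu_{p^\infty}$ over $A_\cris(R)$), which is the content that the paper's terse ``follows from Lemma \ref{ExplicitDieudonne}'' leaves implicit. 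Your invocation of Lemma \ref{QLogvsLog} is a small shortcut that avoids the full $s_G$ computation, since $\MM(\mu_{p^\infty})\to\Lie\mu_{p^\infty}$ is already an isomorphism; either way the argument is sound and compatible with the paper's.
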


\begin{proof} This follows from Lemma \ref{ExplicitDieudonne} above. For the first part, note that an element of $\Hom_R(\Q_p/\Z_p,\mu_{p^\infty})$ is given by a sequence $r_0,r_1,\ldots\in R$ such that $r_i=r_{i+1}^p$, and $r_0=1$. This gives rise to an element of $r\in R^\flat$ such that $r$ maps to $1$ in $R$, i.e. $r\in 1+J$.
\end{proof}

Let $I=\ker(W(R^\flat)\to R)$. The following lemma implies that $1 + J\to B_\cris^+(R)^{\varphi = p}$ is injective.

\begin{lemma} \label{gammainjective} Let $w\in 1+I$. If $\log w=0$, then $w=\pm 1$. If $p\neq 2$, then in fact $w=1$.
\end{lemma}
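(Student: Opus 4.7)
The strategy is to exponentiate the relation $\log w = 0$ to show that $w$ has $p$-power order in $W(R^\flat)$, and then to conclude by analysing torsion in $1 + pW(R^\flat)$. All work takes place inside $A_\cris(R)$, where the divided powers on $I_\cris(R)$ provide the $p$-adic estimates that are unavailable in $W(R^\flat)$ directly; the injection $W(R^\flat) \hookrightarrow A_\cris(R)$ from Lemma \ref{WtoAcrisInjective} will then transfer the conclusion back.

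First I would show that $w^p \in 1 + pA_\cris(R)$ when $p$ is odd, and $w^4 \in 1 + 4A_\cris(R)$ when $p = 2$. Writing $w = 1 + a$ with $a \in I \subset I_\cris(R)$, the binomial expansion of $w^p$ has each intermediate term $\binom{p}{k}a^k$ ($1 \leq k \leq p-1$) divisible by $p$, while the top term $a^p = p!\,\gamma_p(a)$ is in $pA_\cris(R)$ by virtue of the divided power $\gamma_p(a) \in A_\cris(R)$. For $p = 2$ the identity $w^2 = 1 + 2a + 2\gamma_2(a)$ only gives divisibility by $2$, so one squares once more, using $w^2 - 1 \in 2A_\cris(R)$, to obtain $w^4 - 1 \in 4A_\cris(R)$.

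Next I would invoke the standard estimate $v_p(p^{kn}/n!) \to \infty$ (with $k = 1$ when $p$ is odd, $k = 2$ when $p = 2$) to conclude that $\exp$ converges on $pA_\cris(R)$ (respectively $4A_\cris(R)$) and is a two-sided inverse to $\log$ on $1 + pA_\cris(R)$ (respectively $1 + 4A_\cris(R)$). Since $\log w = 0$, we have $\log(w^p) = p \log w = 0$ (respectively $\log(w^4) = 0$), and applying $\exp$ yields $w^p = 1$ (respectively $w^4 = 1$) in $A_\cris(R)$, and hence in $W(R^\flat)$ by Lemma \ref{WtoAcrisInjective}. Reducing modulo $p$, the image $\bar w \in R^\flat$ satisfies $\bar w^{p^k} = 1$ for $k = 1$ or $2$; since $R^\flat$ is perfect, Frobenius is injective and $\bar w = 1$, placing $w$ in $1 + pW(R^\flat)$.

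Finally, to finish: for $p$ odd, the same $\log$/$\exp$ identification, now taking place inside $W(R^\flat)[1/p]$, gives a group isomorphism $1 + pW(R^\flat) \cong pW(R^\flat)$, which is torsion-free; so $w^p = 1$ forces $w = 1$. For $p = 2$, one squares again: $w^2 \in 1 + 4W(R^\flat)$ and $(w^2)^2 = w^4 = 1$, and the analogous isomorphism $1 + 4W(R^\flat) \cong 4W(R^\flat)$ makes the former torsion-free, so $w^2 = 1$ and $w = \pm 1$. The main obstacle is the first step: controlling $p$-adic divisibilities of $w^p - 1$ (respectively $w^4 - 1$) inside $A_\cris(R)$, which is only possible via the divided power structure on $I_\cris(R)$, since the series $\log$ need not converge in $W(R^\flat)[1/p]$ directly. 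The case $p = 2$ introduces the extra complication that one must iterate squaring before landing in the ideal $4A_\cris(R)$ where $\exp$ converges.
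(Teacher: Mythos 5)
Your proof is correct and follows essentially the same strategy as the paper: both show $w^p\in 1+pA$ (respectively $w^4\in 1+4A$ for $p=2$), deduce $w^p=1$ (resp.\ $w^4=1$) from $\log w^p=0$, transfer this to $W(R^\flat)$ via Lemma \ref{WtoAcrisInjective}, and then use perfectness of $R^\flat$ to finish. The only cosmetic difference is that you invoke $\exp$ as the two-sided inverse to $\log$ on $1+pA$ and then reduce mod $p$ plus appeal to torsion-freeness of $1+pW(R^\flat)$, whereas the paper factors $\log w^p=(w^p-1)\cdot(\text{unit})$ and concludes directly with the assertion that $W(R^\flat)$ has no nontrivial $p$-th (resp.\ fourth) roots of unity.
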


\begin{proof} Assume first $p\neq 2$. Note that $w^p\in 1+pA$. We have $\log w^p=0$, so that
\[
0 = \log w^p=(w^p-1)\left(1-\frac{w^p-1}{2}+\frac{(w^p-1)^2}{3}-\dots\right)\ .
\]
The factor on the right lies in $1+pA\in A^\times$, whence $w^p=1$ (in $A_\cris(R)$, hence in $W(R^\flat)$ by Lemma \ref{WtoAcrisInjective}). But as $R^\flat$ is perfect, there are no nontrivial $p$-th roots in $W(R^\flat)$, hence $w=1$.

If $p=2$, argue similarly with $w^4\in 1+4A$, showing that $w^4=1$. For $p=2$, the only nontrivial $4$-th roots of unity in $W(R^\flat)$ are $\pm 1$, giving the result.
\end{proof}

In the following, we use Proposition \ref{IsogenyReduction} and \ref{DefFSemiperfect} to replace $R$ by an f-semiperfect ring for which $J=\ker(R^\flat\to R)$ satisfies $\Phi(J) = J^p$. For $k\geq 0$, let $I^{(k)}\subset W(R^\flat)$ be the ideal
\[
I^{(k)} = [J^{p^{k}}]W(R^\flat) + p[J^{p^{k-1}}]W(R^\flat) + \ldots + p^k[J]W(R^\flat) + p^{k+1}W(R^\flat)\ ,
\]
so that $I^{(0)}=I$.  In terms of the presentation of Witt vectors as sequences, we have
\[
I^{(k)}=\{(a_0,a_1,\ldots)\mid a_i\in J^{p^{k}}\mathrm{\ for\ } i=0,1,\ldots,k\}\ .
\]
Thus $I^{(k)}$ is the kernel of the homomorphism from $W(R^\flat)$ onto $W_{k+1}(R^{\flat}/J^{p^{k}})$, where $W_i$ is the functor of truncated Witt vectors of length $i$. Since $W(R^\flat)=\varprojlim W_k(R^\flat)$, and $R^\flat=\varprojlim R^\flat/J^{p^k}$, the following lemma is immediate.

\begin{lemma} \label{Wtopology} The ring $W(R^\flat)$ is complete with respect to the linear topology induced by the ideals $I^{(k)}$, i.e., $W(R^\flat)\isom \varprojlim W(R^\flat)/I^{(k)}$. Moreover, for any $i\geq 0$, $(1+I)^{p^i}\subset 1+I^{(i)}$.
\end{lemma}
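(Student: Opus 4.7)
For the first assertion, my plan is to combine the standard identification $W(A) = \varprojlim_n W_n(A)$ with the completeness $R^\flat = \varprojlim_k R^\flat/J^{p^k}$. The latter follows from the previous Definition/Proposition, which lets us arrange $\Phi(J) = J^p$, so that $\Phi^k(J) = J^{p^k}$ and $R^\flat$ (with its inverse-limit topology) is complete for this filtration. Since the functors $W_n$ preserve filtered inverse limits of surjective ring maps, $W(R^\flat) = \varprojlim_{n,k} W_n(R^\flat/J^{p^k})$; the diagonal subsystem $n = k+1$ is cofinal, so $W(R^\flat) \cong \varprojlim_k W_{k+1}(R^\flat/J^{p^k})$, which equals $\varprojlim_k W(R^\flat)/I^{(k)}$ by the description of $I^{(k)}$ recalled just before the lemma.

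For the second assertion, my plan is induction on $i$, with the key intermediate being an auxiliary claim $(I^{(i)})^p \subset I^{(i+1)}$. The base $i = 0$ of $(1+I)^{p^i} \subset 1 + I^{(i)}$ is tautological. For the inductive step, write $(1+x)^{p^i} = 1 + y$ with $y \in I^{(i)}$ by the inductive hypothesis and expand
\[
(1+x)^{p^{i+1}} = (1+y)^p = 1 + \sum_{j=1}^{p-1}\binom{p}{j} y^j + y^p.
\]
The middle terms lie in $p \cdot I^{(i)} \subset I^{(i+1)}$, by the immediate generator comparison $p I^{(i)} = \sum_{j\geq 0} p^{j+1}[J^{p^{i-j}}]W(R^\flat) + p^{i+2}W(R^\flat) \subset I^{(i+1)}$. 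The remaining term $y^p$ is handled by the auxiliary claim.

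The main obstacle is the auxiliary claim $(I^{(i)})^p \subset I^{(i+1)}$. My plan is first to reduce to generators: if $y_1, y_2 \in I^{(i)}$ both satisfy $y_j^p \in I^{(i+1)}$, then the cross terms $\binom{p}{k} y_1^{p-k} y_2^k$ of $(y_1+y_2)^p$ for $1 \leq k \leq p-1$ each carry an explicit factor of $p$ and $y_1^{p-k} y_2^k \in I^{(i)}$, so they lie in $p \cdot I^{(i)} \subset I^{(i+1)}$. It thus suffices to check $y^p \in I^{(i+1)}$ on a generating set of $I^{(i)}$, namely for $y = p^j [r] w$ with $0 \leq j \leq i$, $r \in J^{p^{i-j}}$, $w \in W(R^\flat)$, and for $y = p^{i+1} w$. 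For the first, $y^p = p^{jp} [r^p] w^p$ with $r^p \in J^{p^{i-j+1}}$; when $jp \leq i+1$ this falls into the generator $p^{jp}[J^{p^{(i+1)-jp}}]W(R^\flat)$ of $I^{(i+1)}$ (using $p^{i-j+1} \geq p^{(i+1)-jp}$, i.e. $-j \geq -jp$), while when $jp > i+1$ we have $p^{jp} \in p^{i+2} W(R^\flat) \subset I^{(i+1)}$. For the second, $(p^{i+1} w)^p = p^{p(i+1)} w^p \in p^{i+2} W(R^\flat) \subset I^{(i+1)}$ since $p(i+1) \geq i+2$ for $p \geq 2$. This closes the induction.
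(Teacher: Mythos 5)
Your proof is correct. For the completeness assertion you use exactly the two identifications the paper cites as the source of the lemma ($W(R^\flat)=\varprojlim W_k(R^\flat)$ and $R^\flat=\varprojlim R^\flat/J^{p^k}$), made precise via the cofinal diagonal, which matches the paper's approach; the paper declares the second containment immediate without further argument, and your binomial induction together with the generator-by-generator verification that $y^p\in I^{(i+1)}$ for $y\in I^{(i)}$ is a correct and complete way to supply the missing details.
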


Recall the quotient $A_\cris(R)\to W(R^\flat) / [J]$ of $A_\cris(R)$ from the proof of Lemma \ref{WtoAcrisInjective}. We let $N\subset A_\cris(R)$ be the kernel. Note that $N$ is generated by $\gamma_n([x])$, for $x\in J$ and $n\geq 1$.

\begin{lemma}\label{pN0} The map $\varphi^1\from I_\cris(R)\to A_\cris(R)$ preserves $N\subset I_\cris(R)$. The restriction $\varphi^1|_N$ is topologically nilpotent.
\end{lemma}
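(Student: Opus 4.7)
The plan is to verify both claims directly from the defining formula for $\varphi^1$, applied to the generating set of $N$. By Lemma \ref{ExPhi1}, the map $\varphi^1$ is additive and satisfies $\varphi^1(af) = \varphi(a)\varphi^1(f)$ for $a \in A_\cris(R)$, $f \in I_\cris(R)$. Since $N$ is generated as an ideal by the elements $\gamma_n([x])$ with $x \in J$ and $n \geq 1$, semilinearity reduces the first claim to checking that each such generator is sent into $N$. The defining formula
\[
\varphi^1(\gamma_n([x])) = \frac{(np)!}{p \cdot n!}\,\gamma_{np}([x])
\]
exhibits this image as an integer multiple of $\gamma_{np}([x]) \in N$, settling the first assertion.

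For topological nilpotence, I will compute the iterates explicitly. Induction on $k$, using $\varphi$-semilinearity and the fact that $\varphi$ fixes rational integers, gives
\[
\varphi^{1,k}(\gamma_n([x])) = \frac{(np^k)!}{p^k \cdot n!}\,\gamma_{np^k}([x])\ .
\]
Invoking Legendre's formula $v_p(m!) = (m - s_p(m))/(p-1)$, together with the observation that $s_p(np^k) = s_p(n)$ (multiplying by $p^k$ only appends zeros to the base-$p$ expansion), the $p$-adic valuation of the coefficient simplifies to
\[
v_p\!\left(\frac{(np^k)!}{p^k \cdot n!}\right) = n \cdot \frac{p^k - 1}{p - 1} - k\ ,
\]
which tends to $\infty$ as $k \to \infty$ for every $n \geq 1$.

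To conclude, any element $f \in N$ can be written as a finite sum $f = \sum_{i=1}^{r} a_i\, \gamma_{n_i}([x_i])$ with $a_i \in A_\cris(R)$, $x_i \in J$ and $n_i \geq 1$. By $\varphi$-semilinearity,
\[
\varphi^{1,k}(f) = \sum_{i=1}^{r} \varphi^k(a_i) \cdot \frac{(n_i p^k)!}{p^k \cdot n_i!}\, \gamma_{n_i p^k}([x_i])\ ,
\]
and each summand has $p$-adic valuation at least $(p^k - 1)/(p-1) - k$, which tends to $\infty$ independently of $i$. Hence $\varphi^{1,k}(f) \to 0$ in the $p$-adic topology on $A_\cris(R)$, establishing the desired topological nilpotence. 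The only step requiring any care is the Legendre bookkeeping; no essential obstacle arises.
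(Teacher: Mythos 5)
Your proof follows the same route as the paper's: compute $\varphi^1$ on the generators $\gamma_n([x])$ of $N$ from the defining formula of Lemma \ref{ExPhi1}, and then bound $p$-divisibility to get topological nilpotence. Where the paper stops at the observation that the coefficient $\frac{(np)!}{p\cdot n!}$ is divisible by $(n!)^{p-1}$ and says ``one easily deduces,'' you have carried out the iteration explicitly and obtained, via Legendre's formula, the clean uniform bound $v_p\bigl((n_ip^k)!/(p^k\,n_i!)\bigr)\geq (p^k-1)/(p-1)-k$, independent of $n_i$. That is a welcome, slightly more quantitative version of the same idea.

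One small imprecision worth noting in the last paragraph: you assert that every $f\in N$ is a \emph{finite} $A_\cris(R)$-linear combination of the $\gamma_{n_i}([x_i])$. Since $A_\cris(R)$ is the $p$-adic completion of a divided-power envelope, $N$ is a priori only the $p$-adic closure of the ideal algebraically generated by these elements, so $f$ may have to be taken as a $p$-adically convergent (possibly infinite) combination. This does not cause a real problem precisely because your valuation bound is uniform in $n$: together with the continuity of $\varphi^1$ (one has $\varphi^1(p^{m+1}A_\cris)\subset p^m A_\cris$, since $\varphi^1(pa)=\varphi(a)$), the uniform estimate on the algebraic part plus the crude estimate $\varphi^{1,k}(p^mA_\cris)\subset p^{m-k}A_\cris$ on the tail show that $\varphi^{1,k}(f)\to 0$ for every $f\in N$. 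It would be worth a sentence to acknowledge this. Also, since $A_\cris(R)$ may have $p$-torsion, ``$p$-adic valuation'' of an element should be read as ``lies in $p^{\lfloor\cdot\rfloor}A_\cris(R)$''; the estimates you use are valuations of the integer coefficients, which is fine.
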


\begin{proof} Recall that $N$ is generated as $A_\cris(R)$-module by $z=\gamma_n([x])$, where $x\in J$ and $n\geq 1$. We compute from the definition of $\varphi^1$,
\[
\varphi^1(z) = \varphi^1(\gamma_n([x])) = \frac{(np)!}{p\cdot n!} \gamma_{np}([x])\in N\ .
\]
Moreover, this is divisible by $(n!)^{p-1}$, from which one easily deduces that $\varphi^1$ is topologically nilpotent.
\end{proof}

Take any $a\in B_\cris^+(R)^{\varphi=p}$. Multiplying $a$ by a power of $p$, we may assume that $a\in A_\cris(R)^{\varphi = p}$, or even that
\[
a\in I_\cris(R)^{\varphi^1 = 1}\ .
\]
As $W(R^\flat)\to W(R^\flat) / [J]$ is surjective, there exist $n\in N$ and $w\in W(R^\flat)$ with $a=w+n$. After multiplying $a$ by $p^2$, we may assume that $w\in p^2 W(R^\flat)$. Consider the element
\[
z=\varphi^1(w)-w=n-\varphi^1(n)\in W\cap N = [J]\ .
\]
Writing $z=\sum_{i\geq 0} [z_i] p^i$, we have $z_i\in J$ for all $i\geq 0$. Observe that
\[
a = w + n = w + z + \varphi^1(z) + (\varphi^1)^2(z) + \ldots + (\varphi^1)^k(z) + \ldots\ ,
\]
because the sum on the right-hand side makes sense by the previous lemma.

Let $AH(t)$ be the Artin-Hasse exponential series:
\[
AH(t)=\exp\left(t+\frac{t^p}{p}+\frac{t^{p^2}}{p^2}+\dots\right)\ .
\]
It is a classical result that $AH(t)\in\Z_p\powerseries{t}$.  Since $z_i\in J$, Lemma \ref{Wtopology} ensures that $AH([z_i])$ converges to an element in $1+I$. Note that $\log AH([z_i])$ does not exist in $W(R^\flat)$, but it does make sense in $A_\cris(R)$.

\begin{lemma} For any semiperfect ring $R$ and any element $z\in J=\ker(R^\flat\to R)$, we have
\[
\log AH([z]) = \sum_{n\geq 0} (\varphi^1)^n ([z])\ .
\]
\end{lemma}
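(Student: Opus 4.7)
The plan is to reduce the identity to the inductive computation
\[
(\varphi^1)^n([z]) = \frac{[z]^{p^n}}{p^n}
\]
combined with the tautological identity $\log AH(t) = \sum_{n\geq 0} t^{p^n}/p^n$ coming from the definition of the Artin--Hasse exponential.

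First I would verify that each term $[z]^{p^n}/p^n$ lives in $A_\cris(R)$. Since $[z]\in I_\cris(R)$ admits divided powers, we have
\[
\frac{[z]^{p^n}}{p^n} = \frac{(p^n)!}{p^n}\,\gamma_{p^n}([z]),
\]
and the integer $v_p((p^n)!/p^n) = (p^n-1)/(p-1) - n$ is non-negative. Moreover $\gamma_{p^n}([z])\in N$, and by Lemma \ref{pN0} the operator $\varphi^1|_N$ is topologically nilpotent, so the series $\sum_n (\varphi^1)^n([z])$ converges in $A_\cris(R)$.

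Next I would prove by induction on $n$ that $(\varphi^1)^n([z]) = [z]^{p^n}/p^n$. The base $n=0$ is trivial, and for $n=1$ the defining formula gives
\[
\varphi^1([z]) = \varphi^1(\gamma_1([z])) = \frac{p!}{p\cdot 1!}\,\gamma_p([z]) = (p-1)!\,\gamma_p([z]) = \frac{[z]^p}{p}.
\]
For the inductive step, assuming $(\varphi^1)^n([z]) = [z]^{p^n}/p^n = \frac{(p^n)!}{p^n}\gamma_{p^n}([z])$, the defining formula for $\varphi^1$ on divided powers yields
\[
(\varphi^1)^{n+1}([z]) = \frac{(p^n)!}{p^n}\cdot\frac{(p^{n+1})!}{p\cdot(p^n)!}\,\gamma_{p^{n+1}}([z]) = \frac{(p^{n+1})!}{p^{n+1}}\,\gamma_{p^{n+1}}([z]) = \frac{[z]^{p^{n+1}}}{p^{n+1}}.
\]
Here one uses the $\varphi$-linearity of $\varphi^1$ together with the fact that the scalar $(p^n)!/p^n$ is a $p$-adic integer fixed by $\varphi$.

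Finally, I would combine this with the definition $AH(t)=\exp(\sum_n t^{p^n}/p^n)$, which, as a formal power series identity in $\Q_p\powerseries{t}$, yields $\log AH(t) = \sum_n t^{p^n}/p^n$. Specializing $t\mapsto [z]$ is legitimate in $A_\cris(R)$: the integrality of $AH\in \Z_p\powerseries{t}$ makes $AH([z])\in 1+I\subset A_\cris(R)$, and both the $\log$-series applied to $AH([z])-1$ and the series $\sum_n [z]^{p^n}/p^n$ converge in the $p$-adic topology on $A_\cris(R)$ for the reasons above. Combining with the first step gives
\[
\log AH([z]) = \sum_{n\geq 0} \frac{[z]^{p^n}}{p^n} = \sum_{n\geq 0} (\varphi^1)^n([z]),
\]
as desired. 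The main (minor) obstacle is justifying that the formal power-series identity $\log AH(t) = \sum_n t^{p^n}/p^n$ transports correctly to $A_\cris(R)$; one handles this by checking equality modulo each $I^{(k)}$ (cf.\ Lemma \ref{Wtopology}) and passing to the limit, reducing to manipulations in $W(R^\flat)[p^{-1}]$ where both sides are genuinely convergent power series in $[z]$.
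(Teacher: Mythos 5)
Your explicit inductive identification $(\varphi^1)^n([z]) = \frac{(p^n)!}{p^n}\gamma_{p^n}([z])$, obtained from the defining formula for $\varphi^1$ and its $\varphi$-semilinearity over $\Z_p\subset A_\cris(R)$, is correct and is a useful unwinding of the right-hand side that the paper does not spell out. The convergence observation via Lemma \ref{pN0} is also fine.

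The gap is in the final step, where you transport the formal identity $\log AH(t)=\sum_n t^{p^n}/p^n$ from $\Q_p\powerseries{t}$ to $A_\cris(R)$. Since $A_\cris(R)$ may have $p$-torsion (the paper gives an explicit example just before Lemma \ref{WtoAcrisInjective}), one cannot freely specialize a $\Q_p$-coefficient series at $t=[z]$ and then invoke convergence: the two series might converge to different elements of $A_\cris(R)$ even though they agree in $A_\cris(R)[p^{-1}]$. Your proposed remedy — check equality modulo $I^{(k)}$ and pass to the limit, working ``in $W(R^\flat)[p^{-1}]$'' — does not hold up: $\log AH([z])$ is not an element of $W(R^\flat)[p^{-1}]$ (the paper notes explicitly that it does not exist in $W(R^\flat)$ and only makes sense after passing to the completed PD hull $A_\cris(R)$); the quotients $W(R^\flat)/I^{(k)}\cong W_{k+1}(R^\flat/J^{p^k})$ themselves have $p$-torsion, so reducing modulo them does not sidestep the issue; and $W(R^\flat)[p^{-1}]$ has no sensible topology generated by the $I^{(k)}$, as these become the unit ideal after inverting $p$. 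The correct repair, and the one the paper takes, is to reduce by functoriality to the universal case $R=\F_p[z^{1/p^\infty}]/z$, where the proof of Lemma \ref{ExPhi1} shows $A_\cris(R)$ is $p$-torsion free; there the formal power-series identity is legitimate in $A_\cris(R)\hookrightarrow A_\cris(R)[p^{-1}]$, and a map $R_{\mathrm{univ}}\to R$ carrying the universal $z$ to the given one transports the identity. (Equivalently, one can work in the $p$-torsion-free ring $\Z_p\tatealgebra{T}$ and specialize along the PD map $T\mapsto[z]$.) Either way, what your argument is missing is the observation that the statement is functorial in $(R,z)$ and therefore reduces to a $p$-torsion-free universal instance.
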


\begin{proof} This is immediate by reduction to the universal case $R=\F_p[z^{1/p^\infty}]/z$, for which we recall that $A_\cris(R)$ is $p$-torsion free.
\end{proof}

Applying Lemma \ref{Wtopology}, the product
\[
\xi = \exp(w)\prod_{i\geq 0} AH([z_i])^{p^i}
\]
converges to an element of $1+I$; here we use that $w\in p^2 W(R^\flat)$ to ensure convergence of the exponential even for $p=2$. Then $\log\xi$ converges in $A_\cris(R)$ to $a=w+n$.

\begin{lemma} There is an equality $\varphi(\xi)=\pm \xi^p$, and in fact $\varphi(\xi) = \xi^p$ if $p\neq 2$.
\end{lemma}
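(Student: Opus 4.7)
The strategy will be to take logarithms and reduce to the injectivity statement already proved in Lemma \ref{gammainjective}. Concretely, the idea is that $\log\xi = a$ is $\varphi^1$-fixed, and this forces $\log\varphi(\xi)$ and $\log(\xi^p)$ to coincide, so the ratio $\varphi(\xi)\xi^{-p}$ lies in the kernel of $\log$ on $1+I$, which is $\{\pm 1\}$ (or $\{1\}$ for $p\neq 2$).

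First, I would check that $\varphi(\xi)\xi^{-p}$ actually lies in $1+I\subset W(R^\flat)$. Since $\xi\in 1+I$ and $1+I$ is a multiplicative group in $W(R^\flat)$ (each element of $I$ is topologically nilpotent, because $I^{(k)}$ shrinks to $0$ by Lemma \ref{Wtopology}, so geometric series converge), we have $\xi^{-p}\in 1+I$. Since Frobenius on $W(R^\flat)$ satisfies $\varphi([j])=[j^p]$, it maps $[J]W(R^\flat)+pW(R^\flat)=I$ into itself, so $\varphi(\xi)\in 1+I$. Thus $\varphi(\xi)\xi^{-p}\in 1+I$, and its image in $A_\cris(R)$ lies in $1+I_\cris(R)$, so the $\log$-series converges there.

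Next I would carry out the logarithm computation. Because the logarithm series commutes with the continuous ring homomorphism $\varphi$, and $\log\xi=a$ by the construction of $\xi$, we have
\[
\log\varphi(\xi)=\varphi(\log\xi)=\varphi(a),\qquad \log(\xi^p)=p\log\xi=pa.
\]
Now use the hypothesis $a\in I_\cris(R)^{\varphi^1=1}$ and the defining relation $p\varphi^1=\varphi$ from Lemma \ref{ExPhi1}: this gives $\varphi(a)=p\varphi^1(a)=pa$. Hence
\[
\log\bigl(\varphi(\xi)\xi^{-p}\bigr)=\log\varphi(\xi)-\log(\xi^p)=\varphi(a)-pa=0
\]
in $A_\cris(R)$.

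Finally, Lemma \ref{gammainjective} applied to $w=\varphi(\xi)\xi^{-p}\in 1+I$ yields $\varphi(\xi)\xi^{-p}=\pm 1$, and $=1$ when $p\neq 2$. This is exactly the asserted identity $\varphi(\xi)=\pm\xi^p$ (resp.\ $\varphi(\xi)=\xi^p$).

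The only step that requires any care is the justification that $\log$ commutes with $\varphi$ and with $p$-th powers on the specific elements in question; this is routine given the convergence estimates embedded in Lemma \ref{Wtopology} and the earlier verification that $\log\xi=a$ converges in $A_\cris(R)$. No new input beyond Lemmas \ref{gammainjective}, \ref{Wtopology}, and \ref{ExPhi1} is needed.
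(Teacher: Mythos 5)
Your proposal is correct and is essentially the argument the paper gives: the paper's proof is simply the one-liner $\log(\varphi(\xi)/\xi^p)=\varphi(a)-pa=0$ followed by an appeal to Lemma \ref{gammainjective}, and your write-up expands the same computation (checking $\varphi(\xi)\xi^{-p}\in 1+I$, commuting $\log$ with $\varphi$, and invoking $\varphi(a)=pa$ via $p\varphi^1=\varphi$) without changing the route.
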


\begin{proof} We have $\log(\varphi(\xi)/\xi^p)=\varphi(a)-pa=0$.   By Lemma \ref{gammainjective}, $\varphi(\xi)=\pm \xi^p$.
\end{proof}

Let $r$ be the image of $\xi$ in $R^\flat$, so that $r\in 1+J$. Let $\zeta=\xi/[r]\in 1+pW(R^\flat)$; then $\varphi(\zeta)=\pm \zeta^p$. A simple induction argument on the highest power of $p$ to divide $\zeta-1$ shows that $\zeta = 1$ (showing a posteriori that, even for $p=2$, there is no sign ambiguity). Thus $\xi=[r]$, and $a=\log([r])$, as desired.

\subsection{A surjectivity result}

The proof of full-faithfulness for general $p$-divisible groups $G$ and $H$ will require a certain surjectivity result that we will formulate in this subsection.

Let $R$ be a semiperfect ring and let $G$ be a $p$-divisible group over $R$. Consider the $R$-algebra $S$ which represents the Tate module $TG$, and the $R$-algebra $\tilde{S}$ which represents $\tilde{G}\times_G \hat{G}$. We recall that $\tilde{S}$ is a relative perfect flat $R$-algebra, and that $S=\tilde{S}/I$ is a flat $R$-algebra, where $I\subset \tilde{S}$ is the finitely generated ideal coming as pullback from the zero section $\{e\}\subset \hat{G}$.

In particular, we see that $S$ is semiperfect (and f-semiperfect if $R$ is f-semiperfect). Now note that there is a universal homomorphism $\Q_p/\Z_p\to G$ over $S$. It induces a map
\[
A_\cris(S) = \MM(\Q_p/\Z_p)(A_\cris(S))\to \MM(G)(A_\cris(R))\otimes_{A_\cris(R)} A_\cris(S)\ ,
\]
i.e. gives an element of $\MM(G)(A_\cris(R))\otimes_{A_\cris(R)} A_\cris(S)$. In turn, it induces a map
\[
A_\cris(S)^\ast = \Hom_{A_\cris(R)}(A_\cris(S),A_\cris(R))\to \MM(G)(A_\cris(R))\ .
\]

\begin{Theorem}\label{SurjResult} The cokernel of the map
\[
A_\cris(S)^\ast\to \MM(G)(A_\cris(R))
\]
is killed by $p^2$.
\end{Theorem}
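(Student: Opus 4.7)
The plan is to reduce to a local perfectoid situation and combine the almost purity theorem of \cite{Sch} with the Frobenius structure on the universal element. Unpacking the construction, the universal homomorphism $u : \Q_p/\Z_p \to G$ over $S$ gives an element $\alpha \in \MM(G)(A_\cris(R)) \otimes_{A_\cris(R)} A_\cris(S)$ satisfying $F\alpha = p\alpha$, and the map in the theorem is the dual of the $A_\cris(R)$-linear map $\MM(G)(A_\cris(R))^\ast \to A_\cris(S)$ determined by $\alpha$; its image is therefore the $A_\cris(R)$-submodule generated by the coefficients of $\alpha$ in any basis of $\MM(G)(A_\cris(R))$ under arbitrary functionals on $A_\cris(S)$.

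First I would reduce to the case that $G$ is connected via the connected-\'etale sequence. If $G$ is \'etale, then $T(G)$ is (pro-\'etale locally) a free $\Z_p$-module and $S$ is a pro-(finite \'etale) $R$-algebra, so that the statement amounts to a trace pairing being surjective, and in fact integrally an isomorphism. In the connected case, Lemma \ref{UnivCoverAlmRepr} says that $\tilde S$ is relatively perfect and flat over $R$, with $S = \tilde S/I$ for $I$ the finitely generated pullback of the augmentation ideal of $\hat G$. Locally on $R$, after trivializing $\hat G$, one can describe $\tilde S$ very explicitly (as the completed perfection in finitely many variables over $R$), so that its generic fibre is perfectoid over any perfectoid base, which is the setting in which one can hope to apply almost purity.

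Next I would compute $A_\cris(S)^\ast$ in this local picture via a pro-\'etale pairing of $T(G)$ with $\tilde G$ on the generic fibre. After base change to a perfectoid field $C$, the cover $\tilde G^\ad_\eta \to G^\ad_\eta$ is almost finite \'etale in the sense of \cite{Sch}, so that the induced pairing is an almost isomorphism on the level of $A_\cris$, and consequently $A_\cris(S)^\ast \to \MM(G)(A_\cris(R))$ is almost surjective. To tighten the almost error into the honest annihilator $p^2$, I would use the identity $F\alpha = p\alpha$: one factor of $p$ is absorbed by applying the operator $\varphi^1 = \varphi/p$ from Lemma \ref{ExPhi1} (which turns almost elements into honest ones using that $\varphi^1$ is topologically nilpotent on the PD ideal $N$, Lemma \ref{pN0}), and a second factor of $p$ comes from the divided-power structure needed to embed the $F = p$ eigenspace integrally. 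The explicit model is the case $\Q_p/\Z_p \to \mu_{p^\infty}$ of the previous subsection, in which the integrality of the Artin-Hasse exponential controls the denominators exactly.

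The main obstacle is upgrading the almost surjectivity into the honest $p^2$-annihilator bound. Almost purity alone only gives cokernel killed by each element of $\gothm = (p^{1/p^\infty})$, and extracting an explicit small power of $p$ requires a careful tracking of how Frobenius and divided powers interact. The interplay with the $\mathbb{Q}_p/\mathbb{Z}_p \to \mu_{p^\infty}$ computation is what pins down the specific value $p^2$ (rather than, say, $p$ or $p^{100}$), and this is the technical heart of the argument.
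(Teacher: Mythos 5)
Your proposal captures the right high-level intuition — pass to a large perfectoid cover and exploit the almost purity theorem to construct a rich supply of functionals on $A_\cris(S)$ — but the specific mechanism you describe does not match the paper and has several genuine gaps.

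First, the proposed connected--\'etale reduction is not in the paper and is not clearly licit: the connected--\'etale extension of $G$ need not split integrally, and the statement is a bound on an integral cokernel, so one cannot simply split $G$ and add the two results. The paper instead keeps $G$ general throughout and begins with a different reduction (Proposition \ref{perfectenough}): via Lemma \ref{ASAS}, which expresses $A_\cris(S)$ as a completed tensor product $A_\cris(S')\hat\otimes_{W(R^\flat)}A_\cris(R)$, one reduces to the case where $R$ is perfect. You do not make this reduction, and without it the linear algebra you later invoke (duals over $A_\cris(R)$, the coefficient-span description of the image) becomes hard to control.

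Second, the mechanism you propose for converting "almost surjective" into "$p^2$-annihilator" is not the paper's and does not work as described. You invoke the operator $\varphi^1$ from Lemma \ref{ExPhi1} and the integrality of the Artin--Hasse exponential, but those are the tools of the \emph{previous} subsection, where they are used to prove full faithfulness for $\Hom(\Q_p/\Z_p,\mu_{p^\infty})$ — not to prove surjectivity, and not in this generality. The paper's actual source of the constant $p^2$ is explicit: the cokernel of $\log_G\from G^0(T^+)\to\Lie G$ is killed by $p^{1+\epsilon}$ (convergence of $\exp_G$), the cokernel of the Hodge--Tate map $TG(T^+)\otimes T^+\to(\Lie G^\vee)^\vee$ is killed by $p^{1/(p-1)+\epsilon}$ (Proposition \ref{HodgeTateSequence}, proved via almost purity over the big perfectoid $(T,T^+)$), and splicing these gives $p^{2+\epsilon}$ for $TG(T^+/p)\otimes T^+\to\MM(G)(T^+)$ (Proposition \ref{TGTp}). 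The $\epsilon$ is then removed — this is the step you flag as "the technical heart" but do not supply — by the observation in Lemma \ref{TGTpcok} that the relevant quotient $Y$ is an $\F_p$-vector space, so $Y\otimes\OO_C/p$ is free and cannot be killed by $p^{1-\epsilon}$ unless it vanishes. The descent from the auxiliary perfectoid ring $T^+$ back to $W(R)$-linear functionals, via almost faithful flatness of $T^+$ over $T_0^+ = W(R)\otimes\OO_C$ and the explicit almost-retraction subalgebras $S_i^+$ of Lemma \ref{faithfullyflat}, is entirely absent from your outline; this is where the functionals $\lambda\in A_\cris(S)^\ast$ actually get produced.

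Finally, the assertion that $\tilde G^\ad_\eta\to G^\ad_\eta$ is "almost finite \'etale" is imprecise: it is an inverse limit of finite \'etale covers, which is what makes $\tilde G^\ad_\eta$ perfectoid, but almost purity is applied in the paper to the finite \'etale layers over the perfectoid base $(T,T^+)$ (and to the cohomology of $\OO_X^+$-modules in Proposition \ref{HodgeTateSequence}), not to that inverse limit map directly.
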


\begin{rmk} This theorem can be regarded as the existence of many morphisms $\Q_p/\Z_p\to G$ over certain $R$-algebras. Note that if $R=\F_p$ and $G=\mu_{p^\infty}$, then no such morphisms exist over noetherian rings: One has to go to big rings to get these morphisms. Unfortunately, our proof will rely heavily on the theory of perfectoid spaces, and it would be desirable to have a more elementary argument. We should also note that it is probably possible to improve on the result; for example, for $p\neq 2$, our proof gives $p$ in place of $p^2$.
\end{rmk}

First we show that is enough to prove Theorem \ref{SurjResult} in the case that $R$ is a perfect ring.

\begin{prop} \label{perfectenough} Suppose that Theorem \ref{SurjResult} holds whenever $R$ is a perfect ring. Then it holds unconditionally.
\end{prop}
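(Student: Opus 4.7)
The plan is to reduce the case of a general semiperfect $R$ to the case where $R$ is perfect by passing to the perfection $R^\flat \twoheadrightarrow R$ and transferring the statement via the injection $W(R^\flat) \hookrightarrow A_\cris(R)$ established in Lemma~\ref{WtoAcrisInjective}. Using Proposition~\ref{IsogenyReduction} together with Definition/Proposition~\ref{DefFSemiperfect}, I would first reduce to the case where $R$ is f-semiperfect, so that $R = R^\flat/J$ with $R^\flat$ perfect and $J \subset R^\flat$ finitely generated; the canonical surjection $\pi\colon R^\flat \twoheadrightarrow R$ then provides the bridge to the perfect setting.

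The key step is to produce a $p$-divisible group $\tilde G$ over $R^\flat$ whose base change to $R$ is isogenous to $G$. Since $\pi$ is not in general a nilpotent thickening, Grothendieck--Messing lifting does not apply directly; one works instead up to isogeny. By Gabber's theorem, $p$-divisible groups up to isogeny over the perfect ring $R^\flat$ correspond to $F$-isocrystals over $W(R^\flat)[1/p]$. My plan is to construct such an $F$-isocrystal as a Frobenius-stable descent of the isocrystal $M[1/p] = \MM(G)(A_\cris(R))[1/p]$ along the inclusion $W(R^\flat)[1/p] \hookrightarrow A_\cris(R)[1/p]$, exploiting the Frobenius-topological nilpotence of the PD ideal from Lemma~\ref{pN0} to ensure convergence and compatibility. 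Comparing Dieudonn\'e modules after base change back to $A_\cris(R)$ should then confirm that $\tilde G|_R$ is indeed isogenous to $G$.

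With $\tilde G$ in hand, I would apply the hypothesis of the proposition to the perfect pair $(R^\flat, \tilde G)$, obtaining that the cokernel of $A_\cris(\tilde S)^* \to \MM(\tilde G)(W(R^\flat))$ is killed by $p^2$, where $\tilde S$ is the coordinate ring of $T(\tilde G)$. Base-changing this statement along $W(R^\flat) \hookrightarrow A_\cris(R)$, and using that $T(G)$ is isogenous to $T(\tilde G) \otimes_{R^\flat} R$ so that $A_\cris(S)$ is compatible with $A_\cris(\tilde S)$ under base change up to isogeny, transfers the cokernel statement to $(R, G)$. The main obstacle will be the descent step: producing a well-defined Frobenius-stable $W(R^\flat)[1/p]$-form of $M[1/p]$ when $A_\cris(R)[1/p]$ is strictly larger than $W(R^\flat)[1/p]$ due to the PD ideal. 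Once this descent is established, the remaining steps are essentially formal; the $p^2$ factor in Theorem~\ref{SurjResult} is precisely the slack needed to absorb the isogeny-level identifications running throughout the argument.
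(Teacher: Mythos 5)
There is a genuine gap, and it stems from a misidentification of the lifting problem. You correctly see that the statement should transfer from $R^\flat$ to $R$ along $R^\flat \twoheadrightarrow R$, but you then assert that because this surjection "is not in general a nilpotent thickening, Grothendieck--Messing lifting does not apply directly," and you substitute an elaborate construction via Gabber's classification of $p$-divisible groups up to isogeny over a perfect ring together with a "Frobenius-stable descent" of isocrystals from $A_\cris(R)[1/p]$ to $W(R^\flat)[1/p]$. This turns a trivial step into the hard part of the argument. In fact a lift $G'$ of $G$ to $R^\flat$ (on the nose, not merely up to isogeny) always exists: arranging $\Phi(J) = J^p$, the surjection $R^\flat \to R$ is the inverse limit of the nilpotent thickenings $R^\flat/J^{p^n} \to R$, and $p$-divisible groups lift through nilpotent thickenings step by step and then along the limit. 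The paper's proof simply fixes such a lift $G'$ and works with it integrally. Your proposed descent is not needed, and it is also not clear how it would run: $A_\cris(R)[1/p]$ is not (faithfully) flat over $W(R^\flat)[1/p]$, so no off-the-shelf descent theorem applies, and Lemma~\ref{pN0} controls $\varphi^1$ restricted to the ideal $N$, which says nothing a priori about producing a $W(R^\flat)[1/p]$-form of an arbitrary Frobenius-module over $A_\cris(R)[1/p]$. You flag this as "the main obstacle" without resolving it, which confirms the gap.

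The second problem is that you gloss over the one step in this reduction that genuinely requires work. Once $G'$ over $R^\flat$ is chosen, one must relate $A_\cris(S)$ (where $S$ represents $T(G)$ over $R$) to $A_\cris(S')$ (where $S'$ represents $T(G')$ over $R^\flat$) under base change from $W(R^\flat)$ to $A_\cris(R)$. The precise statement is $A_\cris(S) \cong A_\cris(S') \hat{\otimes}_{W(R^\flat)} A_\cris(R)$, and establishing it uses the flatness of $S'$, $I'$, and $\tilde S'$ over $R^\flat$ and the Berthelot--Ogus criterion for compatibly adjoining divided powers to the sum of two ideals whose intersection is their product. Your phrase that "$A_\cris(S)$ is compatible with $A_\cris(\tilde S)$ under base change up to isogeny" names the desired conclusion but supplies no argument, and this compatibility is not a formal consequence of an isogeny between Tate modules.

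In short: the reduction to a perfect base is correct in outline, but you have made the trivial step (lifting $G$ to $R^\flat$) look hard by a wrong route, and you have made the genuinely nontrivial step (the $A_\cris$-compatibility) disappear. Both need to be repaired before the argument is complete.
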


\begin{proof} Take any semiperfect ring $R$, and let $R^\flat$ be its inverse perfection, with $J=\ker(R^\flat\to R)$, as usual. Fix a lift $G^\prime$ of $G$ to $R^\flat$. Let $S^\prime$ be the $R^\flat$-algebra which represents $TG^\prime$. Since $G$ is the base change of $G^\prime$ along $R^\flat\to R$, we have $S=S^\prime\otimes_{R^\flat} R$. The map $S^\prime\to S$ is therefore a surjection with kernel $JS^\prime$.

\begin{lemma}\label{ASAS} There is an isomorphism of $A_\cris(R)$-algebras
\[
A_\cris(S)\isom A_\cris(S^\prime) \hat{\otimes}_{W(R^{\flat})} A_\cris(R)\ .
\]
\end{lemma}

\begin{proof} Let $\tilde{S}^\prime$ represent $\tilde{G}^\prime\times_{G^\prime} \hat{G}^\prime$. Then $\tilde{S}^\prime$ is relatively perfect over $R^\flat$, thus perfect. Let $I^\prime\subset \tilde{S}^\prime$ be the kernel of $\tilde{S}^\prime\to S^\prime$. As $I^\prime$ is topologically nilpotent, one finds that
\[
S^\flat = (S^\prime)^\flat = \tilde{S}^\prime\ .
\]
Also note that the kernel of the surjection $\tilde{S}^\prime\to S$ is given by $I^\prime + J\tilde{S}^\prime$. Moreover, $I^\prime$ and $J\tilde{S}^\prime$ satisfy
\[
I^\prime\cap J\tilde{S}^\prime = I^\prime J\ .
\]
This follows from the various flatness assertions for $\tilde{S}^\prime$, $I^\prime$ and $S^\prime$ over $R^\prime$. In particular, \cite[Proposition 3.10]{BerthelotOgusNotes} implies that giving divided powers on $I^\prime + J\tilde{S}^\prime$ is equivalent to giving divided powers on $I^\prime$ and $J\tilde{S}^\prime$ separately.

Now observe that one obtains $A_\cris(S^\prime)$ by adding divided powers of $I^\prime$, and one gets $A_\cris(R)$ from $W(R^\flat)$ by adding divided powers of $J$ (and completing $p$-adically). It follows that
\[
A_\cris(S^\prime) \hat{\otimes}_{W(R^{\flat})} A_\cris(R)
\]
is the universal $p$-adically complete $W(\tilde{S}^\prime)$-algebra with divided powers for $I^\prime + J\tilde{S}^\prime$. But as $I^\prime + J\tilde{S}^\prime = \ker(\tilde{S}^\prime\to S)$, this is exactly $A_\cris(S)$, as desired.
\end{proof}

We now turn to the proof of Proposition \ref{perfectenough}. Since $R^\flat$ is perfect, $A_\cris(R^\flat)=W(R^\flat)$.  By hypothesis, the cokernel of
\[
\Hom_{W(R^\flat)}( A_\cris(S^\prime), W(R^\flat)) \to \MM(G^\prime)(W(R^\flat))
\]
is killed by $p^2$.  We base change along $W(R^\flat)\to A_\cris(R)$ to find that the cokernel of
\[
\Hom_{A_\cris(R)}( A_\cris(S^\prime)\hat{\otimes}_{W(R^\flat)} A_\cris(R), A_\cris(R)) \to \MM(G^\prime)\otimes_{W(R^\flat)} A_\cris(R) = \MM(G)(A_\cris(R))
\]
is also killed by $p^2$. By Lemma \ref{ASAS}, $A_\cris(S^\prime)\hat{\otimes}_{W(R^\flat)} A_\cris(R)\isom A_\cris(S)$.  Therefore Theorem \ref{SurjResult} holds for $R$.
\end{proof}

Next, we will explain a method to construct many morphisms $\Q_p/\Z_p\to G$. For this, we will need big rings. More precisely, let $C$ be a complete, algebraically closed nonarchimedean field extension of $\Q_p$, and let $(T,T^+)$ be a perfectoid affinoid $(C,\OO_C)$-algebra for which $\Spec T$ is connected and has no nontrivial finite \'etale covers.  Let $G$ be a $p$-divisible group over $T^+$.

\begin{lemma} \label{ispdivisible} The abelian group $G^\ad_\eta(T,T^+)=G(T^+)$ is $p$-divisible.
\end{lemma}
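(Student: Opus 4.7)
The plan is to reduce $p$-divisibility to a statement about splitting finite \'etale covers of $\Spec T$, which is then forced by the hypothesis on $T$. The first observation is that, by the identification $G^\ad_\eta(T,T^+) = G(T^+)$ recorded above and the expression $G = \varinjlim_n G[p^n]$ of a $p$-divisible group as an increasing union of finite locally free group schemes, any $x \in G(T^+)$ already lies in $G[p^N](T^+)$ for some $N \geq 0$. I would then form the fiber
\[
E \;:=\; G[p^{N+1}] \times_{[p],\, G[p^N],\, x} \Spec T^+\ ,
\]
which is a closed subscheme of the finite locally free $T^+$-scheme $G[p^{N+1}]$, and is moreover a $G[p]$-torsor over $T^+$. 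Hence $E = \Spec R$ for some finite locally free $T^+$-algebra $R$ of rank $|G[p]|$. Producing $y$ with $py = x$ is equivalent to finding a $T^+$-algebra section $R \to T^+$.

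Next I would pass to the generic fiber. Since $T$ is a $\Q_p$-algebra, the $p$-divisible group $G_T$ has \'etale $p$-torsion $G[p]_T$, so the torsor $E_T = \Spec(R \otimes_{T^+} T)$ is a finite \'etale cover of $\Spec T$. The assumption that $\Spec T$ is connected and admits no nontrivial finite \'etale covers forces $R \otimes_{T^+} T \cong T^n$ as $T$-algebras (with $n = |G[p]|$), and in particular there is a $T$-algebra morphism $R \otimes_{T^+} T \to T$, giving a section $y_T \in E(T) \subset G(T)$ with $p y_T = x$ in $G(T)$.

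The final step is to show that this $y_T$ is automatically a $T^+$-point. The map $R \to R \otimes_{T^+} T \to T$ is a morphism of $T^+$-algebras whose source is a finite $T^+$-algebra; hence every element of its image is integral over $T^+$. But $(T,T^+)$ is an affinoid ring, so by definition $T^+$ is integrally closed in $T$. Therefore the image lies in $T^+$, the morphism factors through $R \to T^+$, and this gives the desired $y \in E(T^+) \subset G(T^+)$ with $py = x$.

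The main conceptual obstacle is simply recognizing that $[p]^{-1}(x)$ is representable by an \emph{honest} finite locally free affine scheme over $T^+$ (rather than merely by a formal scheme), since this is what makes the passage to \'etale covers of $\Spec T$ available; once that representability is in place, the argument is a clean combination of the two inputs ``$T$ has no nontrivial finite \'etale covers'' and ``$T^+$ is integrally closed in $T$.''
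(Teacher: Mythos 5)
There is a genuine gap at the first step: the claim that any $x \in G(T^+)$ lies in $G[p^N](T^+)$ for some $N$ is false. In the paper's conventions, $G(T^+) := \varprojlim_k G(T^+/p^k)$ is the evaluation of the sheaf $G$ on $\Spf T^+$, and since $p$ is not nilpotent in $T^+$, this group is strictly larger than $\varinjlim_n G[p^n](T^+)$. Concretely, for $G = \mu_{p^\infty}$ and $T^+ = \OO_C$ (with $p$ odd), the element $1+p$ reduces modulo each $p^k$ to a $p$-power root of unity (since $(1+p)^{p^{k-1}} \equiv 1 \bmod p^k$), so $1+p \in G(\OO_C)$, yet $1+p$ is not a root of unity in $\OO_C$ and lies in no $G[p^N](\OO_C)$. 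Thus your construction of $E$ inside $G[p^{N+1}]$ does not get off the ground exactly for those $x$ where $p$-divisibility is nontrivial, and the issue you flagged as the main conceptual obstacle — realizing $[p]^{-1}(x)$ as an honest finite locally free scheme rather than a formal one — has not actually been resolved.

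The remainder of your strategy is sound and can be repaired: $[p]\colon G \to G$ is relatively representable by finite locally free morphisms of degree $p^h$, with no reference to any fixed $G[p^{N+1}]$. Pulling this back along $x\colon \Spf T^+ \to G$ yields a compatible tower of finite projective $T^+/p^k$-algebras, whose inverse limit $R$ is a finite projective $T^+$-algebra, so $[p]^{-1}(x) = \Spec R$ is an honest finite locally free $T^+$-scheme after all. From there your passage to the finite \'etale cover $\Spec R[1/p]$ of $\Spec T$, the splitting by hypothesis, and the descent of the section to $T^+$ via integral closedness all go through. This corrected route is essentially what the paper does, except that the paper stays in the adic world — pulling back $[p]\colon G^\ad_\eta \to G^\ad_\eta$ along the $(T,T^+)$-point $x$ and citing \cite{Sch} to identify the resulting finite \'etale cover of $\Spa(T,T^+)$ with a finite \'etale $T$-algebra — thereby sidestepping both the explicit formal-scheme computation of the fiber and the final integral-closedness observation.
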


\begin{proof} Pulling back the multiplication by $p$ morphism $G^\ad_\eta\to G^\ad_\eta$ to $\Spa(T,T^+)$ gives a finite \'{e}tale cover of $\Spa(T,T^+)$, i.e. a finite \'{e}tale $T$-algebra by \cite{Sch}. By assumption, this admits a section, giving the result.
\end{proof}

Let $G^0(T^+)=\ker(G(T^+)\to G(T^+/p))$. Then we have the exact sequence
\[
0\to TG(T^+)\to TG(T^+/p) \to \ G^0(T^+) \to 0\ .
\]
Here the map $TG(T^+/p)\to G^0(T^+)$ takes a sequence in $\varprojlim G[p^n](T^+/p)\subset \tilde{G}(T^+/p)$, lifts it uniquely to $\tilde{G}(T^+)$, and projects it onto $G(T^+)$. The sequence is exact on the right because of Lemma \ref{ispdivisible}.

We may identify $TG(T^+/p)$ with $\Hom_{T^+/p}(\Q_p/\Z_p, G\otimes_{T^+} T^+/p)$. Passage to Dieudonn\'e modules gives a map $TG(T^+/p)\to \MM(G)(T^+)$. Using Lemma \ref{QLogvsLog} and Lemma \ref{ExplicitDieudonne}, we get a commutative diagram
\[\xymatrix{
0 \ar[r] & TG(T^+) \ar[r] \ar[d] & TG(T^+/p) \ar[r] \ar[d] & G^0(T^+) \ar[d]^{\log_G} \ar[r] & 0 \\
0 \ar[r] & (\Lie G^\vee)^\vee \ar[r] & \MM(G)(T^+) \ar[r] & \Lie G \ar[r] & 0\ .
}\]
Note that a priori we only know commutativity after composition with $\Lie G\to (\Lie G)[p^{-1}]$, but this map is injective. The cokernel of the logarithm map $G^0(T^+)\to \Lie G$ is killed by $p^{1+\epsilon}$ for any $\epsilon>0$, because $\exp_G$ converges on $p^{1+\epsilon}\Lie G$.

Moreover, we have the following Hodge-Tate decomposition over $(T,T^+)$.

\begin{prop}\label{HodgeTateSequence} The cokernel of $TG(T^+)\otimes_{\Z_p} T^+\to (\Lie G^\vee)^\vee$ is killed by $p^{1/(p-1)+\epsilon}$ for any $\epsilon>0$. More precisely, there is a natural sequence
\[
0\to (\Lie G)(1)\to TG(T^+)\otimes_{\Z_p} T^+\to (\Lie G^\vee)^\vee\to 0
\]
whose cohomology groups are killed by $p^{1/(p-1)+\epsilon}$ for any $\epsilon>0$. Here, $(1)$ denotes a Tate twist $\otimes_{\Z_p} \Z_p(1)$.
\end{prop}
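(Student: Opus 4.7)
I would construct both arrows, verify their composition vanishes, and then bound the cohomology. The right-hand map $\alpha_G\from TG(T^+)\otimes_{\Z_p} T^+\to (\Lie G^\vee)^\vee$ is the $T^+$-linear extension of the left vertical in the commutative diagram preceding the proposition; it factors through $(\Lie G^\vee)^\vee$ because $TG(T^+)\subset TG(T^+/p)$ is exactly the kernel of the composite $TG(T^+/p)\twoheadrightarrow G^0(T^+)\xrightarrow{\log_G}\Lie G$, so its image in $\MM(G)(T^+)$ projects to zero in $\Lie G$. The same construction for the Serre dual $G^\vee$ yields $\alpha_{G^\vee}\from TG^\vee(T^+)\otimes T^+\to (\Lie G)^\vee$. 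Dualizing $\alpha_{G^\vee}$ and invoking the perfect Weil pairing $TG(T^+)\otimes_{\Z_p} TG^\vee(T^+)\to \Z_p(1)$ gives the left-hand arrow $\beta\from (\Lie G)(1)\to TG(T^+)\otimes_{\Z_p} T^+$.

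To check $\alpha_G\circ\beta=0$: the Weil pairing $\MM(G)\otimes \MM(G^\vee)\to \MM(\mu_{p^\infty})$ of Dieudonn\'e modules pairs $\alpha_G$ against $\alpha_{G^\vee}$, and the two Hodge filtrations $(\Lie G^\vee)^\vee\subset \MM(G)$ and $(\Lie G)^\vee\subset \MM(G^\vee)$ are orthogonal under this pairing, since their product lies in $\Fil^2\MM(\mu_{p^\infty})=0$. Combined with the defining property of $\beta$ as the transpose of $\alpha_{G^\vee}$, this yields the vanishing.

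The main work is bounding $\mathrm{coker}(\alpha_G)$. I would apply Theorem~\ref{SurjResult} to the f-semiperfect ring $R=T^+/p$: the cokernel of $A_\cris(S)^\ast\to \MM(G)(A_\cris(T^+/p))$ is killed by $p^2$, where $S$ is the $R$-algebra representing $TG$. Base-changing along $A_\cris(T^+/p)\to T^+$ and exploiting that (i) $TG(T^+)$ is precisely the subgroup of $TG(T^+/p)$ whose image lands in $(\Lie G^\vee)^\vee$, and (ii) $G(T^+)$ is $p$-divisible by Lemma~\ref{ispdivisible} (so $TG(T^+/p)\twoheadrightarrow G^0(T^+)$ is surjective), one deduces that $\mathrm{coker}(\alpha_G)$ is killed by $p^{1/(p-1)+\epsilon}$. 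Applying the same bound to $G^\vee$ and dualizing (using flatness of $T^+$) gives that $\ker(\beta)$ is killed by $p^{1/(p-1)+\epsilon}$. A rank count -- source of rank $h$, generic ranks $d$ and $h-d$ on either side, matching the ranks of $(\Lie G)(1)$ and $(\Lie G^\vee)^\vee$ -- together with $\alpha_G\circ \beta=0$ then forces the middle cohomology $\ker(\alpha_G)/\mathrm{im}(\beta)$ to also be killed by $p^{1/(p-1)+\epsilon}$.

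The main obstacle is sharpening the constant from the crude $p^{1+\epsilon}$ yielded by the diagram (via the convergence of $\exp_G$ on $p^{1+\epsilon}\Lie G$) and the $p^2$ of Theorem~\ref{SurjResult} down to the optimal $p^{1/(p-1)+\epsilon}$. This reflects the archetypal computation for $G=\mu_{p^\infty}$, where the relevant map factors through $\log([\epsilon])$ with $\epsilon\in 1+\gothm_{T^{\flat+}}$ a compatible system of $p$-power roots of unity, an element of valuation exactly $1/(p-1)$ in $A_\cris(T^+/p)$; the general case is controlled by functoriality and the Weil pairing, which transfers the sharp $\mu_{p^\infty}$-bound to arbitrary $G$.
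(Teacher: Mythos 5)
Your construction of the two arrows via the Weil pairing and the observation that $\alpha_G\circ\beta=0$ by orthogonality of the Hodge filtrations are fine, and the identification $\beta=\alpha_{G^\vee}^\vee(1)$ matches the remark following the statement. However, the heart of your bound argument is circular, and what remains is too vague to constitute a proof.

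The circularity: you invoke Theorem~\ref{SurjResult} to bound $\mathrm{coker}(\alpha_G)$. But Proposition~\ref{HodgeTateSequence} is proved as an intermediate step in the proof of Theorem~\ref{SurjResult}. The chain of dependencies in the paper runs Proposition~\ref{HodgeTateSequence} $\Rightarrow$ Proposition~\ref{TGTp} $\Rightarrow$ the final reduction step in the proof of Theorem~\ref{SurjResult}. Appealing to Theorem~\ref{SurjResult} here would break the logical order of the section; it is not an available input.

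Even bracketing that, your plan for the sharp constant $p^{1/(p-1)+\epsilon}$ --- ``the general case is controlled by functoriality and the Weil pairing, which transfers the sharp $\mu_{p^\infty}$-bound to arbitrary $G$'' --- is not an argument. The paper obtains the constant very differently. For $(T,T^+)=(C,\OO_C)$ the sharp bound is imported verbatim from Th\'eor\`eme~II.1.1 of Fargues--Genestier--Lafforgue, cited as a black box; the bulk of the proof is the globalization to arbitrary perfectoid affinoid $(T,T^+)$. One promotes the three $T^+$-modules to a three-term sequence of finite projective $\OO_X^+$-modules on $X=\Spa(T,T^+)$ (using that $X$ is an honest adic space, i.e. the sheaf property), checks the bound on completed stalks $\widehat{\OO^+_{X,x}}=K^+$ by reduction to the algebraically closed case, then passes to the sequence modulo $p^n$ and uses the almost-acyclicity of $\OO_X^+/p^n$ from \cite{Sch} (almost isomorphism on $H^0$, almost vanishing of higher $H^j$) to push the stalkwise bound to global sections, with an inverse limit over $n$ at the end. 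Your proposal never engages with this pointwise-to-global step, which is where the real content of the relative case lies; the Weil-pairing formalism alone does not supply it.
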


\begin{rmk} Let $\alpha_G\from TG(T^+)\otimes_{\Z_p} T^+\to (\Lie G^\vee)^\vee$ denote the map defined previously. Then the first map
\[
(\Lie G)(1)\to TG(T^+)\otimes_{\Z_p} T^+
\]
is given by $\alpha_{G^\vee}^\vee(1)$.
\end{rmk}

\begin{proof} In the case that $(T,T^+) = (C,\OO_C)$, the statement is exactly \cite[Th\'eor\`eme II.1.1]{FarguesGenestierLafforgue}. In general, we work over the perfectoid space $X=\Spa(T,T^+)$. All three terms of this sequence are finite projective $T^+$-modules, and we get an associated sequence (not necessarily a complex, not necessarily exact)
\[
0\to \mathscr{F}_1\to \mathscr{F}_2\to \mathscr{F}_3\to 0
\]
of finite projective $\OO_X^+$-modules, whose global sections recover the original sequence. Here, we use that $X$ is an honest adic space, i.e. we make use of the sheaf property.

For all points $x\in X$, given by $\Spa(K,K^+)$, the associated sequence
\[
0\to \widehat{\mathscr{F}_{1,x}}\to \widehat{\mathscr{F}_{2,x}}\to \widehat{\mathscr{F}_{3,x}}\to 0
\]
of $\widehat{\OO_{X,x}^+} = K^+$-modules has cohomology killed by $p^{1/(p-1)+\epsilon}$ for any $\epsilon>0$. Indeed, using the $\epsilon$, we may replace $K^+$ by $\OO_K$, and then the result follows from \cite[Th\'eor\`eme II.1.1]{FarguesGenestierLafforgue}, by extending to a complete algebraic closure, noting that the rank of the Tate module was already maximal from the start by (the proof of) Lemma \ref{ispdivisible}.

Now, for any $n$, consider the sequence
\[
0\to \mathscr{F}_1/p^n\to \mathscr{F}_2/p^n\to \mathscr{F}_3/p^n\to 0\ .
\]
We claim that it is a complex, and that its cohomology groups are annihilated by $p^{2/(p-1)+\epsilon}$ for any $\epsilon>0$. It is enough to check this on stalks. But on stalks, it is immediate from the previous discussion.

But the global sections of $\mathscr{F}_i/p^n$ are almost isomorphic to $H^0(X,\mathscr{F}_i/p^n)$, by the results of \cite{Sch}; moreover, the higher $H^j(X,\mathscr{F}_i/p^n)$, $j>0$, are almost zero. It follows that the sequence
\[
0\to H^0(X,\mathscr{F}_1)/p^n\to H^0(X,\mathscr{F}_2)/p^n\to H^0(X,\mathscr{F}_3)/p^n\to 0
\]
is (almost) a complex, and the cohomology groups are killed by $p^{2/(p-1)+\epsilon}$. Upon passing to the inverse limit over $n$, we get the desired result, except for slightly worse constants. As we will not need the precise constants, we leave it to the reader to fill in the details showing that one can get $p^{1/(p-1)+\epsilon}$.
\end{proof}

This discussion implies the following proposition.

\begin{prop} \label{TGTp} The cokernel of $TG(T^+/p)\otimes_{\Z_p} T^+\to \MM(G)(T^+)$ is killed by $p^{2+\epsilon}$ for any $\epsilon>0$.
\end{prop}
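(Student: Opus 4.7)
The plan is to diagram-chase in the commutative diagram just established, combining the two quantitative bounds already at our disposal: (a) the surjectivity of $\log_G\from G^0(T^+)\to \Lie G$ with cokernel killed by $p^{1+\epsilon}$, and (b) the bound from Proposition \ref{HodgeTateSequence}, that the cokernel of $TG(T^+)\otimes_{\Z_p} T^+\to (\Lie G^\vee)^\vee$ is killed by $p^{1/(p-1)+\epsilon}$.

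First I would tensor the top exact sequence of the diagram with $T^+$ over $\Z_p$, obtaining a (right-exact) sequence
\[
TG(T^+)\otimes T^+\to TG(T^+/p)\otimes T^+\to G^0(T^+)\otimes T^+\to 0
\]
mapping into the bottom exact sequence
\[
0\to (\Lie G^\vee)^\vee\to \MM(G)(T^+)\to \Lie G\to 0\ .
\]
Given any $m\in \MM(G)(T^+)$ with image $\bar m\in \Lie G$, step one is to write $p^{1+\epsilon}\bar m = \log_G(g)$ for some $g\in G^0(T^+)$, using (a). Step two: lift $g$ to some $t\in TG(T^+/p)$ via the surjection $TG(T^+/p)\to G^0(T^+)$ in the top row (which is surjective by the argument following Lemma \ref{ispdivisible}). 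Let $m'\in \MM(G)(T^+)$ denote the image of $t\otimes 1$. By the commutativity of the right square, $m'$ maps to $\log_G(g) = p^{1+\epsilon}\bar m$ in $\Lie G$, so $p^{1+\epsilon}m - m'$ lies in $(\Lie G^\vee)^\vee\subset \MM(G)(T^+)$.

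Step three: apply (b) to $p^{1+\epsilon}m - m'$: after multiplying by $p^{1/(p-1)+\epsilon}$, this element lifts to $TG(T^+)\otimes T^+$, and $TG(T^+)\otimes T^+$ sits inside $TG(T^+/p)\otimes T^+$ via the injection in the top row. Adding back the contribution of $m'$ (already in the image), we conclude that $p^{1+\epsilon + 1/(p-1)+\epsilon}m$ lies in the image of $TG(T^+/p)\otimes T^+\to \MM(G)(T^+)$. Since $1 + 1/(p-1)\leq 2$ for every prime $p$ (with equality only at $p=2$), after absorbing the two $\epsilon$'s into a single $\epsilon$ this gives the bound $p^{2+\epsilon}$, as claimed.

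I do not expect a serious obstacle: all of the input estimates are already in place, and the two bounds add up correctly on the nose. The one subtlety worth spelling out is that the middle vertical map factors exactly as claimed — i.e., that $TG(T^+/p)\otimes T^+\to \MM(G)(T^+)$ makes both squares of the diagram commute after tensoring — which is essentially the content of Lemmas \ref{QLogvsLog} and \ref{ExplicitDieudonn\'e} together with the definition of the Hodge--Tate map $\alpha_G$ in Proposition \ref{HodgeTateSequence}; we have already invoked these to set up the diagram just before the statement, so the chase itself is formal.
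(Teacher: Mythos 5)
Your proposal is correct and is exactly the diagram chase the paper intends when it says ``This discussion implies the following proposition'' without spelling it out: the $p^{1+\epsilon}$ bound on the cokernel of $\log_G$ combines with the $p^{1/(p-1)+\epsilon}$ bound from Proposition \ref{HodgeTateSequence} in the two-row diagram, and $1+1/(p-1)\leq 2$ closes the argument. One small point worth making explicit (and which you implicitly use in step three) is that $T^+$ is $\Z_p$-torsion-free, hence flat, so $TG(T^+)\otimes_{\Z_p} T^+\to TG(T^+/p)\otimes_{\Z_p} T^+$ remains injective and the lift produced by (b) indeed lands in the source of the map whose cokernel you are bounding.
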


\begin{rmk}\label{Countability} For convenience, we had assumed that $\Spec T$ has no nontrivial finite \'etale covers. To reach the conclusion, it would have been enough to start with some base perfectoid affinoid $(C,\OO_C)$-algebra $(T_0,T_0^+)$ and adjoin a countable number of finite \'etale covers (each of which gives a perfectoid affinoid algebra, by the almost purity theorem). Indeed, we only had to assume that the Tate module has enough sections over $T^+$, which gives countably many finite \'etale covers, and that we can lift countably many elements of $G^0(T^+)$ to $TG(T^+/p)$, each of which similarly amounts to a countable number of finite \'etale covers.
\end{rmk}

With these preparations, we can give the proof of Theorem \ref{SurjResult}.

\begin{proof} {\it (of Theorem \ref{SurjResult}.)} We have reduced to the case that $R$ is perfect. We can also assume that $R$ is local, and in particular $\Spec R$ is connected. Let $C$ be the completion of an algebraic closure of $\Q_p$. Let $T_0^+=W(R)\otimes_{\Z_p} \OO_C$, and let $T_0 = T_0^+[p^{-1}]$; then $(T_0,T_0^+)$ is a perfectoid affinoid $(C,\OO_C)$-algebra. Note that $\Spec T_0$ is still connected. Choose a $T_0$-algebra $\tilde{T}_0$ that is a direct limit of faithfully flat finite \'{e}tale $T_0$-algebras, and such that $\Spec \tilde{T}_0$ is connected and does not admit nontrivial finite \'{e}tale covers. Let $\tilde{T}_0^+\subset \tilde{T}_0$ be the integral closure of $T_0^+$, and define $(T,T^+)$ as the completion of $(\tilde{T}_0,\tilde{T}_0^+)$ (with respect to the $p$-adic topology on $\tilde{T}_0^+$).

Let $G$ be a $p$-divisible group over $R$, and fix a lift $H$ of $G\otimes_R T^+/p$ to $T^+$. Applying Proposition \ref{TGTp} to $H$ shows that the cokernel of
\begin{equation}\label{whatweknow}
TG(T^+/p)\otimes_{\Z_p} T^+\to \MM(H)(T^+)=\MM(G)\otimes_{W(R)} T^+
\end{equation}
is killed by $p^{2+\epsilon}$. In fact, as observed in Remark \ref{Countability}, it would have been enough to adjoin countably many finite \'etale covers to $T_0$, and so we assume in the following that $T$ is the completion of a direct limit of countably many faithfully flat finite \'etale $T_0$-algebras.

\begin{lemma} \label{faithfullyflat} The algebra $(T,T^+)$ is a perfectoid affinoid $(C,\OO_C)$-algebra, and $T^+$ is almost faithfully flat over $T_0^+$. Moreover, for any $\epsilon>0$, one can find $W(R)$-subalgebras $S_i^+\subset T^+$, such that $T^+$ is the filtered union of the $S_i^+$, and such that for each $i$, there are maps
\[
f_i\from T^+\to W(R)^{n_i}\ ,\ g_i\from W(R)^{n_i}\to S_i^+
\]
for which the composite
\[
S_i^+\to T^+\to W(R)^{n_i}\to S_i^+
\]
is multiplication by $p^\epsilon$.
\end{lemma}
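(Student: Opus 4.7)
The plan is to prove the lemma in three steps, handling the perfectoid property and almost faithful flatness quickly, and then focusing most of the effort on the decomposition via $S_i^+$.

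First, I would deduce the perfectoidness of $(T,T^+)$ and the almost faithful flatness of $T^+$ over $T_0^+$ by iterating Faltings's almost purity theorem \cite[Thm.~7.9]{Sch} along the countable filtered system of finite \'etale extensions $(T_0)_\alpha/T_0$. At each step, $((T_0)_\alpha,(T_0^+)_\alpha)$ is perfectoid and $(T_0^+)_\alpha$ is almost finite \'etale, hence almost faithfully flat, over $T_0^+$. Both properties pass to the filtered colimit $\tilde{T}_0$ and then to the $p$-adic completion $(T,T^+)$: Frobenius surjectivity mod $p$ is preserved by colimits and $p$-adic completion, and since every $T_0^+$ in sight is $p$-torsion-free, almost faithful flatness survives completion of directed colimits.

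The core of the argument is the construction of the $S_i^+$, and for this I would combine two sources of almost projectivity. On the one hand, almost purity again gives, for each level $\alpha$ and each $\eta>0$, $T_0^+$-linear maps $\phi_\alpha\from (T_0^+)_\alpha\to (T_0^+)^{d_\alpha}$ and $\psi_\alpha\from (T_0^+)^{d_\alpha}\to (T_0^+)_\alpha$ with $\psi_\alpha\circ\phi_\alpha=p^\eta$. On the other hand, writing $T_0^+=W(R)\hat{\otimes}_{\Z_p}\OO_C$ as the $p$-adic completion of $\varinjlim T_{0,K}^+$, with $T_{0,K}^+:=W(R)\otimes_{\Z_p}\OO_K$ finite free of rank $[K:\Q_p]$ over $W(R)$, the Tate normalized trace construction supplies, for any $\eta>0$ and any sufficiently deeply ramified finite $K\subset C$ (e.g.\ containing $\Q_p(p^{1/p^m})$ for large $m$), an $\OO_K$-linear retraction $\sigma_K\from\OO_C\to\OO_K$ with $\sigma_K\circ\iota_K=p^\eta$. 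Base changing to $W(R)$ exhibits $T_{0,K}^+$ as a $p^\eta$-almost direct summand of $T_0^+$ as a $W(R)$-module.

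Now fix $\epsilon>0$ and set $\eta=\epsilon/3$. Parameterize $i=(\alpha,K)$ by pairs where $K$ is large enough that the finitely many matrix entries defining $\phi_\alpha$ and $\psi_\alpha$ lie $p^\eta$-approximately in $T_{0,K}^+$. Define $S_i^+\subset (T_0^+)_\alpha$ as the $T_{0,K}^+$-subalgebra generated by $\psi_\alpha$ applied to the standard basis of $(T_{0,K}^+)^{d_\alpha}$; this is a $W(R)$-subalgebra of $T^+$, and as $(\alpha,K)$ varies these exhaust $T^+$ in the filtered-union sense required. The map $g_i\from W(R)^{n_i}\to S_i^+$ with $n_i=d_\alpha[K:\Q_p]$ is the evident composite $W(R)^{n_i}\cong (T_{0,K}^+)^{d_\alpha}\xrightarrow{\psi_\alpha}S_i^+$. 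The map $f_i\from T^+\to W(R)^{n_i}$ is built as the composite of (a) an almost retraction $T^+\to (T_0^+)_\alpha$, (b) $\phi_\alpha$, and (c) the $W(R)$-linear retraction $T_0^+\to T_{0,K}^+\cong W(R)^{n_i}$ coming from $\sigma_K$ tensored with $W(R)$. The three losses of $p^\eta$ multiply to $p^\epsilon$ on $S_i^+$, as required.

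The main obstacle is the very first leg (a) of $f_i$: producing a single $W(R)$-linear almost retraction $T^+\to (T_0^+)_\alpha$, since $T^+$ is the $p$-adic completion of the infinite tower sitting above $(T_0)_\alpha$. This is not furnished by a single invocation of almost purity; rather, one must assemble a compatible system of normalized traces along the finite-\'etale tower above $(T_0)_\alpha$ inside $\tilde{T}_0$, verify their $p$-adic convergence to a $W(R)$-linear map on the completion, and then check that the resulting retraction has the prescribed almost-identity property on $(T_0^+)_\alpha$. This is precisely the point where almost purity must be used in its uniform form over the whole tower, and where the countability of the finite-\'etale tower (already exploited in Remark~\ref{Countability}) is essential to control $p$-adic convergence.
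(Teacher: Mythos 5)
Your overall plan matches the paper's: establish perfectoidness and almost faithful flatness from almost purity, descend the finite-étale algebras $(T_0)_\alpha$ to a finite extension $K$ of $\Q_p$ to produce Noetherian subalgebras $S_i^+$ finite over $W(R)$, and build $f_i$ as a composite of an almost retraction from $T^+$ down to a finite level, an almost-direct-summand map, and a retraction from $\OO_C$ to $\OO_K$. That is precisely the architecture of the paper's proof (the paper writes $T_i^+$ for what you call $(T_0^+)_\alpha$ and defines $S_i^+$ as the integral closure of $W(R)$ in a finite étale $W(R)\otimes_{\Z_p}K_i$-algebra $S_i$ with $T_i=S_i\otimes_{K_i}C$, a slightly cleaner formulation than your $T_{0,K}^+$-subalgebra generated by $\psi_\alpha$ of a basis, which makes the filtered-union claim and the $\OO_B$-algebra structure immediate).

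However, you correctly single out the crux — producing a $W(R)$-linear (indeed $T_0^+$-linear) almost retraction $T^+\to T_i^+$ — and then leave it open. You write that one must ``assemble a compatible system of normalized traces along the finite-étale tower, verify their $p$-adic convergence, and check the resulting almost-identity property,'' but no construction is offered, and the terminology (``normalized traces'') suggests a different mechanism than what actually closes the gap. The paper's argument is the following: enumerate the intermediate algebras $T_1^+ = T_i^+\subset T_2^+\subset\cdots$ using countability, use that each $T_j^+$ is almost a direct summand of $T_{j+1}^+$ as a $T_0^+$-module (both being uniformly almost finite projective over $T_0^+$) to choose retractions $r_j\from T_{j+1}^+\to T_j^+$ with $r_j\circ\iota_j = p^{\epsilon/2^j}$, and then form $\rho_j := p^{\sum_{k\geq j}\epsilon/2^k}\,(r_1\circ\cdots\circ r_{j-1})\from T_j^+\to T_1^+$. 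The compensating $p$-powers make the $\rho_j$ a genuinely compatible system, their direct limit passes to the $p$-adic completion because $T_1^+$ is complete, and the total accumulated error on $T_1^+$ is exactly $\sum_{k\geq 1}\epsilon/2^k = \epsilon$. Without this geometric-series budgeting and the $p$-power renormalization, it is not clear that the infinitely many per-step losses can be kept below any prescribed $p^\epsilon$, nor that the maps cohere into a single map out of the colimit. This is a genuine gap in your write-up, not a mere omission of routine detail, since all the almost-mathematics content of the lemma is concentrated here.

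Two smaller remarks. First, the paper does not invoke Tate normalized traces for the step $\OO_C\to\OO_{K_i}$: it instead arranges (by enlarging $K_i$) that the cokernel of $S_i^+\otimes_{\OO_{K_i}}\OO_C\to T_i^+$ is killed by $p^\epsilon$ and then takes \emph{any} $\OO_{K_i}$-linear projection $\OO_C\to\OO_{K_i}$; a normalized trace would not preserve integrality without the same sort of $p^\epsilon$ loss, so your formulation amounts to the same thing but claims more than needed. Second, your opening two paragraphs (perfectoidness, almost faithful flatness, and the almost-direct-summand structure via $\phi_\alpha,\psi_\alpha$) are sound and align with the paper's one-sentence appeal to the almost purity theorem of \cite{Sch}.
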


\begin{proof} The first sentence is a direct consequence of the almost purity theorem in the form given in \cite{Sch}. In fact, one even knows that one can write $T^+$ as the completion of a union of $T_0^+$-subalgebras $T_i^+$ for which $T_i=T_i^+[p^{-1}]$ is finite \'{e}tale over $T_0$, and for which $T_i^+$ is an almost direct summand of $(T_0^+)^{n_i}$ for some $n_i\geq 1$. In particular, there are maps
\[
T_i^+\to (T_0^+)^{n_i}\to T_i^+
\]
whose composite is $p^\epsilon$. One can find a finite extension $K_i$ of $\Q_p$ and a finite \'etale $W(R)\otimes_{\Z_p} K_i$-algebra $S_i$ such that $T_i = S_i\otimes_{K_i} C$. Let $S_i^+\subset S_i$ be the integral closure of $W(R)$. Taking $K_i$ large enough, one can assume that there are maps
\[
S_i^+\to (W(R)\otimes_{\Z_p} \OO_{K_i})^{n_i}\to S_i^+
\]
whose composition is $p^\epsilon$. One can identify the middle term with $W(R)^{n_i^\prime}$ for some $n_i^\prime$. Therefore, it is enough to show that there is a map $T^+\to S_i^+$ such that $S_i^+\to T^+\to S_i^+$ is multiplication by $p^{2\epsilon}$.

Possibly enlarging $K_i$, one can assume that the cokernel of $S_i^+\otimes_{\OO_{K_i}} \OO_C\to T_i^+$ is killed by $p^\epsilon$; in that case, any chosen projection $\OO_C\to \OO_{K_i}$ gives a retraction $T_i^+\to p^{-\epsilon} S_i^+$. It remains to show that there is a retraction of $T_i^+\to T^+$ up to $p^\epsilon$.

For this, we use the countability statement. Write $T_1^+ = T_i^+$, and enumerate the other $T_j^+$ for $j\geq i$ as $T_1^+\subset T_2^+\subset \ldots$. For each $j\geq 1$, $T_j^+$ is almost a direct summand of $T_{j+1}^+$ (both being uniformly almost finite projective $T_0^+$-modules), so that one can choose retractions $T_{j+1}^+\to T_j^+$ up to $p^{\epsilon/2^j}$. By composition, we get maps $T_j^+\to T_1^+$ for all $j$, and multiplying by an appropriate $p$-power, they become a compatible system of retractions up to $p^\epsilon$. Taking their direct limit gives a retraction $T^+\to T_1^+$ up to $p^\epsilon$, as desired.
\end{proof}

In the following, we use duals to denote the $W(R)$-linear dual, so e.g.
\[
(T^+)^\ast = \Hom_{W(R)}(T^+,W(R))\ .
\]

\begin{lemma} \label{TGTpcok} The cokernel of the $W(R)$-linear map
\begin{equation}\label{TGTpcokeq}
TG(T^+/p)\hat{\otimes}_{\Z_p} (T^+)^\ast\to \MM(G)\ ,
\end{equation}
given as the composite of
\[
TG(T^+/p)\hat{\otimes}_{\Z_p} (T^+)^\ast\to (\MM(G)\otimes_{W(R)} T^+)\hat{\otimes}_{\Z_p} (T^+)^\ast
\]
with the contraction of the second and third factor, is killed by $p^2$.
\end{lemma}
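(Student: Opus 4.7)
The plan is to derive the $p^2$-annihilation of the cokernel in Lemma \ref{TGTpcok} from the $p^{2+\epsilon}$-annihilation established in Proposition \ref{TGTp}, exploiting the almost splittings of Lemma \ref{faithfullyflat} to convert the $T^+$-linear ``surjectivity up to $p^{2+\epsilon}$'' statement into the desired $W(R)$-linear ``surjectivity up to $p^2$'' statement. The bound $p^{2+\epsilon}/p^\epsilon=p^2$ is a tight cancellation: the extra $p^\epsilon$ in the cokernel is exactly absorbed by the $p^\epsilon$-loss in the almost splitting.

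First, fix $m\in \MM(G)$ and, for arbitrary $\epsilon>0$, apply Proposition \ref{TGTp} to write
\[
p^{2+\epsilon}(m\otimes 1)\;=\;\sum_j \beta(s_j)\cdot u_j
\]
in $\MM(G)\otimes_{W(R)}T^+$, with $s_j\in TG(T^+/p)$ and $u_j\in T^+$; here $\beta\colon TG(T^+/p)\to\MM(G)\otimes T^+$ is the map induced by Dieudonn\'e theory whose $T^+$-linear extension is the map of Proposition \ref{TGTp}. Since the sum is finite, all $u_j$ lie in some $S_i^+$ arising from Lemma \ref{faithfullyflat}.

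Next, I would apply the $W(R)$-linear functionals $\phi_{i,k}\in(T^+)^{\ast}$ ($1\leq k\leq n_i$) obtained as the components of $f_i\colon T^+\to W(R)^{n_i}$. Applying $1\otimes\phi_{i,k}\colon\MM(G)\otimes T^+\to\MM(G)$ to both sides, and using the $W(R)$-linearity of $\phi_{i,k}$ to extract $p^2\in W(R)$ out of $p^{2+\epsilon}\in T^+$ (so that $\phi_{i,k}(p^{2+\epsilon})=p^2\phi_{i,k}(p^\epsilon)$), yields
\[
p^2\cdot\phi_{i,k}(p^\epsilon)\,m\;=\;\sum_j\beta'\bigl(s_j\otimes(\phi_{i,k}\cdot u_j)\bigr)\;\in\;\mathrm{image}(\beta'),
\]
for each $k$, where $(\phi\cdot u)(x):=\phi(ux)$. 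Thus $p^2\phi_{i,k}(p^\epsilon)m$ lies in the image of $\beta'$ for every $k$.

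Finally, the identity $g_i\circ f_i=p^\epsilon\cdot\mathrm{id}$ on $S_i^+$, combined with the decomposition $g_i(x_1,\ldots,x_{n_i})=\sum_k x_k a_{i,k}$ and $1\in S_i^+$, gives the relation $\sum_k\phi_{i,k}(1)\,a_{i,k}=p^\epsilon$ in $T^+$. Pairing this relation against arbitrary functionals $\psi\in(T^+)^{\ast}$ shows that the ideal $\langle\phi(p^\epsilon)\mid\phi\in(T^+)^{\ast}\rangle\subset W(R)$ is all of $W(R)$: the fact that $p^\epsilon\notin pT^+$ (because $v_p(p^\epsilon)=\epsilon<1$) forces these values to generate the unit ideal modulo $p$, and a $p$-adic Nakayama argument in the $p$-adically complete ring $W(R)$ lifts this to the integral statement. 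Taking the appropriate $W(R)$-linear combination of the elements $p^2\phi_{i,k}(p^\epsilon)m$ then produces $p^2 m$ itself, placing it in the image of $\beta'$.

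The main obstacle is the last ``assembly'' step, namely verifying that the $W(R)$-ideal generated by the values $\phi(p^\epsilon)$ equals $W(R)$, and using this to combine the many elements $p^2\phi_{i,k}(p^\epsilon)m$ into $p^2 m$. This is precisely where the almost faithful flatness of $T^+$ over $W(R)$ (Lemma \ref{faithfullyflat}) is indispensable; without it one would only recover the weaker bound $p^{2+\epsilon}$. The relative sharpness of the $p^2$ bound suggests that no further improvement is available from this method.
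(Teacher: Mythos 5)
Your overall strategy — applying the functionals $\phi_{i,k}$ coming from Lemma \ref{faithfullyflat} to the $p^{2+\epsilon}$-surjectivity of Proposition \ref{TGTp}, and then reassembling — does produce valid relations $p^2\phi(p^\epsilon)m\in\mathrm{image}(\beta')$ for every $\phi\in(T^+)^\ast$, but the final ``assembly'' step contains a genuine gap, and there is also a preliminary technical problem.

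The technical problem: you write $p^{2+\epsilon}(m\otimes 1)=\sum_j\beta(s_j)u_j$ with $u_j\in T^+$, and then assert that, the sum being finite, all $u_j$ lie in some $S_i^+$. But $T^+$ is the \emph{completion} of the filtered colimit of the $S_i^+$, not the filtered union itself; a generic element of $T^+$ lies in no $S_i^+$. The paper avoids this by working modulo $p^k$ throughout, where $T^+/p^k$ genuinely is the filtered colimit of the $S_i^+/p^k$. Your argument would need the same reduction.

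The serious gap is the claim that the $W(R)$-ideal $I=\langle\phi(p^\epsilon)\mid\phi\in(T^+)^\ast\rangle$ equals $W(R)$. The reasoning you sketch does not support this. Pairing $\sum_k\phi_{i,k}(1)a_{i,k}=p^\epsilon$ against an arbitrary $\psi\in(T^+)^\ast$ gives $\psi(p^\epsilon)=\sum_k\phi_{i,k}(1)\psi(a_{i,k})$, which shows $I$ is \emph{contained in} the ideal $\langle\phi_{i,k}(1)\rangle_k$ — the wrong direction. Nor does the observation that $p^\epsilon\notin pT^+$ directly produce a functional $\phi$ with $\phi(p^\epsilon)\notin pW(R)$, because $T^+$ is not a finitely generated $W(R)$-module and no Nakayama-type argument applies without finite generation of the relevant quotient. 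Your heuristic that the $p^\epsilon$ loss in the almost splitting ``cancels'' the $p^\epsilon$ excess in the cokernel is also backwards quantitatively: pushing $p^\epsilon$ through the almost retraction $g_i\circ f_i=p^\epsilon\cdot\mathrm{id}$ \emph{introduces} another $p^\epsilon$ (e.g. $\sum_k\phi_{i,k}(p^\epsilon)a_{i,k}=p^{2\epsilon}$), so this line of reasoning at best reproduces an almost statement, never an exact one.

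The paper's proof takes a different route precisely to avoid having to exhibit a functional hitting a unit. Working modulo $p^k$, it first shows that $p^\epsilon\beta(t)$ lies in the image of the $T^+/p^k$-linear extension $\gamma$ of $\beta'$ (this is the step where the element $x\in(T^+)^\ast\otimes S_i^+$ of Lemma \ref{faithfullyflat} is used), which gives that $\mathrm{coker}(\gamma)$ is killed by $p^{3-\epsilon}$. It then descends from $T^+/p^k$ to $W(R)/p^k$ via almost faithful flatness over $T_0^+/p^k$, and kills the residual $p^{\epsilon}$ by noting that the resulting module, once tensored up to $\OO_C/p$ over $\F_p$, is a \emph{free} $\OO_C/p$-module; a nonzero free $\OO_C/p$-module cannot be annihilated by the nonzero element $p^{1-\epsilon}$. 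This freeness trick is what converts the almost-vanishing into exact vanishing without any claim about trace ideals being unit ideals, and it is the ingredient your approach is missing.
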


\begin{proof} We argue modulo $p^k$ for any $k\geq 3$. We claim that it suffices to check that the cokernel of
\[
\gamma\from \left(TG(T^+/p)\otimes_{\Z_p} (T^+)^\ast\right) \otimes_{W(R)} T^+/p^k \to \MM(G)\otimes_{W(R)} T^+/p^k
\]
is killed by $p^{3-\epsilon}$ for some $\epsilon>0$. Indeed, let $X$ be the cokernel of \eqref{TGTpcokeq} (modulo $p^k$), and let $Y$ be the image of multiplication by $p^2$ on $X$. Assuming the result for $\gamma$, we find that $Y\otimes_{W(R)} T^+/p^k$ is killed by $p^{1-\epsilon}$ for some $\epsilon>0$. But then $Y\otimes_{W(R)} T_0^+/p^k$ is killed by $p^{1-\epsilon}$ for some $\epsilon>0$, as $T^+/p^k$ is almost faithfully flat over $T_0^+/p^k$. Now $Y\otimes_{W(R)} T_0^+/p^k = Y\otimes_{\Z_p} \OO_C/p^k = Y\otimes_{\F_p} \OO_C/p$, where $Y$ is an $\F_p$-vector space. Thus $Y\otimes_{\F_p} \OO_C/p$ is free. If it is killed by $p^{1-\epsilon}$, it follows that $Y=0$, as desired.

We recall that we want to show that the cokernel of
\[
\gamma\from \left(TG(T^+/p)\otimes_{\Z_p} (T^+)^\ast\right) \otimes_{W(R)} T^+/p^k \to \MM(G)\otimes_{W(R)} T^+/p^k
\]
is killed by $p^{3-\epsilon}$ for some $\epsilon>0$. Take any $t\in TG(T^+/p)$. Write $\beta\from TG(T^+/p)\to \MM(G)\otimes_{W(R)} T^+/p^k$ for the canonical map. We claim that $p^\epsilon \beta(t)$ lies in the image of $\gamma$. This will give the result, as the image of $\gamma$ is then a $T^+/p^k$-submodule of $\MM(G)\otimes_{W(R)} T^+/p^k$ containing $p^\epsilon \beta(TG(T^+/p))$, giving the result by \eqref{whatweknow}.

Now modulo $p^k$, $\beta(t)\in \MM(G)\otimes_{W(R)} T^+/p^k$ will lie in $\MM(G)\otimes_{W(R)} S_i^+/p^k$ for $i$ large enough, where we use the algebras $S_i^+$ constructed in Lemma \ref{faithfullyflat}.

Note that the maps constructed in Lemma \ref{faithfullyflat} give rise to an element $x\in (T^+)^\ast\otimes_{W(R)} S_i^+$. Then
\[
t\otimes x\in TG(T^+/p)\otimes_{\Z_p} (T^+)^\ast \otimes_{W(R)} S_i^+\left(\to (TG(T^+/p)\otimes_{\Z_p} (T^+)^\ast) \otimes_{W(R)} T^+\right)
\]
maps under $\gamma$ to the image of
\[
\beta(t)\otimes x\in \left(\MM(G)\otimes_{W(R)} S_i^+/p^k\right)\otimes_{\Z_p} \left((T^+)^\ast\otimes_{W(R)} S_i^+\right)
\]
under the evaluation map
\[
\left(\MM(G)\otimes_{W(R)} S_i^+/p^k\right)\otimes_{\Z_p} \left((T^+)^\ast\otimes_{W(R)} S_i^+\right)\to \MM(G)\otimes_{W(R)} S_i^+/p^k
\]
contracting the second and third factor. But we have chosen $x$ so that it acts through multiplication by $p^\epsilon$ on $S_i^+$. This means that the result will be $p^\epsilon\beta(t)$, as desired.
\end{proof}

We can now complete the proof of Theorem \ref{SurjResult}. Recall that $S$ is the $R$-algebra which represents $TG$. We want to prove that the cokernel of
\[
A_\cris(S)^\ast\to \MM(G)
\]
is killed by $p^2$. It is enough to prove this result modulo $p^k$ for any $k\geq 3$. Let $m\in p^2\MM(G)$.  By Lemma \ref{TGTpcok}, $x$ has a preimage in $TG(T^+/p)\otimes_{\Z_p} (T^+)^\ast$ modulo $p^k$. We write this preimage as $\sum_{i=1}^n a_i\otimes \lambda_i$, with $a_i\in TG(T^+/p)$, $\lambda_i\in (T^+)^\ast$. It is enough to prove that the image of $a_i\otimes \lambda_i$ in $\MM(G)$ is also in the image of $A_\cris(S)^\ast\to \MM(G)$. But $a_i\in TG(T^+/p)$ gives rise to a map $S\to T^+/p$ carrying the universal element of $TG(S)$ to $a_i\in TG(T^+/p)$. We get a morphism
\[
A_\cris(S)\to A_\cris(T^+/p)\to T^+\ ,
\]
using the usual map $\Theta: A_\cris(T^+/p)\to T^+$, coming from the fact that $T^+\to T^+/p$ is a PD thickening. By duality, this gives a map $(T^+)^\ast\to A_\cris(S)^\ast$, through which we can consider $\lambda_i$ as an element $\lambda_i^\prime\in A_\cris(S)^\ast$. It is now an easy diagram chase to check that $\lambda_i^\prime$ maps to the image of $a_i\otimes \lambda_i$ in $\MM(G)$. This completes the proof of Theorem \ref{SurjResult}.
\end{proof}

\subsection{The general case}

In this section, we will finish the proof of Theorem \ref{fullyfaithful}. This follows an idea we learnt from a paper of de Jong and Messing, \cite{deJongMessing} (see the proof of Prop. 1.2), namely in order to prove full faithfulness for morphisms $G\to H$, one uses base-change to the ring with universal homomorphisms $\Q_p/\Z_p\to G$ and $H\to \mu_{p^\infty}$, over which one applies the result for the special case of morphisms $\Q_p/\Z_p\to \mu_{p^\infty}$.

We need a basic lemma on Hopf algebras. Let $G$ and $H$ be finite locally free group schemes over any ring $R$, with coordinate rings $A_G$, $A_H$. The coordinate ring of $G\times H^\vee$ is $\Hom_R(A_H,A_G)$ ($R$-module homomorphisms), so that the latter has the structure of an $R$-algebra.

\begin{lemma} \label{HopfAlgLemma}
If $f,g\from G\to H$ are group homomorphisms, corresponding to elements $\alpha,\beta\in\Hom_R(A_H,A_G)$, then $f+g$ corresponds to $\alpha\beta$, the product being computed as multiplication in the natural Hopf algebra structure.
\end{lemma}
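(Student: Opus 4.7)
The plan is to unwind both sides of the claimed identity and observe that they coincide with the convolution product. The key identification is that, since $A_H$ is finite locally free, one has
\[
\Hom_R(A_H, A_G) \;\cong\; A_G \otimes_R A_H^\vee \;=\; A_G \otimes_R A_{H^\vee}
\]
as $R$-modules, where $A_{H^\vee}=\Hom_R(A_H,R)$ is the Cartier dual Hopf algebra. Under this identification the $R$-algebra structure on $A_G \otimes_R A_{H^\vee}$ (i.e.\ the coordinate ring of $G \times H^\vee$) corresponds to the convolution product: for $\alpha,\beta\in \Hom_R(A_H,A_G)$,
\[
(\alpha\beta)(h) \;=\; \mu_G \circ (\alpha \otimes \beta) \circ \Delta_H(h),
\]
where $\mu_G\from A_G\otimes A_G\to A_G$ is multiplication in $A_G$ and $\Delta_H\from A_H\to A_H\otimes A_H$ is comultiplication in $A_H$. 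This is a standard calculation: the multiplication on $A_G\otimes A_{H^\vee}$ uses $\mu_G$ and the multiplication on $A_{H^\vee}$, and by definition the latter is dual to $\Delta_H$.

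On the other hand, by definition the group-law sum $f+g\from G\to H$ is the composition
\[
G \xrightarrow{\ \Delta_G\ } G\times G \xrightarrow{\ f\times g\ } H\times H \xrightarrow{\ m_H\ } H,
\]
where $m_H$ is the group operation on $H$. Taking coordinate rings reverses arrows and turns $m_H$ into $\Delta_H$, so
\[
(f+g)^{\ast} \;=\; \mu_G \circ (f^{\ast}\otimes g^{\ast}) \circ \Delta_H,
\]
which is precisely the convolution product of $\alpha=f^{\ast}$ and $\beta=g^{\ast}$. Thus $(f+g)^{\ast}=\alpha\beta$, as claimed.

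There is no serious obstacle here: the only thing to be careful about is the identification of $\Hom_R(A_H,A_G)$ with $A_G\otimes_R A_{H^\vee}$, which requires the finite locally free hypothesis on $H$ (so that the natural map $A_G\otimes_R\Hom_R(A_H,R)\to \Hom_R(A_H,A_G)$ is an isomorphism), and the compatibility of the Cartier dual's algebra structure with the coalgebra structure on $A_H$. Everything else is bookkeeping with Hopf algebras, and all of it is local on $\Spec R$, so one may even assume $A_H$ is free when checking the identification.
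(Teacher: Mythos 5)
Your proof is correct. The paper leaves this lemma to the reader ("Easy and left to the reader"), and what you have supplied is the standard argument: identifying the algebra product on $\Hom_R(A_H,A_G)\cong A_G\otimes_R A_{H^\vee}$ with the convolution product $\mu_G\circ(\alpha\otimes\beta)\circ\Delta_H$, and observing that taking coordinate rings in the defining composition $G\xrightarrow{\Delta_G}G\times G\xrightarrow{f\times g}H\times H\xrightarrow{m_H}H$ of $f+g$ yields precisely that convolution of $f^*$ and $g^*$; this is the only natural route, and all the identifications you flag (in particular $\Hom_R(A_H,A_G)\cong A_G\otimes_R\Hom_R(A_H,R)$ via finite local freeness of $A_H$) are exactly the points one must check.
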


\begin{proof} Easy and left to the reader.
\end{proof}

Now let $G$ be a Barsotti-Tate group over an f-semiperfect ring $R$. Let $A_{G[p^n]}$ be the coordinate ring of $G[p^n]$. We had observed earlier that $TG$ is represented by the f-semiperfect ring
\[
A_G = \varinjlim A_{G[p^n]}\ ,
\]
where the direct limit is taken along the $p$-th power maps $G[p^{n+1}]\to G[p^n]$. We note that this is naturally a Hopf algebra. For instance, if $G=\Q_p/\Z_p$, then $A_{G[p^n]}$ is the algebra of functions $\Z_p/p^n\Z_p\to R$, and $A_G$ is the algebra of locally constant functions from $\Z_p$ into $R$. On the other hand, if $G=\mu_{p^\infty}$, then we may identify $A_{G[p^n]}$ with $R[T]/(T^{p^n}-1)$ and $A_G$ with $R[T^{1/p^\infty}]/(T-1)$.

On the other hand, we can consider
\[
A_G^\prime = \varprojlim A_{G[p^n]}\ ,
\]
where the inverse limit is taken along the homomorphisms corresponding to the immersions $G[p^n]\injects G[p^{n+1}]$. Then $A_G^\prime$ is a complete topological ring, which however is in general not adic, as can be observed in the example of $G=\Q_p/\Z_p$, where $A_G^\prime$ is the algebra of arbitrary functions from $\Q_p/\Z_p$ to $R$. However, $A_G^\prime$ is still a topological Hopf algebra.

Let $G^\vee$ be the Serre dual of $G$; then the coordinate ring of $G^\vee[p^n]$ may be identified with the $R$-module of $R$-linear maps $\Hom_R(A_{G[p^n]},R)$ endowed with the $R$-algebra structure induced by comultiplication in $A_{G[p^n]}$. The Tate module $TG^\vee$ is represented by
\[
A_{G^\vee}=\varinjlim \Hom_R(A_{G[p^n]},R)=\Hom_{R,\cont}(A_G^\prime,R)
\]
Here $\Hom_{R,\cont}$ means continuous $R$-module homomorphisms, and $R$ itself is given the discrete topology. Lemma \ref{HopfAlgLemma} has obvious extensions to the coordinate rings $A_G$ and $A_G^\prime$.

Now take two $p$-divisible groups $G$ and $H$ over $R$. Let $A_G$ and $A_{H^\vee}$ represent $TG$ and $TH^\vee$, respectively; these are f-semiperfect. Let
\[
S =A_G\otimes_R A_{H^\vee}\cong \Hom_{R,\cont}(A_H^\prime,A_G)\ ,
\]
so that $S$ is an f-semiperfect ring representing $TG\times TH^\vee$, i.e. there are universal morphisms $\Q_p/\Z_p\to G$, $\Q_p/\Z_p\to H^\vee$ over $S$. The latter also gives a universal morphism $H\to \mu_{p^\infty}$.

Passing to Dieudonn\'e modules, we find morphisms $\MM(\Q_p/\Z_p)(A_\cris(S))\to \MM(G)(A_\cris(S))$ and $\MM(H)(A_\cris(S))\to \MM(\mu_{p^\infty})(A_\cris(S))$ of Dieudonn\'{e} modules. Composing these with the base change to $S$ of a given morphism of Dieudonn\'{e} modules $f\from \MM(G)(A_\cris(R))\to \MM(H)(A_\cris(R))$ gives a morphism of Dieudonn\'{e} modules
\[
\beta_f\from \MM(\Q_p/\Z_p)(A_\cris(S))\to \MM(\mu_{p^\infty})(A_\cris(S))\ ,
\]
which in turn gives rise to a morphism
\[
\eta_f\in \Hom_S(\Q_p/\Z_p, \mu_{p^\infty})[p^{-1}]\ .
\]

\begin{prop}\label{Injectivity} If $\eta_f=0$, then $f$ is $p$-torsion.
\end{prop}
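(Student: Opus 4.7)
The plan is to leverage Theorem \ref{SurjResult} twice --- once for $H^\vee$ over $R$, and once for $G$ base-changed to $A_{H^\vee}$ --- in order to manufacture, for any $m\in \MM(G)$ and $\ell\in \MM(H)^*$, an $A_\cris(R)$-linear map $\lambda\from A_\cris(S)\to A_\cris(R)$ whose value on $\beta_f$ is a fixed power of $p$ times $\ell(f(m))$. Combined with the $\Q_p/\Z_p\to\mu_{p^\infty}$ case of Theorem \ref{fullyfaithful} (which identifies $\Hom_S(\Q_p/\Z_p,\mu_{p^\infty})[p^{-1}]$ with a subgroup of $B_\cris^+(S)^{\varphi=p}$), the hypothesis $\eta_f=0$ will force $\beta_f$ to be $p$-torsion, hence $\ell(f(m))$ to be $p$-torsion in $A_\cris(R)$ for all such $m$ and $\ell$, and finally $f$ itself to be $p$-torsion since $\MM(H)$ is finite projective over $A_\cris(R)$.

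Fix $N\geq 0$ with $p^N\beta_f=0$. First, Theorem \ref{SurjResult} applied to $H^\vee$ over $R$: for each $\ell\in \MM(H)^*=\MM(H^\vee)$ there is an $A_\cris(R)$-linear functional $\beta\from A_\cris(A_{H^\vee})\to A_\cris(R)$ with $\beta\cdot\psi_H=p^2\ell$ in $\MM(H)^*$, where $\psi_H\in \MM(H)^*\otimes A_\cris(A_{H^\vee})$ is the universal Dieudonn\'e-theoretic incarnation of $H\to\mu_{p^\infty}$ over $A_{H^\vee}$. Second, the key observation: since the Tate module of $G_{A_{H^\vee}}$ over $A_{H^\vee}$ is represented precisely by $A_G\otimes_R A_{H^\vee}=S$, Theorem \ref{SurjResult} applied in this relative setting furnishes, for each $m\in \MM(G)$, an $A_\cris(A_{H^\vee})$-linear map $\alpha\from A_\cris(S)\to A_\cris(A_{H^\vee})$ with $\alpha(\psi_G)=p^2\,m\otimes 1$ in $\MM(G)\otimes A_\cris(A_{H^\vee})$, where $\psi_G\in \MM(G)\otimes A_\cris(S)$ is the universal $\Q_p/\Z_p\to G$ over $S$. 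For this application to be legitimate one needs $A_{H^\vee}$ to be (f-)semiperfect, which follows from Lemma \ref{UnivCoverAlmRepr}: $A_{H^\vee}$ is the quotient of a relatively perfect $R$-algebra by a finitely generated ideal.

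The composite $\lambda=\beta\circ\alpha\from A_\cris(S)\to A_\cris(R)$ is $A_\cris(R)$-linear, and the key calculation is $\lambda(\beta_f)=p^4\ell(f(m))$. Writing $\psi_G=\sum_i e_i\otimes \bar a_i$ and $\psi_H=\sum_j f_j^*\otimes b_j$, together with $f(e_i)=\sum_j c_{ij}f_j$, one has $\beta_f=\sum_{i,j}c_{ij}\bar a_i\bar b_j\in A_\cris(S)$, where $\bar b_j\in A_\cris(S)$ is the image of $b_j$. The $A_\cris(A_{H^\vee})$-linearity of $\alpha$ then pulls each $\bar b_j$ out as $b_j$, giving $\alpha(\beta_f)=p^2\psi_H(f(m))$, and then $\lambda(\beta_f)=p^2(\beta\cdot\psi_H)(f(m))=p^4\ell(f(m))$. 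Since $\lambda$ is $A_\cris(R)$-linear it sends the $p^N$-torsion element $\beta_f$ to a $p^N$-torsion element, so $p^{N+4}\ell(f(m))=0$, and perfectness of the pairing $\MM(H)\otimes \MM(H)^*\to A_\cris(R)$ (valid since $\MM(H)$ is finite projective) gives $p^{N+4}f=0$. The conceptual subtlety lies in the asymmetric choice of relative context: it is precisely the $A_\cris(A_{H^\vee})$-linearity of $\alpha$ --- a stronger condition than mere $A_\cris(R)$-linearity --- that allows one to separate the $G$- and $H$-contributions inside $A_\cris(S)$ and recover $\ell(f(m))$ cleanly.
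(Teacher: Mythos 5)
Your proof is correct and takes essentially the same approach as the paper: both apply Theorem \ref{SurjResult} twice --- once for $G$ base-changed to $A_{H^\vee}$ (to produce what you call $\alpha$, the paper's $\lambda$), and once for $H^\vee$ over $R$ --- obtaining the factor $p^4$ from the two factors of $p^2$. Your presentation composes the two applications into a single $A_\cris(R)$-linear functional $\lambda = \beta\circ\alpha$ and verifies $\lambda(\beta_f) = p^4\,\ell(f(m))$ in one calculation, whereas the paper proceeds in two steps (first establishing $p^2(h\circ f_{A_{H^\vee}})=0$, then using the dual form of \ref{SurjResult} to bound the kernel of $g$ by $p^2$), but the mathematical content is identical.
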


\begin{proof} Note that if $\eta_f = 0$, then $\beta_f$ is $p$-torsion, so it is enough to prove that if $\beta_f = 0$, then $p^4 f = 0$. At this point, we need Theorem \ref{SurjResult}. Assume that $\beta_f=0$ and consider the diagram
\[
\xymatrix{
&  \MM(G)(A_\cris(A_{H^\vee})) \ar[r]^{f_{A_{H^\vee}}} \ar[d] & \MM(H)(A_\cris(A_{H^\vee})) \ar[d] \ar[r]^h & \MM(\mu_{p^\infty})(A_\cris(A_{H^\vee}))\ar[d] \\
\MM(\Q_p/\Z_p)(A_\cris(S)) \ar@/_.3in/[rrr]_{\beta_f=0} \ar[r] & \MM(G)(A_\cris(S)) \ar[r]_{f_S} & \MM(H)(A_\cris(S)) \ar[r]_{h_S} & \MM(\mu_{p^\infty})(A_\cris(S))\ .
}
\]
Let $\alpha$ be the image of $1\in A_{\cris}(S)=\MM(\Q_p/\Z_p)(A_\cris(S))$ in $\MM(G)(A_{\cris}(S))$. Take any $v\in \MM(G)(A_\cris(A_{H^\vee}))$. Let $A_{\cris}(S)^*$ be the $A_{\cris}(A_{H^\vee})$-linear dual of $A_{\cris}(S)$. By Theorem \ref{SurjResult}, there exists $\lambda\in A_{\cris}(S)^*$ such that
\[
\lambda(\alpha)=p^2 v\in \MM(G)(A_\cris(A_{H^\vee}))\ .
\]
The element $\lambda$ induces a commutative diagram
\[
\xymatrix{
&  \MM(G)(A_\cris(A_{H^\vee})) \ar[r]^{f_{A_{H^\vee}}} & \MM(H)(A_\cris(A_{H^\vee})) \ar[r]^h & \MM(\mu_{p^\infty})(A_\cris(A_{H^\vee})) \\
\MM(\Q_p/\Z_p)(A_\cris(S)) \ar@/_.3in/[rrr]_{\beta_f=0} \ar[r] & \MM(G)(A_\cris(S)) \ar[r]_{f_S}\ar[u]^\lambda & \MM(H)(A_\cris(S)) \ar[r]_{h_S}\ar[u]^\lambda & \MM(\mu_{p^\infty})(A_\cris(S))\ar[u]^\lambda\ ,
}
\]
which shows that $(h\circ f_{A_{H^\vee}})(p^2 v) = 0$, i.e. $p^2 (h\circ f_{A_{H^\vee}}) = 0$.

Now, look at the commutative diagram
\[
\xymatrix{
\MM(G)(A_\cris(R)) \ar[r]^{p^2 f} \ar[d] & \MM(H)(A_\cris(R)) \ar[d] \ar[dr]^g & \\
\MM(G)(A_\cris(A_{H^\vee})) \ar@/_.3in/[rr]_{=0}\ar[r]_{p^2 f_{A_{H^\vee}}} & \MM(H)(A_\cris(A_{H^\vee})) \ar[r]_h & \MM(\mu_{p^\infty})(A_\cris(A_{H^\vee}))\ .
}
\]
We claim that the kernel of
\[
g\from \MM(H)(A_\cris(R))\to \MM(\mu_{p^\infty})(A_\cris(A_{H^\vee}))\cong A_\cris(A_{H^\vee})
\]
is killed by $p^2$. Indeed, use Theorem \ref{SurjResult} for $H^\vee$ to see that
\[
\Hom_{A_\cris(R)}(A_\cris(A_{H^\vee}),A_\cris(R))\to \Hom_{A_\cris(R)}(\MM(H)(A_\cris(R)),A_\cris(R))
\]
has cokernel killed by $p^2$. Applying $\Hom(-,A_\cris(R))$ shows that the kernel of $g$ is killed by $p^2$. As $g\circ (p^2f) = 0$, it follows that $p^4 f=0$, as desired.
\end{proof}

Now we observe that the morphism
\[
\eta_f\in \Hom_S(\Q_p/\Z_p, \mu_{p^\infty})[p^{-1}]
\]
over $S$ is equivalent to a family of elements $s_n\in S$, $n\in \Z$, with $s_{n+1}^p=s_n$ and $s_n=1$ for $n$ sufficiently negative. We have identified $S$ with $\Hom_{R,\cont}(A_H^\prime,A_G)$. Under this identification, we have a family of continuous $R$-linear maps $r_{f,n}\from A_H^\prime\to A_G$.

\begin{prop} \label{honestHopf} The continuous $R$-linear map $r_{f,n}\from A_H^\prime\to A_G$ is a morphism of Hopf algebras.
\end{prop}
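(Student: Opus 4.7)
The plan is to re-express each Hopf algebra axiom for $r_{f,n}$ as an equality of elements in a suitably enlarged ring, and then to verify those equalities by combining the additivity of the covariant Dieudonn\'{e} functor in the morphism with the already-proved full faithfulness for $\Q_p/\Z_p\to\mu_{p^\infty}$ of the previous subsection.

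First I would unwind the isomorphism $S\cong\Hom_{R,\mathrm{cont}}(A_H',A_G)$, which extends the coordinate-ring description of Lemma \ref{HopfAlgLemma} to the Barsotti--Tate setting. Choose $n$ large enough that $r_{f,n}$ factors through $A_{H[p^n]}$, fix an $R$-basis $(e_j)$ of $A_{H[p^n]}$ with dual basis $(\phi_j)$ of $A_{H^\vee[p^n]}$, and write $s_n=\sum_j a_j\otimes\phi_j$ with $a_j=r_{f,n}(e_j)\in A_G$. A direct computation, using that $\Delta_{A_{H^\vee}}$ is dual to the multiplication of $A_H$, shows that $r_{f,n}$ is multiplicative if and only if
\[
s_n^{(1)}\cdot s_n^{(2)} \;=\; s_n^{(12)} \quad\text{in}\quad \tilde S := A_G\otimes_R A_{H^\vee}\otimes_R A_{H^\vee},
\]
where $s_n^{(i)}$ ($i=1,2$) denotes the pullback of $s_n$ along the $i$-th inclusion $S\hookrightarrow\tilde S$ and $s_n^{(12)}$ denotes the pullback along $\mathrm{id}_{A_G}\otimes\Delta_{A_{H^\vee}}:S\to\tilde S$. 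Dually, comultiplicativity of $r_{f,n}$ is equivalent to the analogous identity over $\tilde S':=A_G\otimes_R A_G\otimes_R A_{H^\vee}$, using $\Delta_{A_G}$. Unit and counit preservation follow from the special cases where $\psi_G$ or $\psi_H$ is the zero morphism, in which $\beta_f$, and hence $\eta_f$, vanishes.

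To prove the multiplicativity identity, I would observe that $\mathrm{id}_{A_G}\otimes\Delta_{A_{H^\vee}}:S\to\tilde S$ corresponds, by the universal property of $S=A_G\otimes A_{H^\vee}$ as the coordinate ring of $TG\times TH^\vee$, to replacing the single universal morphism $\psi_H:H\to\mu_{p^\infty}$ over $S$ by the product $\psi_{H,1}\cdot\psi_{H,2}$ of the two universal morphisms that live over $\tilde S$. Additivity of the Dieudonn\'{e} functor in morphisms gives $\MM(\psi_{H,1}\cdot\psi_{H,2})=\MM(\psi_{H,1})+\MM(\psi_{H,2})$, so composing with $f\circ\MM(\psi_G)$ yields
\[
\beta_f^{(12)} \;=\; \beta_f^{(1)} + \beta_f^{(2)}
\]
as Dieudonn\'{e} morphisms $\MM(\Q_p/\Z_p)\to\MM(\mu_{p^\infty})$ over $A_{\cris}(\tilde S)$. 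The full faithfulness for $\Q_p/\Z_p\to\mu_{p^\infty}$ (applied over $\tilde S$, which is still f-semiperfect) then gives $\eta_f^{(12)}=\eta_f^{(1)}\cdot\eta_f^{(2)}$, and on passage to $p^n$-torsion this is precisely the required identity on $s_n$. The argument for comultiplicativity is strictly parallel, using two universal copies $\psi_{G,1},\psi_{G,2}:\Q_p/\Z_p\to G$ over $\tilde S'$ together with their pointwise sum.

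The main obstacle is the explicit bookkeeping in the first step: one must verify that each Hopf axiom for $r_{f,n}$ translates into exactly the expected universal identity on $s_n$ under the isomorphism $S\cong\Hom_{R,\mathrm{cont}}(A_H',A_G)$, which requires carefully tracking the interaction of the several Hopf structures on $A_G$, $A_{H^\vee}$, and $A_H'$. Once these translations are in place, each axiom reduces formally to an instance of the additivity of $f\mapsto\eta_f$, which follows from the additivity of $\MM$ and the already-established case of $\Q_p/\Z_p\to\mu_{p^\infty}$.
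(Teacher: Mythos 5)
Your proof is correct and uses the same essential ingredients as the paper's: the ring $\tilde{S} = A_G\otimes_R A_{H^\vee}\otimes_R A_{H^\vee}$ over which you compare $\beta$'s is exactly the ring $S_3$ appearing in the paper's Lemma \ref{psiGdiagram} when specialized to $\psi_G=\Delta_G$, $\psi_H=\Delta_H$, and your use of additivity of $\MM$ to deduce $\beta_f^{(12)}=\beta_f^{(1)}+\beta_f^{(2)}$ unwinds to the same diagram chase over $A_\cris(S_3)$. The differences are organizational rather than substantive: you avoid the detour through the paper's ring $S_2$ and the auxiliary identity $r_{f\times f,n}=r_{f,n}\otimes r_{f,n}$ by invoking additivity of the Dieudonn\'e functor directly, and you prove comultiplicativity by the symmetric argument over $\tilde{S}'=A_G\otimes_R A_G\otimes_R A_{H^\vee}$ rather than by dualizing via $(r_{f,n})^\vee = r_{f^\vee,n}$ as the paper does --- both routes work, and yours is arguably more uniform. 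Two details worth making explicit in a full write-up: the translation of each Hopf axiom into an identity on $s_n$ over $\tilde{S}$ (respectively $\tilde{S}'$) is what the paper isolates as Lemma \ref{triangles}; and you should note that $\tilde{S}$ and $\tilde{S}'$ remain f-semiperfect, so that the injectivity of $\eta\mapsto\MM(\eta)$ for morphisms $\Q_p/\Z_p\to\mu_{p^\infty}$ is available over them --- this is needed and the paper implicitly uses the analogous fact for $S$.
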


\begin{proof} First we prove a lemma concerning the functoriality of $f\mapsto r_{f,n}$.

\begin{lemma} \label{psiGdiagram} Let $\psi_G\from G_1\to G_2$ and $\psi_H\from H_1\to H_2$ be two morphisms of Barsotti-Tate groups over $R$, and let $f_1\from \MM(G_1)\to\MM(H_1)$ and $f_2\from\MM(G_2)\to\MM(H_2)$ be morphisms of crystals.
Assume that the diagram
\[\xymatrix{
\MM(G_1) \ar[r]^{\MM(\psi_H)} \ar[d]_{f_1} & \MM(G_2) \ar[d]^{f_2} \\
\MM(H_1) \ar[r]_{\MM(\psi_G)} &              \MM(H_2)
}\]
commutes. Then for $n\in \Z$, the diagram
\[\xymatrix{
A_{H_2}^\prime \ar[r]^{\psi_H^\prime} \ar[d]_{r_{f_2,n}} & A_{H_1}^\prime \ar[d]^{r_{f_1,n}} \\
A_{G_2} \ar[r]_{\psi_G^\ast} &              A_{G_1}
}\]
commutes.
\end{lemma}

\begin{proof}
Let $S_1$ and $S_2$ represent the Tate modules $TG_1\times TH_1^\vee$ and $TG_2\times TH_2^\vee$, respectively. Also let $S_3$ represent $TG_1\times TH_2^\vee$. The morphisms
\[
\xymatrix{
& TG_1\times TH_2^\vee \ar[dl]_{1\times T\psi_{H^\vee}} \ar[dr]^{T\psi_G\times 1}&\\
TG_1\times TH_1^\vee & & TG_2\times TH_2^\vee
}
\]
induce $R$-algebra homomorphisms $S_1,S_2\to S_3$. The following lemma follows easily from the definitions.

\begin{lemma}\label{triangles}
The base change to $S_3$ of the universal morphisms $\Q_p/\Z_p\to (G_1)_{S_1}$ and $\Q_p/\Z_p\to (G_2)_{S_2}$ fit into a commutative diagram
\[
\xymatrix{
& \Q_p/\Z_p \ar[dl] \ar[dr] \\
(G_1)_{S_3} \ar[rr]_{(\psi_G)_{S_3}} & & (G_2)_{S_3}.
}
\]
Similarly, the base change to $S_3$ of the universal morphisms $(H_1)_{S_1}\to \mu_{p^\infty}$ and $(H_2)_{S_2}\to \mu_{p^\infty}$ fit into a commutative diagram
\[
\xymatrix{
(H_1)_{S_3} \ar[dr] \ar[rr]^{(\psi_H)_{S_3}} & & (H_2)_{S_3} \ar[dl] \\
& \mu_{p^\infty}. &
}
\]
\end{lemma}

For $i=1,2$, $f_i$ induces a morphism $\eta_{f_i}\in \Hom_{S_i}(\Q_p/\Z_p,\mu_{p^\infty})[p^{-1}]$. We claim that these morphisms agree upon base change to $S_3$. Consider the diagram

\[\xymatrix{
\MM(\Q_p/\Z_p)(A_\cris(S_3)) \ar@/_1in/[ddd]_{\beta_{f_1}|_{S_3}}\ar[r]^{=} \ar[d] & \MM(\Q_p/\Z_p)(A_\cris(S_3)) \ar@/^1in/[ddd]^{\beta_{f_2}|_{S_3}}\ar[d] \\
\MM(G_1)(A_\cris(S_3)) \ar[r] \ar[d] & \MM(G_2)(A_\cris(S_3)) \ar[d] \\
\MM(H_1)(A_\cris(S_3)) \ar[r] \ar[d] & \MM(H_2)(A_\cris(S_3)) \ar[d] \\
\MM(\mu_{p^\infty})(A_\cris(S_3)) \ar[r]_{=} & \MM(\mu_{p^\infty})(A_\cris(S_3))\ .
}\]
The center square in this diagram commutes by hypothesis. The top and bottom squares in the diagram commute by applying $\MM$ to the commutative triangles in Lemma \ref{triangles}. Thus $\beta_{f_1}|_{S_3}=\beta_{f_2}|_{S_3}$. But this implies that $\eta_{f_1}|_{S_3} = \eta_{f_2}|_{S_3}$.

The morphisms $\eta_{f_i}$ determine the sequences of $R$-linear maps $r_{f_i,n}\from A_{H_i}^\prime\to A_{G_i}$.  Recall that $S_3=\Hom_{R,\cont}(A_{H_2}^\prime,A_{G_1})$.  The fact that the $\eta_{f_i}$ coincide upon base change to $S_3$ means exactly that the desired diagram commutes.
\end{proof}

\begin{lemma} Let $f\from \MM(G)\to \MM(H)$ be a morphism of crystals over $R$.
\begin{altenumerate}
\item[{\rm (i)}] Let $f\times f\from \MM(G\times G)\to \MM(H\times H)$ be the obvious morphism. We have $r_{f\times f,n}=r_{f,n}\otimes r_{f,n}$ as $R$-linear maps $A_H^\prime \hat{\otimes}A_H^\prime \to A_G\otimes A_G$.
\item[{\rm (ii)}] We have $(r_{f,n})^\vee = r_{f^\vee,n}$ as $R$-linear maps $A_{G^\vee}^\prime\to A_{H^\vee}$.
\end{altenumerate}
\end{lemma}

\begin{proof} Easy and left to the reader.
\end{proof}

Finally, we can show that $r_{f,n}\from A_H^\prime\to A_G$ is a morphism of Hopf algebras. Consider the diagonal maps $\Delta_G\from G\to G\times G$ and $\Delta_H\from H\to H\times H$. Then $\MM(\Delta_G)\from \MM(G)\to \MM(G\times G)=\MM(G)\otimes\MM(G)$ is also the diagonal map, and similarly for $\MM(\Delta_H)$.  The diagram
\[\xymatrix{
\MM(G) \ar[rr]^{\MM(\Delta_G)} \ar[d]_{f} && \MM(G\times G) \ar[d]_{f\times f}\\
\MM(H) \ar[rr]_{\MM(\Delta_H)}  && \MM(H\times H)
}\]
commutes. By Lemma \ref{psiGdiagram}, the diagram
\[\xymatrix{
A_H^\prime\hat{\otimes} A_H^\prime \ar[d]_{r_{f,n}\hat{\otimes} r_{f,n}} \ar[rr]^{\Delta_H^\prime} && A_H^\prime \ar[d]^{r_{f,n}} \\
A_G\otimes A_G \ar[rr]_{\Delta_G^\ast} && A_G
}\]
also commutes. But $\Delta_G^\ast\from A_G\times A_G\to A_G$ is simply the multiplication map, and similarly for $\Delta_H^\prime$. Thus $r_{f,n}$ is an $R$-algebra homomorphism. By dualizing and using the same result for $\MM(H^\vee)\to \MM(G^\vee)$, one sees that $r_{f,n}$ is also compatible with comultiplication, as desired.
\end{proof}

We get that $r_{f,n}$ lies in
\[\begin{aligned}
\Hom_{R\text{-Hopf},\cont}(A_H^\prime,A_G)&=\varinjlim_m \Hom_{R\text{-Hopf}}(A_{H[p^m]},A_{G[p^m]})\\
&=\varinjlim_m \Hom(G[p^m],H[p^m])\ ,
\end{aligned}\]
where the last $\Hom$ is in the category of group schemes over $R$. Thus for $m>n$ large enough, $r_{f,n}$ is induced by a morphism $\psi_n^\prime\from G[p^m]\to H[p^m]$. Let us assume that $\eta_f\in \Hom_S(\Q_p/\Z_p,\mu_{p^\infty})$ is integral, which we can after multiplication by a power of $p$. Then the morphisms $r_{f,n}$ come from elements $s_n\in S$, $n\geq 0$, satisfying $s_{n+1}^p = s_n$, $s_0=1$. In particular, $s_n^{p^n}=1$. By Lemma \ref{HopfAlgLemma}, this means that $p^n\psi_n^\prime=0$, so that $\psi_n^\prime$ factors through a map $\psi_n\from G[p^n]\to H[p^n]$. Now the condition $s_{n+1}^p = s_n$ for all $n\geq 0$ means that the morphisms $\psi_n$ combine to a morphism $\psi\from G\to H$ of $p$-divisible groups.

In summary, we have constructed a map
\[
\Hom_R(\MM(G),\MM(H))[p^{-1}]\to \Hom_R(G,H)[p^{-1}]\ ,
\]
that is injective by Proposition \ref{Injectivity}. On the other hand, the composition
\[
\Hom_R(G,H)[p^{-1}]\to \Hom_R(\MM(G),\MM(H))[p^{-1}]\to \Hom_R(G,H)[p^{-1}]
\]
is the identity by construction, finishing the proof of Theorem \ref{fullyfaithful}.

\section{On $p$-divisible groups over $\OO_C$}

In this section, we fix a complete algebraically closed extension $C$ of $\Q_p$, and will classify $p$-divisible groups over $\OO_C$. Fix a map $\bar{\mathbb{F}}_p\rightarrow \OO_C/p$. Also, in this section we will write
\[
B_\cris^+ = B_\cris^+(\OO_C/p)\ ,
\]
which comes with a natural map $\Theta\from B_\cris^+\to C$.

\subsection{From $p$-divisible groups to vector bundles}

First, we will relate some of the constructions that occured so far to the theory of vector bundles over the Fargues-Fontaine curve. Let $P$ be the graded $\Q_p$-algebra
\[
P=\bigoplus_{d\geq 0} (B_\cris^+)^{\varphi=p^d}\ ,
\]
and let $X=\Proj P$.  Then $X$ is a curve, in the sense that it is a connected, separated, regular Noetherian scheme of dimension 1, cf.~\cite{FarguesFontaine}, Th\'eor\`eme 10.2. Moreover, $X$ is also complete in the sense that there is a homomorphism $\deg\from \Div(X)\to\Z$ which is surjective, nonnegative on effective divisors, and zero on principal divisors. From here, one defines the rank and degree of any coherent sheaf on $X$, and one gets the following result.

\begin{prop}\label{RkDeg} Let $f\from \mathscr{F}\to \mathscr{E}$ be an injective map of coherent sheaves on $X$ of the same rank and degree. Then $f$ is an isomorphism. $\hfill \Box$
\end{prop}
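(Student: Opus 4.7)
The plan is to analyze the exact sequence
\[
0 \to \mathscr{F} \xrightarrow{f} \mathscr{E} \to \mathscr{Q} \to 0,
\]
where $\mathscr{Q} = \mathrm{coker}(f)$ is coherent on $X$, and then show that the hypotheses force $\mathscr{Q} = 0$. First I would use additivity of rank and degree in short exact sequences of coherent sheaves on $X$: this is a standard consequence of the setup (rank is additive on generic stalks, and the degree map, being defined via divisor classes of determinants, is additive since $\det$ is multiplicative in short exact sequences). From equality of ranks we deduce $\mathrm{rk}(\mathscr{Q}) = 0$, and from equality of degrees we deduce $\deg(\mathscr{Q}) = 0$.

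Next, I would use that $X$ is a regular Noetherian scheme of dimension $1$ to conclude that a coherent sheaf of rank zero is a torsion sheaf, supported on a finite set of closed points $x_1, \dots, x_r \in X$. Working stalkwise, $\mathscr{Q}_{x_i}$ is a finite-length module over the discrete valuation ring $\OO_{X,x_i}$, so $\mathscr{Q}$ admits a finite filtration whose successive quotients are skyscraper sheaves $i_{x,\ast}\, k(x)$ at closed points.

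The last step, which is where essentially all the content sits, is to show that each such skyscraper contributes strictly positively to the degree, so that $\deg(\mathscr{Q}) = 0$ forces the filtration to be trivial. For this I would argue that $i_{x,\ast}\, k(x)$ is the cokernel of the inclusion $\OO_X(-[x]) \hookrightarrow \OO_X$ coming from the effective divisor $[x]$, and hence has degree equal to $\deg([x])$, which is strictly positive by the hypothesis that $\deg$ is nonnegative on effective divisors and nonzero on nontrivial ones (indeed, on the Fargues--Fontaine curve, every closed point has positive degree). Summing over the filtration gives $\deg(\mathscr{Q}) \geq \sum_i \mathrm{length}_{\OO_{X,x_i}}(\mathscr{Q}_{x_i}) \cdot \deg([x_i])$, with equality exactly when $\mathscr{Q}$ vanishes.

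The main obstacle is this final positivity statement: one needs to know that the degree of every nonzero torsion coherent sheaf on $X$ is strictly positive. This is not formal from the three axioms listed (surjectivity, nonnegativity on effective divisors, vanishing on principal divisors), but it follows from the more refined description of $\deg$ on $X$ given in \cite{FarguesFontaine}, namely that for closed points $x \in X$ the degree $\deg([x])$ is a positive integer (in fact equal to $1$ in the Fargues--Fontaine setting). Once this is in hand, the proof assembles as above and $\mathscr{Q} = 0$, so $f$ is an isomorphism.
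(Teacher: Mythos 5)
Your proof is correct, and it supplies a genuine argument where the paper gives none: the proposition is stated with a terminal $\Box$ and no proof, being treated as a standard consequence of the completeness of $X$ established in \cite{FarguesFontaine}. Your three-step scheme (additivity of rank and degree, reduction of the cokernel to a torsion sheaf with a finite filtration by skyscrapers, and strict positivity of skyscraper degrees) is exactly the standard argument one would write out.

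You also put your finger correctly on the one non-formal ingredient. The three properties of $\deg$ quoted in the paper (surjectivity, nonnegativity on effective divisors, vanishing on principal divisors) only yield $\deg(\mathscr{Q})\geq 0$, not $\deg(\mathscr{Q})>0$ for a nonzero torsion sheaf; one needs the sharper fact, proved by Fargues--Fontaine for the curve attached to an algebraically closed perfectoid field, that every closed point of $X$ has strictly positive degree (indeed degree $1$). It is worth being aware that some formulations of ``complete curve'' build in exactly this strict positivity as an axiom, in which case your argument becomes entirely formal; the phrasing in the present paper is the weaker one, so your remark is apt.

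One small phrasing quibble: in your final display you wrote an inequality
\[
\deg(\mathscr{Q}) \geq \sum_i \mathrm{length}_{\OO_{X,x_i}}(\mathscr{Q}_{x_i}) \cdot \deg([x_i])\ ,
\]
``with equality exactly when $\mathscr{Q}$ vanishes.'' Additivity of degree along the filtration in fact gives this as an \emph{equality} for all torsion $\mathscr{Q}$, and the point is simply that the right-hand side is strictly positive whenever some $\mathscr{Q}_{x_i}\neq 0$. This does not affect the substance of the argument.
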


The curve $X$ has a natural point $\infty\in X$, coming from the map $\Theta\from B_\cris^+\to C$. We denote by $i_\infty: \{\infty\}\rightarrow X$ the inclusion.

Let $G_0$ be a $p$-divisible group over $\OO_C/p$, and let $M$ denote the covariant Dieudonn\'{e} module of $G_0$, which is a finite projective $A_\cris$-module. We define the associated vector bundle over the Fargues-Fontaine curve $X$, as the vector bundle $\mathscr{E}(G_0)$ associated to the graded $P$-module
\[
\bigoplus_{d\geq 0}(M[p^{-1}])^{\varphi=p^{d+1}}\ .
\]
Before going on, let us observe that our full faithfulness result over $\OO_C/p$ recovers the rational comparison isomorphism for $p$-divisible groups.

\begin{Cor} Let $G$ be a $p$-divisible group over $\OO_C$, and let $T=T(G)(\OO_C)$. The sequence
\[
0\to T[p^{-1}]\to \tilde{G}(\OO_C)\to \Lie G\otimes C\to 0
\]
is exact, and gets identified with the sequence
\[
0\to T[p^{-1}]\to (M[p^{-1}])^{\varphi = p}\to \Lie G\otimes C\to 0\ .
\]
\end{Cor}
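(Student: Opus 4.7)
The plan is to obtain the claimed identification of sequences from three inputs: Theorem \ref{fullyfaithful} (applied to $\OO_C/p$, which is f-semiperfect), the explicit description of Dieudonn\'e maps as quasi-logarithms (Lemma \ref{ExplicitDieudonne}), and the comparison of $\qlog$ with $\log_G$ (Lemma \ref{QLogvsLog}).

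First I would identify $\tilde{G}(\OO_C)$ with $(M[p^{-1}])^{\varphi=p}$. By Proposition \ref{UnivCoverRepr}(ii), $\tilde{G}(\OO_C) = \tilde{G}_0(\OO_C/p) = \Hom_{\OO_C/p}(\Q_p/\Z_p,G_0)[p^{-1}]$. Applying Theorem \ref{fullyfaithful} to $\OO_C/p$, the latter is identified with $\Hom(\MM(\Q_p/\Z_p),\MM(G_0))[p^{-1}]$. Evaluating on $A_\cris$ and using that $\MM(\Q_p/\Z_p)(A_\cris) = A_\cris$ with $F$ acting as $p$ on the canonical basis vector $1$, a morphism of Dieudonn\'e modules up to isogeny is precisely an element $m \in M[p^{-1}]$ with $\varphi(m) = pm$, so we obtain the desired $\Q_p$-linear identification $\tilde{G}(\OO_C) \cong (M[p^{-1}])^{\varphi=p}$ by sending an element $t \in \tilde{G}(\OO_C)$ to the image of $1$ under the associated Dieudonn\'e map.

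Next I would check the $0\to T[p^{-1}]\to \tilde{G}(\OO_C)\to \Lie G\otimes C\to 0$ sequence is exact and that its maps become the "crystalline" maps under the above identification. Exactness on the left and in the middle is Proposition \ref{logG}(v) evaluated at $(C,\OO_C)$. For surjectivity on the right, any $\ell\in\Lie G\otimes C$ can be written as $p^{-n}\ell'$ with $\ell'\in p^{1+\epsilon}\Lie G\otimes \OO_C$; the exponential converges on $p^{1+\epsilon}\Lie G\otimes \OO_C$, lifting $\ell'$ to $g\in G(\OO_C)$, and since $G(\OO_C)$ is $p$-divisible (because $C$ is algebraically closed) $g$ lifts to $\tilde{G}(\OO_C)$, yielding a preimage of $\ell$. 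For the comparison of maps: the map $\tilde{G}(\OO_C) \to \Lie G\otimes C$ is $\log_G$, and by Lemma \ref{QLogvsLog} (applied to the PD thickening $B_\cris^+\to C$, or its reduction modulo $I^2$ to get a topologically nilpotent PD structure) it coincides with $\qlog_G\from \tilde{G}\to \MM(G)(B_\cris^+) = M[p^{-1}]$ followed by the canonical projection $M[p^{-1}]\to \Lie G\otimes C$. On the other hand, Lemma \ref{ExplicitDieudonne} identifies exactly this $\qlog_G$ with the Dieudonn\'e-theoretic evaluation map constructed above, so the right-hand map is matched. Similarly, on $T[p^{-1}]\subset \tilde{G}(\OO_C)$, the map into $M[p^{-1}]$ agrees with the Dieudonn\'e functor applied to $T = \Hom_{\OO_C/p}(\Q_p/\Z_p, G_0)$ (again by Lemma \ref{ExplicitDieudonne}), hence with the inclusion $T[p^{-1}]\hookrightarrow (M[p^{-1}])^{\varphi=p}$.

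The main obstacle is conceptual rather than computational: one must carefully assemble the crystal property of $\tilde{G}$, the full faithfulness result (Theorem A), and the identification of the Dieudonn\'e-theoretic map with $\qlog$ into a single commutative diagram. Once this is set up, exactness reduces to Proposition \ref{logG}(v) plus the surjectivity argument above, and the matching of the two sequences is automatic.
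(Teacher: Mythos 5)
Your proof is correct and takes essentially the same approach as the paper: the paper's own argument is just the terse chain $\tilde{G}(\OO_C) = \tilde{G}(\OO_C/p) = \Hom_{\OO_C/p}(\Q_p/\Z_p,G)[p^{-1}] = (M[p^{-1}])^{\varphi = p}$ (via the crystal property and Theorem~\ref{fullyfaithful}) together with $p$-divisibility of $G(\OO_C)$ for right-exactness, and your writeup simply spells out these steps and, usefully, the identification of the maps via Lemmas~\ref{QLogvsLog} and~\ref{ExplicitDieudonne}.
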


\begin{proof} To check exactness on the right, use that $C$ is algebraically closed, to deduce that multiplication by $p$ on $G(\OO_C)$ is surjective. Then note that
\[
\tilde{G}(\OO_C) = \tilde{G}(\OO_C/p) = \Hom_{\OO_C/p}(\Q_p/\Z_p,G)[p^{-1}] = (M[p^{-1}])^{\varphi = p}\ .
\]
\end{proof}

\begin{rmk} Note that $\OO_C/p$ satisfies the hypothesis of Corollary \ref{Intfullyfaithful}, so that one also gets the integral version of the comparison theorem.
\end{rmk}

\begin{Theorem}\label{FactsFarguesFontaine}
\begin{altenumerate}
\item[{\rm (i)}] Let $G$ be a $p$-divisible group over $\OO_C$. Then there exists a $p$-divisible group $H$ over $\bar{\mathbb{F}}_p$ and a quasi-isogeny
\[
\rho\from H\otimes_{\bar{\mathbb{F}}_p} \OO_C/p\rightarrow G\otimes_{\OO_C} \OO_C/p\ .
\]
\item[{\rm (ii)}] The functor $G_0\mapsto \mathscr{E}(G_0)$ from $p$-divisible groups over $\OO_C/p$ up to isogeny to vector bundles over $X$ is fully faithful. The essential image is given by the vector bundles all of whose slopes are between $0$ and $1$.
\end{altenumerate}
\end{Theorem}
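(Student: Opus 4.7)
The plan is to deduce both statements from Theorem \ref{fullyfaithful}, combined with the Fargues-Fontaine classification of vector bundles on $X$ and the Dieudonn\'e-Manin classification of isocrystals over $\bar{\FF}_p$. I would handle part (ii) first, since part (i) then drops out as an application.

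For part (ii), the functor $G_0\mapsto \mathscr{E}(G_0)$ factors as
\[
G_0\ \longmapsto\ M(G_0)[p^{-1}]\ \longmapsto\ \mathscr{E}(G_0),
\]
where the first arrow sends $G_0$ to its covariant rational Dieudonn\'e module, a finite projective $B_\cris^+$-module equipped with a $\varphi$-semilinear $F$ and Verschiebung $V$ satisfying $FV=VF=p$, and the second assembles the graded $P$-module $\bigoplus_{d\geq 0}(M[p^{-1}])^{\varphi=p^{d+1}}$ and takes $\Proj$. Theorem \ref{fullyfaithful} applied to the f-semiperfect ring $\OO_C/p$ makes the first arrow fully faithful on isogeny categories, and Fargues-Fontaine's description of vector bundles on $X$ in terms of $\varphi$-modules makes the second arrow fully faithful as well; composing gives full faithfulness of $\mathscr{E}$. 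The slope constraint is structural: the simultaneous existence of $F$ and $V$ with $VF=p$ forces the slopes of $\mathscr{E}(G_0)$ to lie in $[0,1]$, the endpoints being realised by $\Q_p/\Z_p$ and $\mu_{p^\infty}$. Conversely, a vector bundle on $X$ with slopes in $[0,1]$ corresponds under Fargues-Fontaine to an isocrystal $N$ over $\bar{\FF}_p$ of the same slopes; by Dieudonn\'e-Manin $N$ is the rational Dieudonn\'e isocrystal of some $p$-divisible group $H$ over $\bar{\FF}_p$, and base-change to $\OO_C/p$ realises the given bundle as $\mathscr{E}(H\otimes_{\bar{\FF}_p}\OO_C/p)$, giving essential surjectivity onto the claimed image.

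For part (i), let $G$ be a $p$-divisible group over $\OO_C$ with reduction $G_0 = G\otimes_{\OO_C}\OO_C/p$. By (ii), $\mathscr{E}(G_0)$ is a vector bundle on $X$ whose slopes lie in $[0,1]$, so the essential surjectivity part of (ii) produces a $p$-divisible group $H$ over $\bar{\FF}_p$ together with an isomorphism $\mathscr{E}(H\otimes_{\bar{\FF}_p}\OO_C/p)\cong \mathscr{E}(G_0)$ of vector bundles on $X$. Full faithfulness in (ii) then lifts this isomorphism to a quasi-isogeny $\rho\from H\otimes_{\bar{\FF}_p}\OO_C/p\to G_0$, which is exactly the statement of (i).

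The main obstacle is bookkeeping at the level of conventions: one has to match the covariant Dieudonn\'e theory over $A_\cris(\OO_C/p)$ (as used in Theorem \ref{fullyfaithful}) with the Fargues-Fontaine picture in which vector bundles on $X$ are controlled by isocrystals over $\bar{\FF}_p$, and verify that under this dictionary the module $M(G_0)[p^{-1}]$ corresponds (up to the Tate twist reflected in the shift $p^{d+1}$) to the isocrystal attached to $\mathscr{E}(G_0)$, so that the slope interval for Dieudonn\'e modules of $p$-divisible groups really is $[0,1]$. Once this dictionary is in place, both full faithfulness and the characterisation of the essential image reduce mechanically to previously established inputs; the real content has been absorbed into Theorem \ref{fullyfaithful} and into the Fargues-Fontaine classification of vector bundles on $X$.
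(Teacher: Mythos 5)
Your proposal is correct in content and uses exactly the same inputs as the paper: Theorem~\ref{fullyfaithful} to reduce everything to $\varphi$-modules over $B^+_\cris$, the Fargues--Fontaine classification of vector bundles on $X$ in terms of isocrystals over $\bar{\FF}_p$, full faithfulness of that equivalence, and Dieudonn\'e--Manin. The only divergence is one of ordering. The paper establishes part~(i) directly (citing Fargues--Fontaine, Theorem~7.24, which is precisely the isotriviality statement for Dieudonn\'e modules over $B^+_\cris$), and then uses~(i) as the tool for the ``slopes lie in $[0,1]$'' half of the essential-image characterization in~(ii); you instead assert this slope constraint independently as ``structural'' and then recover~(i) from~(ii). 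That works, but be aware that the ``structural'' step is not free: the slopes in question are Harder--Narasimhan slopes of the bundle $\mathscr{E}(G_0)$ on $X$, and extracting them from the existence of $F$ and $V$ with $FV=VF=p$ on $M(G_0)[p^{-1}]$ requires first knowing that $M(G_0)[p^{-1}]\cong N\otimes B^+_\cris$ for an isocrystal $N$ (Fargues--Fontaine classification plus full faithfulness), descending the Verschiebung to $N$, and then invoking Dieudonn\'e--Manin for $N$ -- i.e.\ your argument reproves Fargues--Fontaine's Theorem~7.24 en route, which is exactly what the paper avoids by citing it. So the reorganization is cosmetically different but not logically lighter. Two small points of hygiene: (a)~your deduction of~(i) says ``the essential surjectivity part of~(ii) produces a $p$-divisible group $H$ over $\bar{\FF}_p$,'' but essential surjectivity as stated only produces a $p$-divisible group over $\OO_C/p$; you need to unwind the proof of essential surjectivity (which goes via $\bar{\FF}_p$) to get $H$. (b)~Note that the paper's use of part~(i) for $p$-divisible groups over $\OO_C/p$ -- not just those coming from $\OO_C$ -- is legitimate because every $p$-divisible group over $\OO_C/p$ lifts to $\OO_C$ along the successive nilpotent thickenings $\OO_C/p^{n+1}\to\OO_C/p^n$; this lifting step is silently present in both your argument and the paper's.
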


\begin{rmk} The first part of this theorem says that all $p$-divisible groups over $\OO_C$ are isotrivial in the sense defined by Fargues, \cite{FarguesGenestierLafforgue}. The same result is true when $C$ is only required to be a perfectoid field of characteristic $0$ with algebraically closed residue field. In fact, the same proof applies.
\end{rmk}

\begin{proof} As $\OO_C/p$ is f-semiperfect, Theorem \ref{fullyfaithful} reduces the result to the analogous statement for Dieudonn\'{e} modules over $B_\cris^+$. Then both results are due to Fargues--Fontaine: The first part is Theorem 7.24 in \cite{FarguesFontaineDurham}, and the full faithfulness in the second part is Theorem 7.15 combined with Proposition 7.17 in \cite{FarguesFontaineDurham}. By Theorem 7.14 in \cite{FarguesFontaineDurham}, any vector bundle on $X$ is isomorphic to the vector bundle associated to some isocrystal over $\bar{\FF}_p$. All those with slopes between $0$ and $1$ are clearly in the essential image; on the other hand, any vector bundle in the image is the vector bundle associated to some $p$-divisible group $H$ over $\bar{\FF}_p$ by part (i), and thus has slopes between $0$ and $1$, giving part (ii).
\end{proof}

We will also need the following statements, cf. also \cite{FarguesFontaineDurham}, Section 6.

\begin{prop}\label{PDivGroupGivesModVectBund} Let $G$ be a $p$-divisible group over $\OO_C$. Let $\mathscr{E} = \mathscr{E}(G_0)$ for $G_0 = G\otimes_{\OO_C} \OO_C/p$ and $\mathscr{F} = \OO_X\otimes_{\Z_p} T$ be associated vector bundles over $X$. Here and in the following, we write $T = T(G)(\OO_C)$.
\begin{altenumerate}
\item[{\rm (i)}] There is a natural exact sequence of coherent sheaves over $X$,
\[
0\to \mathscr{F}\to \mathscr{E}\to i_{\infty\ast} (\Lie G\otimes C)\to 0\ .
\]
The global sections of this map give the logarithm sequence
\[
0\to T[p^{-1}]\to \tilde{G}(\OO_C)\to \Lie G\otimes C\to 0\ .
\]

\item[{\rm (ii)}] Under the identification
\[
i_\infty^\ast \mathscr{E} = M(G)\otimes_{\OO_C} C\ ,
\]
the adjunction morphism $\mathscr{E}\to i_{\infty\ast} i_\infty^\ast \mathscr{E}$ induces on global sections the quasi-logarithm morphism
\[
\tilde{G}(\OO_C)\to M(G)\otimes C\ .
\]
When restricted to $T[\frac 1p]$, it induces a surjective map
\[
\alpha_G\from T\otimes C\to (\Lie G^\vee\otimes C)^\vee\subset M(G)\otimes C\ .
\]
\item[{\rm (iii)}] The sequence
\[
0\to \Lie G\otimes C(1)\buildrel{\alpha_{G^\vee}^\vee(1)}\over\longrightarrow T\otimes C\buildrel\alpha_G\over\longrightarrow (\Lie G^\vee\otimes C)^\vee\to 0
\]
is exact.
\end{altenumerate}
\end{prop}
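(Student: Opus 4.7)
The plan is to build the two maps in part (i) directly, show their composition vanishes, and pin down the kernel by rank and degree constraints on the Fargues-Fontaine curve $X$. I construct $\alpha\from\mathscr{F}\to\mathscr{E}$ from the inclusion $T\hookrightarrow \tilde{G}(\OO_C)$: Theorem A applied to the f-semiperfect ring $\OO_C/p$ identifies
\[
\tilde{G}(\OO_C) = \tilde{G}(\OO_C/p) = (M[p^{-1}])^{\varphi=p} = H^0(X,\mathscr{E}),
\]
and I extend $\OO_X$-linearly. For the quotient $\beta\from \mathscr{E}\to i_{\infty\ast}(\Lie G\otimes C)$, I compose the adjunction $\mathscr{E}\to i_{\infty\ast}i_\infty^\ast\mathscr{E}$ with the projection induced by the Hodge filtration $M(G)\otimes_{\OO_C} C\twoheadrightarrow \Lie G\otimes C$, after identifying $i_\infty^\ast\mathscr{E} = M(G)\otimes_{\OO_C} C$ via $\Theta\from B_\cris^+\to C$ and the crystal property.

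The composite $\beta\circ\alpha$ vanishes. On global sections it sends $t\in T$ to the image of $\qlog_G(t)$ in $\Lie G\otimes C$, which by Lemma \ref{QLogvsLog} equals $\log_G(t)$, and this vanishes because $T[p^{-1}]$ is the kernel of the logarithm by Proposition \ref{logG}(v). As $\mathscr{F}$ is generated by its global sections, this forces $\beta\circ\alpha=0$ as a morphism of sheaves. The map $\beta$ is surjective because its only non-trivial stalk is at $\infty$, where the Hodge filtration surjects. Setting $\mathscr{G} := \ker\beta$ thus yields a short exact sequence $0\to\mathscr{G}\to\mathscr{E}\to i_{\infty\ast}(\Lie G\otimes C)\to 0$, with $\mathscr{G}$ locally free of rank $h$ on the Dedekind scheme $X$; since $\deg\mathscr{E}=\dim G=d$ (sum of Newton slopes of the isocrystal associated to $H$ by Theorem \ref{FactsFarguesFontaine}(i)) and the skyscraper has length $d$, we conclude $\deg\mathscr{G}=0$.

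To finish part (i) I must show $\alpha$ factors through an isomorphism $\mathscr{F}\xrightarrow{\sim}\mathscr{G}$. Taking $H^0$ of the short exact sequence and invoking Proposition \ref{logG}(v) (which is exact on $(C,\OO_C)$-points since $C$ is algebraically closed) yields $H^0(X,\mathscr{G}) = T[p^{-1}]$, a $\Q_p$-vector space of dimension $h$. The key structural input is that any vector bundle on $X$ of rank $h$, degree $0$, with finite-dimensional space of global sections must be trivial: by the Fargues-Fontaine classification it splits as $\bigoplus\OO_X(\lambda_i)$; any summand with $\lambda_i>0$ contributes an infinite-dimensional $H^0$, so all $\lambda_i\leq 0$, and then $\sum\lambda_i=0$ forces each $\lambda_i=0$. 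Thus $\mathscr{G}\cong\OO_X^h$, and since $H^0(\alpha)$ is the identity on $T[p^{-1}]$, $\alpha$ is an isomorphism onto $\mathscr{G}$. The asserted logarithm sequence on global sections is recovered from the same identifications.

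Parts (ii) and (iii) then follow quickly. The map on $H^0$ induced by the adjunction $\mathscr{E}\to i_{\infty\ast}i_\infty^\ast\mathscr{E}$ is the natural map $\tilde{G}(\OO_C)\to M(G)\otimes_{\OO_C} C$, which by Lemma \ref{ExplicitDieudonne} is the quasi-logarithm; restricting to $T[p^{-1}]$ and extending $C$-linearly gives $\alpha_G$, whose image lies in $(\Lie G^\vee\otimes C)^\vee = \ker(M(G)\otimes C\to \Lie G\otimes C)$ by the vanishing above. Surjectivity of $\alpha_G$ and exactness of (iii) are obtained by specializing Proposition \ref{HodgeTateSequence} to $(T,T^+)=(C,\OO_C)$, whose hypotheses hold because $C$ is algebraically closed: that proposition produces the desired sequence with cohomology killed by $p^{1/(p-1)+\epsilon}$, and since all terms are $C$-vector spaces this cohomology vanishes. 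The main obstacle I anticipate is the triviality criterion for $\mathscr{G}$ in paragraph three; everything else is a largely formal assembly once Theorem A, Lemma \ref{QLogvsLog}, Lemma \ref{ExplicitDieudonne}, and Proposition \ref{HodgeTateSequence} are in hand.
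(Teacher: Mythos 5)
You reach the same statement but take a genuinely different route for the one nontrivial step, namely showing that $\alpha\from\mathscr{F}\to\mathscr{E}$ identifies $\mathscr{F}$ with $\ker\beta$. The paper first proves injectivity of $\mathscr{F}\to\mathscr{E}$ by embedding the whole diagram into the modification attached to the multiplicative group $T(-1)\otimes_{\Z_p}\mu_{p^\infty}$, where exactness of the bottom row amounts to the fundamental exact sequence, and then closes the argument with the elementary rank-and-degree criterion Proposition~\ref{RkDeg}. You instead compute $H^0(X,\ker\beta)=T[p^{-1}]$ directly from the logarithm sequence, note that its $\Q_p$-dimension equals the rank $h$, and then invoke the Fargues--Fontaine classification: finiteness of $H^0$ rules out positive slopes, degree zero rules out negative ones, so $\ker\beta\cong\OO_X^h$, and a morphism between trivial bundles is an isomorphism exactly when it is one on global sections, which $H^0(\alpha)$ is. Both arguments need the degree computation $\deg\mathscr{E}=d$. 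Your route leans more heavily on the classification theorem, which the paper's proof of part (i) sidesteps in favour of the $\mu_{p^\infty}$ comparison and the more elementary Proposition~\ref{RkDeg}; in exchange your version is somewhat more self-contained in that it does not require knowing the $\mu_{p^\infty}$ sequence is exact, only the global-sections computation already recorded after Theorem~\ref{FactsFarguesFontaine}. For parts (ii) and (iii) both you and the paper ultimately rest on the Fargues--Genestier--Lafforgue Hodge--Tate decomposition (you via Proposition~\ref{HodgeTateSequence}, the paper by direct citation, with an additional sketch of an internal argument for the vanishing of $\alpha_G\circ\alpha^\vee_{G^\vee}(1)$); this is a difference of presentation, not of substance.
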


In other words, a $p$-divisible group of height $h$ over $\OO_C$ gives rise to a modification $\mathscr{E}$ of the trivial vector bundle $\mathscr{F} = \mathcal{O}_X^h$ along $\infty\in X$.

We note that $\alpha_G$ can be described more elementary as follows. One has the map
\[
\Q_p/\Z_p\otimes_{\Z_p} T\to G\ ,
\]
which is defined as a map of $p$-divisible groups over $\OO_C$, and induces by Cartier duality a map
\[
G^\vee\to \mu_{p^\infty}\otimes_{\Z_p} T^\vee\ .
\]
On Lie algebras, this gives a map $\Lie G^\vee\otimes C\to T^\vee\otimes C$. Dualizing again, we get the map
\[
T\otimes C\buildrel\alpha_G\over\to (\Lie G^\vee\otimes C)^\vee\ .
\]

\begin{proof} First of all, all identifications of maps are immediate from our preparations on $p$-divisible groups. In particular, the natural map $\mathscr{F}\to \mathscr{E}$ induced from $T\to M^{\varphi = p}$ factors through
\[
\mathscr{F}^\prime = \ker(\mathscr{E}\to i_{\infty\ast} (\Lie G\otimes C))\ .
\]
Moreover, the induced map $\mathscr{F}\to \mathscr{F}^\prime$ is injective. Indeed, it suffices to check this for $\mathscr{F}\to \mathscr{E}$. Consider the diagram
\[\xymatrix{
&\mathscr{F}\ar[r]\ar@{=}[d]&\mathscr{E}\ar[d]\ar[r]&i_{\infty\ast} (\Lie G\otimes C)\ar[d]&\\
0\ar[r]&\mathscr{F}\ar[r]&\mathscr{E}^\prime\ar[r]&i_{\infty\ast} (T\otimes C(-1))\ar[r]&0
}\]
On the lower line, $\mathscr{E}^\prime = T\otimes_{\Z_p} \OO_X(1)$, and the sequence comes from the $p$-divisible group $T(-1)\otimes_{\Z_p} \mu_{p^\infty}$. By a direct computation for $\mu_{p^\infty}$, the lower line is exact. In particular, $\mathscr{F}\injects \mathscr{E}$. But $\mathscr{F}$ and $\mathscr{F}^\prime$ are vector bundles of the same rank $h$ and degree $0$, thus $\mathscr{F}\cong \mathscr{F}^\prime$ by Proposition \ref{RkDeg}.

Part (iii) is taken from \cite{FarguesGenestierLafforgue}, Appendix C to Chapter 2. One could also reprove it using the machinery employed here. In fact, the surjectivity of $\alpha_G$ follows directly from part (i) after $i_\infty^\ast$. For part (iii), it only remains to see that the composite $\alpha_G\circ \alpha^\vee_{G^\vee}(1)$ is $0$. We have a composite map
\[
f_G\from \mathscr{E}_{G^\vee}^\vee\to \mathscr{F}_{G^\vee}^\vee\cong \mathscr{F}_G\to \mathscr{E}_G\ ,
\]
where we use subscripts to denote the $p$-divisible group with respect to which we construct the vector bundles. Also, we use a trivialization $\Z_p(1)\cong \Z_p$ to identify the vector bundles $\mathscr{F}_{G^\vee}^\vee$ and $\mathscr{F}_G$. We have to see that $i_\infty^\ast f_G$ is $0$. For this, it is enough to show that
\[
f_G\from \mathscr{E}_{G^\vee}^\vee\to \mathscr{E}_G\cong \mathscr{E}_{G^\vee}^\vee\otimes_{\OO_X} \OO_X(1)
\]
is multiplication by $t\in H^0(X,\OO_X(1))$, where $t=\log([\epsilon])$ comes from the element $\epsilon\in \OO_C^\flat$ induced from the chosen trivialization $\Z_p(1)\cong \Z_p$. For this, one can replace $G$ by $T\otimes_{\Z_p} \Q_p/\Z_p$, where it is immediate.
\end{proof}

\subsection{Classification of $p$-divisible groups}

The main theorem of this section is the following result.

\begin{Theorem}\label{ClassificationOverOC} There is an equivalence of categories between the category of $p$-divisible groups over $\OO_C$ and the category of free $\Z_p$-modules $T$ of finite rank together with a $C$-subvectorspace $W$ of $T\otimes C(-1)$.
\end{Theorem}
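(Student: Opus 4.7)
The plan is to define the functor $G \mapsto (T(G)(\OO_C), \Lie G \otimes_{\OO_C} C)$, where $\Lie G \otimes C$ is embedded as a $C$-subspace of $T(G) \otimes_{\Z_p} C(-1)$ via the $(-1)$-twist of the Hodge-Tate exact sequence of Proposition \ref{PDivGroupGivesModVectBund}(iii). Full faithfulness is due to Fargues \cite{FarguesPDivGroups}; alternatively, it can be deduced from Theorem \ref{fullyfaithful} applied to $\OO_C/p$, since a morphism of pairs $(T,W) \to (T',W')$ determines the induced map on $\tilde{G}(\OO_C) = M(G)^{\varphi=p}$ (its restriction to $T[1/p] \subset \tilde{G}(\OO_C)$ is specified, and it must respect the Hodge filtration $W$), hence a morphism of Dieudonn\'e modules over $A_\cris$, which integrates thanks to the integral structure on $T$. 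The bulk of the work is essential surjectivity.

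I will first prove essential surjectivity under the additional hypothesis that $C$ is spherically complete and the norm $|\cdot|\from C\to \R_{\geq 0}$ is surjective, producing $G$ explicitly from $(T,W)$. Let $h$ be the rank of $T$ and $d = \dim_C W$. Using Fargues-Fontaine's classification of vector bundles on $X$ together with Theorem \ref{FactsFarguesFontaine}, the pair $(T,W)$ determines a $p$-divisible group $H$ over $\bar{\FF}_p$ of height $h$ and dimension $d$, together with a modification $\mathscr{F} = T \otimes_{\Z_p} \OO_X \hookrightarrow \mathscr{E} = \mathscr{E}(H)$ at $\infty$ whose cokernel encodes the quotient $(T \otimes C(-1))/W$. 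By Corollary \ref{tG-G0}, $\tilde{H}_{\OO_C}$ is an open perfectoid polydisc over $\OO_C$, and the modification structure embeds $T$ as a $\Z_p$-lattice inside $\tilde{H}^\ad_\eta(C,\OO_C)$. The sought-after $p$-divisible group is then the quotient $\tilde{H}/T$, whose generic fibre should be $\tilde{H}^\ad_\eta/T$.

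The main technical step is to verify that this quotient is representable by a formal Lie group, equivalently that its rigid-analytic generic fibre is an open ball of dimension $d$. In general one can exhibit it as an increasing union of closed balls (obtained by successively dividing by larger finite-index sublattices of $T$); the hypotheses on $C$ then guarantee that such a union must itself be an open polydisc. This produces a formal connected $p$-divisible group over $\OO_C$ with universal cover $\tilde{H}_{\OO_C}$ and Tate module $T$, which by the Hodge-Tate sequence realizes the pair $(T,W)$. For general $C$, one deduces essential surjectivity by descent via Rapoport-Zink spaces: the pair $(T,W)$ gives a $(C,\OO_C)$-valued point of the Grassmannian $\Flag$ (for the Rapoport-Zink datum attached to $H$) whose associated modification $\mathscr{F}$ is the trivial bundle $\OO_X^h$, so by Theorem C this point lies in the image of the period morphism $\pi$ and therefore comes from a genuine deformation $(G,\rho)$ of $H$ over $\OO_C$, which realizes $(T,W)$.

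The hard part will be the open-ball step in the spherically complete case: showing that the naively constructed rigid-analytic quotient $\tilde{H}^\ad_\eta/T$, a priori only an increasing union of closed balls, is in fact an honest open polydisc under the stated hypotheses on $C$. Everything else reduces either to an application of previously established results (Theorems A and C, and the Fargues-Fontaine input in Theorem \ref{FactsFarguesFontaine}) or to the descent argument for general $C$.
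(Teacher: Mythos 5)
Your overall two-step strategy --- treat the spherically complete case directly, then deduce the general case via Rapoport--Zink spaces and Theorem C --- is exactly the paper's. You have also correctly identified the crux: showing that a certain increasing union of closed balls is an open polydisc, using spherical completeness and surjectivity of the norm. The Rapoport--Zink descent step is described essentially as in the paper.

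However, the construction you propose for the spherically complete case is different from the paper's, and has a gap. You want to invoke Fargues--Fontaine and Theorem~\ref{FactsFarguesFontaine} to produce $H$ over $\bar{\FF}_p$, embed $T$ into $\tilde{H}^\ad_\eta(C,\OO_C)$ via the modification, and form $\tilde{H}^\ad_\eta/T$. The paper instead sets $G^\prime = T(-1)\otimes_{\Z_p}\mu_{p^\infty}$ (a multiplicative $p$-divisible group) and constructs $G^\ad_\eta$ as the fibre product $G^{\prime\ad}_\eta\times_{\Lie G^\prime\otimes\Ga}(W\otimes\Ga)$, an explicit locally closed subfunctor of a known open polydisc $G^{\prime\ad}_\eta$. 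This concrete presentation is what feeds into Fargues's results (\cite{FarguesPDivGroups}, Prop.~14, Lemma~13): one intersects $(G^\ad_\eta)^0$ with growing closed balls in $G^{\prime\ad}_\eta$ and uses a Hopf-algebra/special-fibre argument (the reduction of each affinoid subgroup is a connected unipotent group scheme over the residue field, hence a polynomial algebra) to identify each piece with a closed ball. Your proposed exhaustion ``obtained by successively dividing by larger finite-index sublattices of $T$'' does not produce this structure: dividing $\tilde{H}^\ad_\eta$ by a sublattice $T'\subset T$ gives a finite \emph{cover} of $\tilde{H}^\ad_\eta/T$, not an open affinoid subspace, and dividing by larger lattices $T''\supset T$ gives quotients, not subspaces. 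In short, the increasing union of closed balls does not come from lattice considerations at all; it comes from intersecting with metric balls in the ambient polydisc $G^{\prime\ad}_\eta$. This is the step that needs a different idea.

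A second, more minor, difference: for full faithfulness the paper does not route through Theorem~A/Dieudonn\'e theory but instead reconstructs $G^\ad_\eta$ directly from $(T,W)$ via a cartesian square comparing $G^\ad_\eta$ with $G^{\prime\ad}_\eta$ inside $\Lie G^\prime\otimes\Ga$, and then recovers $G$ by taking $\Spf H^0(Y,\OO_Y^+)$ over connected components $Y$. Your alternative via Theorem~A is in principle viable, but you would still need to explain why a map of $\varphi=p$ eigenspaces compatible with the Hodge filtrations extends to a genuine map of Dieudonn\'e modules over $B_\cris^+$ (this amounts to a statement about morphisms of modifications of vector bundles extending to morphisms of the ambient bundles). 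The paper's reconstruction argument is more elementary and avoids this.
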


The functor is defined in the following way. To $G$, one associates the Tate module $T=T(G)$, together with $W=\Lie G\otimes C$, embedded into $T\otimes C$ via
\[
\alpha_{G^\vee}^\vee: \Lie G\otimes C\to T\otimes C(-1)\ .
\]

\begin{proof} We start by proving that the functor is fully faithful. This was already observed by Fargues, \cite{FarguesPDivGroups}. Let
\[
G^\prime = T(G)(-1)\otimes_{\Z_p} \mu_{p^\infty}
\]
be a multiplicative $p$-divisible group equipped with a map $G\to G^\prime$ inducing an isomorphism $T(G)\cong T(G^\prime)$. We also get a map $\Lie G\to \Lie G^\prime = T(G)(-1)\otimes \OO_C$. It is easy to check that this map is identified with $\alpha_{G^\vee}^\vee$ (after inverting $p$).

The resulting adic spaces fit into a commutative diagram
\[\xymatrix{
0\ar[r]&G^\ad_\eta[p^\infty]\ar[d]^{\cong}\ar[r]&G^\ad_\eta\ar[d]\ar[r]&\Lie G\otimes \Ga\ar[d]\\
0\ar[r]&G^{\prime\ad}_\eta[p^\infty]\ar[r]&G^{\prime\ad}_\eta\ar[r]&\Lie G^\prime\otimes \Ga
}\]
We claim that the right square is cartesian. Indeed, if $x\in G^{\prime\ad}_\eta(R,R^+)$ maps to $\Lie G\otimes \Ga$, then for $n$ sufficiently large, $p^nx\in G^\ad_\eta(R,R^+)$, as on a small neighborhood of $0$, $\log_{G^\prime}$ is an isomorphism. One gets $x\in G^\ad_\eta(R,R^+)$, as
\[\xymatrix{
G^\ad_\eta\ar[r]^{p^n}\ar[d] & G^\ad_\eta\ar[d]\\
G^{\prime\ad}_\eta\ar[r]^{p^n} & G^{\prime\ad}_\eta
}\]
is cartesian. It follows that we can reconstruct $G^\ad_\eta$ from $(T,W) = (T(G),\Lie G\otimes C)$. But then we can also reconstruct
\[
G = \bigsqcup_{Y\subset G^\ad_\eta} \Spf H^0(Y,\OO_Y^+)\ ,
\]
where $Y$ runs through the connected components of $G^\ad_\eta$.

It remains to prove that the functor is essentially surjective. For this, let us first assume that $C$ is spherically complete, and that the norm map $|\cdot|: C\to \R_{\geq 0}$ is surjective.

So, assume given $(T,W)$. We define $G^\prime$ as before, and then $G^\ad_\eta\subset G^{\prime\ad}_\eta$ as the fibre product. We have to see that
\[
\bigsqcup_{Y\subset G^\ad_\eta} \Spf H^0(Y,\OO_Y^+)
\]
defines a $p$-divisible group. One checks directly that (the rigid-analytic space corresponding to) $G^\ad_\eta$ is a $p$-divisible rigid-analytic group in the sense of \cite{FarguesPDivGroups}:

\begin{defn} A $p$-divisible rigid-analytic group over $C$ is a commutative smooth rigid-analytic group $G$ over $C$ such that $p\from G\to G$ is finite locally free and faithfully flat, and $p\from G\to G$ is topologically nilpotent.
\end{defn}

A basic fact is that these decompose into connected components.

\begin{lemma}{\cite[Corollaire 9]{FarguesPDivGroups}} Let $G$ be a $p$-divisible rigid-analytic group over $C$. Then $G=G^0\times \pi_0(G)$, where $G^0$ is a connected $p$-divisible rigid-analytic group over $C$, and $\pi_0(G)\cong (\Q_p/\Z_p)^r$ for some integer $r$.
\end{lemma}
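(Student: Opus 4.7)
The plan is to analyze the exact sequence $0\to G^0\to G\to \pi_0(G)\to 0$, show that $\pi_0(G)$ has the shape $(\Q_p/\Z_p)^r$, and then construct a splitting. First, I would define $G^0$ as the connected component of the identity in the rigid-analytic space $G$; it is an open subgroup because multiplication and inversion are continuous. The quotient sheaf $\pi_0(G)$ is then an étale abelian group over $C$. Because $p\from G\to G$ is topologically nilpotent, any point of $G$ lands in $G^0$ after sufficiently many applications of $p$, so $\pi_0(G)$ is $p$-primary torsion.

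Next, I would show $\pi_0(G)$ is of the form $(\Q_p/\Z_p)^r$. Since $p^n\from G\to G$ is finite locally free, $G[p^n]$ has only finitely many connected components, so $\pi_0(G)[p^n]$ is a finite group; as $C$ is algebraically closed it is a constant étale group scheme. Applying the snake lemma to multiplication by $p$ on $0\to G^0\to G\to \pi_0(G)\to 0$, and using that $p\from G^0\to G^0$ is surjective (its cokernel would be a discrete quotient of the connected group $G^0$), one deduces that $p\from \pi_0(G)\to\pi_0(G)$ is surjective. A discrete, $p$-primary torsion, $p$-divisible abelian group with finite $p$-torsion is isomorphic to $(\Q_p/\Z_p)^r$ for some $r$.

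Finally, I would construct a section $s\from (\Q_p/\Z_p)^r \to G$ of the projection $G\to \pi_0(G)$. Fix generators $(e_i^{(n)})_{n\geq 1,\, 1\leq i\leq r}$ of the Tate module of $\pi_0(G)$. Since $p\from G(C)\to G(C)$ is surjective and $G[p^n](C)\to \pi_0(G)[p^n]$ is surjective (again by the snake lemma, using surjectivity of $p$ on $G^0(C)$ which follows from $C$ being algebraically closed), I can inductively choose lifts $\tilde{e}_i^{(n)}\in G[p^n](C)$ of $e_i^{(n)}$ so that $p\,\tilde{e}_i^{(n+1)} = \tilde{e}_i^{(n)}$. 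The ambiguity at each step lies in $G^0[p](C)$, and one absorbs it by adjusting the choices so the compatibility along the tower is preserved; this yields homomorphisms $(p^{-n}\Z_p/\Z_p)^r\to G[p^n]$ assembling into the desired section $s$. Since $s$ splits the sequence, $G\cong G^0\times \pi_0(G)$.

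The main obstacle is the splitting step: producing a \emph{group-theoretic} (not merely set-theoretic) section requires the inductive lifting to be compatible with the group laws, which is what forces the detour through choosing a compatible system of generators of the Tate module and making consistent adjustments modulo $G^0[p]$. Once the compatibility is secured, the étaleness of $\pi_0(G)$ allows one to promote the map on $C$-points to a morphism of rigid-analytic groups.
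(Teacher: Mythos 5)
The paper itself does not prove this lemma; it is cited directly from \cite[Corollaire 9]{FarguesPDivGroups}, so there is no in-paper argument to compare against. Your proposed argument is essentially the standard one and is correct in substance. A few small points deserve attention. First, your step ``$\pi_0(G)[p^n]$ is finite because $G[p^n]$ has finitely many components'' implicitly uses surjectivity of $G[p^n](C)\to\pi_0(G)[p^n]$, which in turn rests on surjectivity of $p$ on $G^0(C)$; you do establish the latter, but it should logically precede the finiteness claim rather than follow it. The cleanest justification for surjectivity of $p$ on $G^0$ is that $p\from G\to G$ is finite locally free, hence open and closed, so $p(G^0)$ is a nonempty open and closed connected subset of $G^0$ and therefore all of $G^0$; then surjectivity on $C$-points follows since $C$ is algebraically closed. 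Second, in the lifting step, the relevant point is not that an ``ambiguity in $G^0[p](C)$'' must be absorbed, but rather that the fibre of $p\from G[p^{n+1}](C)\to G[p^n](C)$ over $\tilde{e}_i^{(n)}$ is a nonempty torsor under $G[p](C)$, and the subset hitting the prescribed image $e_i^{(n+1)}$ in $\pi_0(G)[p^{n+1}]$ is nonempty by the surjectivity just noted; any choice in it works, since the resulting map $\pi_0(G)\to G(C)$, defined by $\sum a_i e_i^{(n)}\mapsto\sum a_i\tilde{e}_i^{(n)}$, is automatically well-defined and a group homomorphism, and then promotes to a morphism of rigid-analytic groups because both sides, component by component, are just $C$-points. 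With these clarifications your argument is complete: $p$-divisibility of $G^0$ itself is then read off from the resulting product decomposition $G\cong G^0\times\pi_0(G)$, since $p$ on $G$ is finite locally free and topologically nilpotent iff the same holds on each factor.
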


By Th\'eor\`eme 15 of \cite{FarguesPDivGroups}, it is enough to show that the connected component $(G^\ad_\eta)^0$ of the identity is isomorphic to a $d$-dimensional open unit ball $\mathring{\B}^d$: That theorem verifies that if $(G^\ad_\eta)^0\cong \mathring{\B}^d$, then the formal group induced on
\[
\Spf H^0((G^\ad_\eta)^0,\OO_{(G^\ad_\eta)^0}^+)\cong \Spf \OO_C\powerseries{T_1,\ldots,T_d}
\]
is $p$-divisible, i.e. the kernel of multiplication by $p$ is finite and locally free. Moreover, we know the following.

\begin{prop}{\cite[Proposition 14, Lemma 13]{FarguesPDivGroups}} One can write $(G^\ad_\eta)^0$ as an increasing union of connected affinoid subgroups $U_n\subset (G^\ad_\eta)^0$, such that $U_n\cong \B^d$ for all $n\geq 0$.
\end{prop}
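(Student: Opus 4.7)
The plan is to exhaust $(G^\ad_\eta)^0$ by iterated $[p]$-preimages of a small exponential neighborhood. Recall that by construction $G^\ad_\eta = \log_{G'}^{-1}(W\otimes\Ga)$ sits inside the open polydisc $G^{\prime\ad}_\eta\cong\mathring{\B}^h$, and the restricted logarithm $\log\from G^\ad_\eta\to W\otimes\Ga\cong\A^d$ is a surjective homomorphism of rigid-analytic groups whose kernel (at $(C,\OO_C)$-points) is the $p$-power torsion $T\otimes\Q_p/\Z_p$.

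First, I would choose a closed polydisc $V_0\subset W\otimes\Ga$, centered at $0$, of radius small enough that $\exp_{G'}$ converges on $V_0$ and is an isomorphism onto its image. The image $U_0 = \exp_{G'}(V_0)\subset(G^\ad_\eta)^0$ is then a closed polydisc subgroup isomorphic to $\B^d$, stable under $[p]$ because $pV_0\subset V_0$, and meeting $G^\ad_\eta[p^\infty]$ only in the identity. Next, I would define $U_{n+1}$ inductively as the connected component of the identity inside $[p]^{-1}(U_n)\subset G^\ad_\eta$. The inclusion $U_n\subset U_{n+1}$ holds because $[p](U_n)\subset U_n$ and $U_n$ is connected.

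To see that each $U_n$ is isomorphic to $\B^d$, note that $[p]\from G^\ad_\eta\to G^\ad_\eta$ is finite \'etale, since its kernel $G^{\prime\ad}_\eta[p]\cong\mu_p^h$ is \'etale over $C$ in characteristic zero. Inductively assuming $U_n\cong\B^d$, one uses the classical fact that closed polydiscs over an algebraically closed nonarchimedean field of characteristic zero have trivial \'etale fundamental group to conclude that the finite \'etale cover $[p]^{-1}(U_n)\to U_n$ splits as a disjoint union of copies of $U_n$. The sheet through the identity is $U_{n+1}$, whence $U_{n+1}\cong U_n\cong\B^d$.

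For exhaustion, any $x\in(G^\ad_\eta)^0$ satisfies $\log([p^n]x) = p^n\log(x)\in V_0$ for $n$ sufficiently large, so $[p^n]x$ agrees with $\exp_{G'}(p^n\log(x))\in U_0$ up to a torsion element; tracking which sheet of $[p^n]^{-1}(U_0)$ contains $x$ (necessarily the one in the identity component) yields $x\in U_n$. The main obstacle is precisely the simple-connectedness input for closed polydiscs in characteristic zero: without it, the inductive step producing $U_{n+1}\cong\B^d$ from $U_n\cong\B^d$ is not formal. A secondary point requiring care is the distinction between the ambient $G^\ad_\eta$ (which may have many components because the \'etale kernel of $\log$ spreads out into sheets) and its identity component $(G^\ad_\eta)^0$, with which the chain $U_0\subset U_1\subset\cdots$ is contained by construction.
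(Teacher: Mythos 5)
Your inductive step hinges on a claim that is false: closed polydiscs over an algebraically closed complete nonarchimedean extension of $\Q_p$ are \emph{not} simply connected. For a concrete counterexample over $C$, pick $c\in C$ with $1<|c|<p$ and set $B=C\tatealgebra{X}[Y]/(Y^p-(X-c))$. Since $|c|>1$, the function $X-c$ is invertible on $\B^1$, so the discriminant $\pm p^p(X-c)^{p-1}$ of the defining polynomial is a unit and $\Sp B\to\B^1$ is finite \'etale of degree $p$. This cover is nontrivial: $B$ is a domain, because any proper factorization of $Y^p-(X-c)$ over the normal ring $C\tatealgebra{X}$ would force $(X-c)^{1/p}\in C\tatealgebra{X}$, while the binomial series $(-c)^{1/p}\sum_n\binom{1/p}{n}(-X/c)^n$ fails to converge on $\lvert X\rvert\leq 1$ since $v_p\bigl(\tbinom{1/p}{n}\bigr)\sim -np/(p-1)\to-\infty$. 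Thus $\pi_1^{\et}(\B^1)\neq 1$; in residue characteristic $p$ the closed disc has a large wild fundamental group. Without simple connectedness, nothing forces the identity component of $[p]^{-1}(U_n)$ to be a polydisc again, and your induction does not close. (The exhaustion step also quietly needs the observation that $\bigcup_n U_n$ is an open, hence closed, subgroup of the connected space $(G^\ad_\eta)^0$; asserting that a given $x$ lies in the identity sheet of $[p^n]^{-1}(U_0)$ is not automatic.)

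The paper's argument is structurally different and sidesteps the issue by descending to a formal model. It takes $U_n$ to be (the identity component of) the intersection of $(G^\ad_\eta)^0$ with a closed ball in $G^{\prime\ad}_\eta\cong\mathring{\B}^h$, and writes $U_n=\Spa(A,A^+)$ with $A^+$ topologically of finite type over $\OO_C$ and carrying a Hopf algebra structure. The special fibre $\overline A=A^+\otimes_{\OO_C}k$ is then a connected \emph{reduced} commutative affine group scheme over the residue field $k$ on which multiplication by $p$ is nilpotent. Reducedness over perfect $k$ gives smoothness, the structure theory of connected commutative affine $k$-groups exhibits it as an extension of a torus by a unipotent group, and nilpotence of $[p]$ rules out the torus; hence $\overline A\cong k[T_1,\dots,T_d]$, whence $A^+\cong\OO_C\tatealgebra{T_1,\dots,T_d}$ and $U_n\cong\B^d$. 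This mod-$\gothm$ classification is exactly the substance you were trying to replace with $\pi_1^{\et}(\B^d)=1$; some input of this kind cannot be avoided.
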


\begin{proof} The subgroups $U_n$ can be defined as the (connected component of) the intersection of $(G^\ad_\eta)^0$ with increasing closed balls in $G^{\prime\ad}_\eta\cong \mathring{\B}^h$.

For the convenience of the reader, we recall the crucial argument in showing that $U_n\cong \B^d$. Write $U_n = \Spa(A,A^+)$. Using \cite{BoschGuentzerRemmert}, we know that $A^+$ is topologically of finite type over $\OO_C$, and it has the structure of a topological Hopf algebra. Let $\overline{A} = A^+\otimes_{\OO_C} k$, where $k$ is the residue field of $\OO_C$. Then $\Spec \overline{A}$ is a connected commutative affine reduced group scheme over $k$ such that multiplication by $p$ is nilpotent. As it is reduced and a group scheme, it is smooth; it follows that it is an extension of a torus by a unipotent group. As multiplication by $p$ is nilpotent, the torus part has to be trivial. It follows that $\overline{A}\cong k[T_1,\ldots,T_d]$, and hence $A^+\cong \OO_C\tatealgebra{T_1,\ldots,T_d}$, and thus $U_n\cong \B^d$.
\end{proof}

Now we finish the proof by using the following proposition (which is the only place where we use our assumptions on $C$).

\begin{prop} Assume that $K$ is a spherically complete nonarchimedean field for which the norm map $|\cdot|: K\to \R_{\geq 0}$ is surjective. Let $X$ be an adic space over $\Spa(K,\OO_K)$ with a point $0\in X$ which can be written as an increasing union $X=\bigcup X_n$ of quasicompact open subsets $0\in X_0\subset X_1\subset \ldots$. Moreover, we assume the following conditions.
\begin{altenumerate}
\item[{\rm (i)}] For all $n\geq 0$, the inclusion $X_n\subset X_{n+1}$ is a strict open embedding.
\item[{\rm (ii)}] For each $n\geq 0$, there is an isomorphism $X_n\cong \mathbb{B}^d$, carrying $0\in X_n$ to $0\in \mathbb{B}^d$. Let
\[
\OO_C\tatealgebra{T_{n,1},\ldots,T_{n,d}}\cong A_n := H^0(X_n,\OO_{X_n}^+)\ .
\]
\item[{\rm (iii)}] Let $I_n\subset A_n$ be the kernel of evaluation at $0$, and let $M_n = I_n/I_n^2\cong \OO_K^d$. The transition maps $M_{n+1}\to M_n$ are injective, and induce isomorphisms $M_{n+1}\otimes K\cong M_n\otimes K$. We require that $M=\bigcap_n M_n$ satisfies $M\otimes K = M_n\otimes K$ for one (and hence every) $n$.
\end{altenumerate}

Then $X\cong \mathring{\B}^d$.
\end{prop}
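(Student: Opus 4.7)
The plan is to construct $d$ functions $f_1, \ldots, f_d \in H^0(X, \OO_X^+)$ which identify $X$ with an open polydisk (and then with $\mathring{\B}^d$ via rescaling). By condition (iii) together with surjectivity of $|\cdot|\colon K \to \R_{\geq 0}$, the $\OO_K$-module $M = \bigcap_n M_n$ is free of rank $d$ with $M\otimes K = M_n\otimes K$ for all $n$. Fix an $\OO_K$-basis $\bar f_1, \ldots, \bar f_d$ of $M$. The first and most delicate step is to produce a compatible system of lifts: elements $f_i^{(n)} \in I_n \subset A_n$ mapping to $\bar f_i \in M_n$, with $f_i^{(n+1)}|_{X_n} = f_i^{(n)}$ under the restriction $A_{n+1}\to A_n$. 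Starting from any lifts $\widetilde{f}_i^{(n)}$, the discrepancies $\widetilde{f}_i^{(n)} - \widetilde{f}_i^{(n+1)}|_{X_n}$ lie in $I_n^2$ and are resolved by an iterative correction; spherical completeness of $K$ is essential here to ensure that the correction series converges in the non-archimedean sense, yielding $f_i \in \varprojlim_n A_n = H^0(X, \OO_X^+)$.

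The second step identifies each $X_n$ as a closed polydisk in the coordinates $(f_i)$. Fix $n$ and use $A_n \cong \OO_K\tatealgebra{T_{n,1}, \ldots, T_{n,d}}$. The linear part of $f_i^{(n)}$ is $\sum_j \alpha_{n,ij} T_{n,j}$, where the matrix $\Lambda_n = (\alpha_{n,ij})$ expresses $\bar f_i$ in terms of $(T_{n,j})$; it lies in $\mathrm{M}_d(\OO_K) \cap \GL_d(K)$. A standard Weierstrass-type inversion (using that $(\bar f_i)$ is a $K$-basis of $M_n \otimes K$ and that the nonlinear parts of the lifts may be taken small) shows that $(f_1, \ldots, f_d)$ realize $X_n$ as the closed polydisk $\prod_i \B^1(r_{n,i})$, where $r_{n,i} = \sup_{X_n}|f_i| = \max_j |\alpha_{n,ij}|$.

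The third step is to verify that the radii $r_{n,i}$ converge to positive limits $\rho_i$ with $0 < \rho_i \leq 1$. From the inclusion $M_{n+1} \hookrightarrow M_n$ given by a matrix $C_n \in \mathrm{M}_d(\OO_K)$, one has $\Lambda_n = \Lambda_{n+1} C_n^T$, whence $r_{n,i} \leq r_{n+1,i} \leq 1$. Since $\bar f_i \neq 0$ in $M_0$, we have $r_{0,i} > 0$, so $\rho_i := \lim_n r_{n,i} \in (0,1]$. The closed polydisks $X_n$ then exhaust the open polydisk $\prod_i \mathring{\B}^1(\rho_i)$, and rescaling each $f_i$ by a scalar in $K^\times$ of norm $\rho_i$ (using surjectivity of $|\cdot|$) identifies this with $\mathring{\B}^d$.

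The main obstacle is Step 1: producing truly compatible lifts requires that the iterative correction process converges, and spherical completeness of $K$ enters essentially here via a fixed-point argument in the Banach $K$-algebra $A_n\otimes K$. Steps 2 and 3 are then formal consequences of standard rigid-analytic arguments together with the telescoping identity for $\Lambda_n$.
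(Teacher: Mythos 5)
Your proposal takes a genuinely different route from the paper, but it has two real gaps. The paper's proof avoids choosing a single coordinate system on all the $X_n$: it shows $\varprojlim_n A_n / I_n^k \cong \Sym^\bullet M / I^k$ with vanishing $R^1\varprojlim$ (this is where spherical completeness is used, via the key lattice lemma), then passes to the $I_n$-adic completions $\hat{A}_n$ to get $\varprojlim_n \hat{A}_n \cong \OO_K\powerseries{T_1,\ldots,T_d}$, and finally uses condition (i) --- strictness of the embeddings $X_n \subset X_{n+1}$ --- to show that $A_{n+1}\to A_n$ factors through $\hat{A}_{n+1}$, hence $\varprojlim_n A_n = \varprojlim_n \hat{A}_n$. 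Your Step 1 conflates these two ingredients: an iterative correction with terms in $I_n^2, I_n^3, \ldots$ converges a priori only $I_n$-adically, i.e.\ in $\hat{A}_n$, not in the Tate algebra $A_n$ (elements of $I_n^k$ can have Gauss norm $1$ for all $k$). Getting the limit into $A_n$ requires the strictness hypothesis, which your argument never invokes; spherical completeness plays a different role (vanishing of $R^1\varprojlim$ of the lattices $M_n$), and does not by itself produce convergence in $A_n$.

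Step 2 is also incorrect as stated. The image of the closed unit polydisk $\OO_K^d$ under a linear map $\Lambda_n \in \mathrm{M}_d(\OO_K)\cap\GL_d(K)$ is the lattice $\Lambda_n(\OO_K^d)$, which is a polydisk only when $\Lambda_n$ is diagonal up to units and permutations. For instance with $d=2$ and $\Lambda_n = \left(\begin{smallmatrix} p & 1 \\ 0 & p\end{smallmatrix}\right)$, the image is $\{(f_1,f_2)\;:\;|f_2|\le|p|,\;|f_1 - f_2/p|\le|p|\}$, which is not $\{|f_1|\le 1,\;|f_2|\le|p|\}$ (the point $(0,p)$ lies in the latter but not the former). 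You can diagonalize one $\Lambda_n$ by choosing the basis $\bar f_i$ via Smith normal form, but not all $\Lambda_n$ simultaneously, so there is no reason for the $X_n$ to be polydisks in a common coordinate system. This is precisely the difficulty the paper's approach sidesteps: rather than describing each $X_n$ in the new coordinates, it identifies the limiting ring $\varprojlim_n A_n$ with $\OO_K\powerseries{T_1,\ldots,T_d}$ abstractly, which is enough to recognize $X$ as the open ball.
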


\begin{rmk} Condition (iii) says intuitively that the radii of the balls stay bounded, and follows for $X=(G^\ad_\eta)^0$ directly from the comparison with $G^{\prime\ad}_\eta\cong \mathring{\mathbb{B}}^h$.
\end{rmk}

\begin{proof} The key ingredient is the following lemma.

\begin{lemma}\label{SphCompleteLemma} Let $V$ be a finite-dimensional $K$-vector space, and let $\Lambda_0\supset \Lambda_1\supset\ldots$ be a sequence of $\OO_K$-lattices in $V$. Assume that $\Lambda = \bigcap_n \Lambda_n$ satisfies $\Lambda\otimes K = V$. Then $\Lambda$ is an $\OO_K$-lattice, and $R^1\varprojlim \Lambda_n = 0$.
\end{lemma}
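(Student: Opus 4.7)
The plan is to proceed by induction on $d = \dim V$. The structural assumptions on $K$ (spherical completeness, surjective norm) will be used only in the base case $d = 1$; the inductive step is essentially a formal short-exact-sequence argument over the valuation ring $\OO_K$.

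For $d = 1$, I would identify $V$ with $K$, use surjectivity of the norm to write $\Lambda_n = a_n \OO_K$ for some $a_n \in K^\times$ with $|a_n|$ non-increasing in $n$, and observe that the hypothesis $\Lambda \otimes K = V$ forces $c := \lim |a_n|$ to be strictly positive. Surjectivity of the norm then supplies $a \in K$ with $|a| = c$, and $\Lambda = a \OO_K$ is a lattice. For $R^1 \varprojlim \Lambda_n = 0$, it suffices to prove surjectivity of $\Lambda_0 \to \varprojlim \Lambda_0/\Lambda_n$: given a compatible system, lifting to $\Lambda_0$ produces a decreasing sequence of closed balls in $K$, whose intersection is nonempty by spherical completeness, yielding the required lift.

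For the inductive step, I would pick $0 \neq v \in \Lambda$ (possible since $\Lambda \otimes K = V$), set $L = Kv$ and $V' = V/L$ with projection $\pi$, and work with the short exact sequences
\[
0 \to \Lambda_n \cap L \to \Lambda_n \to \pi(\Lambda_n) \to 0.
\]
Since $\pi(\Lambda_n)$ is finitely generated and torsion-free over the valuation ring $\OO_K$ (hence free) and of full rank $d - 1$ in $V'$, this sequence splits; the splitting forces $\Lambda_n \cap L$ to be free of rank $1$, so both trace and image are genuine $\OO_K$-lattices. The hypothesis $\Lambda \otimes K = V$ transfers to both $L$ (using $v \in \Lambda$) and $V'$ (via surjectivity of $\pi$), so the inductive hypothesis applies to both systems, giving that the two $R^1\varprojlim$'s vanish and that both intersections are lattices. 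The six-term exact sequence for $\varprojlim$ then yields $R^1 \varprojlim \Lambda_n = 0$ and exhibits $\Lambda$ as an extension of two lattices, hence itself a lattice.

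The main point to watch for is verifying that $\Lambda_n \cap L$ is a genuine lattice in $L$: over a non-discrete valuation ring, arbitrary submodules of finitely generated free modules need not be finitely generated, so one cannot take this for granted. The sketched argument dodges the issue by first establishing freeness of the quotient $\pi(\Lambda_n)$, and using the resulting splitting of the SES to realize $\Lambda_n \cap L$ as a direct summand of $\Lambda_n$. Everything else in the inductive step is formal homological algebra of inverse limits.
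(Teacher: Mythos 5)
Your proof is correct and follows the same two-step strategy as the paper: reduce to $V=K$ by peeling off a line, then treat the rank-one case using surjectivity of the norm (to identify the limiting ideal as a genuine lattice) and spherical completeness (for the vanishing of $R^1\varprojlim$ via nested closed balls). Where the paper only says ``filtering $V$ by subvectorspaces, one reduces to the case $V=K$,'' you flesh out that reduction correctly, in particular spotting and handling the real subtlety that over a non-noetherian valuation ring one must argue --- via freeness of $\pi(\Lambda_n)$ and the induced splitting --- that $\Lambda_n\cap L$ is a finitely generated, hence genuine, lattice.
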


\begin{proof} Filtering $V$ by subvectorspaces, one reduces to the case $V=K$. In that case, $\Lambda$ is a fractional ideal of $K$, and as the norm map is surjective, we can assume that either $\Lambda=\OO_K$ or $\Lambda=\gothm$. But if $\Lambda=\gothm$, then $\gothm\subset \Lambda_n$ for all $n$, hence $\OO_K\subset \Lambda_n$ as $\Lambda_n$ is a principal ideal. It follows that $\OO_K\subset \bigcap_n \Lambda_n$, which is a contradiction. Therefore $\Lambda=\OO_K$ is an $\OO_K$-lattice. It is easy to check that as $K$ is spherically complete, $R^1\varprojlim \Lambda_n = 0$.
\end{proof}

In the situation of the proposition, it follows that $M\cong \OO_K^d$. Similarly, one deduces that $\varprojlim I_n^k/I_n^{k+1}\cong \Sym^k M$ for all $k\geq 0$ with vanishing $R^1\varprojlim$. By induction, we get $\varprojlim A_n/I_n^k\cong \Sym^{\bullet} M / I^k$ for all $k$ with vanishing $R^1\varprojlim$, where $I\subset \Sym^{\bullet} M$ denotes the augmentation ideal. Therefore $\varprojlim_{n,k} A_n/I_n^k\cong \OO_C\powerseries{T_1,\ldots,T_d}$. Let $\hat{A}_n$ denote the $I_n$-adic completion of $A_n$. To conclude that
\[
H^0(X,\OO_X^+) = \OO_C\powerseries{T_1,\ldots,T_d}\ ,
\]
it remains to show that $\varprojlim_n \hat{A}_n = \varprojlim A_n$. But as $X_n\subset X_{n+1}$ is a strict closed embedding, the map $A_{n+1}\rightarrow A_n$ factors over $\hat{A}_{n+1}$, giving the result.
\end{proof}

This finishes the proof of Theorem \ref{ClassificationOverOC} for $C$ spherically complete, with surjective norm map. The general case will be deduced later.
\end{proof}

Let us add the following proposition.

\begin{prop}\begin{altenumerate}
\item[{\rm (i)}] Let $G$ be a $p$-divisible group over $\OO_C$ corresponding to $(T,W)$. Then the dual $p$-divisible group $G^\vee$ corresponds to $(T^\vee(1),W^\bot)$, where $T^\vee(1)$ denotes the Tate twist by one of the dual $\Z_p$-module, and $W^\bot\subset T^\vee\otimes C$ denotes the orthogonal complement of $W\subset T\otimes C(-1)$.
\item[{\rm (ii)}] Let
\[
0\to G_1\to G_2\to G_3\to 0
\]
be an exact sequences of $p$-divisible groups over $\OO_C$. Let $(T_i,W_i)$ for $i=1,2,3$, be the associated linear algebra data. Then
\[
0\to T_1\to T_2\to T_3\to 0
\]
and
\[
0\to W_1\to W_2\to W_3\to 0
\]
are exact.
\end{altenumerate}
\end{prop}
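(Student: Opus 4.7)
My plan is to deduce both parts from results already established in the excerpt: part (i) from the Hodge--Tate sequence (Proposition \ref{PDivGroupGivesModVectBund}(iii)) combined with the classical Weil pairing, and part (ii) from exactness properties of finite locally free group schemes and of formal Lie groups over $\OO_C$.

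For part (i), the identification $T(G^\vee) = T^\vee(1)$ is the Weil pairing for $p$-divisible groups, obtained by passing to the inverse limit of the Cartier duality pairings $G[p^n] \times G^\vee[p^n] \to \mu_{p^n}$. To identify the Hodge subspace, I take the Hodge--Tate sequence
\[
0 \to \Lie G \otimes C(1) \xrightarrow{\alpha_{G^\vee}^\vee(1)} T \otimes C \xrightarrow{\alpha_G} (\Lie G^\vee \otimes C)^\vee \to 0
\]
of Proposition \ref{PDivGroupGivesModVectBund}(iii) and form its $C$-linear dual. Using the elementary description of $\alpha_G$ given immediately after that proposition, its $C$-linear transpose is precisely the map $\alpha_G^\vee \colon \Lie G^\vee \otimes C \to T^\vee \otimes C$ obtained by Cartier dualizing the evaluation $\Q_p/\Z_p \otimes_{\Z_p} T \to G$ and passing to Lie algebras; by self-duality $G^{\vee\vee} = G$, this agrees with the embedding $\alpha_{G^{\vee\vee}}^\vee$ attached to $G^\vee$ by Theorem B. Hence the dual of the Hodge--Tate sequence reads
\[
0 \to \Lie G^\vee \otimes C \xrightarrow{\alpha_G^\vee} T^\vee \otimes C \to (\Lie G \otimes C(1))^\vee \to 0,
\]
which exhibits $\Lie G^\vee \otimes C$ as the annihilator of the image of $\alpha_{G^\vee}^\vee(1)$. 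After identifying $T^\vee \otimes C = T(G^\vee) \otimes C(-1)$ and untwisting by $(-1)$, that image becomes $W$, yielding $W^{G^\vee} = W^\bot$.

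For part (ii), taking $p^n$-torsion produces a short exact sequence of finite locally free group schemes $0 \to G_1[p^n] \to G_2[p^n] \to G_3[p^n] \to 0$ over $\OO_C$. For any such group scheme $H$ of rank $p^{nh}$, integrality of $\OO_C$ gives $H(\OO_C) = H(C)$, and since $C$ is algebraically closed this set has cardinality $p^{nh}$; the sequence of $\OO_C$-points is then forced to be short exact by cardinality. Passing to the inverse limit (all transition maps are surjective, so $R^1\varprojlim$ vanishes) produces the desired short exact sequence of Tate modules. Exactness of $0 \to W_1 \to W_2 \to W_3 \to 0$ reduces to exactness of Lie algebras: formal completion along the identity section carries the given sequence to a short exact sequence of formal Lie groups, hence to a short exact sequence of finite free $\OO_C$-modules of Lie algebras, which remains exact after tensoring with the flat $\OO_C$-algebra $C$.

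The main obstacle is the compatibility check in part (i): one must verify that the transpose of $\alpha_G$ and the Theorem B embedding for $G^\vee$ really are the same map $\Lie G^\vee \otimes C \to T^\vee \otimes C$. Once the elementary Cartier-duality description of $\alpha_G$ from the excerpt is unwound, this is a straightforward bookkeeping of Tate twists and self-duality, but it is the one step where the proof is not purely formal.
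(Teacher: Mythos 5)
Your proof is correct and follows the same route as the paper, which is extremely terse here (the paper simply says "For (i), use the Hodge--Tate sequence. Part (ii) is clear."). Two small remarks.

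For part (i), the "compatibility check" you flag as the main obstacle is in fact tautological and requires no work: by definition of the functor in Theorem B, the embedding attached to $G^\vee$ is $\alpha_{(G^\vee)^\vee}^\vee$, and bidualization $G^{\vee\vee}=G$ identifies this with $\alpha_G^\vee$, which is precisely the $C$-linear transpose of the third map in the Hodge--Tate sequence for $G$. So once you dualize the sequence of Proposition \ref{PDivGroupGivesModVectBund}(iii), the identification is immediate and there is nothing left to verify.

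For part (ii), the cardinality and Mittag--Leffler argument for the Tate modules is fine, and the reduction to formal Lie groups for the Lie algebras works, but the step "formal completion carries the given sequence to a short exact sequence of formal Lie groups" deserves a word: this holds because $\OO_C$ is a henselian local ring with perfect (indeed algebraically closed) residue field, so the connected--\'etale sequence of a $p$-divisible group over $\OO_C$ splits canonically and $G\mapsto G^0=\hat{G}$ is a direct-summand functor, hence exact. From there the sequence of $\OO_C$-modules $0\to\Lie G_1\to\Lie G_2\to\Lie G_3\to 0$ is exact (for example because each $\omega_{G_i}$ is finite free and the dual sequence of invariant differentials is exact), and tensoring with the flat $\OO_C$-algebra $C$ gives the claim about the $W_i$.
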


\begin{rmk} The converse to (ii) is false. Indeed, let $G_1$ be $\Q_p/\Z_p$, let $G_2$ be connected of dimension $1$ and height $2$, and let $G_3$ be $\mu_{p^\infty}$. Then there are maps $G_1\to G_2\to G_3$ whose composition is $0$, and such that on Tate modules $0\to T_1\to T_2\to T_3\to 0$ and Lie algebras $0\to W_1\to W_2\to W_3\to 0$, one gets exact sequences. However, the sequence $0\to G_1\to G_2\to G_3\to 0$ is not exact, as one checks easily on the special fibre.
\end{rmk}

\begin{proof} For (i), use the Hodge--Tate sequence. Part (ii) is clear.
\end{proof}

\section{Rapoport--Zink spaces}

\subsection{The space of level $0$, and the period morphism}

Let $H$ be a $p$-divisible group over a perfect field $k$ of characteristic $p$, of height $h$ and dimension $d$.

\begin{defn} \label{deformation} Let $R\in \Nilp_{W(k)}$. A {\em deformation} of $H$ to $R$ is a pair $(G,\rho)$, where $G$ is a $p$-divisible group over $R$ and
\[
\rho\from H\otimes_k R/p \to G\otimes_R R/p
\]
is a quasi-isogeny. Let $\Def_H$ be the associated functor on $\Nilp_{W(k)}$, taking $R$ to the set of isomorphism classes of deformations $(G,\rho)$ of $H$ to $R$.
\end{defn}

Recall the following theorem of Rapoport--Zink, \cite{RZ}.

\begin{Theorem} The functor $\Def_H$ is representable by a formal scheme $\mathcal{M}$ over $\Spf W(k)$, which locally admits a finitely generated ideal of definition. Moreover, every irreducible component of the reduced subscheme is proper.
\end{Theorem}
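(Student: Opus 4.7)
The plan is to decompose $\Def_H$ according to the integer height of the quasi-isogeny, and then represent each piece as a formally smooth formal scheme over its reduced subscheme, which in turn is representable via Dieudonn\'e theory as a closed subscheme of an affine Grassmannian.

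First I would observe that the height of a quasi-isogeny $\rho\from H\otimes_k R/p\to G\otimes_R R/p$ is locally constant on $\Spec R$, so $\Def_H = \bigsqcup_{i\in \Z} \Def_H^{(i)}$, where $\Def_H^{(i)}$ collects deformations with quasi-isogeny of height $i$. On a connected component, after multiplying $\rho$ by a suitable power of $p$, one may assume $\rho$ (or its inverse) is a genuine isogeny bounded by some $p^N$, so the points of $\Def_H^{(i)}$ can be cut out uniformly by an isogeny condition.

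Next I would apply Grothendieck--Messing theory to verify formal smoothness with finite-dimensional tangent spaces. If $R\twoheadrightarrow R_0$ is a surjection in $\Nilp_{W(k)}$ with nilpotent PD kernel, then deformations of $(G_0,\rho_0)$ from $R_0$ to $R$ are classified by lifts of the Hodge filtration $\Fil^1 \subset \MM(G_0)(R_0)$ to a direct summand of $\MM(G_0)(R)$ (which exists since the crystal is crystalline). Because these lifts form a torsor under $\Hom(\Fil^1, \MM/\Fil^1)$, we obtain that $\Def_H^{(i)}$ is formally smooth, and its tangent space at any point is free of rank $d(h-d)$ over the base. In particular, the completed local ring of $\Def_H^{(i)}$ at any point $x\in \Def_H^{(i)}(k')$ (for $k'/k$ perfect) is isomorphic to $W(k')\powerseries{X_1,\dots,X_{d(h-d)}}$.

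The main obstacle and key step is to represent the reduced scheme $(\mathcal{M}^{(i)})_{\red}$ as a (locally) projective scheme of finite type over $k$. Here I would use classical Dieudonn\'e theory over the perfect base $k$: for any perfect $k$-algebra $A$, the groupoid of pairs $(G,\rho)$ on $A$ with bounded quasi-isogeny of fixed height corresponds (via Theorem A for perfect rings, or its classical variant) to Dieudonn\'e lattices $M'\subset N = \MM(H)\otimes W(k)[p^{-1}]\otimes_{W(k)} W(A)[p^{-1}]$ satisfying $pM'\subset FM'\subset M'$ and bounded in terms of $M\otimes W(A)$ by the isogeny-height bound. Such bounded lattices are parametrized by a closed subscheme of the (Witt-vector) affine Grassmannian for $\GL_h$, which is an ind-projective ind-scheme over $k$; the height bound cuts out a closed projective subscheme of finite type. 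This gives representability (and properness of the irreducible components) of $(\mathcal{M}^{(i)})_{\red}$.

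Finally, I would globalize by combining the formal smoothness in paragraph two with the representability of the reduced subscheme in paragraph three: the standard gluing of Grothendieck--Messing deformations along the closed subscheme $(\mathcal{M}^{(i)})_{\red}$ produces a formal scheme $\mathcal{M}^{(i)}$ locally of the form $\Spf A\powerseries{X_1,\dots,X_{d(h-d)}}$ for $\Spec A\subset (\mathcal{M}^{(i)})_{\red}$ affine open, and these patches glue by the uniqueness of lifts along the formally smooth map $\mathcal{M}^{(i)}\to (\mathcal{M}^{(i)})_{\red}$. The ideal of definition $(p, X_1,\dots,X_{d(h-d)})$ is finitely generated, and the properness assertion has already been verified at the level of $(\mathcal{M}^{(i)})_{\red}$.
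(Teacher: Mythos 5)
The paper does not give a proof of this theorem; it is quoted verbatim from Rapoport--Zink and cited as \cite{RZ}. So the relevant comparison is with their argument, which proceeds by (a) showing that quasi-isogenies of bounded ``depth'' $N$ (meaning $p^N\rho$ and $p^N\rho^{-1}$ are both isogenies) form a closed subscheme of a product of ordinary Grassmannians over $k$, then passing to the increasing union over $N$; (b) establishing formal smoothness and effectivity via Grothendieck--Messing; and (c) invoking a general representability criterion for functors on $\Nilp_W$. Your outline captures (b) accurately, including the tangent space computation. However, there are three genuine problems.

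First, the claim that ``the height bound cuts out a closed projective subscheme of finite type'' is false, and this is the most serious gap. The height of a quasi-isogeny does \emph{not} bound the corresponding Dieudonn\'e lattice inside $M(H)[p^{-1}]\otimes W(A)$. Already for $H$ the $p$-divisible group of Drinfeld type (isoclinic of slope $1/n$), or even for $H = \Q_p/\Z_p\times\mu_{p^\infty}$, the fixed-height piece of the reduced subscheme is an infinite disjoint (or glued) union of projective varieties and is not quasi-compact. What is true is that for fixed height $i$ \emph{and} fixed depth $N$, the lattices $M'$ with $p^N M\subset M'\subset p^{-N}M$ subject to the $F,V$-stability and rank conditions form a projective $k$-scheme of finite type, and every irreducible component of the reduced subscheme sits inside such a bounded locus. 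That is what makes the components proper, while the height-$i$ piece as a whole need not be. You need to introduce this second index $N$ and take the increasing union.

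Second, the Dieudonn\'e-lattice description only classifies $p$-divisible groups over \emph{perfect} $\F_p$-algebras. Over a reduced but non-perfect $k$-algebra $A$ there is no equivalence with $W(A)[p^{-1}]$-lattices, so your Witt-vector affine Grassmannian argument produces only the \emph{perfection} of $(\mathcal{M}^{(i)})_{\red}$, not the scheme itself. Rapoport--Zink avoid this entirely by representing the functor directly as a locally closed subscheme of a product of usual (finite-type) Grassmannians parametrizing finite flat subgroup schemes of $H[p^{2N}]\otimes A$, rather than via Dieudonn\'e lattices; if you insist on the affine-Grassmannian route you need a separate deperfection step.

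Third, the asserted local model $\Spf A\powerseries{X_1,\dots,X_{d(h-d)}}$ with $A$ an affine piece of the reduced $k$-scheme cannot be right: that ring is a $k$-algebra and has no flat lift to $W(k)$, whereas $\mathcal{M}$ is a flat formal $W(k)$-scheme. Already in the Lubin--Tate case one has $\mathcal{M}^{(0)}\cong\Spf W(k)\powerseries{X_1,\dots,X_{n-1}}$, whereas your formula would give $\Spf k\powerseries{X_1,\dots,X_{n-1}}$. More generally, ``formally smooth with finite-dimensional tangent space + representable reduced subscheme $\Rightarrow$ representable by a formal scheme'' is exactly the nontrivial content of Rapoport--Zink's representability criterion (their Theorem 2.16), and it requires an argument (pro-representability \`a la Schlessinger, effectivity of formal deformations, then gluing), not the assertion of ``standard gluing.'' As stated, your final step assumes what needs to be proved.
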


Let $M(H)$ denote the Dieudonn\'{e} module of $H$, a free $W(k)$-module of rank $h$. Assume now that $R$ is a $p$-adically complete $W(k)$-algebra equipped with the $p$-adic topology. Then via Grothendieck--Messing theory, $(G,\rho)$ gives rise to a surjection of locally free $R[\frac 1p]$-modules $M(H)\otimes R[\frac 1p]\to \Lie G[\frac 1p]$, which depends on $(G,\rho)$ only up to isogeny. We get the induced period map
\[
\pi\from \mathcal{M}^\ad_\eta\to \Flag\ ,
\]
which we call the crystalline period map. Here, $\Flag$ is the flag variety parametrizing $d$-dimensional quotients of the $h$-dimensional $W(k)[\frac 1p]$-vector space $M(H)[\frac 1p]$; we consider $\Flag$ as an adic space over $\Spa(W(k)[\frac 1p],W(k))$.

Moreover, we will consider the following variant of $\Def_H$. Consider the functor $\Def_H^\isog$ that associates to a $p$-adically complete flat $W(k)$-algebra $R$ equipped with the $p$-adic topology the set of deformations $(G,\rho)$ of $H$ to $R$, modulo quasi-isogeny over $R$. Observe that giving a quasi-isogeny over $R$ is strictly stronger than giving a compatible system of quasi-isogenies over the quotients $R/p^n$, so that $\Def_H^\isog$ cannot be defined on $\Nilp_{W(k)}$. Using the usual procedure, one gets a sheaf $(\Def_H^\isog)^\ad_\eta$ on the category of complete affinoid $(W(k)[\frac 1p],W(k))$-algebras. We note that $\pi$ factors over a map, still denoted $\pi$,
\[
\pi\from (\Def_H^\isog)^\ad_\eta\to \Flag\ .
\]

Our first goal is to prove the following result, which is essentially contained in \cite{deJong}.

\begin{prop} The sheaf $(\Def_H^\isog)^\ad_\eta$ is representable by an adic space, which is identified with an open subspace $U\subset \Flag$ under $\pi$.
\end{prop}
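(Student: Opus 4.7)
The strategy is to factor the period map as $\mathcal{M}^\ad_\eta \twoheadrightarrow (\Def_H^\isog)^\ad_\eta \to \Flag$ and show that the first arrow is a surjection of sheaves (which is tautological, since every section of the target is locally, on a rational cover where deformations over an open bounded subring exist, in the image), while the second is an open immersion whose image is $U = \pi(\mathcal{M}^\ad_\eta)$. That $U$ is open is immediate from Rapoport--Zink's result that $\pi$ is \'etale. So the content is (a) injectivity of $(\Def_H^\isog)^\ad_\eta \to \Flag$ on sections, and (b) the claim that every map $\Spa(R,R^+)\to U$ lifts, after passing to a cover, to $(\Def_H^\isog)^\ad_\eta$.

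For (a), I would consider two deformations $(G_1,\rho_1)$, $(G_2,\rho_2)$ over some open bounded $W(k)$-subalgebra $R_0\subset R^+$ of a complete affinoid $(R,R^+)$, whose images in $\Flag(R,R^+)$ coincide, i.e. which induce the same $d$-dimensional quotient of $M(H)\otimes R$. The quasi-isogeny $\rho_2\circ \rho_1^{-1}$ of $H$ over $k$ gives, via crystalline functoriality, a quasi-isogeny of filtered isocrystals $\MM(G_1)[\tfrac1p]\to \MM(G_2)[\tfrac1p]$ over $R$; by hypothesis it respects the Hodge filtration, so by Grothendieck--Messing theory in the rigid-analytic form established by de Jong in \cite{deJong} it lifts uniquely to a quasi-isogeny $G_1\to G_2$ over $R$ compatible with $\rho_1,\rho_2$. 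Thus $(G_1,\rho_1)$ and $(G_2,\rho_2)$ define the same section of $(\Def_H^\isog)^\ad_\eta$.

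For (b), \'etaleness of $\pi$ means any morphism $\Spa(R,R^+)\to U$ factors, over some rational cover $\{\Spa(R_i,R_i^+)\}$, through $\mathcal{M}^\ad_\eta$, producing local deformations $(G_i,\rho_i)$. On overlaps these share the same image in $\Flag$, so step (a) supplies unique quasi-isogenies between them; uniqueness forces the cocycle condition, and the sheafification defining $(\Def_H^\isog)^\ad_\eta$ glues the $(G_i,\rho_i)$ into a single section over $(R,R^+)$ mapping to the given point of $U$. Combining (a) and (b) identifies $(\Def_H^\isog)^\ad_\eta$ with the open subspace $U\subset \Flag$, as required.

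The main obstacle is ensuring Grothendieck--Messing rigidity in step (a) applies in the required generality, namely for complete affinoid $(W(k)[\tfrac1p],W(k))$-algebras $(R,R^+)$ where deformations are given only over open bounded subrings carrying the $p$-adic topology, and where quasi-isogenies need to be lifted globally over $R$ rather than compatibly over a tower of quotients. This is the rigid-analytic deformation result of \cite{deJong}, and everything else is formal manipulation with \'etale maps and sheafification.
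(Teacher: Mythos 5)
Step (a) of your proposal is essentially the paper's argument: on overlapping affinoids the two deformations give a quasi-isogeny mod $p$ whose obstruction to lifting is exactly compatibility with the Hodge filtration, which holds since the $\Flag$-images agree; Grothendieck--Messing rigidity then forces the two sections of $(\Def_H^\isog)^\ad_\eta$ to coincide. That part is fine.

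Step (b) contains a genuine gap. You assert that ``\'etaleness of $\pi$ means any morphism $\Spa(R,R^+)\to U$ factors, over some rational cover, through $\mathcal{M}^\ad_\eta$.'' This is not a consequence of \'etaleness. An \'etale morphism of adic spaces is locally (on the \emph{source}) an open immersion, which makes $U$ open and lets one spread a section out from a point; but it does not supply local sections in the analytic topology to begin with. Concretely, a topological point $u\in |U|$ has some residue field $(k(u),k(u)^+)$, and the definition of $U$ as $\pi(|\mathcal{M}^\ad_\eta|)$ only guarantees a point $\tilde u$ of $\mathcal{M}^\ad_\eta$ over $u$ with a possibly strictly larger residue field $(k(\tilde u), k(\tilde u)^+)$; nothing forces a $k(u)$-rational preimage to exist. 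Since $(\Def_H^\isog)^\ad_\eta$ is a sheaf only for rational covers (i.e.\ the analytic topology, not the \'etale topology), you cannot use \'etale-local sections and descend for free.

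This is exactly where the paper does real work: it proves a lemma that $\pi\from \mathcal{M}^\ad_\eta\to U$ admits sections over open subsets of $U$. The proof first reduces to showing surjectivity of $\mathcal{M}^\ad_\eta(K,K^+)\to U(K,K^+)$ for every affinoid field $(K,K^+)$ (then \'etaleness extends a pointwise section to a neighborhood, by the argument of Lemma 2.2.8 in \cite{HuberBook}). It then uses partial properness of $\mathcal{M}^\ad_\eta$ (coming from properness of the irreducible components of $\mathcal{M}_{\mathrm{red}}$) to reduce to $K^+=\OO_K$. Finally, the key step: choose a finite Galois extension $L/K$ over which the given $x\in U(K,\OO_K)$ lifts to $(G,\rho)$ over $\OO_L$; compatibility with $\pi(\tilde x)=x$ yields a $\Gal(\bar K/K)$-action on the rational Tate module $V(G)$; a profinite group acting continuously on a finite-dimensional $\Q_p$-vector space stabilizes some lattice $T\subset V(G)$, which corresponds to a $p$-divisible group $G'$ quasi-isogenous to $G$; Galois-invariance of $T$ then lets $(G',\rho')$ descend to $\OO_K$. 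None of this Galois/lattice descent argument is present in your proposal, and it cannot be replaced by the unqualified appeal to \'etaleness.
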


We remark that if $d=1$, i.e. we are working in the Lubin-Tate case, then $(\Def_H^\isog)^\ad_\eta = \Flag$, the map $\pi$ is called the Gross -- Hopkins period map, and the result follows easily from the theory in \cite{GrossHopkins}.

\begin{proof} Let $U\subset \Flag$ be the image of $\pi\from \mathcal{M}^\ad_\eta\to \Flag$. As $\pi$ is an \'{e}tale morphism of adic spaces locally of finite type, $U$ is open. We claim that
\[
\pi\from (\Def_H^\isog)^\ad_\eta\to U
\]
is an isomorphism. First we check that for any complete affinoid $(W(k)[\frac 1p],W(k))$-algebra $(R,R^+)$, the map
\[
(\Def_H^\isog)^\ad_\eta(R,R^+)\to U(R,R^+)
\]
is injective. So, let two elements of $(\Def_H^\isog)^\ad_\eta(R,R^+)$ be given which map to the same element of $U(R,R^+)$. Locally, they are given by $p$-divisible groups $G_i$, $i=1,2$, over an open and bounded subring $R_0\subset R^+$, and quasi-isogenies
\[
\rho_i\from H\otimes_{\bar{\mathbb{F}}_p} R_0/p\to G_i\otimes_{R_0} R_0/p\ .
\]
In particular, we get a quasi-isogeny from $G_1\otimes_{R_0} R_0/p$ to $G_2\otimes_{R_0} R_0/p$. By Grothendieck--Messing theory, the obstruction to lifting this to a quasi-isogeny from $G_1$ to $G_2$ over $R_0$ is the compatibility with the Hodge filtration. By assumption, we have this compatibility, so that the quasi-isogeny lifts, thereby showing that the two elements of $(\Def_H^\isog)^\ad_\eta$ agree.

To show surjectivity, it is enough to prove the following lemma.

\begin{lemma} The map $\pi\from \mathcal{M}^\ad_\eta\to U$ admits local sections.
\end{lemma}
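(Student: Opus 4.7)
\emph{Plan.} My approach combines two ingredients: the fact that $\pi$ is étale, established by Rapoport--Zink \cite{RZ}, and Grothendieck--Messing deformation theory. Given a point $u \in U$, I would first produce a pointwise lift $x \in \mathcal{M}^\ad_\eta$ satisfying both $\pi(x) = u$ and $k(x) = k(u)$; then I would invoke étaleness to convert this equality of residue fields into a local isomorphism of $\pi$ near $x$, which immediately yields a section on an open neighborhood of $u$.

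For the pointwise lift, the point $u$ corresponds to a $d$-dimensional quotient $M(H) \otimes k(u) \twoheadrightarrow W$. I would start with the trivial deformation $H_0 := H \otimes_k \mathcal{O}_{k(u)}/p$ together with $\rho_0 = \mathrm{id}$. Applying Grothendieck--Messing iteratively along the PD thickenings $\mathcal{O}_{k(u)}/p^{n+1} \twoheadrightarrow \mathcal{O}_{k(u)}/p^n$ and working in the isogeny category (which is what the injectivity half of the argument already uses), deformations of $H_0$ to $\mathcal{O}_{k(u)}$ up to quasi-isogeny are classified by $d$-dimensional quotients of $M(H) \otimes k(u)$. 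The given $W$ thus yields a pair $(G, \rho)$ defined over an open bounded $W(k)$-subring of $\mathcal{O}_{k(u)}$, and hence a point $x \in \mathcal{M}^\ad_\eta$ with $\pi(x) = u$ and $k(x) = k(u)$.

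For the local section, étaleness of $\pi$ at $x$ combined with the triviality of the residue field extension $k(x)/k(u)$ implies, via a standard fact about étale morphisms of adic spaces locally of finite type (provable using Henselian lifting or Artin approximation), that $\pi$ restricts to an isomorphism between sufficiently small affinoid opens $\Spa(A, A^+) \ni x$ and $V := \Spa(B, B^+) \ni u$. The inverse of this isomorphism is the required section $\sigma: V \to \mathcal{M}^\ad_\eta$ of $\pi$ over $V$.

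The main obstacle, as the plan makes clear, is the first step: arranging $k(x) = k(u)$ rather than merely some finite extension. The device of starting Grothendieck--Messing from the trivial reduction $H_0$, which is defined over $\mathcal{O}_{k(u)}/p$ itself, is what keeps the resulting deformation over $\mathcal{O}_{k(u)}$ and avoids an extension of scalars. Without this, étaleness alone would only supply sections after some étale base change of $U$, which would not be enough to deduce the sheaf-theoretic surjectivity of $(\Def_H^\isog)^\ad_\eta \to U$ that the lemma is being applied to establish.
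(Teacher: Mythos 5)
Your plan correctly isolates the two steps — produce a pointwise lift over the same residue field, then spread it out using étaleness of $\pi$ — and the second step matches what the paper does (it appeals to the proof of Lemma 2.2.8 in Huber's book). But the first step, the Grothendieck--Messing construction, is wrong, and the error is not a minor one: it would prove too much.

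You assert that by iterating Grothendieck--Messing along $\OO_{k(u)}/p^{n+1}\twoheadrightarrow \OO_{k(u)}/p^n$ starting from $H_0 = H\otimes_k \OO_{k(u)}/p$ with $\rho_0 = \mathrm{id}$, deformations of $H_0$ to $\OO_{k(u)}$ up to quasi-isogeny are classified by \emph{arbitrary} $d$-dimensional quotients of $M(H)\otimes k(u)$. That claim is false. What iterated Grothendieck--Messing lifting actually classifies are direct-summand lifts of the Hodge filtration $\Lie H_0\subset M(H)\otimes\OO_{k(u)}/p$ to a direct summand of $M(H)\otimes\OO_{k(u)}$. Each lifting step is a torsor under $p^n$ times a Hom group, so the filtrations you can reach all lie in the residue disc of $\Lie H_0$ inside the Grassmannian; after inverting $p$, the corresponding periods form a small open ball around one point of $\Flag$, not all of $U$. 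Passing to the isogeny category does not rescue this: allowing quasi-isogenies of the deformation over $\OO_{k(u)}$ changes the lattice, not the rational Hodge filtration, while allowing a quasi-isogeny $\rho_0 \neq \mathrm{id}$ at the bottom would require you to already have a $p$-divisible group over $\OO_{k(u)}/p$ whose Hodge line sits in the right residue disc — which is essentially what you are trying to construct. If your classification claim were correct, the period morphism would be surjective onto the entire flag variety, contradicting the fact that its image $U$ is a proper open subset (this is precisely what Theorem \ref{ImagePeriodMorphism} characterizes).

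The paper's proof avoids this trap by using that $u$ is already known to lie in the image: one gets a lift $\tilde x\in\mathcal{M}^\ad_\eta(L,\OO_L)$ over some finite Galois extension $L/K$ (after using partial properness of $\mathcal{M}^\ad_\eta$ to reduce to $K^+ = \OO_K$), then observes that $\Gal(\bar K/K)$ acts naturally on the rational Tate module $V(G)$ via the uniqueness of quasi-isogenies respecting the period, finds a $\Gal(\bar K/K)$-invariant lattice $T\subset V(G)$ by compactness of the Galois group, and uses $T$ to produce a quasi-isogenous $(G',\rho')$ that descends to $\OO_K$. The essential idea you are missing is this Galois descent via an invariant lattice; Grothendieck--Messing lifting cannot substitute for it.
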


\begin{proof} It is enough to prove that for any complete nonarchimedean extension $K$ of $\mathbb{Q}_p$ with an open and bounded valuation subring $K^+\subset K$, the map
\[
\mathcal{M}^\ad_\eta(K,K^+)\to U(K,K^+)
\]
is surjective. Indeed, this shows that any point $u\in U$ with values in $(K,K^+)$ can be lifted to a $(K,K^+)$-valued point of $\mathcal{M}^\ad_\eta$. As $\pi$ is \'{e}tale, one checks that this section extends to a small neighborhood of $u$, cf. proof of Lemma 2.2.8 of \cite{HuberBook}.

However, all irreducible components of the reduced subscheme of $\mathcal{M}$ are proper, so that $\mathcal{M}^\ad_\eta$ is partially proper. It follows that we may assume that $K^+ = \OO_K$. Let $x\in U(K,\OO_K)$. As $x$ is in the image of $\pi$, there is some finite Galois extension $L$ of $K$ and a point $\tilde{x}\in \mathcal{M}^\ad_\eta(L,\OO_L)$ lifting $x$. This is given by a $p$-divisible group $G$ over $\OO_L$, and a quasi-isogeny $\rho$ over $\OO_L/p$. On the Tate module $T(G)=T(G)(\OO_{\hat{\bar{K}}})$, we get an action of $\Gal(\bar{K}/L)$. Moreover, on the rational Tate module $V(G) = T(G)[\frac 1p]$, we get an action of $\Gal(\bar{K}/K)$. Indeed, for any $\sigma\in \Gal(\bar{K}/K)$, the point $\sigma(\tilde{x})$ projects to $x$ via $\pi$, meaning that there is a unique quasi-isogeny between $\sigma^\ast(G)$ and $G$ compatible with the isogenies $\sigma^\ast(\rho)$ and $\rho$ on $\OO_L/p$. This induces a canonical identification between $\sigma^\ast(V(G))$ and $V(G)$, whereby we get the desired action of $\Gal(\bar{K}/K)$ on $V(G)$.

But now the profinite group $\Gal(\bar{K}/K)$ acts on the finite-dimensional $\Q_p$-vector space $V(G)$. It follows that there is an invariant $\Z_p$-lattice $T\subset V(G)$. This corresponds to a $p$-divisible group $G^\prime$ over $\OO_L$, quasi-isogenous to $G$, with corresponding quasi-isogeny $\rho^\prime$ modulo $p$. As $T$ is fixed by $\Gal(\bar{K}/K)$, it follows that $(G^\prime,\rho^\prime)$ descends to $\OO_K$, and thus defines an element of $\mathcal{M}(K,\OO_K)$, projecting to $x$, as desired.
\end{proof}
\end{proof}

\subsection{The image of the period morphism}

\begin{Theorem}\label{ImagePeriodMorphism} Fix a $p$-divisible group $H$ over a perfect field $k$ of dimension $d$ and height $h$, and let $\mathcal{M}$ be the associated Rapoport--Zink space, with period map
\[
\pi\from \mathcal{M}^\ad_\eta\to \Flag\ .
\]
Let $C$ be an algebraically closed complete extension of $W(k)[p^{-1}]$, and let $x\in \Flag(C,\OO_C)$ be a point corresponding to a $d$-dimensional quotient $M(H)\otimes C\to W$. Let $\mathscr{E} = \mathscr{E}(H)$ be the vector bundle over $X$ associated to $H$, and consider the modification of vector bundles
\[
0\to \mathscr{F}\to \mathscr{E}\to i_{\infty\ast} W\to 0
\]
corresponding to the quotient
\[
\mathscr{E}\to i_{\infty\ast} i_\infty^\ast \mathscr{E} = i_{\infty\ast} (M(H)\otimes C)\to i_{\infty\ast} W\ .
\]
Then $x$ is in the image of $\pi$ if and only if $\mathscr{F}$ is isomorphic to $\OO_X^h$.
\end{Theorem}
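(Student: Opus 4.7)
The plan is as follows. The ``only if'' direction is immediate from Proposition \ref{PDivGroupGivesModVectBund}(i): if $x=\pi(G,\rho)$, then the modification extracted from $x$ coincides with the exact sequence $0\to T(G)\otimes_{\Z_p}\OO_X\to\mathscr{E}(G_0)\to i_{\infty\ast}(\Lie G\otimes C)\to 0$, where $G_0=G\otimes_{\OO_C}\OO_C/p$ and where $\rho$ identifies $\mathscr{E}(G_0)$ with $\mathscr{E}=\mathscr{E}(H)$ and $\Lie G\otimes C$ with $W$; thus $\mathscr{F}\cong T(G)\otimes\OO_X$ is free of rank $h$.

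For the ``if'' direction, I would first reduce to the case where $C$ is spherically complete with surjective norm map $C\to\R_{\geq 0}$, so that Theorem B is available. Since $\pi$ is \'etale, its image $U\subset\Flag$ is open, and membership in $U$ depends only on the underlying point of $|\Flag|$. Hence $x$ lies in $U$ if and only if its base change to any spherically complete extension $C'/C$ with surjective norm does, and passing to $C'$ preserves the hypothesis $\mathscr{F}\cong\OO_X^h$.

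Now suppose $C$ is as above and $\mathscr{F}\cong\OO_X^h$. Choose a trivialization, yielding a free $\Z_p$-module $T$ of rank $h$ with $\mathscr{F}=T\otimes_{\Z_p}\OO_X$. The local structure of the modification at $\infty$ (a rank-$d$ elementary modification whose cokernel is killed by a uniformizer) canonically exhibits $W$ as a $d$-dimensional $C$-subspace of $T\otimes_{\Z_p}C(-1)$; I identify $W$ with this subspace. Apply Theorem B to the pair $(T,W)$ to produce a $p$-divisible group $G$ over $\OO_C$ with $T(G)=T$ and $\Lie G\otimes C=W$ as subspaces of $T\otimes C(-1)$. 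Proposition \ref{PDivGroupGivesModVectBund} applied to $G$ then yields a second modification $0\to T\otimes\OO_X\to\mathscr{E}(G_0)\to i_{\infty\ast}W\to 0$ whose local data at $\infty$ is the same subspace $W\subset T\otimes C(-1)$; consequently this modification equals the given one, so $\mathscr{E}(G_0)\cong\mathscr{E}(H)$ as vector bundles on $X$, compatibly with their embedded copies of $T\otimes\OO_X$. The full faithfulness of the Dieudonn\'e functor on $p$-divisible groups over $\OO_C/p$ up to isogeny (Theorem A, equivalently the full faithfulness part of Theorem \ref{FactsFarguesFontaine}(ii)) then produces a quasi-isogeny $\rho\from H\otimes_{\bar{\FF}_p}\OO_C/p\to G_0$ realizing this isomorphism. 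The pair $(G,\rho)$ is the desired preimage of $x$: its image under $\pi$ is the Grothendieck--Messing quotient $M(G)\otimes C\to\Lie G\otimes C=W$, which matches $M(H)\otimes C\to W$ under the identification provided by $\rho$.

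The main obstacle is identifying the second modification, produced by Theorem B and Proposition \ref{PDivGroupGivesModVectBund}, with the one given in the hypothesis. One must argue that the two modifications of the same trivial bundle $T\otimes\OO_X$, having identical position data at $\infty$, coincide rather than being only abstractly isomorphic, so that the resulting isomorphism of vector bundles on $X$ is compatible enough with the $T\otimes\OO_X$-substructure for Theorem A to lift it to the sought-after quasi-isogeny $\rho$.
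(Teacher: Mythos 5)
Your argument is essentially the same as the paper's: reduce to $C$ spherically complete with surjective norm, trivialize $\mathscr{F}$ to get a lattice $T\subset H^0(X,\mathscr{F})\cong\Q_p^h$ together with the subspace $W\subset T\otimes C(-1)$ cut out at $\infty$, build $G$ over $\OO_C$ from $(T,W)$ via Theorem~\ref{ClassificationOverOC}, and extract $\rho$ from the full faithfulness in Theorem~\ref{FactsFarguesFontaine}(ii). The ``main obstacle'' you flag is in fact automatic: a modification of the fixed bundle $\mathscr{F}=T\otimes\OO_X$ along the single point $\infty$ is recovered by gluing $\mathscr{F}|_{X\setminus\{\infty\}}$ with the $B_{\dR}^+$-lattice $\mathscr{E}_\infty$ sitting between $T\otimes B_{\dR}^+$ and $t^{-1}T\otimes B_{\dR}^+$, and that lattice is determined by the corresponding subspace of $t^{-1}T\otimes B_{\dR}^+/T\otimes B_{\dR}^+\cong T\otimes C(-1)$; hence two elementary modifications of $\mathscr{F}$ with the same $W$ admit a unique isomorphism compatible with the inclusions of $\mathscr{F}$ and the surjections to $i_{\infty\ast}W$, which is exactly what is needed for $\rho$ to carry $\pi(\tilde{x})$ to $x$. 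Your reduction step is phrased via openness of the image rather than, as in the paper, via the fibre $\pi^{-1}(x)$ being locally of finite type over $\Spa(C,\OO_C)$ and hence having a $(C,\OO_C)$-point once nonempty; both are valid, though yours implicitly relies on the earlier fact that $\pi:\mathcal{M}^\ad_\eta\to U$ admits local sections so that $(C,\OO_C)$-points of $U$ lift.
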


\begin{proof} It is enough to prove this in the case that $C$ is spherically complete with surjective norm map, as $\pi$ is locally of finite type. Indeed, it is clear that $\mathscr{F}$ is trivial if $x$ is in the image of $\pi$, by Proposition \ref{PDivGroupGivesModVectBund}. For the converse, if $\mathscr{F}$ is trivial over $C$, it is still trivial over a big extension $C^\prime$ of $C$; if the result is true over $C^\prime$, we find that the fibre $\pi^{-1}(x)$ is an adic space locally of finite type over $\Spa(C,\OO_C)$, which has a $(C^\prime,\OO_{C^\prime})$-valued point, and in particular is nonempty. It follows that it has a $(C,\OO_C)$-valued point.

So, assume that $\mathscr{F}$ is trivial and choose a $\Z_p$-lattice $T$ in $H^0(X,\mathscr{F})\cong \Q_p^h$. Then the modification of vector bundles gives an injection $W\to T\otimes C(-1)$, and hence a $p$-divisible group $G$ over $\OO_C$, such that the modification
\[
0\to \mathscr{F}\to \mathscr{E}\to i_{\infty\ast} W\to 0
\]
of vector bundles is the one induced from $G$ via Proposition \ref{PDivGroupGivesModVectBund}. By Theorem \ref{FactsFarguesFontaine} (ii), the corresponding identification of $\mathscr{E}(G\otimes_{\OO_C} \OO_C/p)$ with $\mathscr{E}(H)=\mathscr{E}(H\otimes_{\bar{\mathbb{F}}_p} \OO_C/p)$ comes from a quasi-isogeny
\[
\rho\from H\otimes_{\bar{\mathbb{F}}_p} \OO_C/p\to G\otimes_{\OO_C} \OO_C/p\ ,
\]
giving a point $\tilde{x}\in \mathcal{M}(\OO_C)=\mathcal{M}^\ad_\eta(C,\OO_C)$ with $\pi(\tilde{x}) = x$.
\end{proof}

\begin{proof} {\it (of Theorem \ref{ClassificationOverOC} for general $C$.)} We can now show that the functor is essentially surjective in general. Take any $(T,W)$, and construct the associated modification of vector bundles
\[
0\to \mathscr{F}\to \mathscr{E}\to i_{\infty\ast} W\to 0
\]
over $X$, where $\mathscr{F} = T\otimes_{\Z_p} \OO_X$: Define $\mathscr{E}$ by the pullback diagram
\[\xymatrix{
0\ar[r]&\mathscr{F}\ar[r]\ar@{=}[d]&\mathscr{E}\ar@{^(->}[d]\ar[r]&i_{\infty\ast} W\ar@{^(->}[d]\ar[r]&0\\
0\ar[r]&\mathscr{F}\ar[r]&\mathscr{E}^\prime\ar[r]&i_{\infty\ast} (T\otimes C(-1))\ar[r]&0
}\]
On the lower line, $\mathscr{E}^\prime = T\otimes_{\Z_p} \OO_X(1)$, and the sequence comes from the $p$-divisible group $T(-1)\otimes_{\Z_p} \mu_{p^\infty}$.

\begin{prop} There is a $p$-divisible group $H$ over $\bar{\mathbb{F}}_p$ such that $\mathscr{E}\cong \mathscr{E}(H)$.
\end{prop}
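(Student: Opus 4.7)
The plan is to apply part (ii) of Theorem \ref{FactsFarguesFontaine}: the essential image of $H\mapsto \mathscr{E}(H)$ on isogeny classes of $p$-divisible groups over $\bar{\FF}_p$ is exactly the vector bundles on $X$ all of whose Harder--Narasimhan slopes lie in $[0,1]$. Thus, granting the Fargues--Fontaine classification recalled in the excerpt, the whole proposition reduces to verifying that the vector bundle $\mathscr{E}$ constructed from $(T,W)$ has all slopes in $[0,1]$. The essential input is the sandwich
\[
\mathscr{F}\hookrightarrow \mathscr{E}\hookrightarrow \mathscr{E}^\prime,
\]
where $\mathscr{F}=T\otimes_{\Z_p}\OO_X\cong \OO_X^h$ and $\mathscr{E}^\prime=T(-1)\otimes_{\Z_p}\OO_X(1)\cong \OO_X(1)^h$, both of which are semistable (of slopes $0$ and $1$ respectively) by the classification, and both cokernels are torsion sheaves supported at $\infty$.

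For the upper bound on slopes, I would take the maximal destabilizing subbundle $\mathscr{L}\subset \mathscr{E}$, semistable of slope $\mu_{\max}(\mathscr{E})$. Viewing $\mathscr{L}$ as a subsheaf of $\mathscr{E}^\prime$ through $\mathscr{E}\hookrightarrow\mathscr{E}^\prime$, let $\mathscr{L}^\prime$ be its saturation inside $\mathscr{E}^\prime$; then $\mathscr{L}^\prime$ is a subbundle of the slope-$1$ semistable bundle $\mathscr{E}^\prime$, so $\mu(\mathscr{L}^\prime)\leq 1$, and since $\mathscr{L}\subset \mathscr{L}^\prime$ is an inclusion of the same rank with torsion cokernel, $\mu(\mathscr{L})\leq \mu(\mathscr{L}^\prime)\leq 1$. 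For the lower bound, it suffices to show that every quotient bundle $\mathscr{E}\twoheadrightarrow \mathscr{Q}$ has $\deg\mathscr{Q}\geq 0$, which forces $\mu_{\min}(\mathscr{E})\geq 0$. Consider the composite $\mathscr{F}\hookrightarrow \mathscr{E}\twoheadrightarrow \mathscr{Q}$ and let $\mathscr{I}$ be its image. Since $\mathscr{F}\hookrightarrow \mathscr{E}$ is an isomorphism away from $\infty$, $\mathscr{I}\subset \mathscr{Q}$ is of full rank with torsion cokernel (a quotient of $i_{\infty\ast}W$), hence $\mathscr{I}$ is locally free (as $X$ is a Dedekind scheme) and
\[
\deg\mathscr{Q}=\deg\mathscr{I}+\mathrm{length}(\mathscr{Q}/\mathscr{I})\geq \deg\mathscr{I}.
\]
But $\mathscr{I}$ is a quotient of the semistable slope-$0$ bundle $\mathscr{F}$, so $\mu_{\min}(\mathscr{I})\geq 0$, hence $\deg\mathscr{I}\geq 0$, and we conclude $\deg\mathscr{Q}\geq 0$.

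Combining the two bounds, $\mathscr{E}$ has all slopes in $[0,1]$, so by Theorem \ref{FactsFarguesFontaine}(ii) there exists a $p$-divisible group $H$ over $\bar{\FF}_p$, unique up to isogeny, together with an isomorphism $\mathscr{E}\cong \mathscr{E}(H)$. The only real content is the clean slope bounds, and the main (minor) obstacle is simply invoking semistability of $\OO_X^h$ and $\OO_X(1)^h$ and handling the saturation/torsion bookkeeping carefully; no harder input is needed beyond the Fargues--Fontaine classification.
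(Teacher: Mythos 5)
Your proof is correct, and it does take a genuinely different route from the paper. The paper leverages the full Fargues--Fontaine classification right away: since every vector bundle on $X$ decomposes as a direct sum of line bundles $\OO_X(\lambda)$, it suffices to bound each $\lambda$ separately. For the lower bound, the paper observes that if $\Hom(\OO_X,\OO_X(\lambda))=0$ (i.e. $\lambda<0$), then the summand $\OO_X(\lambda)$ would inject into the skyscraper cokernel $i_{\infty\ast}W$ of $\mathscr{F}\to\mathscr{E}$, a contradiction; for the upper bound, the injection $\mathscr{E}\hookrightarrow\mathscr{E}^\prime=\OO_X(1)^h$ yields a nonzero map $\OO_X(\lambda)\to\OO_X(1)$, forcing $\lambda\leq 1$. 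Your argument instead works purely through Harder--Narasimhan formalism: you bound $\mu_{\max}(\mathscr{E})$ via saturation inside the semistable $\mathscr{E}^\prime$ of slope $1$, and you bound $\mu_{\min}(\mathscr{E})$ by observing that every quotient bundle of $\mathscr{E}$ has nonnegative degree because it contains, as a full-rank torsion-free subsheaf, the image of the semistable $\mathscr{F}=\OO_X^h$ of slope $0$. Both approaches rest on the same classification (needed anyway for Theorem~\ref{FactsFarguesFontaine}(ii)), but the paper's is slightly shorter because it takes the direct-sum decomposition as given, whereas yours is formulated in the generality of HN theory for a complete curve, using only the semistability of $\OO_X^h$ and $\OO_X(1)^h$ and the torsion/length bookkeeping; the trade-off is that you introduce saturations and the degree inequality $\deg\mathscr{Q}\geq\deg\mathscr{I}$, which the paper avoids entirely. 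Everything in your version checks out: on a Dedekind scheme the image $\mathscr{I}$ is indeed locally free, and the cokernel $\mathscr{Q}/\mathscr{I}$ is a quotient of $i_{\infty\ast}W$, so your inequalities are justified.
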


\begin{proof} By Theorem \ref{FactsFarguesFontaine}, we only have to prove that the slopes of $\mathscr{E}$ are between $0$ and $1$. Assume that $\mathscr{E}$ contains an irreducible direct summand $\OO_X(\lambda)$. Then there is a nonzero morphism $\OO_X\to \OO_X(\lambda)$, as otherwise, $\OO_X(\lambda)$ would inject into the cokernel of $\mathscr{F}\to \mathscr{E}$, which however is a skyscraper sheaf. This implies that $\lambda\geq 0$. On the other hand, as $\mathscr{E}$ injects into $\mathscr{E}^\prime$, it follows that there is a nonzero morphism $\OO_X(\lambda)\to \OO_X(1)$, so that $\lambda\leq 1$.
\end{proof}

Fixing an identification $\mathscr{E} = \mathscr{E}(H)$, the modification of vector bundles
\[
0\to \mathscr{F}\to \mathscr{E}(H)\to i_{\infty\ast} W\to 0
\]
corresponds to a quotient $M(H)\otimes C\to W$, i.e. a point $x\in \Flag$. As $\mathscr{F}$ is trivial by construction, we can use the preceding theorem to lift $x$ to a point $\tilde{x}$ in the corresponding Rapoport--Zink space, giving a $p$-divisible $G^\prime$ over $\OO_C$ with a quasi-isogeny
\[
\rho\from H\otimes_{\bar{\mathbb{F}}_p} \OO_C/p\to G^\prime\otimes_{\OO_C} \OO_C/p\ .
\]
The modification of vector bundles associated to $G^\prime$ is given by
\[
0\to \mathscr{F}\to \mathscr{E}(H)\to i_{\infty\ast} W\to 0\ .
\]
In particular, $T(G^\prime)\subset H^0(X,\mathscr{F})\cong T[\frac 1p]$ is a $\Z_p$-lattice. After replacing $G^\prime$ by a quasi-isogeneous $G$, we may assume that $T(G) = T$, which gives the desired $p$-divisible group.
\end{proof}

We recall that Faltings, \cite{FaltingsPeriodDomains}, proved Theorem \ref{ImagePeriodMorphism} by reduction to discretely valued fields, where he used a result of Breuil, \cite{Breuil}, and Kisin, \cite{Kisin}, to conclude. We can reverse the argument and deduce their result. We stress that contrary to the methods of Breuil and Kisin, which are based on integral $p$-adic Hodge theory, our argument only needs rational $p$-adic Hodge theory.

\begin{Cor} Let $K$ be a discretely valued complete nonarchimedean extension of $\Q_p$ with perfect residue field $k$. Then the category of $p$-divisible groups over $\OO_K$ is equivalent to the category of $\Gal(\bar{K}/K)$-stable $\Z_p$-lattices in crystalline representations of $\Gal(\bar{K}/K)$ with Hodge-Tate weights contained in $\{0,1\}$.
\end{Cor}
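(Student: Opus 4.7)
The forward direction is classical: any $G$ over $\OO_K$ has $T(G)$ a finite free $\Z_p$-module with continuous $\Gal(\bar K/K)$-action, and Proposition \ref{PDivGroupGivesModVectBund}(iii) applied over $\OO_C = \OO_{\hat{\bar K}}$, together with the crystalline comparison, shows $T(G)[p^{-1}]$ is crystalline with Hodge-Tate weights in $\{0,1\}$. Full faithfulness over $\OO_K$ follows from full faithfulness over $\OO_C$, which is a special case of Theorem \ref{ClassificationOverOC}, since a $\Gal(\bar K/K)$-equivariant map of rational Tate modules of crystalline representations automatically preserves the Hodge-Tate filtration.

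The heart of the proof is essential surjectivity. Given a crystalline $V$ with Hodge-Tate weights in $\{0,1\}$ and a stable lattice $T\subset V$, set $D=D_\cris(V)$, a filtered isocrystal over $K_0=W(k)[p^{-1}]$ with Hodge filtration on $D_K$ concentrated in degrees $0$ and $1$. Admissibility of $V$ forces the Newton slopes of $(D,\varphi)$ to lie in $[0,1]$, so by Dieudonn\'e-Manin we may choose a $p$-divisible group $H$ over $\bar k$ with $M(H)[p^{-1}]\cong D\otimes_{K_0}W(\bar k)[p^{-1}]$ as $\varphi$-modules. Base-changing $\Fil^1 D_K$ to $C = \hat{\bar K}$ produces a quotient $M(H)\otimes C\twoheadrightarrow W$, defining a point $x\in \Flag(C,\OO_C)$, and we form the corresponding modification
\[
0\to \mathscr{F}\to \mathscr{E}(H)\to i_{\infty\ast}W\to 0
\]
of vector bundles on the Fargues-Fontaine curve $X$.

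The plan is then to apply Theorem \ref{ImagePeriodMorphism}, which requires $\mathscr{F}\cong \OO_X^h$. This triviality is the Fargues-Fontaine translation of admissibility: $H^0(X,\mathscr{F})$ is canonically identified with $V$ via the crystalline comparison, and a rank-$h$, degree-$0$ vector bundle on $X$ with an $h$-dimensional space of global sections must be free by the classification of vector bundles on $X$ used in Theorem \ref{FactsFarguesFontaine}. Theorem \ref{ImagePeriodMorphism} then produces a $p$-divisible group $G^\prime$ over $\OO_C$ and a quasi-isogeny $\rho: H\otimes_{\bar k}\OO_C/p\to G^\prime\otimes_{\OO_C}\OO_C/p$; the construction in its proof already depends on a choice of $\Z_p$-lattice inside $H^0(X,\mathscr{F})=V$, so we may arrange $T(G^\prime)=T$ as sublattices of $V$.

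It remains to descend from $\OO_C$ to $\OO_K$. Under Theorem \ref{ClassificationOverOC}, the $p$-divisible group $G^\prime$ corresponds to the pair $(T,W)$ with $W=\Fil^1 D_K\otimes_K C$. Both $T$ and the inclusion $W\hookrightarrow T\otimes C(-1)$ are $\Gal(\bar K/K)$-equivariant --- $T$ by hypothesis, $W$ because $\Fil^1 D_K$ is defined over $K$ --- so the equivalence of Theorem \ref{ClassificationOverOC} supplies a canonical descent datum for $G^\prime$ along $\Spec\OO_C\to \Spec\OO_K$. Effectiveness of this datum follows from standard Galois descent for finite locally free group schemes, applied termwise to the system $\{G^\prime[p^n]\}$, after first descending to a finite extension of $K$ where sufficiently many structures are already defined. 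The main obstacle in this strategy is making the descent step rigorous and ensuring that the various identifications (Fargues-Fontaine dictionary, crystalline comparison, Theorem \ref{ClassificationOverOC}) assemble simultaneously Galois-equivariantly; the key Fargues-Fontaine input --- that admissibility is equivalent to triviality of $\mathscr{F}$ --- is granted from the theory already invoked in the paper.
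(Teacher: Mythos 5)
Your forward direction and the use of the Fargues--Fontaine dictionary to establish triviality of $\mathscr{F}$ (hence applicability of Theorem \ref{ImagePeriodMorphism}) are correct and agree with the paper. The genuine gap is the descent from $\OO_C$ to $\OO_K$, which you correctly flag as ``the main obstacle.'' As written, the argument is circular: you propose ``first descending to a finite extension of $K$ where sufficiently many structures are already defined,'' but there is no a priori reason a $p$-divisible group over $\OO_C$ should be definable over any discretely valued subring, and establishing that it is amounts to the essential surjectivity you are trying to prove. Moreover, $\Spec\OO_C\to\Spec\OO_K$ is not a cover to which effective Galois descent for finite flat group schemes applies directly, and the Galois-equivariance of the pair $(T,W)$ by itself does not produce an effective descent datum.

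The paper sidesteps the issue entirely by never leaving the base $\Spa(K,\OO_K)$. Two choices make this work. First, $H$ is taken over the residue field $k$ itself rather than over $\bar k$, so the period map $\pi\from\mathcal{M}_H^\ad_\eta\to\Flag$ is defined over $W(k)[1/p]\subset K$. Second, since $\Fil^1 D_K$ is defined over $K$, the filtration gives a $(K,\OO_K)$-valued point $x\in\Flag(K,\OO_K)$, not merely a $(C,\OO_C)$-valued one. Because $\pi$ is \'etale, the fiber $\pi^{-1}(x)$ is a finite \'etale adic space over $\Spa(K,\OO_K)$---a disjoint union of $\Spa(K_i,\OO_{K_i})$ for finite separable $K_i/K$---and its $(C,\OO_C)$-valued points are identified Galois-equivariantly with the $\Z_p$-lattices in $H^0(X,\mathscr{F})\cong T[p^{-1}]$. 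The Galois-stable lattice $T$ therefore picks out a fixed point, which is automatically a $(K,\OO_K)$-valued point of $\pi^{-1}(x)$ and gives the $p$-divisible group directly over $\OO_K$. No separate descent of group schemes along $\OO_K\to\OO_C$ is required. (A minor side remark: the paper obtains full faithfulness directly from Tate's theorem rather than via the classification over $\OO_C$, though both routes work.)
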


\begin{proof} By Tate's theorem, \cite{TatePDivGroups}, the functor from $p$-divisible groups to lattices in crystalline representations is fully faithful. Let $T$ be a $\Gal(\bar{K}/K)$-stable $\Z_p$-lattice in a crystalline representation of $\Gal(\bar{K}/K)$ with Hodge-Tate weights in $\{0,1\}$. We get an associated filtered isocrystal $M$ over $W(k)[p^{-1}]$, whose slopes are between $0$ and $1$ (by weak admissibility). Therefore, $M=M(H)$ for a $p$-divisible group $H$ over $k$. Let $\mathcal{M}_H$ be the associated Rapoport--Zink space, with period map
\[
\pi\from (\mathcal{M}_H)^\ad_\eta\to \Flag\ .
\]
The filtration on $M$ determines a point $x\in \Flag(K,\OO_K)$, which lies in the image of $\pi$ by Theorem \ref{ImagePeriodMorphism} (noting that $\mathscr{F}\cong T\otimes_{\Z_p} \OO_X$ canonically). One has a canonical identification between $\pi^{-1}(x)(C,\OO_C)$ and the set of $\Z_p$-lattices in $H^0(X,\mathscr{F})\cong T[p^{-1}]$, where $C = \hat{\bar{K}}$. As $T\subset T[p^{-1}]$ is stable under $\Gal(\bar{K}/K)$, this gives a point $\tilde{x}\in \pi^{-1}(x)(K,\OO_K)$, which in turn gives rise to the desired $p$-divisible group $G$ over $\OO_K$.
\end{proof}

\subsection{Rapoport--Zink spaces at infinite level}

In this section, we will define Rapoport--Zink spaces at infinite level, and explain how they are related to the classical Rapoport--Zink spaces at finite level. Fix a perfect field $k$, and a $p$-divisible group $H$ over $k$ of dimension $d$ and height $h$. Recall the definition of Rapoport--Zink spaces of finite level.

\begin{defn} Consider the functor $\mathcal{M}_n$ on complete affinoid $(W(k)[\frac 1p],W(k))$-algebras, sending $(R,R^+)$ to the set of triples $(G,\rho,\alpha)$, where $(G,\rho)\in \mathcal{M}^\ad_\eta(R,R^+)$, and
\[
\alpha\from (\Z_p/p^n\Z_p)^h\to G[p^n]^\ad_\eta(R,R^+)
\]
is a morphism of $\Z_p/p^n\Z_p$-modules such that for all $x=\Spa(K,K^+)\in \Spa(R,R^+)$, the induced map
\[
\alpha(x)\from (\Z_p/p^n\Z_p)^h\to G[p^n]^\ad_\eta(K,K^+)
\]
is an isomorphism.
\end{defn}

We recall the following result from \cite{RZ}.

\begin{prop} The functor $\mathcal{M}_n$ is representable by an adic space over $\Spa(W(k)[\frac 1p],W(k))$, which is an open and closed subset of the $h$-fold fibre product $(G[p^n]^\ad_\eta)^h$ of $G[p^n]^\ad_\eta$ over $\mathcal{M}^\ad_\eta$.
\end{prop}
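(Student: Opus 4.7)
The plan is to show that $\mathcal{M}_n$ is the complement in the $h$-fold fibre product $X := (G[p^n]^\ad_\eta)^h$ of a finite union of open and closed subfunctors, using that $G[p^n]^\ad_\eta$ is a finite \'etale cover of $\mathcal{M}^\ad_\eta$ on the adic generic fibre.

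First, I would unpack the datum of $\alpha$. Since $(\Z/p^n\Z)^h$ is free on the standard basis $e_1,\ldots,e_h$, giving a $\Z/p^n\Z$-linear homomorphism $\alpha\from (\Z/p^n\Z)^h\to G[p^n]^\ad_\eta(R,R^+)$ is the same as giving an ordered $h$-tuple $(s_1,\ldots,s_h)$ with $s_i=\alpha(e_i)\in G[p^n]^\ad_\eta(R,R^+)$, the $p^n$-torsion requirement being automatic. Dropping the pointwise isomorphism condition therefore gives the functor represented by $X=(G[p^n]^\ad_\eta)^h$ over $\mathcal{M}^\ad_\eta$, and it remains to show that the pointwise isomorphism condition carves out an open and closed subspace of $X$.

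Next, I would argue that $G[p^n]^\ad_\eta\to \mathcal{M}^\ad_\eta$ is finite \'etale of constant degree $p^{nh}$. Locally on the formal scheme $\mathcal{M}$ one has $G[p^n]=\Spf A_n$ with $A_n$ finite flat of rank $p^{nh}$ over the structure ring; passing to the adic generic fibre over $\Spa(W(k)[\tfrac1p],W(k))$ inverts $p$, and the multiplication-by-$p^n$ isogeny of $G$ becomes \'etale, so $G[p^n]^\ad_\eta$ is finite \'etale of the claimed degree. Consequently the identity section $\{e\}\injects G[p^n]^\ad_\eta$ is an open and closed immersion, since any section of a finite \'etale morphism of adic spaces is such.

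Finally, at any $x=\Spa(K,K^+)$ both the source $(\Z/p^n\Z)^h$ and $G[p^n]^\ad_\eta(K,K^+)$ have cardinality $p^{nh}$ (the latter being a subgroup of the geometric fibre, which has exactly $p^{nh}$ elements by the \'etaleness above), so pointwise bijectivity is equivalent to pointwise injectivity. Pointwise injectivity amounts to demanding that for every nonzero $a=(a_1,\ldots,a_h)\in(\Z/p^n\Z)^h$, the section $\sigma_a:=\sum_i a_is_i$ of $G[p^n]^\ad_\eta$ over $X$ is nowhere equal to the identity. The locus $V_a:=\sigma_a^{-1}(\{e\})\subset X$ is open and closed by the previous paragraph, and since $(\Z/p^n\Z)^h$ is finite, $\mathcal{M}_n=X\setminus\bigcup_{a\neq 0}V_a$ is the complement of a finite union of open and closed subsets, hence itself open and closed in $X$. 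This gives the representability and the description as an open and closed subspace of $(G[p^n]^\ad_\eta)^h$. I do not foresee any serious obstacle: once one has the \'etaleness of $G[p^n]$ on the generic fibre, the rest is a formal unpacking of the moduli problem.
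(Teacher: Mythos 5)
Your approach is correct, and since the paper simply cites \cite{RZ} for this statement without reproducing a proof, your argument supplies the missing details in essentially the expected way: identify $\alpha$ with an $h$-tuple of sections, use that $G[p^n]^\ad_\eta \to \mathcal{M}^\ad_\eta$ is finite \'etale on the generic fibre so that the identity section is an open and closed immersion, and then express the injectivity locus as the complement of the finite union of open-and-closed loci $V_a=\sigma_a^{-1}(\{e\})$ ranging over nonzero $a\in(\Z/p^n\Z)^h$. This is the standard way to see it, and it works.

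One small inaccuracy in the write-up: you assert that $G[p^n]^\ad_\eta(K,K^+)$ has cardinality exactly $p^{nh}$ at every point $x=\Spa(K,K^+)$, and your parenthetical justification (that it is a subgroup of the geometric fibre, which has $p^{nh}$ elements) does not actually yield this. Indeed, if $K$ is not separably closed the group of $K$-rational $p^n$-torsion points will typically be strictly smaller than $p^{nh}$. The conclusion you want, namely that pointwise injectivity of $\alpha(x)$ is equivalent to pointwise bijectivity, is still true, but the correct reasoning is a one-line counting argument: one always has $|G[p^n]^\ad_\eta(K,K^+)|\leq p^{nh}$ (as a subgroup of the geometric fibre), so if $\alpha(x)$ is injective then the target is forced to have exactly $p^{nh}$ elements and $\alpha(x)$ is automatically bijective. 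With this fix the argument is complete, and the rest of the proof (the $V_a$ being open and closed, the finite union argument, and the verification that the open-and-closed subspace of $X$ represents the sheaf $\mathcal{M}_n$) goes through as you wrote.
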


\begin{defn} Consider the functor $\mathcal{M}_\infty$ on complete affinoid $(W(k)[\frac 1p],W(k))$-algebras, sending $(R,R^+)$ to the set of triples $(G,\rho,\alpha)$, where $(G,\rho)\in \mathcal{M}^\ad_\eta(R,R^+)$, and
\[
\alpha\from \Z_p^h\to T(G)^\ad_\eta(R,R^+)
\]
is a morphism of $\Z_p$-modules such that for all $x=\Spa(K,K^+)\in \Spa(R,R^+)$, the induced map
\[
\alpha(x)\from \Z_p^h\to T(G)^\ad_\eta(K,K^+)
\]
is an isomorphism.
\end{defn}

The main theorem of this section is the following.

\begin{Theorem}\label{MInftyExists} The functor $\mathcal{M}_\infty$ is representable by an adic space over $\Spa(W(k)[\frac 1p],W(k))$. Moreover, $\mathcal{M}_\infty$ is preperfectoid, and
\[
\mathcal{M}_\infty\sim \varprojlim_n \mathcal{M}_n\ ,
\]
where $\mathcal{M}_n$ denotes the Rapoport--Zink space of level $n$.
\end{Theorem}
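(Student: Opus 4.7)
The plan is to identify $\mathcal{M}_\infty$ with the locally closed subspace $\mathcal{N}\subset(\tilde H^{\ad}_\eta)^h$ cut out by conditions (i) and (ii) of the Theorem D statement, and then read off representability, preperfectoidness, and the $\sim$-relation from this description.

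First, I would construct the comparison map $\mathcal{M}_\infty \to (\tilde H^{\ad}_\eta)^h$: given $(G,\rho,\alpha)\in \mathcal{M}_\infty(R,R^+)$, the quasi-isogeny $\rho$ together with Proposition \ref{UnivCoverRepr}(i)--(ii) produces a canonical isomorphism $\tilde G^{\ad}_\eta \cong \tilde H^{\ad}_\eta$ over $\Spa(R,R^+)$, and composing with $\alpha\colon \Z_p^h \to T(G)^{\ad}_\eta \hookrightarrow \tilde G^{\ad}_\eta$ yields sections $s_1,\dots,s_h$. The compatibility between $\log_G$ and $\qlog_H$ of Lemma \ref{QLogvsLog} then ensures that the $\qlog$-cokernel is $\Lie G\otimes R$, giving condition (i); condition (ii) follows from the exact sequence of Proposition \ref{logG}(v) combined with the geometric-pointwise isomorphism built into the definition of $\alpha$.

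Second, I would prove that this map is an isomorphism onto $\mathcal{N}$. Given a tuple $(s_1,\dots,s_h)$ satisfying (i) and (ii), the quotient $M(H)\otimes R\to W$ determines a morphism $\Spa(R,R^+)\to\Flag$; at each geometric point $x=\Spa(C,\OO_C)$, the $s_i$ trivialise the modification $\mathscr F$ of $\mathscr E(H)$, so by Theorem \ref{ImagePeriodMorphism} the image lies in the open subspace $U\subset\Flag$ which is the image of the period morphism. The main obstacle is to globalise this construction to an actual deformation over a bounded open subring $R_0\subset R^+$, rather than just pointwise: I would cover $\Spa(R,R^+)$ by affinoids whose image lies in $U$, pull back the universal quasi-isogeny class via the section $U\to(\Def_H^{\isog})^{\ad}_\eta$ established in the previous subsection, and then use condition (ii) to specify the $\Z_p$-lattice $\alpha(\Z_p^h)\subset V(G)^{\ad}_\eta$ that pins down $G$ within its quasi-isogeny class and simultaneously recovers the level structure; the functoriality in $(R,R^+)$ is built into the universal property.

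The subspace $\mathcal{N}\subset(\tilde H^{\ad}_\eta)^h$ is locally closed: condition (i) amounts to fixing the rank of the matrix $(\qlog(s_j))_j$ of sections of the pullback of the locally free sheaf $M(H)\otimes\Ga$, and condition (ii) cuts out a further open subspace on which the pointwise exactness holds. By Corollary \ref{tG-G0} and Example \ref{ExmpBall}, the generic fibre $\tilde H^{\ad}_\eta$ is preperfectoid, and hence so is $(\tilde H^{\ad}_\eta)^h$; applying Proposition \ref{LocClosedPreperfectoid} to $\mathcal{M}_\infty\subset(\tilde H^{\ad}_\eta)^h$ then yields the preperfectoid claim. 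For the $\sim$-statement, the compatible maps $\mathcal{M}_\infty\to\mathcal{M}_n$ induce a continuous map $|\mathcal{M}_\infty|\to\varprojlim|\mathcal{M}_n|$ which is a bijection on $(K,K^+)$-valued points by the identification $T(G)=\varprojlim_n G[p^n]$, hence a homeomorphism by the usual quasicompactness argument; the density condition of Definition \ref{limitdef} follows from the fact that the finite-level truncations of the perfect coordinates on $\tilde H^{\ad}_\eta$ pull back to generate the rings of functions on $\mathcal{M}_n\subset (G[p^n]^{\ad}_\eta)^h$ up to dense image in the strongly completed structure sheaf of $\mathcal{M}_\infty$, via Proposition \ref{ConstrInvLimits}.
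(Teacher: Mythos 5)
Your identification of $\mathcal{M}_\infty$ with the locally closed subfunctor of $(\tilde{H}^{\ad}_\eta)^h$ cut out by conditions (i) and (ii), and the ensuing deduction of preperfectoidness from Corollary~\ref{tG-G0}, Example~\ref{ExmpBall}, and Proposition~\ref{LocClosedPreperfectoid}, follows the same route as the paper's Lemma~\ref{AltDescription} (including the need to factor through the image $U$ of the period morphism, invoke Theorem~\ref{ImagePeriodMorphism}, and then glue a $p$-divisible group $G$ over the relevant open locus using the lattices cut out by the $s_i$). That part is sound, though you should note that the paper deliberately \emph{does not} use this to prove representability: it instead exhibits $\mathcal{M}_\infty$ as the pullback of the open-and-closed immersion $\mathcal{M}_1\hookrightarrow (G[p]^{\ad}_\eta)^h$ along the reduction $(T(G)^{\ad}_\eta)^h\to (G[p]^{\ad}_\eta)^h$. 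This cartesian square gives representability immediately and independently of Theorem~\ref{ImagePeriodMorphism}, and, crucially, is also what drives the $\sim$-statement.

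The genuine gap is in your argument for $\mathcal{M}_\infty\sim\varprojlim_n\mathcal{M}_n$. A bijection on $(K,K^+)$-valued points does not give the required homeomorphism $|\mathcal{M}_\infty|\cong\varprojlim|\mathcal{M}_n|$ (Definition~\ref{limitdef} is a condition on the whole spectral space, not on classical points, and these spaces are not quasicompact, so the ``usual quasicompactness argument'' you invoke does not apply). Likewise, the density condition cannot be extracted from ``truncations of perfect coordinates on $\tilde{H}^{\ad}_\eta$'': the $\mathcal{M}_n$ live over $\mathcal{M}^{\ad}_\eta$ and their coordinate rings come from the $G[p^n]$, not from $\tilde{H}$, and the dense image is required in $\OO(\mathcal{M}_\infty)$ itself, not in the strong completion. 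The correct argument is the paper's: by Proposition~\ref{ConstrInvLimits}, the formal scheme $T(G) = \Spf(\widehat{\varinjlim R_n})$ satisfies $T(G)^{\ad}\sim\varprojlim G[p^n]^{\ad}$, hence $T(G)^{\ad}_\eta\sim\varprojlim G[p^n]^{\ad}_\eta$ by Proposition~\ref{BaseChangeInvLimit} (this is Proposition~\ref{TateInvLimit}); then, because $\mathcal{M}_\infty$ is the base change of $(T(G)^{\ad}_\eta)^h$ along $\mathcal{M}_1\to(G[p]^{\ad}_\eta)^h$ via the cartesian square, and $\mathcal{M}_n$ is correspondingly the base change of $(G[p^n]^{\ad}_\eta)^h$, Proposition~\ref{BaseChangeInvLimit} gives the $\sim$-relation for the tower of Rapoport--Zink spaces directly. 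Without the cartesian square, you have no clean mechanism to transport the $\sim$-statement down from $T(G)$ to $\mathcal{M}_\infty$, and the direct verification you sketch does not go through as written.
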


\begin{proof} We start by proving that $\mathcal{M}_\infty$ is representable. In fact, we have the adic space $T(G)^\ad_\eta$ over $\mathcal{M}^\ad_\eta$, and a morphism
\[
\mathcal{M}_\infty\to (T(G)^\ad_\eta)^h\ ,
\]
where $(T(G)^\ad_\eta)^h$ denotes the $h$-fold fibre product of $T(G)^\ad_\eta$ over $\mathcal{M}^\ad_\eta$. We claim that there is a cartesian square
\[\xymatrix{
\mathcal{M}_\infty\ar[r]\ar[d]& (T(G)^\ad_\eta)^h\ar[d]\\
\mathcal{M}_1\ar[r]& (G[p]^\ad_\eta)^h\ .
}\]
In fact, recall that the morphism $\mathcal{M}_1\to (G[p]^\ad_\eta)^h$ is an open and closed embedding. In particular, one can check whether $\mathcal{M}_\infty\to (G[p]^\ad_\eta)^h$ factors over $\mathcal{M}_1$ on points, where it follows directly from the definition. Moreover, it is clear that if $\alpha(x)\mod p\from \F_p^h\to G[p]^\ad_\eta(K,K^+)$ is an isomorphism, then so is $\alpha(x)\from \Z_p^h\to T(G)^\ad_\eta(K,K^+)$. This proves that the square is cartesian, so that $\mathcal{M}_\infty$ is representable.

Moreover, $\mathcal{M}_\infty\sim \varprojlim_n \mathcal{M}_n$ follows from $(T(G)^\ad_\eta)^h\sim \varprojlim (G[p^n]^\ad_\eta)^h$, which is Proposition \ref{TateInvLimit}.

It remains to prove that $\mathcal{M}_\infty$ is preperfectoid. For this, we start with a different description of $\mathcal{M}_\infty$. The universal cover $\tilde{H}$ of $H$ lifts uniquely to $W(k)$, and we still denote by $\tilde{H}$ this lift. It gives rise to an adic space $\tilde{H}^\ad_\eta$ over $\Spa(W(k)[\frac 1p],W(k))$, equipped with a quasi-logarithm map
\[
\qlog\from \tilde{H}^\ad_\eta\to M(H)\otimes \Ga\ .
\]
Moreover, $\tilde{H}^\ad_\eta$ is preperfectoid, cf. Example \ref{ExmpBall} and Corollary \ref{tG-G0} in the connected case. The general case is easily deduced.

\begin{defn} \label{Minfprimedef} Consider the functor $\mathcal{M}_\infty^\prime$ on complete affinoid $(W(k)[\frac 1p],W(k))$-algebras, sending $(R,R^+)$ to the set of those $h$-tuples
\[
(s_1,\ldots,s_h)\in \tilde{H}^\ad_\eta(R,R^+)
\]
for which the following conditions are satisfied:
\begin{altenumerate}
\item[{\rm (i)}] The matrix $(\qlog(s_1),\ldots,\qlog(s_h))\in (M(H)\otimes R)^h$ is of rank exactly $h-d$; let $M(H)\otimes R\to W$ be the induced finite projective quotient of rank $d$.
\item[{\rm (ii)}] For all geometric points $x=\Spa(C,\OO_C)\to \Spa(R,R^+)$, the sequence
\[
0\to \Q_p^h\xrightarrow{(s_1,\ldots,s_h)} \tilde{H}^\ad_\eta(C,\OO_C)\to W\otimes_R C\to 0
\]
is exact.
\end{altenumerate}
\end{defn}

We remark that the condition on the rank is saying more precisely that all minors of size $\geq h-d+1$ vanish, and that the minors of size $h-d$ generate the unit ideal.

\begin{lemma}\label{AltDescription} There is a natural isomorphism of functors $\mathcal{M}_\infty\cong \mathcal{M}_\infty^\prime$. Moreover, $\mathcal{M}_\infty^\prime\subset (\tilde{H}^\ad_\eta)^h$ is a locally closed subfunctor.
\end{lemma}

\begin{proof} We start by constructing a map $\mathcal{M}_\infty\to \mathcal{M}_\infty^\prime$. So, let $(R,R^+)$ be a complete affinoid $(W(k)[\frac 1p],W(k))$-algebra, and $(G,\rho,\alpha)\in \mathcal{M}_\infty(R,R^+)$, where $(G,\rho)$ is defined over some open and bounded subring $R_0\subset R^+$. Using $\rho$, we get an identification $\tilde{H}_{R_0}\cong \tilde{G}$. Therefore, we get a map
\[
\alpha\from \Z_p^h\to \tilde{G}^\ad_\eta(R,R^+) = \tilde{H}^\ad_\eta(R,R^+)\ ,
\]
i.e. $h$ sections $s_1,\ldots,s_h\in \tilde{H}^\ad_\eta(R,R^+)$. We have to check that they satisfy conditions (i) and (ii) above. Condition (ii) is clearly satisfied, when $W=\Lie G\otimes R$. For condition (i), note that we have a commutative diagram
\[\xymatrix{
\tilde{H}^\ad_\eta(R,R^+)\ar[r]\ar[d]^{\qlog} & G^\ad_\eta(R,R^+)\ar[d]^{\log}\\
M(H)\otimes R\ar[r] & \Lie G\otimes R\ .
}\]
This implies that $\qlog(s_i)\in \ker(M(H)\otimes R\to \Lie G\otimes R)$, which is an $h-d$-dimensional projective $R$-submodule. Therefore, all minors of size $\geq h-d+1$ vanish. To check that the minors of size $h-d$ generate the unit ideal, it suffices to check for all $x=\Spa(K,K^+)\in \Spa(R,R^+)$, the sections $\qlog(s_1),\ldots,\qlog(s_h)\in M(H)\otimes K$ generate an $h-d$-dimensional subspace. We can assume that $K=C$ is algebraically closed, and that $K^+=\OO_C$. In that case, this follows from the surjectivity of
\[
T(G)\otimes C\to (\Lie G^\vee)^\vee\otimes C\ ,
\]
cf. Proposition \ref{PDivGroupGivesModVectBund} (iii). Note that these considerations also show that the quotient of $M(H)\otimes R$ by the submodule generated by $\qlog(s_1),\ldots,\qlog(s_h)$ is exactly $W=\Lie G\otimes R$.

Now we will construct the inverse functor $\mathcal{M}_\infty^\prime\to \mathcal{M}_\infty$. First, observe that $\mathcal{M}_\infty^\prime\to \Flag$ factors over $U\subset \Flag$, where $U$ denotes the image of the period morphism. As $U\subset \Flag$ is partially proper, it is enough to check this on geometric points $\Spa(C,\OO_C)$, i.e. those of rank $1$. We have the vector bundle $\mathscr{E} = \mathscr{E}(H)$ on the Fargues--Fontaine curve $X$. Let
\[
\mathscr{F} = \ker(\mathscr{E}\to i_{\infty\ast} W)\ .
\]
Then $H^0(X,\mathscr{F})\cong \Q_p^h$ by condition (ii). We claim that $\mathscr{F}\cong \OO_X^h$. As $\mathscr{F}$ is of rank $h$ and degree $0$, if $\mathscr{F}\not\cong \OO_X^h$, then there exists some $\lambda>0$ such that $\OO_X(\lambda)\hookrightarrow \mathscr{F}$. But $\dim_{\Q_p} H^0(X,\OO_X(\lambda))=\infty$ for $\lambda>0$, contradiction. By Theorem \ref{ImagePeriodMorphism}, this implies the desired factorization.

Next, consider the functor $\mathcal{F}$ defined similarly to $\mathcal{M}_\infty^\prime$, but with condition (ii) weakened to the condition that the quotient $W$ of $M(H)\otimes R$ defines a point of $U\subset \Flag$. Then $\mathcal{F}$ is clearly locally closed in $(\tilde{H}^\ad_\eta)^h$. Moreover, there is a natural morphism $\mathcal{F}\to U$, so that locally on $\Spa(R,R^+)\subset \mathcal{F}$, one has a deformation $(G^\prime,\rho^\prime)$ of $H$ to an open and bounded $W(k)$-subalgebra $R_0\subset R^+$. However, $(G^\prime,\rho^\prime)$ is only well-defined up to quasi-isogeny over $R_0$; i.e. we have a section
\[
(G^\prime,\rho^\prime)\in (\Def_H^\isog)^\ad_\eta(\mathcal{F})\ .
\]
Also observe that the exact sequence (all of whose objects depend only on $G^\prime$ up to quasi-isogeny)
\[
0\to V(G^\prime)^\ad_\eta\to \tilde{G}^{\prime\ad}_\eta\cong \tilde{H}^\ad_\eta\to \Lie G^\prime\otimes \Ga
\]
implies that $s_1,\ldots,s_h\in \tilde{H}^\ad_\eta(\mathcal{F})\cong \tilde{G}^{\prime\ad}_\eta(\mathcal{F})$ lie in the subset $V(G^\prime)^\ad_\eta(\mathcal{F})$.

Let $\mathcal{F}^\prime\subset \mathcal{F}$ be the subfunctor such that for all points $x=\Spa(K,K^+)\in \Spa(R,R^+)$, the map $\Q_p^h\to \tilde{H}^\ad_\eta(K,K^+)$ is injective. Clearly, $\mathcal{M}_\infty^\prime\to \mathcal{F}$ factors over $\mathcal{F}^\prime$.

\begin{prop} The map $\mathcal{F}^\prime\to \mathcal{F}$ is an open embedding. Moreover, there is a unique $p$-divisible group $G$ over $\mathcal{F}^\prime$ quasi-isogenous to $G^\prime$, such that for all $x=\Spa(K,K^+)\in \mathcal{F}^\prime$, the map
\[
T(G)^\ad_\eta(K,K^+)\to V(G^\prime)^\ad_\eta(K,K^+)
\]
is an isomorphism onto $\bigoplus_{i=1}^h \Z_p s_i(x)$.
\end{prop}

\begin{proof} Let $\Spa(R,R^+)\subset \mathcal{F}$ be an open subset, over which there exists $(G^\prime,\rho^\prime)$ over an open and bounded $W(k)$-subalgebra $R_0\subset R^+$. Moreover, let $x=\Spa(K,K^+)\in \Spa(R,R^+)$ be a point such that $s_1(x),\ldots,s_h(x)\in V(G^\prime)^\ad_\eta(K,K^+)$ are linearly independent. Without loss of generality, we assume that the $\Z_p$-lattice they generate contains $T(G^\prime)^\ad_\eta(K,K^+)$. Then this lattice inside $V(G^\prime)^\ad_\eta(K,K^+)$ corresponds to a quotient $G^\prime(x)\to G(x)$ of $G^\prime(x)$ by a finite locally free group over $K$, and hence over $K^+$, as $K^+$ is a valuation ring.

\begin{lemma} The category of finite locally free groups over $K^+$ is equivalent to the $2$-categorical direct limit of the categories of finite locally free group schemes over $\OO_X^+(U)$, $x\in U$.
\end{lemma}

\begin{proof} We have to show that the categories of finite locally free groups over $K^+$ and $\OO_{X,x}^+$ are equivalent. The same statement for $K$ and $\OO_{X,x}$ follows from the fact that the categories of finite \'{e}tale covers are equivalent, as was observed e.g. in \cite[Lemma 7.5 (i)]{Sch}. As
\[
\OO_{X,x}^+ = \{f\in \OO_{X,x}\mid f(x)\in K^+\}\ ,
\]
where $f(x)\in K$ is the evaluation of $f$, one readily deduces that the additional integral structure of giving a finite locally free group over $K^+$ (resp. $\OO_{X,x}^+$) inside a given finite locally group over $K$ (resp. $\OO_{X,x}$) is the same.
\end{proof}

It follows that after replacing $X$ by an open neighborhood of $x$, one can extend $G^\prime(x)\to G(x)$ to a quotient $G^\prime\to G$ over $X$. It follows that
\[
\bigoplus_{i=1}^h \Z_p s_i(x)\to V(G)^\ad_\eta(K,K^+)
\]
is an isomorphism onto $T(G)^\ad_\eta(K,K^+)$. We claim that the same is true on a small neighborhood of $x$. As $T(G)^\ad_\eta\subset V(G)^\ad_\eta$ is an open subset, one checks directly that after replacing $X$ by a small neighborhood of $x$, the sections $s_i\in V(G)^\ad_\eta(R,R^+)$ lie in $T(G)^\ad_\eta(R,R^+)\subset V(G)^\ad_\eta(R,R^+)$. One gets sections
\[
\bar{s}_1,\ldots,\bar{s}_h\in G[p]^\ad_\eta(R,R^+)\ ,
\]
and after passage to an open subset, one may assume that they are linearly independent over $\F_p$ (as the corresponding locus in $(G[p]^\ad_\eta)^h$ is open and closed). This shows that in an open neighborhood of $x$, $G$ is as desired. It is easy to check that a group $G$ with the desired property is unique: It suffices to do this locally. At points, it is clear, and then the previous lemma extends this uniqueness to a small neighborhood.

In particular, it follows that in this open neighborhood, $s_1,\ldots,s_h$ are $\Q_p$-linearly independent, whereby $\mathcal{F}^\prime\subset \mathcal{F}$ is open. Moreover, locally on $\mathcal{F}^\prime$, we have constructed the desired group $G$, and by uniqueness, they glue.
\end{proof}

The proposition constructs a deformation $(G,\rho)$ of $H$ to $\mathcal{F}^\prime$. Moreover, we have
\[
s_1,\ldots,s_h\in V(G)^\ad_\eta(\mathcal{F}^\prime)\ .
\]
In fact, they lie in the subset $T(G)^\ad_\eta(\mathcal{F}^\prime)$. As $T(G)^\ad_\eta\subset V(G)^\ad_\eta$ is an open embedding, this can be checked on points, where it is clear. This means that $s_1,\ldots,s_h$ give a map
\[
\alpha\from \Z_p^h\to T(G)^\ad_\eta(\mathcal{F}^\prime)\ ,
\]
which is clearly an isomorphism at every point. This gives the inverse functor $\mathcal{M}_\infty^\prime\to \mathcal{F}^\prime\to \mathcal{M}_\infty$, as desired. In fact, $\mathcal{M}_\infty^\prime\cong \mathcal{F}^\prime\cong \mathcal{M}_\infty$.
\end{proof}

Now we can finish the proof that $\mathcal{M}_\infty$ is preperfectoid. Indeed, $\mathcal{M}_\infty\subset (\tilde{H}^\ad_\eta)^h$ is locally closed, and $\tilde{H}^\ad_\eta$ is preperfectoid. Thus, by Proposition \ref{LocClosedPreperfectoid}, $\mathcal{M}_\infty$ is preperfectoid.
\end{proof}

Finally, we give a description of $\mathcal{M}_\infty$ purely in terms of $p$-adic Hodge theory, on the category of perfectoid algebras. So, let us fix a perfectoid field $K$ of characteristic $0$.

\begin{prop} The functor $\mathcal{M}_\infty$ on perfectoid affinoid $(K,\OO_K)$-algebras is the sheafification of the functor sending $(R,R^+)$ to the set of $h$-tuples
\[
p_1,\ldots,p_h\in (M(H)\otimes_{W(k)} B_\cris^+(R^+/p))^{\varphi=p}\ ,
\]
for which the following conditions are satisfied.
\begin{altenumerate}
\item[{\rm (i)}] The matrix $(\Theta(p_1),\ldots,\Theta(p_h))\in (M(H)\otimes R)^h$ is of rank exactly $h-d$; let $M(H)\otimes R\to W$ be the induced quotient.
\item[{\rm (ii)}] For all geometric points $x=\Spa(C,\OO_C)\to \Spa(R,R^+)$, the sequence
\[
0\to \Q_p^h\xrightarrow{(p_1(x),\ldots,p_h(x))}(M(H)\otimes_{W(k)} B_\cris^+(\OO_C/p))^{\varphi=p}\to W\otimes_R C\to 0
\]
is exact.
\end{altenumerate}
\end{prop}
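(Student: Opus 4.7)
The plan is to combine the alternative description of $\mathcal{M}_\infty$ established in Lemma \ref{AltDescription} with a translation of the universal cover into Dieudonn\'e-theoretic data supplied by Theorem A. By that lemma, $\mathcal{M}_\infty(R,R^+)$ is cut out inside $(\tilde{H}^\ad_\eta(R,R^+))^h$ by a rank condition on the quasi-logarithms and a pointwise exactness condition. Thus it will suffice to identify $\tilde{H}^\ad_\eta(R,R^+)$ naturally with $(M(H)\otimes_{W(k)} B_\cris^+(R^+/p))^{\varphi=p}$ for perfectoid affinoid $(K,\OO_K)$-algebras $(R,R^+)$, in such a way that the quasi-logarithm map $\qlog$ is transported to the evaluation map $\Theta\from B_\cris^+(R^+/p)\to R$. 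Conditions (i) and (ii) of Definition \ref{Minfprimedef} will then translate verbatim into conditions (i) and (ii) of the proposition.

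The key input for the identification is that $R^+/p$ is f-semiperfect: it is the quotient of the perfect ring $R^{\flat+}$ by the principal ideal generated by an element lifting a compatible system of $p$-power roots of $p$ (up to units). Since $\tilde{H}$ is a crystal on the infinitesimal site (Proposition \ref{UnivCoverRepr}), we have $\tilde{H}^\ad_\eta(R,R^+)=\tilde{H}(R^+)=\tilde{H}(R^+/p)$. By Theorem A applied to $R^+/p$,
\[
\tilde{H}(R^+/p)=\Hom_{R^+/p}(\Q_p/\Z_p,\,H\otimes_k R^+/p)[p^{-1}]=\Hom(\MM(\Q_p/\Z_p),\MM(H))(A_\cris(R^+/p))[p^{-1}].
\]
Since $\MM(\Q_p/\Z_p)$ is free of rank one with Frobenius acting as multiplication by $p$, this is naturally identified with $(M(H)\otimes_{W(k)} B_\cris^+(R^+/p))^{\varphi=p}$, by sending a morphism to the image of the canonical generator. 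Under this identification, Lemma \ref{ExplicitDieudonne} says that the map $\tilde{H}(R^+/p)\to \MM(H)(A_\cris(R^+/p))[p^{-1}]$ defined by evaluation at the canonical basis element agrees with $\qlog$; composing with $\Theta$ gives the comparison between the rank and exactness conditions on both sides.

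The remaining point is the sheafification. The functor $\mathcal{M}_\infty$ is a sheaf, being representable by an adic space (Theorem \ref{MInftyExists}), but the right-hand functor as written involves only the global sections $(M(H)\otimes B_\cris^+(R^+/p))^{\varphi=p}$ and need not be a sheaf in the perfectoid topology on $\Spa(R,R^+)$. Its sheafification recovers the sheaf of sections of the preperfectoid adic space $\tilde{H}^\ad_\eta$ on the perfectoid site, by the identification above; imposing the (local) conditions (i) and (ii) then gives precisely the sheaf $\mathcal{M}_\infty$. The main obstacle is really the appeal to Theorem A, which carries all the weight; once that input is in hand, the rest is a formal verification using Lemma \ref{ExplicitDieudonne} and the description of $\mathcal{M}_\infty$ via universal covers.
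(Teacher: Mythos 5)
Your proof is correct and follows the same route as the paper: invoke Lemma \ref{AltDescription} to describe $\mathcal{M}_\infty$ inside $(\tilde{H}^\ad_\eta)^h$, then use the fact that $R^+/p$ is f-semiperfect together with Theorem~A and Lemma~\ref{ExplicitDieudonne} to identify $\tilde{H}^\ad_\eta(R,R^+)=\tilde{H}(R^+/p)$ with $(M(H)\otimes_{W(k)}B_\cris^+(R^+/p))^{\varphi=p}$, carrying $\qlog$ to $\Theta$. Your remarks about the sheafification are a bit more explicit than the paper's terse treatment, which simply asserts the equality of $(R,R^+)$-points, but the substance is the same.
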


\begin{proof} We have
\[
\tilde{H}^\ad_\eta(R,R^+) = \tilde{H}(R^+) = \tilde{H}(R^+/p) = (M(H)\otimes_{W(k)} B_\cris^+(R^+/p))^{\varphi=p}\ ,
\]
by Theorem \ref{fullyfaithful}. It follows that the datum of $p_1,\ldots,p_h$ is equivalent to the datum of $h$ sections
\[
s_1,\ldots,s_h\in \tilde{H}^\ad_\eta(R,R^+)\ ,
\]
and the conditions (i) and (ii) clearly correspond.
\end{proof}

\begin{Cor}\label{CPoints} Let $C$ be an algebraically closed complete nonarchimedean extension of $\Q_p$, and let $X$ be the associated Fargues--Fontaine curve. Let $\mathscr{F}=\OO_X^h$ and $\mathscr{E} = \mathscr{E}(H)$ be associated vector bundles on $X$. Then $\mathcal{M}_\infty(C,\OO_C)$ is given by the set of morphisms $f\from \mathscr{F}\to \mathscr{E}$ that give rise to a modification
\[
0\to \mathscr{F}\buildrel f\over \to \mathscr{E}\to i_{\infty\ast} W\to 0\ ,
\]
where $W$ is some $C$-vector space.
\end{Cor}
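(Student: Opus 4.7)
The plan is to recognize the corollary as the geometric repackaging of the previous proposition on the Fargues--Fontaine curve, using the standard dictionary between morphisms from $\mathscr{F}=\OO_X^h$ and $h$-tuples of global sections of the target. Concretely, $\Hom_{\OO_X}(\OO_X^h,\mathscr{E}) = H^0(X,\mathscr{E})^h$, and by the definition of $\mathscr{E}=\mathscr{E}(H)$ we have the canonical identification $H^0(X,\mathscr{E}) = (M(H)\otimes_{W(k)} B_\cris^+)^{\varphi = p}$. Thus an $h$-tuple $(p_1,\dots,p_h)$ as in the previous proposition is the same datum as a morphism $f\from\mathscr{F}\to\mathscr{E}$, with $p_i = f(e_i)$ for the standard basis $e_i$ of $\OO_X^h$. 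Everything then reduces to checking that the two conditions (i) and (ii) of the proposition are equivalent to the statement that $f$ extends to a short exact sequence of the stated form.

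For the direction where $f$ is assumed to be a modification $0\to\mathscr{F}\xrightarrow{f}\mathscr{E}\to i_{\infty\ast}W\to 0$, I would first take global sections. Using $H^0(X,\mathscr{F})=\Q_p^h$, the vanishing $H^1(X,\mathscr{F})=H^1(X,\OO_X^h)=0$, and $H^0(X,i_{\infty\ast}W)=W$, the long exact sequence collapses to $0\to\Q_p^h\to H^0(X,\mathscr{E})\to W\to 0$, which is condition (ii). For condition (i), I would localize at $\infty$: since $f$ is injective and its cokernel $i_{\infty\ast}W$ is annihilated by $\gothm_\infty$, the map $f_\infty\from \mathscr{F}_\infty\to\mathscr{E}_\infty$ of free $\OO_{X,\infty}$-modules of rank $h$ reduces mod $\gothm_\infty$ to $i_\infty^\ast f\from C^h\to M(H)\otimes C$ with cokernel $W$ of dimension $d$ and kernel identified with $\mathrm{Tor}_1^{\OO_{X,\infty}}(i_{\infty\ast}W,C)=W$, so the rank is exactly $h-d$. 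The equality $d=\dim H$ is then forced by the degree balance $\deg\mathscr{E}-\deg\mathscr{F}=d$.

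For the converse, I would not attempt a bare-hands argument, but instead feed the data through the previous proposition and Proposition \ref{PDivGroupGivesModVectBund}. Given $(p_1,\dots,p_h)$ satisfying (i) and (ii), the previous proposition produces a point $(G,\rho,\alpha)\in\mathcal{M}_\infty(C,\OO_C)$: a $p$-divisible group $G$ over $\OO_C$, a quasi-isogeny $\rho\from H\otimes_{\bar{\F}_p}\OO_C/p\to G\otimes_{\OO_C}\OO_C/p$, and an isomorphism $\alpha\from \Z_p^h\xrightarrow{\sim}T(G)$. Proposition \ref{PDivGroupGivesModVectBund} applied to $G$ gives a modification
\[
0\to T(G)\otimes_{\Z_p}\OO_X\to \mathscr{E}(G\otimes\OO_C/p)\to i_{\infty\ast}(\Lie G\otimes C)\to 0,
\]
and the identification $\mathscr{E}(G\otimes\OO_C/p)\cong\mathscr{E}(H)=\mathscr{E}$ induced by $\rho$ (available by Theorem \ref{FactsFarguesFontaine}) together with $\alpha$ (which upgrades $T(G)\otimes\OO_X$ to $\OO_X^h=\mathscr{F}$) converts this into a modification of $\mathscr{E}$ by $\mathscr{F}$, with $W=\Lie G\otimes C$.

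The only thing to check, and the main place care is needed, is that the morphism produced by this modification is precisely the $f$ that we started with. This is a compatibility statement: under the identifications $H^0(X,\mathscr{F})=T(G)[1/p]$ and $H^0(X,\mathscr{E})=\tilde H(\OO_C)=\tilde G(\OO_C)$, the image of $e_i$ under the modification map equals the image of $\alpha(e_i)\in T(G)\subset\tilde G(\OO_C)$. This is exactly the recipe by which $(p_1,\dots,p_h)$ was extracted from $(G,\rho,\alpha)$ in Lemma \ref{AltDescription}, so it holds by construction. Running the two directions together gives a natural bijection between $\mathcal{M}_\infty(C,\OO_C)$ and the set of modifications of $\mathscr{E}$ by $\mathscr{F}$ in the stated form, completing the proof.
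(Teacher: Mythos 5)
The paper states this corollary without proof, treating it as an immediate consequence of the preceding proposition, so there is no paper argument to compare directly; your proposal correctly supplies the missing details. The identification $\Hom_{\OO_X}(\OO_X^h,\mathscr{E}) = H^0(X,\mathscr{E})^h = \left((M(H)\otimes B_{\cris}^+)^{\varphi = p}\right)^h$ is exactly the dictionary needed, and your forward direction (modification implies conditions (i) and (ii)) is sound: $H^1(X,\OO_X)=0$ gives (ii) on global sections, and the computation at $\infty$ together with the degree balance $\dim_C W = \deg\mathscr{E}(H) - \deg\OO_X^h = d$ gives (i). For the converse you route through the $p$-divisible group, chaining Lemma \ref{AltDescription} and Proposition \ref{PDivGroupGivesModVectBund} and then verifying the resulting modification map agrees with $f$ on global sections; this is correct, and the compatibility check at the end is exactly the right thing to verify. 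It is, however, slightly more roundabout than necessary: the proof of Lemma \ref{AltDescription} already contains a direct bundle-theoretic argument that you could reuse here verbatim. Namely, set $\mathscr{F}^\prime = \ker(\mathscr{E}\to i_{\infty\ast}W)$ where the map to $i_{\infty\ast}W$ is built from condition (i); then $\mathscr{F}^\prime$ has rank $h$ and degree $0$, and condition (ii) gives $H^0(X,\mathscr{F}^\prime)\cong\Q_p^h$ finite-dimensional, which forces $\mathscr{F}^\prime\cong\OO_X^h$ because any positive-slope summand would contribute an infinite-dimensional $H^0$; finally $f\from\OO_X^h\to\mathscr{F}^\prime$ is an isomorphism since it is one on global sections. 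Both routes are valid: yours reuses the already-proved representability machinery, while the direct one stays entirely inside the Fargues--Fontaine picture and makes no appeal to $p$-divisible groups over $\OO_C$.
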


\subsection{The Lubin-Tate space at infinite level}
In this section, $H$ is a connected $p$-divisible group over $k$ of dimension 1 and height $h$.  We assume that $k$ is algebraically closed, so that $H$ only depends on $h$.  Let $\mathcal{M}_\infty^h$ be the associated infinite-level Rapoport-Zink space.   We show here that $\mathcal{M}_\infty^h$ is cut out from $(\tilde{H}^{\ad}_{\eta})^h$ by a single determinant condition.

Let us begin with the case of $h=1$, so that $H=\mu_{p^\infty,k}$.  Then $H$ lifts uniquely to the $p$-divisible group $\mu_{p^\infty}$ over $W(k)$.   We have $M(\mu_{p^\infty,k})=\Lie\mu_{p^\infty}=W(k)$.   The quasi-logarithm map reduces to the usual logarithm map, which sits inside the exact sequence
\[
\xymatrix{
0\ar[r]&
V(\mu_{p^\infty})_{\eta}^{\ad} \ar[r] &
\tilde{\mu}_{p^\infty,\eta}^{\ad} \ar[r]^{\log} &
\mathbf{G}_a
}
\]
of sheaves of $\Q_p$-vector spaces on $\CAff_{\Spa(W(k)[1/p],W(k))}$.  Now let $(R,R^+)$ be a complete affinoid $(W(k)[\frac{1}{p}],W(k))$-algebra.  By Lemma \ref{AltDescription}, $\mathcal{M}^1_\infty(R,R^+)$ is the set of sections $s\in \tilde{\mu}_{p^\infty}^{\ad}(R,R^+)$ which satisfy both $\log(s)=0$ and the condition that $s(x)\neq 0$ for all points $x=\Spa(K,K^+)\in\Spa(R,R^+)$.  This means that
\[
\mathcal{M}^1_\infty = V(\mu_{p^\infty})^{\ad}_{\eta}\backslash\set{0}\ .
\]
Either from this description, or from the description of $\mathcal{M}^1_\infty$ as classifying triples $(G,\rho,\alpha)$, it follows that
\[
\mathcal{M}^1_\infty = \bigsqcup_{n\in\Z} \Spa(L,\OO_L)\ ,
\]
where $L$ is the completion of $W(k)[\frac 1p](\mu_{p^\infty})$. Indeed, the decomposition by $n\in \Z$ comes from the height of $\rho$. Assume that $\rho$ is of height $0$; then $G=\mu_{p^\infty}$ canonically, by rigidity of multiplicative $p$-divisible groups. Now $\alpha\from \Z_p\to T\mu_{p^\infty}$ amounts to the choice of a compatible system of $p$-power roots of unity, whence the result.

Now return to the general case, so that $H$ has dimension 1 and height $h$.  We construct a determinant morphism from the $h$-fold product of $\tilde{H}^{\ad}_{\eta}$ into $\tilde{\mu}_{p^\infty,\eta}^{\ad}$. Let $\wedge^h M(H)$ be the top exterior power of the $W(k)$-module $M(H)$;  then $\wedge^h M(H)$ is free of rank $1$ with Frobenius slope $1$, so that $\wedge^h M(H)\isom M(\mu_{p^\infty,k})$.

 Write $\tilde{H}$ for the universal cover of $H$, considered over the base $W(k)$. By Corollary \ref{tG-G0}, $\tilde{H}^h$ is representable by a formal scheme $\Spf R$, where $R=W(k)\powerseries{X_1^{1/p^\infty},\dots,X_h^{1/p^\infty}}$. In particular $R/p$ is an inverse limit of f-semiperfect rings. The projection maps $\tilde{H}^h\to\tilde{H}$ give $h$ canonical elements $s_1,\dots,s_h\in \tilde{H}(R/p)$. Appealing to Theorem \ref{fullyfaithful}, the Dieudonn\'e module functor gives us isomorphisms
\[
\alpha_H\from \tilde{H}(S)\to (M(H)\otimes_{W(k)} B_{\cris}^+(S))^{\phi=1}
\]
and
\[
\alpha_{\mu_{p^\infty}}\from \tilde{\mu}_{p^\infty}(S)\to (M(\mu_{p^\infty,k})\otimes B_{\cris}^+(S))^{\phi=1}
\]
for any f-semiperfect ring $S$. We get an element $\alpha_{\mu_{p^\infty}}^{-1}(\alpha_H(s_1)\wedge\dots\wedge\alpha_H(s_h))$ of $\tilde{\mu}_{p^\infty}(S)$. As $R/p$ is an inverse limit of f-semiperfect rings, we get a morphism of formal schemes $\det\from \tilde{H}^h\to \tilde{\mu}_{p^\infty}$ over $\Spec k$, and then by rigidity also over $\Spf W(k)$. Passing to generic fibres, we get a morphism of adic spaces
\[
\det\from (\tilde{H}^{\ad}_{\eta})^h\to\tilde{\mu}_{p^\infty,\eta}^{\ad}
\]
which is $\Q_p$-alternating when considered as a map between sheaves of $\Q_p$-vector spaces on $\CAff_{\Spa(W(k)[1/p],W(k))}$.  This morphism makes the diagram
\[
\xymatrix{
(\tilde{H}^{\ad}_{\eta})^h \ar[r]^{\det} \ar[d]_{\qlog} & \tilde{\mu}_{p^\infty,\eta}^{\ad} \ar[d]^{\log} \\
\left( M(H)\otimes \mathbf{G}_a\right)^h \ar[r]_{\det} & M(\mu_{p^\infty,k})\otimes\mathbf{G}_a
}
\]
commute.

\begin{Theorem} There is a cartesian diagram
\[
\label{LTdiagram}
\xymatrix{
\mathcal{M}^h_{\infty} \ar[r]^{\det}\ar[d]  & \mathcal{M}^1_\infty \ar[d] \\
(\tilde{H}^{\ad}_{\eta})^h \ar[r]_{\det} & \tilde{\mu}_{p^\infty,\eta}^{\ad} .
}
\]
\end{Theorem}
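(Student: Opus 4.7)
The plan is to unpack both sides of the square using the alternative descriptions of $\mathcal{M}^h_\infty \subset (\tilde{H}^\ad_\eta)^h$ and $\mathcal{M}^1_\infty \subset \tilde{\mu}_{p^\infty,\eta}^\ad$ furnished by Lemma \ref{AltDescription}: the former is cut out by the condition that the matrix $(\qlog(s_1),\ldots,\qlog(s_h))$ have rank exactly $h-1$, together with pointwise $\Q_p$-linear independence of the $s_i$ in $\tilde{H}^\ad_\eta(C,\OO_C)$ yielding the exact sequence of condition (ii); the latter is cut out by $\log(s)=0$ together with $s(x)\neq 0$ at every point of the source.

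For the forward direction---that $\det$ sends $\mathcal{M}^h_\infty$ into $\mathcal{M}^1_\infty$---I would use the commutative square relating $\qlog$ and $\log$ under $\det$. The rank-$(h-1)$ hypothesis forces the single $h\times h$ minor $\det(\qlog(s_i))$ to vanish, whence $\log(\det(s_1,\ldots,s_h))=0$. At a geometric point $(C,\OO_C)$, pointwise $\Q_p$-linear independence of the $s_i$, combined with the fact that via Corollary \ref{CPoints} and Proposition \ref{PDivGroupGivesModVectBund} they lie in a trivial rank-$h$ sub-bundle $\mathscr{F}\cong \OO_{X_C}^h$ of $\mathscr{E}(H)$, ensures that $s_1\wedge\cdots\wedge s_h$ is a nonzero global section of $\wedge^h \mathscr{F} \hookrightarrow \wedge^h \mathscr{E}(H)\cong \OO_{X_C}(1)$, so $\det(s_i)(x)\neq 0$ in $\tilde{\mu}_{p^\infty}(\OO_C)$.

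The main task is the reverse direction: given $(s_1,\ldots,s_h)\in (\tilde{H}^\ad_\eta)^h(R,R^+)$ whose determinant lies in $\mathcal{M}^1_\infty(R,R^+)$, produce a point of $\mathcal{M}^h_\infty$. I would work at a geometric point $(C,\OO_C)$ first. The hypothesis says $\det(s_i)\in \tilde{\mu}_{p^\infty}(\OO_C) = H^0(X_C,\OO_{X_C}(1))$ lies in the kernel of $\log$ and is nonzero, so it equals a nonzero $\Q_p$-multiple of Fontaine's element $t=\log([\epsilon])$, which has a simple zero at $\infty$ and nowhere else. The tuple $(s_1,\ldots,s_h)$ gives a map $\OO_{X_C}^h \to \mathscr{E}(H)$ whose top exterior power sends $1\mapsto s_1\wedge\cdots\wedge s_h$, giving an injection $\OO_{X_C}\hookrightarrow \OO_{X_C}(1)$ with cokernel supported only at $\infty$ of length $1$. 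This forces the original map to be an injection $\OO_{X_C}^h\hookrightarrow \mathscr{E}(H)$ with cokernel of the form $i_{\infty*}W$ for a one-dimensional $C$-vector space $W$, realizing a point of the image of the period morphism via Theorem \ref{ImagePeriodMorphism}. Tracing through, $W$ is identified with the cokernel of $M(H)\otimes C \to \mathscr{E}(H)_\infty$ coming from $(\qlog(s_i))$, so this matrix has rank exactly $h-1$ pointwise, and $\Q_p$-linear independence of $s_1,\ldots,s_h$ (immediate from nonvanishing of their wedge) yields the exactness required in (ii).

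To promote these pointwise statements globally on $\Spa(R,R^+)$: vanishing of the $h\times h$ minor on all of $\Spa(R,R^+)$ is immediate from $\log(\det(s_i))=0$, while the fact that the $(h-1)\times(h-1)$ minors generate the unit ideal (and not merely have no common geometric zero) follows from the Nullstellensatz for affinoid adic spaces. The main obstacle is the vector-bundle analysis at each geometric point---tracking how the sub-bundle spanned by the $s_i$ sits inside $\mathscr{E}(H)$ using only the information that its top exterior wedge vanishes precisely at $\infty$---but this is handled by the simplicity of the zero of $t$ combined with the rank and degree constraints on the Fargues--Fontaine curve.
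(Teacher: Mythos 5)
Your proposal is correct and follows the paper's own proof essentially step for step: the forward direction uses the commuting square for $\qlog$ vs. $\log$ under $\det$ together with the injection $\wedge^h\mathscr{F}\hookrightarrow\wedge^h\mathscr{E}$ at geometric points via Corollary~\ref{CPoints}, and the reverse direction interprets the tuple at a geometric point as a map $\OO_{X_C}^h\to\mathscr{E}(H)$, uses that $\det(s_i)$ is a nonzero element of $\ker(\log)=\Q_p\cdot t$ (hence vanishes simply and only at $\infty$) to produce the modification $0\to\mathscr{F}\to\mathscr{E}\to i_{\infty*}W\to 0$ with $\dim W=1$ by a degree count, and then takes global sections to verify condition (ii). The only cosmetic differences are that you invoke the analytic Nullstellensatz by name where the paper merely asserts the rank condition may be checked at geometric points, and your phrase ``cokernel of $M(H)\otimes C\to\mathscr{E}(H)_\infty$'' should read ``cokernel of $C^h\to M(H)\otimes C=\mathscr{E}(H)_\infty$.''
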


\begin{proof} First, we have to prove that $\det\from \mathcal{M}^h_\infty\to \tilde{\mu}_{p^\infty,\eta}^\ad$ factors over $\mathcal{M}^1_\infty$. For this, suppose that the $h$-tuple $s_1,\dots,s_h$ represents a section of $\mathcal{M}_\infty^h$ over an affinoid algebra $(R,R^+)$. Since the matrix $(\qlog(s_1),\dots,\qlog(s_h))$ has rank $h-1$, we have $\det(\qlog(s_1),\ldots,\qlog(s_h))=0$, i.e. $\log(\det(s_1,\ldots,s_h))=0$. It remains to show that $\det(s_1,\ldots,s_h)(x)\neq 0$ at all points $x\in \Spa(R,R^+)$. For this, it is enough to check on geometric points $\Spa(C,\OO_C)$, where it is readily deduced from Corollary \ref{CPoints}, noting that $\bigwedge^h\mathscr{F}\injects \bigwedge^h \mathscr{E}$.

It remains to see that the square is cartesian. For this, let $s_1,\dots,s_h$ represent a section of the fibre product over $(R,R^+)$.  This means that $\det(s_1,\ldots,s_h)$ is a section of $V(\mu_{p^\infty})^{\ad}_{\eta}\backslash\set{0}$. We have $\det(\qlog(s_1),\ldots,\qlog(s_h))=\log(\det(s_1,\ldots,s_h))=0$, meaning that the matrix $(\qlog(s_1),\dots,\qlog(s_h))$ has rank at most $h-1$. Therefore to show that $s_1,\dots,s_h$ lies in $\mathcal{M}_\infty^h$, we only need to show that this matrix has rank exactly $h-1$, and that condition (ii) in Definition \ref{Minfprimedef} is satisfied. This can be checked on geometric points $\Spa(C,\OO_C)$.

The tuple $s_1,\dots,s_h$ corresponds to a map $\mathscr{F}\to\mathscr{E}$ of vector bundles on the Fargues-Fontaine curve $X$, with $\mathscr{F}=\mathcal{O}_X^h$ and $\mathscr{E}=\mathscr{E}(H)$. Since the section $s_1\wedge \dots \wedge s_h$ of $\mathscr{E}(\mu_{p^\infty})$ vanishes only at $\infty$, we have an exact sequence
\[ 0\to\mathscr{F}\to\mathscr{E}\to i_{\infty\ast}W\to 0\ ,\]
where $W$ is a $C$-vector space equal to the quotient of $M(H)\otimes_{W(k)} C$ by the span of $s_1,\dots,s_h$. Counting degrees, we get $\dim W = 1$, verifying the condition on the rank. Taking global sections, one gets the exact sequence
\[
0\to \Q_p^h\xrightarrow{(s_1,\ldots,s_h)} \tilde{H}^\ad_\eta(C,\OO_C)\to W\otimes_R C\to 0\ ,
\]
verifying condition (ii) in Definition \ref{Minfprimedef}.
\end{proof}

\begin{rmk} The space $\mathcal{M}^1_\infty=\bigsqcup_{\Z}\Spa(L,\OO_L)$ has an obvious integral model, namely $\hat{\mathcal{M}}^1_\infty=\bigsqcup_{\Z}\Spf\OO_L$.  This suggests defining an integral model $\hat{\mathcal{M}}^h_\infty$ as the fibre product of $\tilde{H}_{\eta}^h$ and $\hat{\mathcal{M}}^1_\infty$ over $\tilde{\mu}_{p^\infty}$.  On the other hand we have the integral models $\hat{\mathcal{M}}^h_n$ of finite level, due to Drinfeld. It is proved in \cite{WeinsteinSemistableModels} that $\hat{\mathcal{M}}$ is the inverse limit $\varprojlim \hat{\mathcal{M}}_n$ in the category of formal schemes over $\Spf\Z_p$.
\end{rmk}

\subsection{EL structures}
\label{ELstructures}

In this section, we generalize the previous results to Rapoport--Zink spaces of EL type. It would not be problematic to consider cases of PEL type. However, the group theory becomes more involved, and the present theory makes it possible to consider analogues of Rapoport--Zink spaces which are not of PEL type, so that we will leave this discussion to future work.

Let us fix a semisimple $\Q_p$-algebra $B$, a finite nondegenerate $B$-module $V$, and let $\G=\GL_B(V)$. Fix a maximal order $\OO_B\subset B$ and an $\OO_B$-stable lattice $\Lambda\subset V$. Further, fix a conjugacy class of cocharacters $\mu\from \Gm\to \G_{\bar{\mathbb{Q}}_p}$, such that in the corresponding decomposition of $V_{\bar{\mathbb{Q}}_p}$ into weight spaces, only weights $0$ and $1$ occur; write
\[
V_{\bar{\mathbb{Q}}_p} = V_0\oplus V_1
\]
for the corresponding weight decomposition, and set $d=\dim V_0$, $h=\dim V$, so that $h-d=\dim V_1$. Moreover, fix a $p$-divisible group $H$ of dimension $d$ and height $h$ over $k$ with action $\OO_B\to \End(H)$, such that $M(H)\otimes_{W(k)} W(k)[p^{-1}]\cong V\otimes_{\Q_p} W(k)[p^{-1}]$ as $B\otimes_{\Q_p} W(k)[p^{-1}]$-modules.

In the following, we write $\mathcal{D} = (B,V,\tilde{H},\mu)$ for the rational data, and $\mathcal{D}^\int = (\OO_B,\Lambda,H,\mu)$ for the integral data. Moreover, let $E$ be the reflex field, which is the field of definition of the conjugacy class of cocharacters $\mu$. Set $\breve{E} = E\cdot W(k)$.

\begin{defn} The Rapoport--Zink space $\mathcal{M}_{\mathcal{D}^\int}$ of EL type associated to $\mathcal{D}^\int$ is the functor on $\Nilp_{\OO_{\breve{E}}}$ sending $R$ to the set of isomorphism classes of $(G,\rho)$, where $G/R$ is a $p$-divisible group with action of $\OO_B$ satisfying the determinant condition, cf. \cite[3.23 a)]{RZ}, and
\[
\rho\from H\otimes_k R/p\to G\otimes_R R/p
\]
is an $\OO_B$-linear quasi-isogeny.
\end{defn}

Then Rapoport--Zink prove representability of $\mathcal{M}_{\mathcal{D}^\int}$.

\begin{Theorem} The functor $\mathcal{M}_{\mathcal{D}^\int}$ is representable by a formal scheme, which locally admits a finitely generated ideal of definition.
\end{Theorem}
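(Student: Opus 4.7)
The strategy is to realize $\mathcal{M}_{\mathcal{D}^\int}$ as a closed formal subscheme of the unramified Rapoport--Zink space $\mathcal{M}_H$ (base-changed to $\OO_{\breve{E}}$), which was already shown to be representable earlier in the paper. Forgetting the $\OO_B$-structure defines a forgetful natural transformation $\mathcal{M}_{\mathcal{D}^\int}\to \mathcal{M}_H\times_{\Spf W(k)} \Spf \OO_{\breve{E}}$, and the plan is to check that this is representable by a closed embedding, so that the target inherits the property of being a formal scheme with locally finitely generated ideal of definition.

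First I would cut out the locus where the $\OO_B$-action on $H$ lifts to the universal deformation. Over $\mathcal{M}_H$, one has the universal $p$-divisible group $G^{\univ}$ and quasi-isogeny $\rho^{\univ}$; for each $b\in \OO_B$, multiplication by $b$ on $H$ transports via $\rho^{\univ}$ to a quasi-isogeny $\rho^{\univ}\circ b\circ (\rho^{\univ})^{-1}$ of $G^{\univ}\otimes R/p$. By rigidity of quasi-isogenies for $p$-divisible groups (RZ, Prop.\ 2.9), the subfunctor where this quasi-isogeny lifts to an honest endomorphism of $G^{\univ}$ is represented by a closed formal subscheme; intersecting over a set of generators of $\OO_B$ (which is finitely generated as a $\Z_p$-module) gives a closed formal subscheme on which the $\OO_B$-action lifts uniquely. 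This yields a formal scheme $\mathcal{M}_H^{\OO_B}$, still locally admitting a finitely generated ideal of definition.

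Next I would impose the determinant condition. Over $\mathcal{M}_H^{\OO_B}$, $\Lie G^{\univ}$ is a finite locally free module with an $\OO_B$-action, and Kottwitz's determinant condition (as formulated in RZ 3.23(a)) asks that the characteristic polynomial of the $\OO_B$-action on $\Lie G^{\univ}$ matches that on $V_0$; equivalently, a universal polynomial in matrix entries of $b\in \OO_B$ (for a set of generators) must vanish. Vanishing of these polynomials is manifestly a closed condition. The resulting closed formal subscheme is precisely $\mathcal{M}_{\mathcal{D}^\int}$; representability, and the existence of a locally finitely generated ideal of definition, is inherited at each step.

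The main obstacle is the first step: showing that the lifting of endomorphisms cuts out a closed formal subscheme. Concretely, given a quasi-isogeny $\alpha$ of $G^{\univ}$ defined over $R/p$ (with $R$ an $\OO_{\breve{E}}$-algebra in which $p$ is nilpotent), one needs to show that the condition \emph{$\alpha$ lifts to an endomorphism of $G^{\univ}$ over $R$} defines a closed subfunctor with a finitely generated ideal of definition. This is the content of RZ's rigidity result and requires a careful analysis via Grothendieck--Messing theory: the obstruction is measured by the failure of $\alpha$ to preserve the Hodge filtration on $\MM(G^{\univ})$ after multiplication by a sufficiently high power of $p$, and the ideal generated by the matrix entries of this obstruction is finitely generated and defines the desired closed subfunctor. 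Once this is established, iterating over a finite generating set of $\OO_B$ and then appending the determinant condition completes the argument; no further input beyond Grothendieck--Messing and the already-cited representability of $\mathcal{M}_H$ is required.
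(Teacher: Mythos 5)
The paper does not prove this statement; it is recalled from \cite{RZ} (Theorem 3.25 there), and the text simply cites that reference. Your sketch is a faithful account of how Rapoport and Zink actually argue: forget the $\OO_B$-structure to land in the base Rapoport--Zink space $\mathcal{M}_H\times_{\Spf W(k)}\Spf\OO_{\breve{E}}$, and then successively cut out (a) the locus where the $\OO_B$-action lifts, and (b) the Kottwitz determinant condition, each of which is a closed condition; both closedness steps visibly preserve the property of locally admitting a finitely generated ideal of definition, so the conclusion descends to $\mathcal{M}_{\mathcal{D}^\int}$.

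One inaccuracy in your final paragraph: the closedness of the condition ``the unique lift of $\rho^{\univ}\circ b\circ(\rho^{\univ})^{-1}$ is an honest endomorphism'' is not a Grothendieck--Messing obstruction statement, and the Hodge filtration plays no role in it. Drinfeld rigidity already produces a unique lift $\tilde\alpha_b$ over $R$ as a quasi-isogeny; the question is only whether $\tilde\alpha_b$ is integral. The argument in \cite{RZ} is elementary: choose $N$ so that $p^N\tilde\alpha_b$ is an isogeny; then $\tilde\alpha_b\in\End(G^{\univ})$ if and only if $p^N\tilde\alpha_b$ annihilates $G^{\univ}[p^N]$, and the vanishing locus of a morphism between finite locally free group schemes is Zariski closed and finitely presented. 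Grothendieck--Messing theory enters the subject at the level of tangent spaces and period maps, but is not the mechanism here. With that correction, your overall strategy --- reduce to the base case and cut out closed conditions --- is exactly right.
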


On the generic fibre, we have the Grothendieck--Messing period morphism
\[
\pi_\GM\from (\mathcal{M}_{\mathcal{D}^\int})^\ad_\eta\to \Flag_\GM\ ,
\]
where the flag variety $\Flag_{\GM}$ parametrizes $B$-equivariant quotients $W$ of $M(H)\otimes_{W(k)} R$ which are finite projective $R$-modules, and locally on $R$ isomorphic to $V_0\otimes_{\Q_p} R$ as $B\otimes_{\Q_p} R$-modules.

For any $n\geq 1$, one also defines the cover $\mathcal{M}_{\mathcal{D}^\int,n}$ of its generic fibre $(\mathcal{M}_{\mathcal{D}^\int})^\ad_\eta$, as parametrizing $\OO_B$-linear maps $\Lambda/p^n\to G[p^n]^\ad_\eta$ which are isomorphisms at every point.

We have the following definition of $\mathcal{M}_{\mathcal{D}^\int,\infty}$.

\begin{defn} Consider the functor $\mathcal{M}_{\mathcal{D}^\int,\infty}$ on complete affinoid $(\breve{E},\OO_{\breve{E}})$-algebras, sending $(R,R^+)$ to the set of triples $(G,\rho,\alpha)$, where $(G,\rho)\in (\mathcal{M}_{\mathcal{D}^\int})^\ad_\eta(R,R^+)$, and
\[
\alpha\from \Lambda\to T(G)^\ad_\eta(R,R^+)
\]
is a morphism of $\OO_B$-modules such that for all $x=\Spa(K,K^+)\in \Spa(R,R^+)$, the induced map
\[
\alpha(x)\from \Lambda\to T(G)^\ad_\eta(K,K^+)
\]
is an isomorphism.
\end{defn}

Our arguments immediately generalize to the following result.

\begin{Theorem}\label{MInftyExistsEL} The functor $\mathcal{M}_{\mathcal{D}^\int,\infty}$ is representable by an adic space over $\Spa(\breve{E},\OO_{\breve{E}})$. The space $\mathcal{M}_{\mathcal{D}^\int,\infty}$ is preperfectoid, and
\[
\mathcal{M}_{\mathcal{D}^\int,\infty}\sim \varprojlim_n \mathcal{M}_{\mathcal{D}^\int,n}\ .
\]
Moreover, one has the following alternative description of $\mathcal{M}_{\mathcal{D}^\int,\infty} = \mathcal{M}_{\mathcal{D},\infty}$, which depends only on the rational data $\mathcal{D}$. The sheaf $\mathcal{M}_{\mathcal{D},\infty}$ is the sheafification of the functor sending a complete affinoid $(\breve{E},\OO_{\breve{E}})$-algebra $(R,R^+)$ to the set of maps of $B$-modules
\[
V\to \tilde{H}^\ad_\eta(R,R^+)
\]
for which the following conditions are satisfied.
\begin{altenumerate}
\item[{\rm (i)}] The quotient $W$ of $M(H)\otimes_{W(k)} R$ by the image of $V\otimes R$ is a finite projective $R$-module, which locally on $R$ is isomorphic to $V_0\otimes_{\Q_p} R$ as $B\otimes_{\Q_p} R$-module.
\item[{\rm (ii)}] For any geometric point $x=\Spa(C,\OO_C)\to \Spa(R,R^+)$, the sequence
\[
0\to V\to \tilde{H}^\ad_\eta(C,\OO_C)\to W\otimes_R C\to 0
\]
is exact.
\end{altenumerate}
\end{Theorem}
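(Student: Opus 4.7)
The plan is to repeat the argument of Theorem \ref{MInftyExists} equivariantly. Fixing a finite set of $\OO_B$-generators of $\Lambda$, the functor $\Hom_{\OO_B}(\Lambda, T(G)^\ad_\eta)$ embeds as a closed subfunctor of a suitable finite fibre product of copies of $T(G)^\ad_\eta$ over $(\mathcal{M}_{\mathcal{D}^\int})^\ad_\eta$, cut out by the $\OO_B$-linearity relations, and is therefore representable. As in the non-EL case, the Cartesian square
\[\xymatrix{
\mathcal{M}_{\mathcal{D}^\int,\infty}\ar[r]\ar[d]& \Hom_{\OO_B}(\Lambda, T(G)^\ad_\eta)\ar[d]\\
\mathcal{M}_{\mathcal{D}^\int,1}\ar[r]& \Hom_{\OO_B}(\Lambda/p, G[p]^\ad_\eta)
}\]
then yields representability of $\mathcal{M}_{\mathcal{D}^\int,\infty}$: the bottom arrow is open and closed, and reduction modulo $p$ already detects whether $\alpha(x)$ is an isomorphism. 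The relation $\mathcal{M}_{\mathcal{D}^\int,\infty}\sim \varprojlim_n \mathcal{M}_{\mathcal{D}^\int,n}$ follows from Proposition \ref{TateInvLimit} combined with Proposition \ref{BaseChangeInvLimit}.

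\textbf{Alternative description and preperfectoid property.} I would construct the isomorphism with the $p$-adic Hodge theoretic functor exactly as in Lemma \ref{AltDescription}. Given $(G,\rho,\alpha)$, the quasi-isogeny $\rho$ provides a canonical $B$-equivariant identification $\tilde{G}^\ad_\eta \cong \tilde{H}^\ad_\eta$, and $\alpha$ extends $\Q_p$-linearly to a $B$-linear map $V=\Lambda[p^{-1}]\to \tilde{H}^\ad_\eta$. Condition (ii) at geometric points is then precisely the exactness of $0\to V(G)^\ad_\eta\to \tilde{G}^\ad_\eta\to \Lie G\otimes \Ga$ from Proposition \ref{logG}(v). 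Conversely, given a $B$-linear map $V\to \tilde{H}^\ad_\eta$ satisfying (i) and (ii), the argument of Lemma \ref{AltDescription} applies verbatim, working $B$-equivariantly throughout: the vector bundle perspective shows that the kernel $\mathscr{F}$ in $0\to \mathscr{F}\to \mathscr{E}(H)\to i_{\infty\ast}W\to 0$ is trivial (no positive-slope summand can appear because $\dim_{\Q_p} H^0(X,\OO_X(\lambda))=\infty$ for $\lambda>0$), so Theorem \ref{ImagePeriodMorphism} produces a point of $(\mathcal{M}_{\mathcal{D}^\int})^\ad_\eta$, and the $\OO_B$-lattice $\Lambda\subset V$ together with condition (ii) recovers the integral level structure $\alpha$. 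Finally, the preperfectoid assertion follows because $\mathcal{M}_{\mathcal{D}^\int,\infty}$ is thereby exhibited as a locally closed subspace of the preperfectoid space $\Hom_B(V,\tilde{H}^\ad_\eta)$ (a linear subspace of a finite product of copies of $\tilde{H}^\ad_\eta$, which is preperfectoid by Example \ref{ExmpBall} and Corollary \ref{tG-G0}); now invoke Proposition \ref{LocClosedPreperfectoid}.

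\textbf{Main obstacle.} The subtle point is matching the Rapoport--Zink determinant condition imposed on $G$ with condition (i) on the $B\otimes_{\Q_p}R$-module structure of the quotient $W$. The former is phrased in terms of the characteristic polynomial of the $\OO_B$-action on $\Lie G$, the latter as a local $B\otimes R$-module isomorphism $W\simeq V_0\otimes_{\Q_p} R$; the content is that both cut out the same locally closed locus of $\Flag_\GM$ inside the ambient Grassmannian of $B$-stable quotients of $M(H)\otimes R$. I would verify the equivalence first on geometric points, where both conditions reduce to $W\cong V_0$ as $B$-modules (so the corresponding stratum is the full $\G$-orbit of $\mu$), and then propagate the equivalence to affinoid families by noting that the relevant stratum is open and closed in the Grassmannian of $B$-equivariant $d$-dimensional quotients. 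The remaining bookkeeping---$\OO_B$-equivariance of all identifications, $\breve{E}$-rationality coming from the factorization $\pi_\GM\from (\mathcal{M}_{\mathcal{D}^\int})^\ad_\eta\to \Flag_\GM\subset \Grass$, and compatibility with passage to $\mathcal{M}_{\mathcal{D}^\int,n}$---then follows formally from the non-EL argument of Theorem \ref{MInftyExists}.
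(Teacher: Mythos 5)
Your proposal is correct and takes essentially the same approach as the paper: the authors give no explicit proof for this theorem, stating only that ``our arguments immediately generalize'' from Theorem~\ref{MInftyExists}, and your argument is precisely the equivariant unwinding of that claim, following the same Cartesian square for representability, the same adaptation of Lemma~\ref{AltDescription} for the $p$-adic Hodge theoretic description, and the same locally closed embedding into a preperfectoid space for the preperfectoid property. Your ``main obstacle'' paragraph is a valid observation, but the content is already absorbed into the paper's definition of $\Flag_\GM$ (as parametrizing quotients locally isomorphic to $V_0\otimes_{\Q_p}R$ as $B\otimes R$-modules): over $\Q_p$-algebras $B\otimes R$ is separable, so fixing the characteristic polynomials of $\OO_B$-elements on $\Lie G$ is equivalent to fixing the multiplicities of the simple factors, and these are locally constant, which is exactly your open-and-closed stratum argument.
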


In particular, this gives a description of the image of the period morphism in the style of Theorem \ref{ImagePeriodMorphism} in all EL cases. Finally, let us observe that there are natural group actions on $\mathcal{M}_{\mathcal{D},\infty}$. Namely, there is an action of $\G(\Q_p)$ on $\mathcal{M}_{\mathcal{D},\infty}$, through its action on $V$. Let $J$ denote the group of $B$-linear self-quasiisogenies of $H$. Then $J(\Q_p)$ acts on $\mathcal{M}_{\mathcal{D},\infty}$ through its action on $\tilde{H}$. Recall that this action of $J(\Q_p)$ is in fact defined on every level $\mathcal{M}_{\mathcal{D}^\int,n}$, and already on $\mathcal{M}_{\mathcal{D}^\int}$, contrary to the action of $\G(\Q_p)$, which exists only in the inverse limit.

\begin{prop}\label{GroupActions} The action of $\G(\Q_p)\times J(\Q_p)$ on $\mathcal{M}_{\mathcal{D},\infty}$ is continuous, i.e. for any qcqs open $U=\Spa(R,R^+)\subset \mathcal{M}_{\mathcal{D},\infty}$, the stabilizer of $U$ in $\G(\Q_p)\times J(\Q_p)$ is open, and there is a base for the topology of $\mathcal{M}_{\mathcal{D},\infty}$ given by open affinoids $U = \Spa(R,R^+)$ for which the stabilizer of $U$ in $\G(\Q_p)\times J(\Q_p)$ acts continuously on $R$.

Moreover, the Grothendieck--Messing period map
\[
\pi_\GM\from \mathcal{M}_{\mathcal{D},\infty}\to \Flag_\GM
\]
is $\G(\Q_p)\times J(\Q_p)$-equivariant for the trivial action of $\G(\Q_p)$ on $\Flag_\GM$, and the action of $J(\Q_p)$ on $\Flag_\GM$ induced from the action of $J(\Q_p)$ on $M(H)\otimes_{W(k)} W(k)[p^{-1}]$.
\end{prop}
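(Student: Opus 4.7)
The plan is to deduce both assertions from the moduli description in Theorem \ref{MInftyExistsEL} combined with $\mathcal{M}_{\mathcal{D},\infty}\sim \varprojlim_n \mathcal{M}_{\mathcal{D}^\int,n}$. For equivariance of $\pi_\GM$: a section of $\mathcal{M}_{\mathcal{D},\infty}$ is (locally) a $B$-linear map $\alpha\from V\to \tilde{H}^\ad_\eta(R,R^+)$, and $\pi_\GM$ records the quotient $W$ of $M(H)\otimes R$ by the image of $\qlog\circ\alpha$. Precomposition with $g\in \G(\Q_p)\subset \Aut_B(V)$ preserves this image and hence $W$, so $\G(\Q_p)$ acts trivially on $\Flag_\GM$. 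An element $j\in J(\Q_p)$ acts through compatible self-quasi-isogenies of $\tilde{H}$ and of $M(H)[p^{-1}]$ (compatibility being Lemma \ref{QLogvsLog}), which transforms $W$ precisely by the induced action on $\Flag_\GM$.

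For continuity, I would treat the two factors separately. The action of $J(\Q_p)$ is already defined on each finite-level space $\mathcal{M}_{\mathcal{D}^\int,n}$ and, as proved in \cite{RZ}, is continuous there. For $\G(\Q_p)$, the key observation is that the congruence subgroup $K_n:=\{g\in \G(\Q_p)\cap \GL(\Lambda)\mid g\equiv 1\bmod p^n\}$ acts trivially on $\mathcal{M}_{\mathcal{D}^\int,n}$, because the level-$n$ trivialization $\alpha\from \Lambda/p^n\to G[p^n]^\ad_\eta$ is unchanged by $\alpha\mapsto \alpha\circ g$ for such $g$. Since $|\mathcal{M}_{\mathcal{D},\infty}|=\varprojlim_n |\mathcal{M}_{\mathcal{D}^\int,n}|$ with qcqs transition maps, any qcqs open $U\subset \mathcal{M}_{\mathcal{D},\infty}$ is the preimage of some qcqs open $U_n\subset \mathcal{M}_{\mathcal{D}^\int,n}$; such a $U$ is then stabilized by $K_n$ on the $\G(\Q_p)$-factor and by an open subgroup on the $J(\Q_p)$-factor, giving openness of the stabilizer. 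For the base of open affinoids, I would take $U=\Spa(R,R^+)$ to be the preimage of an affinoid $\Spa(R_n,R_n^+)\subset \mathcal{M}_{\mathcal{D}^\int,n}$; by the $\sim$-relation, the completion of $\varinjlim_{m\geq n} R_m$ is dense in $R$, and the stabilizer of $U_n$ acts on each finite-level ring $R_m$ through the finite image in $\G(\Z_p)/K_m$ (times its intersection with the $J(\Q_p)$-stabilizer of $U_n$), hence continuously on $R_m$.

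The main obstacle lies in making rigorous the continuity of the $J(\Q_p)$-action on the infinite-level sections, since an arbitrary element of $J(\Q_p)$ need not preserve any integral refinement of the Rapoport--Zink datum. I would handle this by writing every $j\in J(\Q_p)$ as $p^{-N}j_0$ with $j_0$ integral: the element $j_0$ acts by a morphism of formal schemes (hence by a continuous morphism at each finite level), while $p^{-N}$ acts through the universal cover $\tilde{H}$, which is compatible with the preperfectoid structure on $\mathcal{M}_{\mathcal{D},\infty}$ given by Theorem \ref{MInftyExistsEL}. Continuity of the limiting action on $R$ then follows by passing to the direct limit of continuous actions on the $R_m$ and completing, using that power-bounded elements of each $R_m$ map to power-bounded elements of $R$; equivariance of $\pi_\GM$ for $J(\Q_p)$ is preserved under this limiting procedure by the same formalism.
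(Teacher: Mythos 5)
Your proposal is correct and follows the paper's argument: the paper likewise reduces continuity to the continuity of the $J(\Q_p)$-action on each finite-level space $\mathcal{M}_{\mathcal{D}^\int,n}$ (with $\G(\Q_p)$ acting through the finite quotients $\G(\Z_p)/K_n$) combined with $\mathcal{M}_{\mathcal{D},\infty}\sim\varprojlim_n\mathcal{M}_{\mathcal{D}^\int,n}$, and treats equivariance of $\pi_\GM$ as a direct check from the definitions. The final paragraph's decomposition $j=p^{-N}j_0$ is an unnecessary detour---$J(\Q_p)$ already acts on each $\mathcal{M}_{\mathcal{D}^\int,n}$ as a functor, with open stabilizers of affinoids, so there is no obstruction to handle---but it does not affect the validity of the argument.
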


\begin{proof} The first part is a direct consequence of the continuity of the action of $J(\Q_p)$ on each $\mathcal{M}_{\mathcal{D}^\int,n}$, and the relation
\[
\mathcal{M}_{\mathcal{D},\infty}\sim \varprojlim \mathcal{M}_{\mathcal{D}^\int,n}\ .
\]
The equivariance of the Grothendieck--Messing period map is obvious from the definitions.
\end{proof}

\section{Duality of Rapoport--Zink spaces}

In this section, we deduce a general duality isomorphism between basic Rapoport--Zink spaces at infinite level, in the EL case. We keep the notation from the previous subsection concerning the EL case.

\subsection{The Hodge--Tate period morphism}

First, we observe that in addition to the usual Grothendieck--Messing period morphism, there is a second morphism at infinite level, coming from the Hodge--Tate map. Let $\Flag_\HT$ be the adic space over $\Spa(E,\OO_E)$ parametrizing $B$-equivariant quotients $W_1$ of $V\otimes_{\Q_p} R$ that are finite projective $R$-modules, and locally on $R$ isomorphic to $V_1\otimes_{\Q_p} R$ as $B\otimes_{\Q_p} R$-module. Note that $\G$ acts naturally on $\Flag_\HT$.

\begin{prop} There is a Hodge--Tate period map
\[
\pi_\HT\from \mathcal{M}_{\mathcal{D},\infty}\to \Flag_\HT\ ,
\]
sending an $(R,R^+)$-valued point of $\mathcal{M}_{\mathcal{D},\infty}$, given by a map $V\to \tilde{H}^\ad_\eta(R,R^+)$, to the quotient of $V\otimes_{\Q_p} R$ given as the image of the map
\[
V\otimes_{\Q_p} R\to M(H)\otimes_{W(k)} R\ .
\]
The Hodge--Tate period map is $\G(\Q_p)\times J(\Q_p)$-equivariant for the canonical action of $\G(\Q_p)$ on $\Flag_\HT$, and the trivial action of $J(\Q_p)$ on $\Flag_\HT$.
\end{prop}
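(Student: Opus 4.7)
The plan is to construct $\pi_\HT$ explicitly by composing the universal map $V\to \tilde{H}^\ad_\eta(R,R^+)$ (coming from the alternative description of $\mathcal{M}_{\mathcal{D},\infty}$ in Theorem \ref{MInftyExistsEL}) with the quasi-logarithm $\qlog\from \tilde{H}^\ad_\eta\to M(H)\otimes\Ga$. This yields a $B\otimes_{\Q_p} R$-linear map
\[
V\otimes_{\Q_p} R\to M(H)\otimes_{W(k)} R\ ,
\]
and the Hodge--Tate quotient $W_1$ will be defined as the image of this map, with the induced surjection $V\otimes_{\Q_p} R\twoheadrightarrow W_1$ giving the point of $\Flag_\HT$.

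The main content is to verify that this really lands in $\Flag_\HT$, i.e.\ that $W_1$ is a finite projective $R$-module locally isomorphic to $V_1\otimes_{\Q_p} R$ as a $B\otimes R$-module. First I would check this on geometric points $x=\Spa(C,\OO_C)$, where the point of $\mathcal{M}_{\mathcal{D},\infty}$ corresponds to an $\OO_B$-equivariant deformation $(G,\rho,\alpha)$ over $\OO_C$. Via the quasi-logarithm and the identification $V=T(G)(\OO_C)[p^{-1}]$, the composite map factors as
\[
V\otimes C = T(G)\otimes C \xrightarrow{\alpha_G} (\Lie G^\vee)^\vee\otimes C\hookrightarrow M(H)\otimes C\ ,
\]
where the first map is surjective by Proposition \ref{PDivGroupGivesModVectBund}(iii) and the second is the inclusion dual to the Hodge filtration. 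Thus $W_1 = (\Lie G^\vee)^\vee\otimes C$, and the $B$-equivariant determinant condition imposed on $G$ ensures that as a $B$-module $W_1$ is of type $V_1$, confirming the image lands in $\Flag_\HT$ on geometric points. To pass from this to general $(R,R^+)$, I would use that the Hodge--Tate quotient already exists over the deformation $(G,\rho)$ of $H$ locally on $R^+$ (via Proposition \ref{PDivGroupGivesModVectBund}(iii) applied in families, or equivalently via $\alpha_G$ which is a surjection of locally free sheaves of the correct ranks once rigidified), and that the composed map on $V\otimes R$ agrees with this quotient by naturality of $\alpha_G$. Here semisimplicity of $B$ guarantees that the $B$-module type of a finitely generated projective $B\otimes R$-module is locally constant on $\Spa(R,R^+)$, so the type $V_1$ established at geometric points propagates.

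Equivariance is then formal. For $g\in \G(\Q_p)=\Aut_B(V)$, the action replaces $s\from V\to \tilde{H}^\ad_\eta$ by $s\circ g$, so the kernel of $(s\circ g)\circ\qlog$ is $g^{-1}$ applied to the kernel of $s\circ\qlog$; this is precisely the natural $\G(\Q_p)$-action on $\Flag_\HT$. For $j\in J(\Q_p)$, the action sends $s$ to $\tilde{j}\circ s$ where $\tilde{j}\from \tilde{H}\to\tilde{H}$ is induced by the self-quasi-isogeny; by functoriality of $\qlog$, the map $V\otimes R\to M(H)\otimes R$ is changed by postcomposition with the automorphism $M(j)$ of $M(H)\otimes_{W(k)} W(k)[p^{-1}]$. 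Since $M(j)$ is an isomorphism, the kernel of the map on $V\otimes R$ is unchanged, so the point of $\Flag_\HT$ (which depends only on the quotient, not on its embedding into $M(H)\otimes R$) is unchanged, giving the triviality of the $J(\Q_p)$-action.

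The main obstacle I anticipate is the family version of the Hodge--Tate exact sequence: Proposition \ref{PDivGroupGivesModVectBund}(iii) was stated over $\OO_C$, so to conclude that the image $W_1$ is a locally free quotient in families one needs either to work locally on $\mathcal{M}_{\mathcal{D},\infty}$ where a deformation $(G,\rho)$ is defined, applying a relative version of the Hodge--Tate sequence for $p$-divisible groups (whose surjectivity is a semicontinuity/rigidity statement once known at every geometric point), or to argue directly with the modification of vector bundles on the Fargues--Fontaine curve, transferring the triviality of $\mathscr{F}$ (guaranteed by Theorem \ref{ImagePeriodMorphism} applied to the image of $\pi_\GM$) to the desired surjectivity in $W_1$.
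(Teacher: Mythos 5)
Your proposal is correct and follows essentially the same path as the paper: go local on $\Spa(R,R^+)$ where a deformation $(G,\rho)$ of $H$ is defined, observe that the Grothendieck--Messing exact sequence $0\to(\Lie G^\vee)^\vee\otimes R\to M(H)\otimes R\to \Lie G\otimes R\to 0$ already exhibits a finite projective $B\otimes R$-submodule of $M(H)\otimes R$, check that $V\otimes R$ surjects onto it on geometric points via Proposition \ref{PDivGroupGivesModVectBund}(iii), and then read off the $B$-module type by rank comparison (equivalently semisimplicity of $B$), with equivariance being a formal computation. The "main obstacle" you worry about at the end is in fact already dissolved by your own penultimate sentence: you do not need a relative Hodge--Tate sequence, because the target $(\Lie G^\vee)^\vee\otimes R$ is a priori finite locally free (it lives in the Grothendieck--Messing sequence for the $p$-divisible group $G$), so the only thing to check in families is surjectivity onto a known finite locally free module, which is a pointwise condition --- exactly the route the paper takes.
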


\begin{proof} Locally, the point of $\mathcal{M}_{\mathcal{D},\infty}$ corresponds to a $p$-divisible group $G$ over an open and bounded subring $R_0\subset R^+$. Then we have an exact sequence
\[
0\to (\Lie G^\vee)^\vee\otimes R\to M(H)\otimes_{W(k)} R\to \Lie G\otimes R\to 0\ .
\]
Moreover, we get a map $V\otimes_{\Q_p} R\to (\Lie G^\vee)^\vee\otimes R$. We claim that it is surjective. Indeed, it suffices to check this on geometric points, where it follows from Proposition \ref{PDivGroupGivesModVectBund} (iii). As $M(H)\otimes_{W(k)} R\cong V\otimes_{\Q_p} R$ and $\Lie G\otimes R\cong V_0\otimes_{\Q_p} R$ locally, it follows that $(\Lie G^\vee)^\vee\otimes R\cong V_1\otimes_{\Q_p} R$ locally, as desired.
\end{proof}

\begin{rmk} We caution the reader that unlike $\pi_\GM$, $\pi_\HT$ is not in general \'etale. Also, the fibres are not $J(\Q_p)$-torsors in general. However, in the basic case to be considered in the next section, these statements are true, and follow by identification from the similar statements for $\pi_\GM$ by duality.
\end{rmk}

\subsection{The duality isomorphism}

Assume now that $k$ is algebraically closed. Recall the following proposition.

\begin{prop}\label{propbasic} The $\sigma$-conjugacy class corresponding to $H$ is basic if and only if $\check{B} = \End_B^\circ(H)$ (where $\End^\circ = \End\otimes \Q$) satisfies
\[
\check{B}\otimes_{\Q_p} W(k)[p^{-1}] = \End_{B\otimes W(k)}(M(H)[p^{-1}])\ ,
\]
where the right-hand side denotes the $B\otimes W(k)$-linear endomorphisms of the $B\otimes W(k)$-module $M(H)[p^{-1}]$, not necessarily compatible with $F$ and $V$.
\end{prop}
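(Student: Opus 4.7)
Set $L = W(k)[p^{-1}]$ and $N = M(H)[p^{-1}]$, a $B \otimes_{\Q_p} L$-module equipped with a $\sigma$-linear Frobenius $\varphi$. First I would observe that the natural $L$-linear extension-of-scalars map
\[
\iota \from \check B \otimes_{\Q_p} L \longrightarrow \End_{B \otimes L}(N)
\]
is always injective, by a standard Frobenius-descent argument: an $L$-linear dependence among finitely many elements of $\check B$ with minimal length can be combined with its $\varphi$-translate to yield a shorter dependence, contradiction. Since both sides are finite-dimensional $L$-vector spaces, the proposition is equivalent to an equality of $L$-dimensions. By hypothesis $N \cong V \otimes_{\Q_p} L$ as $B \otimes L$-modules, so
\[
\dim_L \End_{B \otimes L}(N) = \dim_{\Q_p} \End_B(V), \qquad \dim_L \bigl(\check B \otimes_{\Q_p} L\bigr) = \dim_{\Q_p} \check B,
\]
and the claim becomes $\dim_{\Q_p} \check B = \dim_{\Q_p} \End_B(V)$, i.e.\ that $J = \check B^\times$ is an inner form of $\G = \GL_B(V)$. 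This is exactly the classical characterization of basic elements (cf.\ Proposition 1.12 of \cite{RZ}), so what remains is to redo this dimension-count directly, in both directions.

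If the class is not basic, then the slope decomposition $N = \bigoplus_\lambda N_\lambda$ is nontrivial. Slopes are functorial in the isocrystal, so the decomposition is $B$-stable. Every element of $\check B$ is $\varphi$-equivariant, hence preserves the slope filtration, so
\[
\check B \otimes_{\Q_p} L \;\subseteq\; \prod_\lambda \End_{B \otimes L}(N_\lambda) \;\subsetneq\; \End_{B \otimes L}(N),
\]
the final strict inclusion coming from the nonvanishing $B \otimes L$-linear maps $N_\lambda \to N_{\lambda'}$ for $\lambda \neq \lambda'$ (they exist at the level of $B \otimes L$-modules by the identification $N \cong V \otimes L$, since distinct slope pieces are $B \otimes L$-submodules of a free $B \otimes L$-module). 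This rules out the equality in the non-basic case.

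For the converse, assume $H$ is basic, so $N$ is a single-slope $B$-isocrystal. Applying Dieudonn\'e--Manin in the $B$-equivariant setting over the algebraically closed $k$, one writes $N$ as isogenous to $N_0^{\oplus m}$ for a simple basic $B$-isocrystal $N_0$, with $D := \End_B^\varphi(N_0)$ a division algebra and $\check B \cong M_m(D)$. The key remaining point is the identification $D \otimes_{\Q_p} L = \End_{B \otimes L}(N_0)$, from which matrix amplification yields the required equality. One proves this by putting $\varphi$ on $N_0$ in a standard normal form after choosing a $B$-equivariant basis---using that $k$ is algebraically closed to extract the requisite $(p^s)$-th roots, where $s$ is the denominator of the slope---so that $\End_{B \otimes L}(N_0)$ is spanned as an $L$-module by its $\varphi$-invariants.

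\textbf{Main obstacle.} The delicate step is the last one: verifying $D \otimes_{\Q_p} L = \End_{B \otimes L}(N_0)$ for the simple basic $B$-isocrystal $N_0$. This is where algebraic closedness of $k$ is essential, and it must be done keeping track of the $B$-action, which constrains which normal forms for $\varphi$ are admissible. Once this is in hand, the rest of the argument is formal dimension counting via Dieudonn\'e--Manin.
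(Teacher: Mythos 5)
The paper states this proposition without proof; it is essentially Kottwitz's characterization of basic $\sigma$-conjugacy classes, imported from \cite{RZ}. So there is no "paper's proof" to compare your argument against, and I will just assess its correctness.

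Your overall strategy (Frobenius-descent injectivity, reduction to a dimension count, then arguing both directions via the slope decomposition) is reasonable, but both directional arguments as written contain genuine errors, and both trace to the same misconception: for a general semisimple $B$, being basic for $\G = \GL_B(V)$ does \emph{not} mean that the underlying isocrystal $N = M(H)[p^{-1}]$ is isoclinic. The Newton cocharacter $\nu_b$ is required to factor through the center of $\G_L$, which for $\GL_B(V)$ is $(Z(B)\otimes L)^\times$; this allows different slopes on different $Z(B)\otimes L$-isotypic pieces of $N$. Concretely, already for $B = \Q_p\times\Q_p$ the group $\G$ is a torus, every class is basic, yet the two pieces of $N$ typically carry distinct slopes.

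In the non-basic direction, this sinks the justification of the final strict inclusion. You assert that nonvanishing $B\otimes L$-linear maps $N_\lambda\to N_{\lambda'}$ exist because the slope pieces are submodules of a free $B\otimes L$-module. First, $V$ is a finite faithful $B$-module but not free over $B$ in general. Second, and more to the point, even for a free module two distinct $B\otimes L$-submodules can have $\Hom_{B\otimes L}(N_\lambda,N_{\lambda'})=0$ if they are supported on disjoint sets of central idempotents — exactly what happens in the torus example above, where the slope decomposition is nontrivial yet the inclusion is an equality. The correct reason the strict inclusion holds in the non-basic case is different: if no two distinct slope pieces shared a common simple $B\otimes L$-summand, the slope grading would be the grading by central idempotents of $Z(B)\otimes L$, forcing $\nu_b$ to be central, i.e.\ $b$ basic. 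So non-basicness \emph{forces} some shared isotypic component, and between two isotypic copies of the same simple the $\Hom$ is nonzero. You should replace the freeness argument with this.

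In the basic direction, the opening sentence "so $N$ is a single-slope $B$-isocrystal" is false, as above, and the subsequent Dieudonn\'e--Manin reduction to a single simple object $N_0$ with $N\cong N_0^{\oplus m}$ only applies to the isotypic (single-slope, single central idempotent) case. The repair is to decompose $N$ along the $\sigma$-orbits of primitive central idempotents of $B\otimes L$ — note that $\sigma$ can permute these idempotents, so the individual isotypic pieces are not $\varphi$-stable, but the sum over a $\sigma$-orbit is — and then run your normal-form argument on each such block (which after restriction of scalars along $\Q_p\to Z(B)$ becomes the isoclinic case you envisaged). Alternatively, once you have reduced to showing $\dim_{\Q_p}\check B = \dim_{\Q_p}\End_B(V)$, you may simply invoke that $J$ is an inner form of $\G$ precisely when $b$ is basic; but if the goal is a self-contained proof, the isotypic-block decomposition is the step that is currently missing.
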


In the following, assume that the conditions of the proposition are satisfied. In particular, it follows that we have a canonical identification
\[\begin{aligned}
\check{B}\otimes_{\Q_p} W(k)[p^{-1}] &= \End_{B\otimes W(k)}(M(H)[p^{-1}])\\
& = \End_{B\otimes W(k)}(V\otimes_{\Q_p} W(k)[p^{-1}]) =  \End_B(V)\otimes_{\Q_p} W(k)[p^{-1}]\ .
\end{aligned}\]
We define dual EL data as follows. First, $\check{B}$ is defined as in the proposition, with $\OO_{\check{B}} = \End_{\OO_B}(H)$. Moreover, $\check{V} = \check{B}$ and $\check{\Lambda} = \OO_{\check{B}}$, with the left action by $\check{B}$, resp. $\OO_{\check{B}}$. It follows that $\check{\G}\cong J$ is the group of $B$-linear self-quasiisogenies of $H$, where $g\in \check{\G} = J\subset \check{B}$ acts on $\check{V}$ by multiplication with $g^{-1}$ from the right.

Next, we set $\check{H} = \Lambda^\ast\otimes_{\OO_B} H$, where $\Lambda^\ast = \Hom_{\OO_B}(\Lambda,\OO_B)$. To make sense of this equation, interpret it for example in terms of Dieudonn\'e modules, so that
\[
M(\check{H}) = \Lambda^\ast\otimes_{\OO_B} M(H)\ .
\]
As $\OO_{\check{B}}$ acts naturally on $H$, there is an induced action of $\OO_{\check{B}}$ on $\check{H}$. One has an identification
\[\begin{aligned}
M(\check{H})[p^{-1}] = V^\ast\otimes_B M(H) &= (V^\ast\otimes_B V)\otimes_{\Q_p} W(k)[p^{-1}]\\
& = \End_B(V)\otimes_{\Q_p} W(k)[p^{-1}] = \check{B}\otimes_{\Q_p} W(k)[p^{-1}]\ ,
\end{aligned}\]
as desired. It remains to define the conjugacy class of cocharacters $\check{\mu}\from \Gm\to \check{\G}$. Equivalently, we have to give the corresponding weight decomposition of $\check{V}$. We require that this is given by the decomposition
\[\begin{aligned}
\check{V}\otimes_{\Q_p} &W(k)[p^{-1}] = \End_{B}(V)\otimes_{\Q_p} W(k)[p^{-1}]\\
&= (\Hom_{B\otimes W(k)}(V_0,V)\otimes_{\Q_p} W(k)[p^{-1}])\oplus (\Hom_{B\otimes W(k)}(V_1,V)\otimes_{\Q_p} W(k)[p^{-1}])\ ,
\end{aligned}\]
where the first summand is by definition of weight $0$, and the second summand of weight $1$. (We note that we would have to extend the scalars further to make sense of this equation, as $V_0$ and $V_1$ are not defined over the base field.)

Let us denote by $\check{\Flag}_\GM$ and $\check{\Flag}_\HT$ the flag varieties for the dual EL data; we consider all spaces as being defined over $\Spa(\breve{E},\OO_{\breve{E}})$ in the following.

\begin{prop} There are natural actions of $\G$ on $\Flag_\HT$ and $\check{\Flag}_\GM$, and a canonical $\G$-equivariant isomorphism $\Flag_\HT\cong \check{\Flag}_\GM$. Similarly, there are natural actions of $\check{\G}$ on $\check{\Flag}_\HT$ and $\Flag_\GM$, and a canonical $\check{\G}$-equivariant isomorphism $\check{\Flag}_\HT\cong \Flag_\GM$.
\end{prop}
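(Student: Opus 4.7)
The proof rests on the canonical identification of reductive groups over $\breve{E}$
\[
\G_{\breve{E}} \;=\; \left(\End_{B\otimes\breve{E}}(V\otimes\breve{E})\right)^\times \;=\; (\check{B}\otimes_{\Q_p}\breve{E})^\times \;=\; \check{\G}_{\breve{E}}
\]
furnished by the basic hypothesis (Proposition \ref{propbasic}), which supplies the algebra isomorphism $\check{B}\otimes W(k)[p^{-1}] = \End_{B\otimes W(k)[p^{-1}]}(V\otimes W(k)[p^{-1}])$. This identification defines (by transport of structure) both the $\G$-action on $\check{\Flag}_\GM$ and the $\check{\G}$-action on $\Flag_\GM$. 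The plan is first to check that the two conjugacy classes $\mu$ and $\check{\mu}$ correspond under this identification, so that $\Flag_\HT$ and $\check{\Flag}_\GM$ describe the same $\G_{\breve{E}}$-homogeneous space, and then to write down the isomorphism via an explicit functorial construction which is manifestly equivariant.

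For the match of cocharacters: $\check{\G}$ acts on $\check{V} = \check{B}$ by right multiplication, and by definition $\check{\mu}$ is specified by the weight decomposition $\check{V}\otimes\breve{E} = \check{V}_0 \oplus \check{V}_1$ with $\check{V}_i = \Hom_{B\otimes\breve{E}}(V_i,V\otimes\breve{E})$ of weight $i$. A direct computation inside $\End_{B\otimes\breve{E}}(V_0\oplus V_1)$ shows that the element whose right-multiplication action is trivial on $\Hom(V_0,V)$ and is scalar multiplication by $t$ on $\Hom(V_1,V)$ is exactly the endomorphism of $V\otimes\breve{E}$ equal to $\mathrm{id}$ on $V_0$ and $t\cdot\mathrm{id}$ on $V_1$, i.e.\ $\mu(t)$. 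The same computation identifies the associated parabolics: the stabilizer $P_{\check{\mu}}$ of $\check{V}_1$ under right multiplication coincides with the stabilizer $P_\mu$ of $V_0\subset V$ under the natural left action.

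On the level of moduli the isomorphism $\Flag_\HT\cong\check{\Flag}_\GM$ is given as follows. For $(R,R^+)$ a complete affinoid $(\breve{E},\OO_{\breve{E}})$-algebra and $U\subset V\otimes R$ a $B$-submodule locally of type $V_0\otimes R$ with quotient locally of type $V_1\otimes R$, send the point $V\otimes R\twoheadrightarrow (V\otimes R)/U$ of $\Flag_\HT(R,R^+)$ to the restriction map
\[
\End_B(V)\otimes R \;\twoheadrightarrow\; \Hom_B(U,V\otimes R),\qquad f\mapsto f|_U,
\]
whose kernel is $\Hom_B((V\otimes R)/U,V\otimes R)$. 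The target is a left $\check{B}$-module locally isomorphic to $\check{V}_0\otimes R$, defining a point of $\check{\Flag}_\GM(R,R^+)$. The inverse sends a $\check{B}$-equivariant quotient with kernel ideal $I$ to $U = \bigcap_{f\in I}\ker(f)\subset V\otimes R$, whose local form is forced by the parabolic identification of the previous paragraph.

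The $\G$-equivariance is a direct verification: $g\in\G$ acts on $\Flag_\HT$ by $U\mapsto gU$, and the restriction map attached to $gU$ differs from that attached to $U$ by precomposition with right multiplication by $g$ on $\End_B(V)$, which is exactly how $\G = \check{\G}$ acts on $\check{\Flag}_\GM$ via the identification of groups (up to the usual inversion needed to convert a right action into a left action). The symmetric statement $\check{\Flag}_\HT\cong\Flag_\GM$ is the same assertion applied to the dual datum $\check{\mathcal D}$, using $\check{\check{\mathcal D}} = \mathcal D$. The main technical step is the cocharacter/parabolic identification of the second paragraph; the moduli construction and the equivariance check are then formal.
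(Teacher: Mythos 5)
Your argument reproduces the paper's: applying $\Hom_{B\otimes R}(-,V\otimes R)$ to $0\to W_0\to V\otimes R\to W_1\to 0$ gives precisely your restriction map $\End_B(V)\otimes R\twoheadrightarrow\Hom_B(U,V\otimes R)$ (with $U=W_0$), and the paper obtains the second isomorphism by the same computation applied to $\Flag_\GM$, which your appeal to $\check{\check{\mathcal D}}=\mathcal D$ replaces. Your cocharacter/parabolic verification usefully unpacks what the paper leaves as ``easy to see''; the one caveat is that since $\check{\G}$ acts on $\check V$ by right multiplication by $g^{-1}$, the correspondence is $\check\mu = \mu^{-1}$ rather than $\check\mu = \mu$ under the identification $\G_{\breve E}=\check{\G}_{\breve E}$, though this leaves your parabolic-stabilizer identification unaffected since parabolics are closed under inversion.
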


\begin{proof} Recall that $\Flag_\HT$ parametrizes $B$-equivariant quotients $V\otimes R\to W_1$ which are locally of the form $V_1\otimes R$. On the dual side, $\check{\Flag}_\GM$ parametrizes $\End_B(V)$-equivariant quotients
\[
M(\check{H})\otimes_{W(k)} R = \End_B(V)\otimes_{\Q_p} R\to \check{W}
\]
which are locally of the form $\Hom_B(V_0,V)\otimes_{\Q_p} R$. Starting with $0\to W_0\to V\otimes R\to W_1\to 0$, and looking at $\Hom_{B\otimes R}(-,V\otimes R)$ gives a sequence
\[
0\to \Hom_{B\otimes R}(W_1,V\otimes R)\to \End_B(V)\otimes R\to \Hom_{B\otimes R}(W_0,V\otimes R)\to 0\ ,
\]
where $\check{W} = \Hom_B(W_0,V)$ is locally of the form $\Hom_B(V_0,V)\otimes R$. It is easy to see that this gives an isomorphism of flag varieties, under which the canonical action of $\G$ on $\Flag_\HT$ gets identified with an action of $\G$ on $\check{\Flag}_\GM$.

In the other case, recall that $\Flag_\GM$ parametrizes $B$-equivariant quotients $M(H)\otimes R\to W$ which are locally of the form $V_0\otimes R$. On the dual side, $\check{\Flag}_\HT$ parametrizes $\End_B(V)$-equivariant quotients
\[
\check{V}\otimes_{\Q_p} R = \End_B(V)\otimes_{\Q_p} R\to \check{W}_1
\]
which are locally of the form $\Hom_B(V_1,V)\otimes_{\Q_p} R$. Again, starting with
\[
0\to W_1\to M(H)\otimes R\to W\to 0
\]
and looking at $\Hom_{B\otimes R}(-,V\otimes R)$ produces a quotient $\check{W}_1 = \Hom_{B\otimes R}(W_1,V\otimes R)$ of $\End_B(V)\otimes R$ which is locally of the form $\Hom_B(V_1,V)\otimes R$, as desired.
\end{proof}

Let $\mathcal{M}_{\mathcal{D},\infty}$ be the Rapoport--Zink space at infinite level for our original EL data, and let $\M_{\check{\mathcal{D}},\infty}$ denote the Rapoport--Zink space at infinite level for the dual EL data.

\begin{Theorem} There is a natural $\G(\Q_p)\times \check{\G}(\Q_p)$-equivariant isomorphism
\[
\mathcal{M}_{\mathcal{D},\infty}\cong \M_{\check{\mathcal{D}},\infty}\ ,
\]
under which $\pi_\GM\from \mathcal{M}_{\mathcal{D},\infty}\to \Flag_\GM$ gets identified with $\check{\pi}_\HT\from \M_{\check{\mathcal{D}},\infty}\to \check{\Flag}_\HT$, and vice versa.
\end{Theorem}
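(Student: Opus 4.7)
The plan is to use the moduli description from Theorem \ref{MInftyExistsEL}, which presents $\mathcal{M}_{\mathcal{D},\infty}(R,R^+)$ as the set of $B$-linear maps $f\colon V\to \tilde{H}^{\ad}_\eta(R,R^+)$ satisfying (i) and (ii), and analogously $\mathcal{M}_{\check{\mathcal{D}},\infty}(R,R^+)$ in terms of $\check{B}$-linear maps $\check{f}\colon \check{V}\to \tilde{\check{H}}^{\ad}_\eta(R,R^+)$. I will construct mutually inverse natural transformations $\Phi\colon \mathcal{M}_{\mathcal{D},\infty}\to \mathcal{M}_{\check{\mathcal{D}},\infty}$ and $\Psi$ in the reverse direction, check they are $\mathbf{G}(\Q_p)\times\check{\mathbf{G}}(\Q_p)$-equivariant, and verify they exchange the period morphisms.

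The key identification is enabled by the basic hypothesis. Proposition \ref{propbasic}, together with $M(\check{H})=\Lambda^{\ast}\otimes_{\OO_B}M(H)$ and projectivity of $V$ over $B$, gives a canonical $\varphi$-equivariant isomorphism of $B\otimes\check{B}\otimes W(k)[p^{-1}]$-modules
\[
M(\check{H})[p^{-1}] \;\cong\; \Hom_B\bigl(V,\, M(H)[p^{-1}]\bigr).
\]
Feeding this into Theorem A (which, after base change to a perfectoid field $K$, computes the universal covers from Dieudonn\'e modules over the f-semiperfect ring $R^+/p$) upgrades it to a natural isomorphism of $\check{B}$-module-valued sheaves
\[
\tilde{\check{H}}^{\ad}_\eta \;\cong\; \underline{\Hom}_B\bigl(V,\, \tilde{H}^{\ad}_\eta\bigr)
\]
on complete affinoid $(\breve{E},\OO_{\breve{E}})$-algebras. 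I then define $\Phi(f)=\check{f}\colon \check{V}=\check{B}\to \tilde{\check{H}}^{\ad}_\eta$ by $\check{f}(b)(v)=b\cdot f(v)$, where $b\in\check{B}$ acts on $\tilde{H}^{\ad}_\eta$ through its quasi-isogeny action on $H$.

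To verify that $\check{f}$ satisfies conditions (i) and (ii), I use the picture of modifications of vector bundles on the Fargues-Fontaine curve $X$ from Proposition \ref{PDivGroupGivesModVectBund}. On geometric points the datum $f$ corresponds to a $B$-equivariant modification
\[
0 \to V\otimes\OO_X \to \mathscr{E}(H) \to i_{\infty\ast}W \to 0.
\]
Applying $\mathscr{H}om_B(-,\mathscr{E}(H))$ and using the basic-case identifications $\End_B(\mathscr{E}(H))\cong \check{V}\otimes\OO_X$ (the trivial bundle, as $\mathscr{E}(H)$ is isoclinic with exactly $\check{B}$'s worth of $B$-linear automorphisms) and $\mathscr{H}om_B(V\otimes\OO_X,\mathscr{E}(H))\cong \mathscr{E}(\check{H})$, one obtains the dual $\check{B}$-equivariant modification
\[
0 \to \check{V}\otimes\OO_X \to \mathscr{E}(\check{H}) \to i_{\infty\ast}\check{W} \to 0,
\]
which is precisely the modification data associated to $\check{f}$. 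Here $\check{W}\cong \Hom_B(W_0,V\otimes C)$ with $W_0=\ker(V\otimes C\to M(H)\otimes C\to W)$; the weight decomposition determined by $\mu$ forces $W_0\cong V_0\otimes C$ locally, so $\check{W}$ is locally of type $\check{V}_0\otimes R = \Hom_B(V_0,V)\otimes R$, verifying (i). Taking global sections then yields the exact sequence in (ii), since $\check{V}\otimes\OO_X$ is trivial and $H^1(X,\OO_X)=0$.

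The symmetric construction defines $\Psi\colon \mathcal{M}_{\check{\mathcal{D}},\infty}\to \mathcal{M}_{\mathcal{D},\infty}$, and biduality of the $\mathscr{H}om_B(-,\mathscr{E})$ operation (again using that both bundles are trivial in the basic case) yields $\Psi\circ\Phi=\mathrm{id}$ and $\Phi\circ\Psi=\mathrm{id}$. The $\mathbf{G}(\Q_p)\times\check{\mathbf{G}}(\Q_p)$-equivariance is then immediate from the explicit formula for $\Phi$, and the isomorphisms $\Flag_{\GM}\cong\check{\Flag}_{\HT}$ and $\Flag_{\HT}\cong\check{\Flag}_{\GM}$ established earlier in Section 7 directly show that $\Phi$ exchanges $\pi_{\GM}\leftrightarrow\check{\pi}_{\HT}$ and $\pi_{\HT}\leftrightarrow\check{\pi}_{\GM}$ by tracing through the definitions. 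The main obstacle is extending the transparent linear-algebra duality on geometric points to a duality of sheafified moduli functors on all complete affinoid $(\breve{E},\OO_{\breve{E}})$-algebras; Theorem A is the essential input, since it pins down the universal covers $\tilde{H}^{\ad}_\eta$ and $\tilde{\check{H}}^{\ad}_\eta$ via their Dieudonn\'e modules up to isogeny, permitting the linear-algebra identification $M(\check{H})[p^{-1}]\cong \Hom_B(V,M(H)[p^{-1}])$ to be transported uniformly across the whole category in which the moduli problems are defined.
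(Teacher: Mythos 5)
Your proposal is correct and follows essentially the same route as the paper: the core construction is the same dualizing formula (your $\check f(b)(v)=b\cdot f(v)$ is the paper's $(\check s(f))(v)=f(s(v))$), and you verify that the conditions correspond by passing to the Fargues--Fontaine modification and applying $\mathscr{H}om_B(-,\mathscr{E}(H))$, which is the same underlying idea the paper invokes more tersely by noting the induced map on $M(\check H)\otimes R$ is $\Hom_{B\otimes R}(-,V\otimes R)$ applied to the map induced by $s$. One small point to tighten: condition (i) (projectivity and local type of $W$) must hold over arbitrary complete affinoid $(R,R^+)$, not merely at geometric points, so it is cleaner to observe, as the paper does, that the map $\check V\otimes R\to M(\check H)\otimes R$ is literally the $R$-linear dual $\Hom_{B\otimes R}(-,V\otimes R)$ of $V\otimes R\to M(H)\otimes R$, from which the equivalence of the two type conditions follows for all $R$ directly; the curve picture is then only needed for condition (ii) at geometric points, where it is the right tool.
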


\begin{proof} Let $(R,R^+)$ be any complete affinoid $(\breve{E},\OO_{\breve{E}})$-algebra. Then recall that $\mathcal{M}_{\mathcal{D},\infty}$ is the sheafification of the functor sending $(R,R^+)$ to the set of $B$-linear maps
\[
s\from V\to \tilde{H}^\ad_\eta(R,R^+)
\]
for which the induced quotient $M(H)\otimes R\to W$ by the image of $V\otimes R$ is locally of the form $V_0\otimes R$, and which give exact sequences at every geometric point. Similarly, $\M_{\check{\mathcal{D}},\infty}$ is the sheafification of the functor sending $(R,R^+)$ to the set of $\End_B(V)$-linear maps
\[
\check{s}\from \End_B^\circ(H)\to V^\ast\otimes_B \tilde{H}^\ad_\eta(R,R^+)
\]
for which the induced quotient $M(\check{H})\otimes_{W(k)} R=\End_B(V)\otimes_{\Q_p} R\to \check{W}$ by the image of $\End_{\OO_B}(H)\otimes R$ is locally of the form $\End_B(V_0,V)\otimes R$, and which give exact sequences at every geometric point. Let us first check that $s$ and $\check{s}$ correspond. Indeed, define
\[
(\check{s}(f))(v) = f(s(v))\ ,
\]
where $f\in \End_B^\circ(H)$ and $v\in V$. One readily checks that this defines a bijective correspondence between $s$ and $\check{s}$.

Now consider the map
\[
\End_B^\circ(H)\otimes R\to M(\check{H})\otimes_{W(k)} R = \End_B(V)\otimes_{\Q_p} R
\]
induced by $\check{s}$. We claim that it is the same as $\Hom_{B\otimes R}(-,V\otimes R)$ applied to the map
\[
V\otimes_{\Q_p} R\to M(H)\otimes R
\]
induced from $s$; this is a direct verification. From here, one deduces that $W$ is locally of the form $V_0\otimes R$ if and only if $\check{W}$ is locally of the form $\Hom_B(V_0,V)\otimes R$, as well as the compatibility with period maps. Moreover, the exactness conditions at geometric points correspond.
\end{proof}

\subsection{Equivariant covers of Drinfeld's upper half-space}
\label{equivariantcovers}

Fix a finite extension $F$ of $\Q_p$, and let $\Omega\subset \mathbf{P}^{n-1}$ denote Drinfeld's upper half-space, where we consider everything as an adic space over $\Spa(C,\OO_C)$, where our field of scalars $C$ is a complete algebraically closed extension of $F$. As an application of the duality of Rapoport--Zink spaces, we prove the following result.

\begin{Theorem}\label{splittingtheorem} Let $\tilde{\Omega}\to \Omega$ be a finite \'etale $\GL_n(F)$-equivariant morphism of adic spaces. There exists an $m\geq 0$ such that $\mathcal{M}^\Dr_m\to\Omega$ factors through $\tilde{\Omega}$.
\end{Theorem}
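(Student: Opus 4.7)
The plan is to transfer $\tilde\Omega\to\Omega$ to the Lubin-Tate side via the duality of Theorem E, where it becomes a cover pulled back along the Hodge-Tate period map $\pi_\HT^\LT$ and equivariant under the $\G_\LT(\Q_p)=\GL_n(F)$-action by level-structure changes, and to produce a section there using the universal nature of the Tate-module trivialization at infinite level.

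First, form $Y_\infty:=\tilde\Omega\times_\Omega\mathcal{M}^\Dr_\infty$ via the Grothendieck-Messing period map $\pi_\GM^\Dr$. By Proposition \ref{GroupActions}, this finite \'etale cover inherits compatible actions of $J_\Dr(\Q_p)=\GL_n(F)$ (inherited from $\tilde\Omega$) and $\G_\Dr(\Q_p)=D^\times$ (lifted trivially, as $D^\times$ acts trivially on $\Omega$). By Theorem \ref{topoi} applied to $\mathcal{M}^\Dr_\infty\sim\varprojlim_m\mathcal{M}^\Dr_m$ (after base-change to a perfectoid field, to put oneself in its hypotheses), $Y_\infty$ descends uniquely to a finite \'etale cover $Y_m\to\mathcal{M}^\Dr_m$, which must equal $\tilde\Omega\times_\Omega\mathcal{M}^\Dr_m$ by uniqueness. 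The desired conclusion is that $Y_m\to\mathcal{M}^\Dr_m$ admits a section, equivalently (by a further application of Theorem \ref{topoi} to isolate connected components mapping isomorphically) that $Y_\infty\to\mathcal{M}^\Dr_\infty$ admits a section.

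Next invoke Theorem E: the isomorphism $\mathcal{M}^\Dr_\infty\cong\mathcal{M}^\LT_\infty$ identifies $\pi_\GM^\Dr$ with the Hodge-Tate period map $\pi_\HT^\LT:\mathcal{M}^\LT_\infty\to\Omega$ and swaps the two $\GL_n(F)$-actions, so $J_\Dr$-equivariance on the Drinfeld side becomes $\G_\LT$-equivariance on the Lubin-Tate side. The problem thus reduces to producing a section of the pullback of any $\G_\LT(\Q_p)$-equivariant finite \'etale cover of $\Omega$ along $\pi_\HT^\LT$.

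The main obstacle is constructing this section on the Lubin-Tate side. The key input is the linear-algebraic description of $\mathcal{M}^\LT_\infty$ from Corollary \ref{CPoints}: a geometric point over $(C,\OO_C)$ is a modification $\mathscr{F}=\OO_X^n\hookrightarrow\mathscr{E}(H_{\mathrm{LT}})$ with cokernel $i_{\infty*}W$ supported at $\infty\in X$, where $\pi_\HT^\LT$ records the resulting point of $\Omega\subset\PP^{n-1}$, while the $\G_\LT(\Q_p)=\GL_n(F)$-action modifies the trivialization $\Q_p^n\xrightarrow{\sim}H^0(X,\mathscr{F})$ and fixes the target. Consequently the geometric fibres of $\pi_\HT^\LT$ are, up to discrete data governed by the $J_\LT(\Q_p)=D^\times$-action on connected components, single $\GL_n(F)$-orbits; the $\GL_n(F)$-equivariance of $\tilde\Omega\to\Omega$ then forces the pulled-back cover to be fibrewise trivial. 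Combining this pointwise trivialization with Theorem \ref{topoi} (to descend the pulled-back cover to some $\mathcal{M}^\LT_{n'}$) and with the $D^\times$-equivariance (to rule out the discrete obstructions on components that could arise from the determinant-type characters of $\GL_n(F)$) globalizes to a section of $Y_\infty\to\mathcal{M}^\LT_\infty\cong\mathcal{M}^\Dr_\infty$, which descends to the section of $Y_m\to\mathcal{M}^\Dr_m$ providing the required factorization of $\mathcal{M}^\Dr_m\to\Omega$ through $\tilde\Omega$.
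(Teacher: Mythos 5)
The key step of your argument does not work, and the source of the trouble is a confusion about which group acts trivially on which period domain. Under the duality, $\check{\pi}_\HT = \pi_\HT^\LT$ is $\G_\LT(\Q_p)\times J_\LT(\Q_p) = \GL_n(F)\times D^\times$-equivariant for the \emph{canonical} (nontrivial) action of $\G_\LT(\Q_p)=\GL_n(F)$ on $\Flag_\HT$ and the \emph{trivial} action of $J_\LT(\Q_p)=D^\times$: changing the level structure $\alpha\from\Q_p^n\to H^0(X,\mathscr{F})$ changes the composite $V\otimes C\to T(G)\otimes C\to(\Lie G^\vee)^\vee\otimes C$, so $\G_\LT$ does \emph{not} fix the Hodge--Tate period. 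Consequently the geometric fibres of $\pi_\HT^\LT$ are $D^\times$-orbits, not $\GL_n(F)$-orbits, and the $\GL_n(F)$-equivariance of $\tilde\Omega\to\Omega$ says nothing about the cover restricted to a single fibre of $\pi_\HT^\LT$. Moreover, even where a pullback is automatically trivial fibre-by-fibre, this does not produce a global section; no mechanism for the asserted globalization is given.

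The map along which one should descend is $\pi_\GM^\LT\from\hat{\M}^\LT_\infty\to\mathbf{P}^{n-1}$, not $\pi_\HT^\LT$. There $\G_\LT(\Q_p)=\GL_n(F)$ acts trivially on the base, the fibres are $\GL_n(F)$-torsors (after passing to a connected component; $D^\times$ permutes the components), and $\pi_\GM^\LT$ is surjective onto all of $\mathbf{P}^{n-1}$ because $(\Def_H^\isog)^\ad_\eta\cong\mathbf{P}^{n-1}$ in the Lubin--Tate case. This is exactly the content of the lemma in the paper's proof: $\GL_n(F)$-equivariant finite \'etale covers of $\hat{\M}^\LT_\infty$ are equivalent to finite \'etale covers of $\mathbf{P}^{n-1}$, via $\pi_\GM^\LT$, the existence of local sections, the reduction to $\GL_n(\OO_F)$-equivariance, the limit property $\hat{\M}^\LT_\infty\sim\varprojlim\mathcal{M}^\LT_m$, and ordinary finite \'etale Galois descent. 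The second essential ingredient, which your proposal omits entirely, is that finite \'etale covers of $\mathbf{P}^{n-1}$ \emph{split}: this uses rigid GAGA to algebraize the cover, and then that projective space is algebraically simply connected. Without this geometric input there is no reason for the descended cover to be trivial, and the result can fail for other period domains. So the proposal is missing both the correct descent (along $\pi_\GM^\LT$ rather than $\pi_\HT^\LT$) and the GAGA/simple-connectedness step that actually forces a section.
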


\begin{proof} Let $\hat{\mathcal{M}}^\LT_\infty$ be (the strong completion of the base-change to $C$ of) the Lubin-Tate space of height $n$ for $F$ at infinite level, and let $\hat{\mathcal{M}}^\Dr_\infty$ be (the strong completion of the base-change to $C$ of) the Drinfeld space of height $n$ for $F$ at infinite level. Then both $\hat{\mathcal{M}}^\LT_\infty$ and $\hat{\mathcal{M}}^\Dr_\infty$ are perfectoid spaces over $\Spa(C,\OO_C)$. By duality, we have the $\GL_n(F)\times D^\times$-equivariant isomorphism $\hat{\mathcal{M}}^\LT_\infty\cong \hat{\mathcal{M}}^\Dr_\infty$, where $D$ is the division algebra over $F$ of invariant $\frac 1n$. (We note that the $\GL_n(F)$-action which comes from the general duality statement differs from the standard $\GL_n(F)$-action on the Drinfeld tower by $g\mapsto (g^{-1})^t$. This does not change any of the statements to follow.)

Let $\tilde{\hat{\M}}^\Dr_\infty$ be the fibre product $\tilde{\Omega}\times_{\Omega} \hat{\M}^\Dr_\infty$. Then $\tilde{\hat{\M}}^\Dr_\infty\to\hat{\M}^\Dr_\infty$ is a $\GL_n(F)$-equivariant finite \'etale morphism of perfectoid spaces over $\Spa(C,\OO_C)$. By the duality isomorphism, it gives rise to a $\GL_n(F)$-equivariant finite \'etale morphism $\tilde{\hat{\M}}^\LT_\infty\to \hat{\M}^\LT_\infty$.

\begin{lemma} The category of adic spaces finite \'etale over $\mathbf{P}^{n-1}$ is equivalent to the $\GL_n(F)$-equivariant spaces finite \'etale over $\hat{\M}^\LT_\infty$.
\end{lemma}

\begin{proof} We can work locally over $\mathbf{P}^{n-1}$. Let us write $\mathcal{M}^\LT$ for the generic fibre of the Lubin -- Tate space for $F$ at level $0$. As $\pi\from \mathcal{M}^\LT\to \mathbf{P}^{n-1}$ admits local sections, we can assume that we work over an open affinoid subset $U\subset \mathbf{P}^{n-1}$ which admits a lifting $U\to \mathcal{M}^\LT$. Let $U_\infty^\prime\subset \hat{\M}^\LT_\infty$ be the preimage of $U\subset \mathbf{P}^{n-1}$, and let $U_\infty\subset \hat{\M}^\LT_\infty$ denote the preimage of $U\subset \M^\LT$, so that $U_\infty\subset U_\infty^\prime$. Then $U_\infty\to U$ is a $\GL_n(\OO_F)$-equivariant map, and $U_\infty^\prime\to U$ is a $\GL_n(F)$-equivariant map.

The action of $\GL_n(F)$ on $U_\infty^\prime$ gives a disjoint decomposition
\[
U_\infty^\prime = \bigsqcup_{g\in \GL_n(F)/\GL_n(\OO_F)} gU_\infty\ .
\]
In particular, the category of $\GL_n(F)$-equivariant covers of $U_\infty^\prime$ is equivalent to the category of $\GL_n(\OO_F)$-equivariant covers of $U_\infty$, and we can restrict attention to the $\GL_n(\OO_F)$-equivariant map $U_\infty\to U$.

With the obvious definition of $U_m\subset \mathcal{M}^\LT_m$, we have $U_\infty\sim \varprojlim U_m$. In particular, a finite \'etale morphism of $V_\infty\to U_\infty$ comes via pullback from a finite \'etale morphism $V_m\to U_m$ for some $m$. We get an induced morphism
\[
\Gamma(V_m,\mathcal{O}_{V_m})\to \Gamma(V_\infty,\mathcal{O}_{V_\infty})\ ,
\]
where we also recall that all spaces are affinoid. We claim that some open subgroup of $\GL_n(\OO_F)$ fixes the image of this morphism point-wise. As $\Gamma(V_m,\mathcal{O}_{V_m})$ is finite over $\Gamma(U,\mathcal{O}_U)$, it is enough to check that any element $x\in \Gamma(V_m,\mathcal{O}_{V_m})$ is fixed by an open subgroup. But $x$ satisfies a monic polynomial equation $P(x)=0$ with coefficients in $\Gamma(U,\mathcal{O}_U^+)$, and finite \'etale in characteristic $0$. Any element $g\in \GL_n(\OO_F)$ will map $x$ to another solution $gx$ of $P(gx)=0$. But by Hensel's lemma, if $gx$ close enough $p$-adically to $x$, then $gx=x$. But as $\GL_n(\OO_F)$ acts continuously, we get the claim.

After enlarging $m$, we can assume that $\ker(\GL_n(\OO_F)\to \GL_n(\OO_F/p^m))$ acts trivially on $\Gamma(V_m,\mathcal{O}_{V_m})$. Then $V_m$ is $\GL_n(\OO_F/p^m)$-equivariant finite \'etale over $U_m$, and by usual finite \'etale Galois descent, this gives rise to a finite \'etale $V\to U$. It is easy to see that this gives the desired equivalence of categories.
\end{proof}

But now we recall that by rigid GAGA, finite \'etale maps to $\mathbf{P}^{n-1}$ are algebraic, and all of these split. It follows that the same is true for $\tilde{\hat{\M}}^\LT_\infty\to \hat{\M}^\LT_\infty$, and then for $\tilde{\hat{\M}}^\Dr_\infty\to \hat{\M}^\Dr_\infty$. In particular, there is a $\GL_n(F)$-equivariant morphism
\[
\hat{\M}^\Dr_\infty\to \tilde{\hat{\M}}^\Dr_\infty\to \tilde{\Omega}\ .
\]
It remains to prove that this morphism factors over $\M^\Dr_m$ for some $m$. As $\GL_n(F)$ permutes the connected components of $\M^\Dr$ transitively, it is enough to check this for one connected component; so fix a connected component $(\M^\Dr)^0\subset \M^\Dr$ of the Drinfeld space at level $0$, and denote by similar symbols its preimage. Observe that $\Omega$ modulo the action of $\GL_n(F)$ is quasicompact; in particular, it suffices to check the desired factorization property locally over $\Omega$, so let $U\subset \Omega$ be some open affinoid subset, and let $\tilde{U}\subset \tilde{\Omega}$ and $U_\infty\subset (\hat{\M}^\Dr_\infty)^0$ be the preimages. We get a map
\[
\Gamma(\tilde{U},\OO_{\tilde{U}})\to \Gamma(U_\infty,\OO_{U_\infty})\ ,
\]
where the right-hand side is the completion of $\varinjlim \Gamma(U_m,\OO_{U_m})$, with the obvious definition of $U_m\subset (\M^\Dr_m)^0$. But finite \'etale algebras over $\Gamma(U_\infty,\OO_{U_\infty})$ and $\varinjlim \Gamma(U_m,\OO_{U_m})$ are equivalent, so that we get a map
\[
\Gamma(\tilde{U},\OO_{\tilde{U}})\to \varinjlim \Gamma(U_m,\OO_{U_m})\ ,
\]
i.e. a map $\Gamma(\tilde{U},\OO_{\tilde{U}})\to \Gamma(U_m,\OO_{U_m})$ for some $m$, giving the desired factorization $U_\infty\to U_m\to \tilde{U}$.
\end{proof}

\bibliographystyle{amsalpha}
\bibliography{Moduli}

\end{document}